\documentclass[11pt]{amsart}
\usepackage{amsmath,amscd,amsfonts,amssymb,amsthm,epsfig,mathrsfs,color,bm,shuffle}
\makeatletter
\let\@wraptoccontribs\wraptoccontribs
\makeatother
\usepackage[all]{xy}
\SelectTips{cm}{10}
\usepackage[colorlinks,citecolor=black,linkcolor=black,urlcolor=black,
  pdfstartview=FitH]{hyperref}
\hypersetup{    colorlinks,    citecolor=blue,    filecolor=black,
    linkcolor=blue,    urlcolor=black}
\usepackage{tikz}
\usepackage{ctable,array}

\usepackage{booktabs}

\addtolength{\hoffset}{-1cm}
\addtolength{\textwidth}{2cm}

\newcommand{\Zoo}{\mathring{\mathring Z}}
\newcommand{\Zo}{\mathring Z}
\newcommand\xt[1][t]{[\hspace{-.12em}[ {#1} ]\hspace{-.12em}]}
\newcommand{\mdeg}[1][]{\mathrm{mdeg}_{#1}}

\DeclareMathOperator\sh{sh}

\usepackage[vcentermath]{youngtab}
\usepackage[thinlines]{easybmat}
\newcommand\bsm{\begin{smallmatrix}}
	\newcommand\esm{\end{smallmatrix}}
\newcommand\Grf{\mathbb G}
\usepackage{tikz-cd,longtable}
\DeclareMathOperator\Span{Span}

\newtheorem{theorem}{Theorem}[section]
\newtheorem{proposition}[theorem]{Proposition}
\newtheorem{lemma}[theorem]{Lemma}
\newtheorem{corollary}[theorem]{Corollary}
\newtheorem{conjecture}[theorem]{Conjecture}
\newtheorem{definition}[theorem]{Definition}
\newtheorem{example}[theorem]{Example}
\newtheorem{remark}[theorem]{Remark}
\newtheorem{question}[theorem]{Question}
\theoremstyle{definition}

\newcommand{\C}{\mathbb{C}}
\newcommand{\Cx}{\mathbb{C}^\times}
\newcommand{\N}{\mathbb{N}}
\newcommand{\Z}{\mathbb{Z}}
\newcommand{\D}{\mathbb{D}}
\newcommand{\K}{\mathcal{K}}
\newcommand{\R}{\mathbb{R}}

\newcommand{\vi}{\mathbf{i}}
\newcommand{\vj}{\mathbf{j}}
\newcommand{\vk}{\mathbf{k}}
\newcommand{\ON}{\mathbb C[N]}
\newcommand{\barD}{\overline{D}}

\newcommand{\GO}{G^\vee(\mathcal{O})}

\newcommand{\A}{{\mathbb A}}
\newcommand{\basefield}{\C}
\newcommand{\tR}{\mathfrak{t}^*_{\mathbb R}}

\newcommand{\ft}{{\mathfrak t}}
\newcommand{\fg}{{\mathfrak g}}
\newcommand{\ftreg}{{\ft^{\mathrm{reg}}}}
\newcommand{\dimvec}{\underrightarrow{\dim}\,}
\newcommand{\varep}{\varepsilon}

\DeclareMathOperator\Hom{Hom}

\DeclareMathOperator\im{im}
\DeclareMathOperator\wt{wt}
\DeclareMathOperator\ad{ad}
\DeclareMathOperator\Ad{Ad}

\DeclareMathOperator\Irr{Irr}

\newcommand\Perv{\mathrm P_{G^\vee(\mathcal O)}(\Gr)}
\newcommand\Vect{\mathrm{Vect}_{\mathbb C}}
\newcommand\Rep{\mathrm{Rep}(\overline G)}
\DeclareMathOperator\Conv{Conv}
\DeclareMathOperator\Seq{Seq}

\DeclareMathOperator\supp{supp}

\DeclareMathOperator\Sym{Sym}
\newcommand\PGL{{\mathbf{PGL}}}
\newcommand\SL{{\mathbf{SL}}}
\newcommand\id{{\mathrm{id}}}
\newcommand\IC{{\mathcal I}}
\newcommand\Gr{{\mathcal Gr}}

\newcommand\Pol{{\mathrm{Pol}}}

\newcommand\hH{{\overset{{\scriptscriptstyle \vee}}{\mathcal H}}{}}
\newcommand\PP{{\mathcal{PP}}}

\newcommand\iso\cong
\newcommand\shin{\shuffle}

\newcommand\defn[1]{{\bf #1}}

\begin{document}

\title[The Mirkovi\' c--Vilonen basis]{The Mirkovi\'c--Vilonen basis and \\ Duistermaat--Heckman measures}

\author{Pierre Baumann}
\email{p.baumann@unistra.fr}
\address{Institut de Recherche Math\'ematique Avanc\'ee\\
Universit\'e de Strasbourg et CNRS}

\author{Joel Kamnitzer}
\email{jkamnitz@math.utoronto.ca}
\address{Department of Mathematics\\
University of Toronto}

\author{Allen Knutson}
\email{allenk@math.cornell.edu}
\address{Department of Mathematics\\
Cornell University}
\contrib[With an appendix by]{Anne Dranowski, Joel Kamnitzer, and Calder Morton-Ferguson}

\begin{abstract}
	Using the geometric Satake correspondence, the Mirkovi\'c--Vilonen cycles in the affine Grasssmannian give bases for representations of a semisimple group $ G $.  We prove that these bases are ``perfect'', i.e. compatible with the action of the Chevelley generators of the positive half of the Lie algebra $ \mathfrak g $.  We compute this action in terms of intersection multiplicities in the affine Grassmannian.  We prove that these bases stitch together to a basis for the algebra $\C[N]$ of regular functions on the unipotent subgroup.  We compute the multiplication in this MV basis using intersection multiplicities in the Beilinson--Drinfeld Grassmannian, thus proving a conjecture of Anderson.
	
	In the third part of the paper, we define a map from $ \C[N] $ to a convolution algebra of measures on the dual of the Cartan subalgebra of $ \mathfrak g $.  We characterize this map using the universal centralizer space of $ G$.  We prove that the measure associated to an MV basis element equals the Duistermaat--Heckman measure of the corresponding MV cycle.  This leads to a proof of a conjecture of Muthiah.
	
	Finally, we use the map to measures to compare the MV basis and Lusztig's dual semicanonical basis.  We formulate conjectures relating the algebraic invariants of preprojective algebra modules (which underlie the dual semicanonical basis) and geometric invariants of MV cycles.  In the appendix, we use these ideas to prove that the MV basis and the dual semicanonical basis do not coincide in $SL_6 $.

\end{abstract}

\date{\today}
\maketitle

\setcounter{tocdepth}{1}
\tableofcontents

\newcommand\lie[1]{{\mathfrak #1}}
\newcommand\tensor\otimes
\newcommand\into\hookrightarrow
\newcommand\onto\twoheadrightarrow

\section{Introduction}\label{sec:intro}
\subsection{Biperfect bases and MV polytopes}\label{ssec:biperfect}
Let $ G $ denote a simple simply-connected complex algebraic group.  Going back to the work of Gel$'$fand--Zelevinsky \cite{GZ}, there has been great interest in finding special bases for irreducible representations $L(\lambda) $ of $ G $. 
Good bases restrict to bases of weight spaces and induce bases of tensor product multiplicity spaces.

Rather than work with individual representations, it is convenient to pass to the coordinate ring $ \C[N] $ (where $ N \subset B \subset G $ is the unipotent radical of a Borel), which contains all the irreducible representations.  Berenstein--Kazhdan \cite{BerensteinKazhdan} introduced the notion of perfect bases for $ L(\lambda) $ and $ \C[N]$; in this paper, we slightly modify their definition and work with \textbf{biperfect bases} for $\C[N]$.  Biperfect bases have good behaviour with respect to the left and right actions of the Chevalley generators $ \{ e_i \} \subset \lie{n} $ on $ \C[N] $ (see \S \ref{ss:PerfBas}).

For $ G = SL_2, SL_3, SL_4$, biperfect bases exist, are unique, and are given by explicit formulas.  However, for general $ G $, uniqueness does not hold, nor are explicit formulas available.  Some constructions of biperfect bases are known in general. The first general construction was Lusztig's dual canonical basis \cite{Lusztig}, aka Kashiwara's upper global basis \cite{Kashiwara91}.  In this paper, we will focus on the \textbf{Mirkovi\'c--Vilonen basis}, which is constructed for any $ G $ using the geometric Satake correspondence, and Lusztig's \textbf{dual semicanonical basis}, which is constructed for simply-laced $ G $ using preprojective algebra modules. 

Though uniqueness does not hold in general, a beautiful result of Berenstein--Kazhdan \cite{BerensteinKazhdan} shows that every biperfect basis has the same underlying combinatorics.  More precisely, any biperfect basis $ B $ comes with maps $ \tilde e_i, \tilde e^*_i : B \rightarrow B \cup \{0\} $ (for each $ i \in I$) which approximate the left and right actions of $ e_i $ on $ B$.  These maps $ \tilde e_i, \tilde e^*_i $ endow $ B $ with a bicrystal structure.  If $ B, B' $ are two biperfect bases, then there is a unique isomorphism of bicrystals $ B \rightarrow B' $ (Theorem \ref{th:UniqCrys}).  We write $ B(\infty) $ for the abstract bicrystal common to all biperfect bases.

\subsection{Bases and their polytopes}\label{ssec:bases}
Let $ G^\vee $ be the Langlands dual group and let
$ \Gr = G^\vee((t))/G^\vee[[t]] $ denote the affine Grassmannian of
this group.  Mirkovi\'c--Vilonen \cite{MirkovicVilonen} defined a family of cycles in $ \Gr $
which, under the geometric Satake correspondence, give bases for
irreducible representations of $ G $.  We will give a slight
modification of their construction in order to get a basis for
$ \C[N] $.  Let $ S_{\pm}^\mu := N^\vee_{\pm}((t)) L_\mu $ denote
semi-infinite orbits in $ \Gr$ (where $ L_\mu \in \Gr $ is the point defined
by the $ G^\vee$-coweight $ \mu $).  We will be concerned with the
intersection of opposite semi-infinite orbits.  For any
$ \nu \in Q_+ $, the positive root cone, the irreducible
components of $ \overline{S_+^0 \cap S_-^{-\nu}} $ are called
\textbf{stable MV cycles}.  These cycles index the MV basis
$ \{ b_Z \}$ for $ \C[N] $.

In addition to the MV basis, in this paper, we also study the dual semicanonical basis for $ \C[N] $, which was introduced by Lusztig \cite{Lusztig00} and further studied by Geiss--Leclerc--Schr\"oer \cite{GeissLeclercSchroer}.   This basis $ \{ c_Y \}$ is indexed by irreducible components $ Y $ of Lusztig's nilpotent varieties, which parameterize representations of the preprojective algebra $\Lambda$.  To each $\Lambda$-module $ M $, we can associate a vector $ \xi_M \in \C[N] $ and we define $ c_Y := \xi_M $ for a general point $ M \in Y $.

As the MV basis and dual semicanonical basis are both biperfect bases (Theorems \ref{th:MVBasisBiperf} and \ref{th:LusztigPerfect}), 
we get canonical bijections
$$
\{ b_Z \} \longrightarrow B(\infty) \longleftarrow \{ c_Y \}
$$

Suppose that $ b_Z $ and $ c_Y $ correspond under these bijections.  Outside of small rank, this doesn't imply that $ b_Z = c_Y $ as elements of $ \C[N] $, just that they represent the same element of the crystal.  However, this combinatorial relationship is very appealing, as can be seen by the following result which combines the work of the second author \cite{mvcrystal} and the first and second authors (together with Tingley) \cite{BaumannKamnitzerTingley}.

\begin{theorem} \label{th:polytopesIntro}
Let $  Z$ and $ Y $ be as above and let $ M $ be a general point of $ Y $.  Then we have an equality of polytopes.
$$ \Conv( \{ \mu \mid L_\mu \in Z \}) = - \Conv (\{ \dimvec N \mid N \subseteq M \text{ is a $ \Lambda$-submodule} \})$$
\end{theorem}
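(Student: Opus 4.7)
The plan is to identify both sides of the equality with the MV polytope attached to the common element $b \in B(\infty)$ to which $b_Z$ and $c_Y$ correspond. Recall that MV polytopes are certain convex lattice polytopes in the weight lattice characterized by tropical Pl\"ucker relations, and that they form a combinatorial model of $B(\infty)$: there is a bicrystal isomorphism $b \mapsto P(b)$ between $B(\infty)$ and the set of MV polytopes. The strategy is therefore to realize each side of the theorem as the construction of $P(b)$ from a specific biperfect basis, and then invoke uniqueness of bicrystal isomorphisms to conclude.

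For the left-hand side, I would invoke the result of the second author \cite{mvcrystal}: for any stable MV cycle $Z$, the polytope $\Conv(\{\mu \mid L_\mu \in Z\})$ is an MV polytope, and the resulting map from stable MV cycles to MV polytopes is an isomorphism of bicrystals with respect to the MV basis crystal structure established in Theorem \ref{th:MVBasisBiperf}. Composing with the bicrystal isomorphism $\{b_Z\} \to B(\infty)$ identifies $\Conv(\{\mu \mid L_\mu \in Z\})$ with $P(b)$.

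For the right-hand side, I would invoke the main theorem of \cite{BaumannKamnitzerTingley}: for $M$ a general point of an irreducible component $Y$ of Lusztig's nilpotent variety, the polytope $-\Conv(\{\dimvec N \mid N \subseteq M\})$ is an MV polytope, and the assignment $Y \mapsto -\Conv(\{\dimvec N \mid N \subseteq M\})$ is a bicrystal isomorphism from $\Irr(\Lambda\mmod)$ to the set of MV polytopes, relative to the dual semicanonical crystal structure of Theorem \ref{th:LusztigPerfect}. This identifies the right-hand side with $P(b)$ as well.

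Since $b_Z$ and $c_Y$ correspond to the same $b \in B(\infty)$ by hypothesis, the two MV polytopes coincide, and by uniqueness of bicrystal isomorphisms (Theorem \ref{th:UniqCrys}) there is no ambiguity in the identifications made above. The main obstacle, which is already absorbed into the cited results, is the verification that the crystal structures on stable MV cycles and on $\Irr(\Lambda\mmod)$ used in \cite{mvcrystal} and \cite{BaumannKamnitzerTingley} genuinely coincide with the biperfect crystals induced from the $e_i$ and $e_i^*$ actions on $\C[N]$; granting this compatibility, the theorem is a formal consequence of the two polytope models of $B(\infty)$. The sign on the right-hand side reflects the convention that submodule dimension vectors lie in $Q_+$, while the weights $\mu$ of $L_\mu \in Z \subseteq \overline{S_+^0 \cap S_-^{-\nu}}$ lie in the opposite cone.
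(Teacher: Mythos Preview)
Your proposal is correct and matches the paper's own treatment: the theorem is not proved from scratch but is assembled by identifying both sides with the abstract MV polytope $P(b)$ of the common crystal element, invoking \cite{mvcrystal} for the MV-cycle side (see \S\ref{ss:MVfromMV}) and \cite{BaumannKamnitzerTingley} for the preprojective side (Theorem~\ref{thm:BKT}), with the gluing coming from the uniqueness of the bicrystal $B(\infty)$. The compatibility issue you flag at the end is exactly what the paper addresses in \S\ref{ss:MVfromMV} and Remark~\ref{rem:MVpoly}.
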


MV polytopes were first defined by Anderson \cite{Anderson03} as the left hand side of the above equality.  However, in retrospect, it is more natural to view them as purely combinatorial objects: they can either be defined using a condition on their 2-faces (as in \cite{mvcrystal}) or from the crystal $B(\infty) $ using Saito reflections (as in \S \ref{ss:MVpolytopes}).

The above equality of polytopes motivates the following question.
\begin{question} \label{ques:main}
Let $ Z, Y$ be as above.  Is there a relationship between the equivariant invariants of $Z$ and the structure of a general point of $ Y $?  How is this connected to the relationship between the basis vectors $ b_Z $ and $ c_Y $?
\end{question}

\subsection{The MV basis}
The first part of the paper is devoted to understanding the MV basis.  We prove the following results in Theorems \ref{th:MVBasisBiperf}, \ref{th:ActEiMV}, \ref{th:mult}.

\begin{theorem} \label{th:MVbasisIntro}
\begin{enumerate}
\item The MV basis $ \{ b_Z \} $ is a biperfect basis for $ \C[N] $.
\item For each $ i $, the action of $ e_i $ on $ b_Z $ is given by the intersection multiplicities appearing in the intersection of $ Z $ with a hyperplane section.
\item Given two MV cycles $ Z_1, Z_2 $, the product $ b_{Z_1} b_{Z_2} $ in $ \C[N] $ is given by the intersection multiplicities appearing in the intersection of the Beilinson--Drinfeld degeneration of $ Z_1 \times Z_2 $ with the central fibre.
\end{enumerate}
In particular, the structure constants for the action of $ e_i $ and for the multiplication are non-negative integers.
\end{theorem}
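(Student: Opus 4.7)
The strategy is to transfer all three parts into geometric statements via the geometric Satake correspondence. The basis vector $b_Z$ is defined as the class of $Z$ in the top Borel--Moore homology of $\overline{S_+^0 \cap S_-^{-\nu}}$, under the identification of this homology with the weight-$\nu$ component of $\C[N]$ induced by Mirkovi\'c--Vilonen's weight multiplicity theorem. With this setup, all three statements become geometric assertions in $\Gr$ or in its Beilinson--Drinfeld variant.

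For part (i), weight compatibility is automatic from the setup: since $S_\pm^\mu$ are the attracting (resp.\ repelling) sets of the torus fixed point $L_\mu$ under a suitable cocharacter, $Z \subset \overline{S_+^0 \cap S_-^{-\nu}}$ forces $b_Z$ to lie in the weight-$\nu$ piece of $\C[N]$. Upper triangularity of the bicrystal operators $\tilde e_i, \tilde e_i^*$ then follows once part (ii) is in hand, by identifying the ``leading'' MV cycle appearing in the hyperplane-section computation with the MV cycle obtained from $Z$ by a Saito reflection in the $i$-direction.

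For part (ii), the plan is to realize the action of $e_i$ as pushforward along a specific codimension-one embedding into $\Gr$ -- essentially a section of a natural line bundle associated with the $i$-th rank-one subgroup. Expanding $e_i \cdot b_Z$ in the MV basis then amounts, by the definition of pushforward of cycle classes, to computing the intersection multiplicities of $Z$ with this hyperplane section. Non-negativity is automatic since one intersects effective cycles; integrality reduces to the equidimensionality of $\overline{S_+^0 \cap S_-^{-\nu}}$ established by Mirkovi\'c--Vilonen.

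For part (iii), the key tool is the Beilinson--Drinfeld Grassmannian $\Gr_{BD}$ over $\A^2$, whose generic fiber is $\Gr \times \Gr$ and whose central fiber is $\Gr$. Given MV cycles $Z_1, Z_2$, the plan is to sweep out the family $Z_1 \widetilde{\times} Z_2$ over the generic locus, take its closure in $\Gr_{BD}$, and specialize to the central fiber; the fusion-product description of geometric Satake identifies this specialization with multiplication in $\C[N]$, so decomposing it into irreducible MV cycles with multiplicities yields the claimed structure constants. The hardest step, resolving Anderson's conjecture, is to set this degeneration up cleanly: showing that the closure in $\Gr_{BD}$ is flat over $\A^2$, that the central-fiber cycle is an effective $\Z$-linear combination of MV cycles, and that the resulting decomposition genuinely computes the MV-basis expansion of $b_{Z_1} b_{Z_2}$ under Satake -- a delicate interplay between nearby cycles, perverse convolution, and the concrete geometry of semi-infinite orbits. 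Once this is established, non-negativity and integrality of the multiplication structure constants are immediate consequences.
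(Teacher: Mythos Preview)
Your proposal has the right overall shape for parts (ii) and (iii), but there are genuine gaps, especially in (i), and some misidentifications.

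\textbf{Part (ii).} The key input you do not name is the Ginzburg--Vasserot result: the principal nilpotent $\overline e=\sum_i q(\alpha_i)e_i$ acts on $F(\mathcal A)=H^\bullet(\Gr,\mathcal A)$ by cup product with $c_1(\mathscr L)$, where $\mathscr L$ is the \emph{single} line bundle pulled back from $\mathscr O(1)$ under the Kac--Moody embedding $\Upsilon:\Gr\hookrightarrow\mathbb P(V)$. There is no ``line bundle associated with the $i$-th rank-one subgroup''; rather, one chooses a hyperplane $D$ through a particular weight space of $V$, computes $[Z]\cap c_1(\mathscr L)$ via Fulton's intersection formula as $\sum i(Z',D\cdot Z)[Z']$, and then separates the contributions of the individual $e_i$ by noting that the components $Z'$ of $D\cap Z$ distribute among the strata $\overline{S_-^{\mu+\alpha_i}}$. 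You also omit the non-trivial leading-coefficient calculation (Proposition~\ref{pr:CalcMult}): one must show $i(\tilde e_iZ,D\cdot Z)=q(\alpha_i)\varepsilon_i(Z)$, which is done by an explicit local computation in coordinates adapted to the $i$-th one-parameter subgroup.

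\textbf{Part (i).} Your sketch addresses at most perfectness for the \emph{left} action and says nothing about how to obtain the starred crystal structure $(\varepsilon_i^*,\tilde e_i^*,\ldots)$ governing the \emph{right} action. The paper's mechanism is a geometric involution $Z\mapsto Z^*:=(t^\nu Z)^\dagger$ built from an antiautomorphism of $G^\vee$; one extends it linearly to $\C[N]$ and proves (Lemma~\ref{le:UEquivar} and the Proposition following it) that it exchanges the left and right actions of each $e_i$. This is the substance of biperfectness, and it does not follow from (ii) alone. Separately, your invocation of ``Saito reflection'' is misplaced: the leading cycle in $e_i\cdot b_Z$ is $\tilde e_iZ$, the ordinary crystal raising operator in the Braverman--Gaitsgory crystal structure, not the Saito reflection $\sigma_i$ (which is a different operation, mapping $\{\varepsilon_i^*=0\}$ to $\{\varepsilon_i=0\}$).

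\textbf{Part (iii).} Your outline is morally correct but technically looser than what the paper does. The paper works over $\mathbb A^1$ (one point fixed at $0$), not $\mathbb A^2$, and avoids any direct appeal to flatness or nearby cycles. Instead it builds two cosheaves on $\mathbb A^1$---one from the IC sheaf $j_{!*}(\IC_{\lambda_1}\boxtimes\IC_{\lambda_2})$, the other from the dualizing sheaf---compares their costalks via the map $\IC\to\D[-d]$, and identifies the corestriction map with a cap product against the orientation class of $\{0\}\subset\mathbb A^1$ (Proposition~\ref{pr:CosheafFamily}). The intersection multiplicities then drop out of Fulton's formula applied to the Cartier divisor $\pi^{-1}(0)$. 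Your flatness-plus-nearby-cycles route might be made to work, but it is not the argument the paper gives, and the ``delicate interplay'' you gesture at is precisely what the cosheaf machinery is designed to make concrete.
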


Part (i) of this theorem is a two-sided extension to $\mathbb C[N]$ of a like result from Braverman and Gaitsgory \cite[Proposition~4.1]{BravermanGaitsgory}. Part (iii) was conjectured by Anderson in \cite{Anderson03}.  We prove (i) and (ii) using a result of Ginzburg \cite{Ginzburg} and Vasserot \cite{Vasserot} concerning the action of the principal nilpotent under the geometric Satake correspondence.  We prove (iii) by carefully considering the fusion product in the Satake category, as defined by Mirkovi\'c--Vilonen \cite{MirkovicVilonen}.

Part (iii) of this theorem is closely related to an old result of Feigin--Finkelberg--Kuznetzov--Mirkovi\'c \cite{FFKM} and a more recent result of Finkelberg--Krylov--Mirkovi\'c \cite{FK} describing the algebra $ U(\mathfrak n) $ using Zastava spaces.

\subsection{Equivariant invariants of MV cycles}
\label{ssec:equivinvts}
In order to further our understanding of the Mirkovi\'c--Vilonen basis, we relate the basis vector $ b_Z \in \C[N] $ to equivariant invariants of the MV cycle $ Z$.  We begin by introducing a remarkable map from $ \C[N] $ to the space $\PP $ of piecewise polynomial measures on $\tR $. Given any sequence $ \vi = (i_1, \dots, i_p) \in I^p $ of simple roots, we define a measure $ D_\vi $ on $ \tR $ as follows.  First, we consider a linear map $ \pi_\vi : \R^{p+1} \rightarrow \tR $ taking the standard basis vectors to the negative partial sums  $ \alpha_{i_1} + \cdots + \alpha_{i_k} $.  Then we define $ D_\vi $  to be the pushforward of Lesbesgue measure on the $p$-simplex under $ \pi_\vi $ (see Figure \ref{fig:MeasureExamples}).

\begin{figure} 
\begin{tikzpicture}[scale = .4]
\shadedraw[top color=black, bottom color=white,shading angle = 240]
(0,0) node[left] {$-\alpha_1$} -- ++(-3,-5) node[below] {$-2\alpha_1 -\alpha_2$}
-- ++(4.5,2.5) -- ++(-1.5,2.5);
\shadedraw[top color=black, bottom color=lightgray,shading angle = 0]
++(-3,-5) -- ++(6,0) node[below] {$-\alpha_1-\alpha_2$} -- ++(-1.5,2.5) 
 -- ++(-4.5,-2.5);
\shadedraw[top color=black, bottom color=white,shading angle = 60]
++(3,-5) -- ++(3,5) -- ++(-4.5,-2.5) -- ++(+1.5,-2.5);
\shadedraw[top color=black, bottom color=lightgray,shading angle = 180]
(6,0) node[right] {$0$} -- ++(-6,0)  -- ++(1.5,-2.5) -- ++(4.5,2.5);
\draw[dashed] (6,0) -- ++(3,-5) -- ++(-6,0);
\end{tikzpicture}
\begin{tikzpicture}[scale = .4]
\shadedraw[top color=darkgray, bottom color=white,shading angle = -150]
(0,0) node[right] {$0$} -- ++(-9,-5) node[below] {$-2\alpha_1 -\alpha_2$} 
-- ++(6,0) -- ++(3,5);
\shadedraw[top color=darkgray, bottom color=white,shading angle = 120]
(0,0) -- ++(3,-5) node[below] {$-\alpha_2$}-- ++(-6,0)  -- ++(3,5);
\draw[dashed] (0,0) -- ++(-6,0) -- ++(-3,-5);
\end{tikzpicture}

 \caption{The $SL_3$ examples of $ D_\vi $ for $ \vi = (1,2,1) $ and $(2,1,1)$, 
 vertices labeled by their positions, with the shading to suggest the 
 (piecewise-linear function times Lebesgue) measure. } \label{fig:MeasureExamples}
\end{figure}
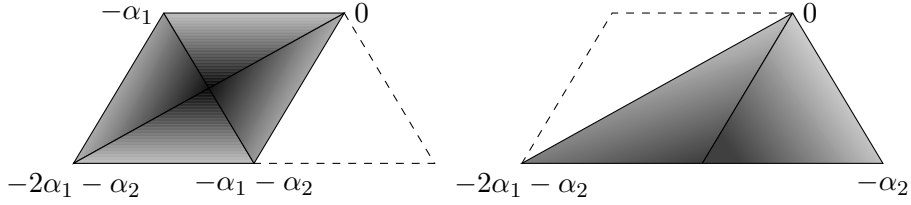

These measures $ D_\vi $ satisfy the shuffle relations under convolution (Lemma \ref{le:shuffle}) and so we obtain an algebra morphism $ D : \C[N] \rightarrow \PP $ by
$$
D(f) = \sum_{\vi} \langle e_{\vi}, f \rangle D_\vi
$$
where $ e_\vi = e_{i_1} \cdots e_{i_p} \in U(\mathfrak n) $ and where $ \langle \phantom{a}, \phantom{a} \rangle : U(\mathfrak n) \otimes \C[N] \rightarrow \C $ is the usual pairing.

Let $ \barD : \C[N] \rightarrow \C[\ftreg] $ be the map that sends a function $ f \in \C[N]_{-\nu} $, to the coefficient of $e^{-\nu} $ in the Fourier transform of $ D(f) $. Then $\barD$ is an algebra morphism, which can be described using the universal centralizer of the Lie algebra $ \mathfrak g $.  Namely, consider the map $ \ftreg \rightarrow N $ that associates to $ x \in \ftreg $  the unique $ n_x \in N $ such that $ \Ad_{n_x} (x+e) = x $, where $ e $ is a principal nilpotent.  We prove that the map $ \barD : \C[N] \rightarrow \C[\ftreg]$ is the algebra map dual to $x \mapsto n_x$ (Proposition \ref{pr:ExpanCh}).

On the other hand, associated to $ Z $, we have the \textbf{Duistermaat--Heckman measure} $ DH(Z)$, a measure (defined by Brion--Procesi \cite{BrionProcesi}, following ideas from symplectic geometry) on $ \ft_\R$ which captures the asymptotics of sections of equivariant line bundles on $ Z $ (see \S \ref{se:DH}).  This measure lives on the MV polytope of $ Z$ and its Fourier transform encodes the class of $ Z $ in the equivariant homology $H^{T^\vee}_\bullet(\Gr) $ (see Theorem \ref{th:FTDH}).  We also have the \textbf{equivariant multiplicity} $ \varep_{L_0}(Z) $ of $ Z$; this is a rational function which represents the equivariant homology class of $ Z $ in a neighbourhood of $ L_0 $.

Our second main result (Theorem \ref{th:DHequalsD}) relates the equivariant invariants of MV cycles to the above morphisms.
\begin{theorem} \label{th:DHDZintro}
Let $Z$ be an MV cycle.  After identifying $ \ft $ and $ \ft^* $, we have
$$
DH(Z) = D(b_Z) \quad\text{and}\quad \varep_{L_{-\nu}}(Z) = \barD(b_Z)
$$
\end{theorem}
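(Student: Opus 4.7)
The plan is to first establish the pointwise identity $\varep_{L_{-\nu}}(Z)=\barD(b_Z)$ and then upgrade it to the measure identity $DH(Z)=D(b_Z)$. These are closely linked: by Theorem~\ref{th:FTDH} the Fourier transform of $DH(Z)$ is the equivariant homology class $[Z]\in H^{T^\vee}_\bullet(\Gr)$ read through its fixed-point localizations, whose coefficient at $L_{-\nu}$ is precisely $\varep_{L_{-\nu}}(Z)$; meanwhile the definition of $\barD$ gives $\barD(b_Z)$ as the $e^{-\nu}$-coefficient of the Fourier transform of $D(b_Z)$. So the pointwise identity is the outermost-vertex Fourier coefficient of the measure identity.

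To prove the pointwise identity I would show that both sides define the same algebra morphism $\C[N]\to\C[\ftreg]$, and identify them through the universal centralizer. Proposition~\ref{pr:ExpanCh} realizes $\barD$ as the map dual to the section $x\mapsto n_x$. On the geometric side, I first check that $Z\mapsto\varep_{L_{-\nu}}(Z)$ extends linearly to an algebra morphism: multiplicativity follows from Theorem~\ref{th:MVbasisIntro}(iii), since the Beilinson--Drinfeld degeneration used there preserves equivariant multiplicities at the outermost vertex $L_{-\nu_1-\nu_2}$ of the fused polytope. I then identify this morphism with the dual of $x\mapsto n_x$ via the Bezrukavnikov--Finkelberg--Mirkovi\'c isomorphism between $H^{G^\vee(\mathcal O)}_\bullet(\Gr)$ and the coordinate ring of the universal centralizer of $G$, combined with the Ginzburg--Vasserot description of the principal-nilpotent action already powering Theorem~\ref{th:MVbasisIntro}(i)--(ii): the class $[Z]$, localized at $L_{-\nu}$ and read as a function of $x\in\ftreg$, evaluates $b_Z$ at the centralizer element $n_x$.

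The main obstacle will be this last matching, since localization at a non-identity fixed point $L_{-\nu}$ does not fit naturally with a principal-nilpotent structure based at $L_0$. A cleaner fallback I would pursue is induction on the height of $\nu$: Theorem~\ref{th:MVbasisIntro}(ii) expresses the $e_i$-action on $b_Z$ via hyperplane-section intersection numbers in $\Gr$, and the standard residue recursion for equivariant multiplicities under a hyperplane section yields a matching recursion for $\varep_{L_{-\nu}}(Z)$; the recursion for $\barD(b_Z)$ is immediate from the definition of $D$, and the base case $\nu=0$ reads $1=1$. The upgrade to the full measure identity then proceeds by noting that both $DH$ and $D$ are algebra morphisms $\C[N]\to\PP$ and that it suffices to match Fourier transforms at every $T^\vee$-fixed point of $Z$; the argument above, applied to the MV cycles that realize each vertex $L_\mu$ as their outermost vertex, supplies the remaining Fourier coefficients, so $DH(Z)=D(b_Z)$ as measures.
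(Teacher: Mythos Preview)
Your proposal contains the two key ideas that the paper actually uses---the universal centralizer description of $H^{T^\vee}_\bullet(\Gr)$ (the paper's second proof, via Yun--Zhu rather than BFM) and the hyperplane-section recursion (the paper's first proof)---but your organization inverts the logic and the ``upgrade'' step has a genuine gap.

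The gap is in passing from the pointwise identity to the measure identity. Knowing $\varep_{L_{-\nu}}(Z)=\barD(b_Z)$ for \emph{every} stable MV cycle $Z$ tells you only the bottom-vertex Fourier coefficient of each $DH(Z)$. It does \emph{not} supply the equivariant multiplicities $\varep_{L_\mu}(Z)$ at the interior fixed points $L_\mu$ of a fixed $Z$: there is no mechanism identifying $\varep_{L_\mu}(Z)$ with $\barD(b_{Z'})$ for some other cycle $Z'$, and your sentence ``applied to the MV cycles that realize each vertex $L_\mu$ as their outermost vertex'' does not produce one. Nor does asserting that $Z\mapsto DH(Z)$ extends to an algebra morphism $\C[N]\to\PP$ help, since even granting that (which itself requires an argument about DH measures under the Beilinson--Drinfeld degeneration), two algebra morphisms agreeing after the non-injective projection $\barD$ need not agree.

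The paper avoids this entirely by proving the measure identity directly and deducing the pointwise one as a corollary. In the first proof, the key input you are missing is Knutson's formula (Theorem~\ref{th:Allen}), which expresses $DH(Z)$ as a sum $\sum_{\mu_\bullet} c(\mu_\bullet)\,(\pi_{\mu_\bullet})_*(\delta_{\Delta^m})$ over chains of BB cycles. For a stable MV cycle with a generic dominant $\C^\times$, the BB cycles are exactly the stable MV cycles contained in $Z$, the weight chains are indexed by $\Seq(\nu)$, and the chain coefficients $c(\vi)$ are products of the very intersection numbers appearing in Theorem~\ref{th:ActEiMV}. Expanding $D(b_Z)=\sum_\vi\langle\dot e_\vi,b_Z\rangle D_\vi$ using that same theorem gives the identical sum. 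Your ``fallback induction'' is the germ of this argument, but aimed at $\varep_{L_{-\nu}}$ rather than at $DH$; redirecting it to the full DH measure via Theorem~\ref{th:Allen} is what makes it work.
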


We give two proofs of this Theorem.  Our first proof uses Theorem \ref{th:MVbasisIntro}(ii) and a general result of the third author \cite{Knutson} for computing Duistermaat--Heckman measures using hyperplane sections.  Our second proof uses the work of Yun--Zhu \cite{YunZhu}, which relates the equivariant homology of the affine Grassmannian to the universal centralizer.

As an application of this theorem, we prove a conjecture of Muthiah \cite{Mu}.  Let $ \lambda $ be a dominant weight and identify the zero weight space $ L(\lambda)_0 $ with $ H_{2\rho^\vee(\lambda)}(\overline{\Gr^\lambda \cap S^0_-}) $ by the geometric Satake correspondence.  Given an irreducible component $ Z \subset \overline{\Gr^\lambda \cap S^0_-} $, we can consider its equivariant multiplicity $ \varep_{L_0}(Z) \in \C(\ft^*)$ at the point $ L_0 \in \Gr$.  

Muthiah \cite{Mu} conjectured the following result and proved it when $ G =SL_n $ and $ \lambda \le n \omega_1 $.  We prove it for all $ G$ and $ \lambda $ (Theorem \ref{th:MuthiahMain}).

\begin{theorem} \label{th:Dinakar}
	The linear map $L(\lambda)_0 \rightarrow \C(\ft^*) $ defined by
$	 [Z] \mapsto \varep_{L_0}(Z)$ is  equivariant with respect to the actions of the Weyl group on both sides.
\end{theorem}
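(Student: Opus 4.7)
My plan is to reduce this to Theorem \ref{th:DHDZintro}, using the universal centralizer description of the map $\barD$ to extract the Weyl equivariance.

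First I would translate the statement into one about the MV basis of $\C[N]$. Given an MV cycle $Z \subset \overline{\Gr^\lambda \cap S^0_-}$ representing a basis vector of $L(\lambda)_0$, the combinatorics of the bicrystal $B(\infty)$ (together with Theorem \ref{th:polytopesIntro}) associates to $Z$ a unique stable MV cycle $\tilde Z \subset \overline{S^0_+ \cap S^{-\lambda}_-}$ with the same MV polytope. Both cycles contain $L_0$, and a local analysis near that fixed point should identify $\varep_{L_0}(Z)$ with $\varep_{L_0}(\tilde Z)$ up to an explicit, transparently $W$-equivariant, Euler factor coming from the normal directions along which $\tilde Z$ sees strata $\Gr^\mu$ with $\mu$ running over the vertices of the MV polytope above $0$.

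Next I would compare with Theorem \ref{th:DHDZintro}. That theorem computes $\varep_{L_{-\lambda}}(\tilde Z)$ rather than $\varep_{L_0}(\tilde Z)$, so I would invoke the automorphism of $\Gr$ obtained by combining the Chevalley involution of $G^\vee$ with $t \mapsto t^{-1}$, which swaps $L_0 \leftrightarrow L_{-\lambda}$ and takes stable MV cycles to stable MV cycles (this swap is precisely the Kashiwara involution $*$ at the crystal level). The net effect is an equality of the form
\[ \varep_{L_0}(Z) \;=\; \barD(b_{\tilde Z^*}) \]
for a suitable transform $\tilde Z^*$ of $\tilde Z$, up to the Euler factor of the first step.

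Finally I would deduce the $W$-equivariance from the fact that $\barD$ is the algebra morphism dual to the Kostant-type section $\ftreg \to N$, $x \mapsto n_x$ with $\Ad_{n_x}(x+e) = x$. For any $w \in W$ with a lift $\dot w \in N_G(T)$, the element $\Ad_{\dot w}(x+e)$ takes the form $wx + e'$ for another principal nilpotent $e'$, and a unique $N$-valued correction permits one to relate $n_{wx}$ to $\dot w \cdot n_x \cdot \dot w^{-1}$ multiplied by an element of the centralizer of $wx$. This intertwines the Weyl action on $\ftreg$ (via pullback along $x\mapsto n_x$) with the natural Weyl action on the zero weight spaces of $G$-representations. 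The main obstacle will be verifying this last matching: the Weyl action on $L(\lambda)_0$ is not induced by any geometric $W$-action on $\Gr$, and transporting it through the above identifications will likely require invoking either the Ginzburg--Vasserot principal-nilpotent action already used in Theorem \ref{th:MVbasisIntro}(ii), or, as in the second proof of Theorem \ref{th:DHDZintro}, the Yun--Zhu isomorphism between the equivariant homology of $\Gr$ and the regular functions on the universal centralizer.
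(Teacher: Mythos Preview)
Your overall strategy --- reduce the equivariant multiplicity to $\barD$ via Theorem~\ref{th:DHDZintro}, then exploit the description of $\barD$ through the section $x\mapsto n_x$ --- is exactly what the paper does. But your implementation of both reductions is off, and the detours you introduce are where the gaps lie.

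\textbf{Steps 1--2 are unnecessary and the Euler-factor claim is unjustified.} The passage from $Z$ to a stable MV cycle is just translation: $t^{-\lambda}Z$ is already a stable MV cycle of weight $-\lambda$, and trivially $\varep_{L_0}(Z)=\varep_{L_{-\lambda}}(t^{-\lambda}Z)$. Combined with Corollary~\ref{co:barDEM} this gives $\varep_{L_0}(Z)=\barD(\Psi_\lambda([Z]))$ directly, with no Euler factor and no Chevalley involution needed. Your proposed matching of $\varep_{L_0}(Z)$ with $\varep_{L_0}(\tilde Z)$ is comparing the bottom (repelling) fixed point of $Z$ with the top (attracting) fixed point of $t^{-\lambda}Z$; these local structures are genuinely different, and there is no reason to expect them to agree up to a simple Euler class.

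\textbf{Step 3 has the wrong transformation law.} Your formula $n_{wx}\sim \dot w\, n_x\, \dot w^{-1}\cdot(\text{centralizer})$ with an ``$N$-valued correction'' does not lead anywhere: $\Ad_{\dot w}(e)$ is not in $\mathfrak n$, and an $N$-factor would not be killed by $v_\lambda^*$. The correct statement (Proposition~\ref{pr:BehNx}) is that
\[
n_{wx} \;=\; y\, n_x\, \overline w^{-1}\, t,\qquad y\in N_-,\ t\in T,
\]
proved by an explicit rank-one computation. The $N_-$-factor is exactly what makes the argument work: for $v\in L(\lambda)_0$ one has $tv=v$ and $v_\lambda^*(y\,\cdot)=v_\lambda^*(\cdot)$, so
\[
\barD(\Psi_\lambda(v))(w^{-1}x)=v_\lambda^*(n_{w^{-1}x}\,v)=v_\lambda^*(y\,n_x\,\overline w\,t\,v)=v_\lambda^*(n_x\,\overline w\,v)=\barD(\Psi_\lambda(\overline w\,v))(x).
\]
This is the whole proof; no appeal to Ginzburg--Vasserot or Yun--Zhu is required at this stage.
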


\subsection{Comparison of bases}
\label{ss:CompBases}
As we have ring homomorphisms $ D: \C[N] \rightarrow \PP $, $\barD: \C[N] \rightarrow \C[\ftreg] $, we immediately see the following result.
\begin{corollary} \label{co:intro}
Suppose that $ b_Z = c_Y $.  Then $ DH(Z) = D(c_Y) $ and $ \varep_{L_0}(Z) = \barD(c_Y) $.
\end{corollary}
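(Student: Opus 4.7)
The plan is to observe that this corollary is an immediate consequence of Theorem \ref{th:DHDZintro} (Theorem \ref{th:DHequalsD}) combined with the fact that $D$ and $\barD$ are well-defined functions on $\C[N]$.

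More precisely, I would proceed as follows. By Theorem \ref{th:DHDZintro}, one has the identities $DH(Z) = D(b_Z)$ and $\varep_{L_0}(Z) = \barD(b_Z)$, where both sides are computed from the MV cycle $Z$ and its associated basis vector. These identities hold with no hypothesis on $Y$ or $c_Y$. Under the hypothesis that $b_Z = c_Y$ as elements of $\C[N]$, we may substitute $c_Y$ for $b_Z$ on the right-hand side of each identity, yielding $DH(Z) = D(c_Y)$ and $\varep_{L_0}(Z) = \barD(c_Y)$.

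Note that we do not even need to invoke that $D$ and $\barD$ are \emph{ring} homomorphisms here; the result only requires that they are well-defined maps on $\C[N]$, so that equal inputs produce equal outputs. The real content of the corollary is the suggestion that the equivariant invariants of an MV cycle, which are built from the geometry of $\Gr$, can in favourable cases be extracted from the preprojective algebra module $M$ underlying $c_Y$ — a hint in the direction of Question \ref{ques:main}. There is no obstacle to overcome in the proof itself; the corollary simply packages Theorem \ref{th:DHDZintro} in the form in which it will be applied in the comparison of the MV basis and the dual semicanonical basis in later sections (and in the appendix).
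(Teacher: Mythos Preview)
Your proof is correct and matches the paper's approach exactly: the paper states that the corollary follows immediately from the fact that $D$ and $\barD$ are maps on $\C[N]$, together with Theorem~\ref{th:DHDZintro}. Your observation that only well-definedness (not the ring homomorphism property) is needed for the substitution is accurate, though the paper phrases it in terms of $D$ and $\barD$ being ring homomorphisms.
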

This result is useful, since $ DH(Z)$ and $ \varep_{L_0}(Z)$ can be computed using the methods of computational commutative algebra.  On the other hand, $ D(c_Y) $ and $ \barD(c_Y) $ are relatively easy to compute using the following formula from Geiss--Leclerc--Schr\"oer \cite{GeissLeclercSchroer}:
$$  \langle e_{\vi}, \xi_M \rangle = \chi(F_\vi(M)) 
$$
where $ \chi(F_\vi(M))$ denotes the Euler characteristic of the variety of composition series of $M $ of type $ \vi $:
 $$F_{\vi}(M) = \bigl\{ 0 \subset M^1 \subset \cdots \subset M^m = M \bigm| M^k / M^{k-1} \cong S_{i_k} \bigr\} $$

In the appendix (written with Anne Dranowski and Calder Morton-Ferguson) we prove the following result (Theorem \ref{th:AppendixMain}).

\begin{theorem} \label{th:introAppendix}
Let $ G = SL_6 $ and let $ \nu = 2 \alpha_1 + 4 \alpha_2 + 4 \alpha_3 + 4 \alpha_4 + 2 \alpha_5$.
There exists a specific MV cycle $ Z $ of weight $ \nu$  with corresponding component $ Y $ of $ \Lambda(\nu) $, such that $ \barD(c_Y)= \barD(b_Z) + 2 \barD(b)$, where $ b $ is a vector which lies in both the MV and dual semicanonical bases.  In particular, we have $ b_Z \ne c_Y $.
\end{theorem}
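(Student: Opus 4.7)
The plan is a direct computation in a carefully chosen rank-$5$ example. By Corollary \ref{co:intro}, if $b_Z = c_Y$ in $\C[N]$ then $\barD(b_Z) = \barD(c_Y)$, so it suffices to exhibit a pair $(Z,Y)$ corresponding under the bicrystal bijection of \S\ref{ssec:bases} for which these two images disagree. We aim for the sharper assertion $\barD(c_Y) - \barD(b_Z) = 2\barD(b)$ with $b$ lying in both bases: this both rules out $b_Z = c_Y$ and pins down the precise form of the discrepancy.

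First, we choose a generic module $M$ in a specific irreducible component $Y \subset \Lambda(\nu)$, presenting $M$ explicitly as a representation of the type $A_5$ preprojective algebra with dimension vector $(2,4,4,4,2)$. Applying the Geiss--Leclerc--Schr\"oer formula $\langle e_\vi, \xi_M \rangle = \chi(F_\vi(M))$, we compute the needed Euler characteristics by stratifying $F_\vi(M)$ according to isomorphism types of successive subquotients (or equivalently by a torus fixed-point count), then assemble $D(c_Y) = \sum_\vi \chi(F_\vi(M))\, D_\vi$ and extract the $e^{-\nu}$-coefficient of its Fourier transform to obtain $\barD(c_Y)$ as an explicit rational function of the Cartan coordinates. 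Next, we locate the matching MV cycle $Z$: its polytope is the negative of the submodule polytope of $M$ by Theorem \ref{th:polytopesIntro}, which singles out $Z$ among the finitely many components of $\overline{S_+^0 \cap S_-^{-\nu}}$. By Theorem \ref{th:DHDZintro}, $\barD(b_Z) = \varep_{L_0}(Z)$, and we evaluate this equivariant multiplicity from a local model of $Z$ near $L_0$ in a suitable affine chart of $\Gr$, for example via a torus-equivariant Hilbert-series calculation on the coordinate ring of the germ, or alternatively by iterating the hyperplane-section procedure afforded by Theorem \ref{th:MVbasisIntro}(ii). Subtracting the two rational functions and factoring the remainder as twice $\varep_{L_0}(Z')$ for a second MV cycle $Z'$ of weight $-\nu$ yields the identity $\barD(c_Y) - \barD(b_Z) = 2\barD(b)$ with $b = b_{Z'}$. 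A direct verification --- e.g.\ that the corresponding crystal vertex lies in a subalgebra of $\C[N]$ in which the two bases are known to coincide, or that the matching $\Lambda$-module $M'$ is rigid enough to force $\xi_{M'} = b_{Z'}$ --- confirms $b$ belongs to both bases.

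The main obstacle is the geometric side: producing defining data for $Z$ around $L_0$ sharp enough to read off $\varep_{L_0}(Z)$ requires nontrivial work in computational commutative algebra at this rank, and is where the technical effort concentrates. The preprojective side, by contrast, is lengthy but mechanical Euler-characteristic bookkeeping. The subtlest conceptual step is matching crystal labels on the two sides, handled by determining the MV polytope of $M$ via Saito reflections (per \S\ref{ssec:biperfect}) and comparing it with the polytope of $Z$ through Theorem \ref{th:polytopesIntro}; only after this identification is complete can the calculations on the two sides be subtracted and the overshoot $2\barD(b)$ recognized.
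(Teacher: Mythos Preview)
Your plan is broadly correct in spirit --- compute $\barD$ on both sides and compare --- but it misses the two structural shortcuts that make the computation feasible, and without them the proposal is not a proof but a wish list.

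On the preprojective side, the key point (which you do not mention) is that the relevant component $Y$ has as general point a \emph{direct sum} $M_a \oplus M_{a'}$ of two copies of the $8$-dimensional module from the half-size example $\nu_0 = \alpha_1 + 2\alpha_2 + 2\alpha_3 + 2\alpha_4 + \alpha_5$. By Lemma~\ref{le:multXi} one has $\xi_{M_a \oplus M_{a'}} = \xi_{M_a}\,\xi_{M_{a'}}$, and since $\barD$ is an algebra morphism, $\barD(c_Y) = \barD(\xi_{M_a})^2$. This reduces the module-side computation to the already tractable $8$-dimensional case (about $250$ composition series to enumerate). A direct attack on $F_\vi(M)$ for the $16$-dimensional module, as you propose, would involve composition series of length $16$ and is not realistic bookkeeping. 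Similarly, the element $b$ is identified concretely as $\xi_{I(\omega_2+\omega_4)} = \xi_{I(\omega_2)}\,\xi_{I(\omega_4)}$, a product of flag minors, which lies in every biperfect basis by Remark~\ref{re:flag}; this is what replaces your unspecified ``direct verification''.

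On the geometric side, you say you will use ``a local model of $Z$ near $L_0$ in a suitable affine chart''. First, a small slip: by Corollary~\ref{co:barDEM} the relevant fixed point is $L_{-\nu}$, not $L_0$. More importantly, the paper does not work directly in the affine Grassmannian at all: it uses the Mirkovi\'c--Vybornov isomorphism to identify an open piece of the MV cycle with a \emph{generalized orbital variety} $\mathring Z_\tau \subset \overline{\mathbb O_\lambda} \cap \mathbb T_\mu \cap \mathfrak n$ inside a space of $12\times 12$ matrices, indexed by a specific tableau $\tau$. This gives explicit polynomial equations in $24$ variables (found with computer assistance), from which the multidegree --- and hence $\barD(b_Z)$ via Proposition~\ref{pr:Dmdeg} --- is mechanical. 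The matching of $Z$ and $Y$ is also done through this tableau/Lusztig-datum framework (Corollary~\ref{co:Compatible}), not by computing and comparing MV polytopes as you suggest. Without the Mirkovi\'c--Vybornov chart, producing equations for $Z$ sharp enough to compute the equivariant multiplicity is the genuine obstacle you allude to but do not resolve.
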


This theorem suggests that we have $ c_Y = b_Z + 2b $ in $\C[N]$.  Remarkably, Geiss--Leclerc--Schr\"oer found the disagreement of the dual canonical and dual semicanonical bases at the same location.  Let $ v $ be the dual canonical basis element which corresponds to both $ b_Z $ and $ c_Y $ under the crystal isomorphisms.  Then from \cite[p.~196]{GeissLeclercSchroer}, we have $ c_Y = v + b $. Turning these equations around, we thus expect that
$$
c_Y = b_Z + 2b \quad\text{and}\quad v = b_Z + b .
$$
In \S \ref{ss:NonUniq}, we explain that similar equations indeed hold in a $D_4$ example.

In rank 2 cluster algebras, we have a similar trichotomy of bases (see for example Theorem 2.2 in \cite{Lampe}).  In this trichotomy, the MV basis seems to match Lee--Li--Zelevinsky's \cite{LLZ} greedy basis, which in turn coincides with Gross--Hacking--Keel--Kontsevich's theta basis  \cite{GHKK}, by the work of Cheung--Gross--Muller--Musiker--Rupel--Stella--Williams \cite{CGMMRSW}. The theta basis exists for any cluster algebra, in particular for $ \C[N]$.  Thus, the above calculation suggests that the MV basis for $ \C[N]$ coincides with the theta basis for this cluster algebra.

\subsection{Extra-compatibility}\label{ssec:extra}
In the Brion--Procesi definition we use, the Duistermaat--Heckman
measure is an $n\to\infty$ limit of sums of point measures.
Thus it is natural to look for this extra structure,
i.e. a finitely supported measure for each $n$, on the $ \Lambda$-module side
as well.

For any preprojective algebra module $M$ and $n \in \N $, we consider the space of (possibly degenerate) flags of submodules
$$ F_{n,\mu}(M) := \{ 0 \subseteq N_1 \subseteq \cdots \subseteq N_n \subseteq M : \sum \dimvec N_k = \mu \}$$
We prove the following result (Theorem \ref{th:limLambda}) by a direct calculation; it is an analogue of Theorem \ref{th:DHDZintro}, but more elementary.

\begin{theorem} \label{th:DxiMintro}
For any $ \Lambda$-module $M $, $D(\xi_M) $ describes the asymptotics (as $ n \to \infty$) of the function $ \mu \mapsto \chi(F_{n,\mu}(M))$.
\end{theorem}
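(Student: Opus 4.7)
The claim amounts to showing that the rescaled discrete measures
\[
\sigma_n := \tfrac{1}{n^p}\sum_\mu \chi(F_{n,\mu}(M))\,\delta_{-\mu/n},
\]
with $p = |\dimvec M|$, converge weakly to $D(\xi_M)$ as $n\to\infty$. I will compare Laplace transforms $\int e^{\langle\xi,x\rangle}\,d\sigma_n$ versus $\int e^{\langle\xi,x\rangle}\,dD(\xi_M)$ and appeal to standard asymptotics of Riemann sums on the simplex.

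Unfolding $D(\xi_M) = \sum_{|\vi|=p}\chi(F_\vi(M))\,D_\vi$ by the Geiss--Leclerc--Schr\"oer formula $\langle e_\vi,\xi_M\rangle = \chi(F_\vi(M))$, the Laplace transform at $\xi\in\ft$ reads
\[
\int e^{\langle\xi,x\rangle}\,dD(\xi_M)(x) = \sum_{|\vi|=p}\chi(F_\vi(M))\int_{\Delta^p}\exp\!\bigl(-\textstyle\sum_{k=1}^p t_k\langle\xi,\alpha_{i_1}+\cdots+\alpha_{i_k}\rangle\bigr)\,dt,
\]
a finite sum of integrals over the standard $p$-simplex. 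By Weierstrass approximation and uniform boundedness of the support, it suffices to show $\tfrac{1}{n^p}\sum_\mu \chi(F_{n,\mu}(M))\,e^{-\langle\xi,\mu\rangle/n}$ tends to this value as $n\to\infty$.

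The combinatorial input is a stratification of $F_{n,\mu}(M)$ by composition-series refinement. Given a composition series $0 = M_0\subset\cdots\subset M_p = M$ of type $\vi$ and a non-decreasing $f:\{1,\ldots,n\}\to\{0,\ldots,p\}$, the rule $N_k := M_{f(k)}$ defines an $n$-flag in $F_{n,\mu}(M)$ with $\mu = \sum_j a_j\dimvec M_j$, where $(a_j) = (|f^{-1}(j)|)_{j=0}^p \in \Z_{\geq 0}^{p+1}$ with $\sum_j a_j = n$. Over flags whose successive quotients $N_k/N_{k-1}$ and $M/N_n$ are each simple or zero (equivalently, $a_j\geq 1$ for all $j$), the pair $(M_\bullet,f)$ is uniquely recovered from the flag; over flags with some non-simple, non-zero successive quotient (equivalently some $a_j = 0$), the pair count is multiplied by $\prod_k\chi(F(N_k/N_{k-1}))$. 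The non-generic tuples $(a_j)$ live on the boundary of the simplex $\{t_j\geq 0,\sum t_j = 1\}$ under the dilation $a_j = nt_j$, and so number at most $O(n^{p-1})$. Additivity of compactly supported Euler characteristic then gives
\[
\chi(F_{n,\mu}(M)) = \sum_{|\vi|=p}\chi(F_\vi(M))\,N_\vi(n,\mu) + O(n^{p-1})
\]
uniformly in $\mu$, where $N_\vi(n,\mu) := \#\{(a_j) : \sum_j a_j = n,\ \sum_j a_j\dimvec M_j = \mu\}$.

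Substituting and setting $t_j := a_j/n$, the main term $\tfrac{1}{n^p}\sum_a e^{-\sum_j a_j\langle\xi,\alpha_{i_1}+\cdots+\alpha_{i_j}\rangle/n}$ is a Riemann sum converging to $\int_{\Delta^p}\exp(-\sum_j t_j\langle\xi,\alpha_{i_1}+\cdots+\alpha_{i_j}\rangle)\,dt$; the $O(n^{p-1})$ stratification error vanishes after division by $n^p$. This matches the Laplace transform of $D(\xi_M)$, and hence $\sigma_n\to D(\xi_M)$ weakly. The main obstacle is the uniform-in-$\mu$ control of the over-counting error, which I would handle by further stratifying the non-generic locus by the isomorphism type of the jump pattern $(\dimvec N_k/N_{k-1})_k$ and applying additivity one stratum at a time, using the fact that for fixed $M$ the Grassmannian of subquotients is a bounded-dimensional variety.
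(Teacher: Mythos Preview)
Your argument is correct and shares its core with the paper's proof. Both rest on the observation that an $n$-flag in $M$ is determined by its set of distinct terms together with a pattern of repetitions, that the distinct terms form a chain of length $\ell\le p=\rho^\vee(\nu)$, and that only the $\ell=p$ contribution survives the $n^{-p}$ rescaling; the resulting count of repetition patterns is then a Riemann sum for the simplex integral defining $D_\vi$.

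The packaging differs slightly. The paper stratifies $F_n(M)$ directly by the sequence $\gamma=(\dimvec M^k/M^{k-1})_k$ and uses the elementary isomorphism between the $\gamma$-stratum and the $\gamma^{\ne 0}$-stratum to obtain an \emph{exact} formula $\chi(F_{n,\mu}(M))=\sum_{\ell}\sum_{|\theta|=\ell}\chi_\theta\cdot\#\{\gamma:\gamma^{\ne 0}=\theta,\;\mu(\gamma)=\mu\}$, from which the $\ell<p$ terms are visibly $O(n^{p-1})$. You instead pass through the refinement map from pairs $(M_\bullet,f)$ to flags and control the over-counting via motivic Euler characteristic of fibers; this is equivalent once you carry out the stratification of the non-generic locus that you outline, but it introduces an extra bookkeeping step the paper avoids. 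Your Laplace-transform conclusion is a legitimate alternative to the paper's direct argument on measures. One small point: your claim that the error is $O(n^{p-1})$ \emph{uniformly in $\mu$} is stronger than you need (and requires a moment's thought); it suffices that the total error summed over $\mu$ is $O(n^{p-1})$, which follows immediately from the stratification since for each $\theta$ with $|\theta|=\ell<p$ there are $O(n^\ell)$ zero-insertion patterns in total.
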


Suppose that we have an MV cycle $Z $ and a component $ Y $ of Lusztig's nilpotent variety, such that $b_Z = c_Y $.  This implies that $ D(b_Z) = D(c_Y) $, which by Theorem \ref{th:DHDZintro} and \ref{th:DxiMintro} means that
$$
\lim_{n \to \infty} (\tau_n)_* [\Gamma(Z, \mathscr L^{\otimes n})] = \lim_{n\to \infty} (\tau_n)_* [H^\bullet(F_n(M))]
$$
where $M $ is a general point of $ Y$.  Here $ [\phantom{V}] $ denotes a class in the representation ring of $ T $, regarded as a linear combination of point masses.  The map $ \tau_n $ represents scaling by $ 1/n $ and the limits are taken in the space of distributions on $ \tR $.  Thus it is reasonable to expect equality before taking the limits. We say that $Z $ and $ Y $ are \textbf{extra-compatible}, if,
for all $ n \in \N $ and all weights $\mu$, we have
$$
\dim \Gamma(Z, \mathscr L^{\otimes n})_\mu = \chi(F_{n,-\mu}(M))
$$
For example, taking $n = 1$ gives
$$
\dim \Gamma(Z, \mathscr L)_\mu = \chi(\{ N \subseteq M : \dimvec N = -\mu \})
$$
which can be viewed as an upgrade of the equality of polytopes from Theorem \ref{th:polytopesIntro}.

We prove the following result (Theorem \ref{th:cominusExtra}) establishing extra-compatibility in a simple class of examples.
\begin{theorem}
If $ Z $ is a Schubert variety in a cominuscule flag variety and $M $ is the corresponding $\Lambda$-module, then $ Z $ and $ M $ are extra-compatible.
\end{theorem}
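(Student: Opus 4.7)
\emph{Proof proposal.}
The plan is to reduce both sides to the same combinatorial enumeration of $P$-partitions on the minuscule poset $P_w$ associated with $Z=\overline{X_w}\subset G/P$, where $G/P$ is the cominuscule flag variety in question and $w\in W^P$ is the minimal coset representative of the Schubert cell.

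First I would pin down $M$ itself. In the cominuscule case, the component of Lusztig's nilpotent variety corresponding to $Z$ under the crystal bijection contains a dense orbit of a single \emph{rigid} module $M=M_w$, whose submodule lattice is isomorphic to the distributive lattice $J(P_w)$ of order ideals in Proctor's minuscule poset $P_w$. Under this isomorphism an ideal $K\in J(P_w)$ corresponds to a submodule of dimension vector $\sum_{p\in K}\alpha_{i(p)}$, where $i(p)\in I$ is the simple-root index labeling $p$. For the type-$A$ Grassmannian this is a classical direct computation; in the remaining cominuscule types it follows from the explicit rigid modules of Geiss--Leclerc--Schr\"oer together with the combinatorics of Proctor.

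With $M$ so identified, every submodule is rigid and the scheme $F_{n,-\mu}(M)$ is a finite reduced set whose points are multichains $0\subseteq K_1\subseteq\cdots\subseteq K_n$ in $J(P_w)$ satisfying $\sum_k\sum_{p\in K_k}\alpha_{i(p)}=-\mu$. The classical bijection
\[
\{K_1\subseteq\cdots\subseteq K_n\}\longleftrightarrow\{g:P_w\to\{0,1,\ldots,n\}\text{ order-reversing}\},\qquad g(p)=\#\{k:p\in K_k\},
\]
rewrites the constraint as $\sum_{p\in P_w}g(p)\,\alpha_{i(p)}=-\mu$, whence $\chi(F_{n,-\mu}(M))$ equals the number of such maps $g$. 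On the geometric side, $\mathscr L|_Z$ is the ample generator of $\mathrm{Pic}(G/P)$ restricted to $X_w$, so $\Gamma(Z,\mathscr L^{\otimes n})^*$ is the Demazure submodule $V_w(n\omega)\subseteq V(n\omega)$, where $\omega$ is the cominuscule fundamental weight. The Seshadri--Lakshmibai standard monomial theory (equivalently the Littelmann path model specialized to cominuscule type) expresses the character of $V_w(n\omega)$ as a sum over exactly the same order-reversing maps $g:P_w\to\{0,1,\ldots,n\}$, with the contribution of $g$ carrying weight $n\omega-\sum_p g(p)\alpha_{i(p)}$. Comparing weight spaces yields the desired identity.

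The main obstacle is the structural Step~1: producing $M_w$ for each cominuscule $w$ and verifying that the pair (submodule lattice, dimension-vector map) coincides with $(J(P_w),\;K\mapsto\sum_{p\in K}\alpha_{i(p)})$. Once this identification is secured, matching the two counting formulas is a standard application of the $P$-partition/SSYT dictionary and the rest of the theorem falls out. A variant strategy, which I would try in parallel, is to use Theorem~\ref{th:polytopesIntro} as a base case ($n=1$) and to attempt an inductive argument on $n$ via an appropriate short exact sequence of flag varieties on both sides; this would bypass the explicit poset identification but would require exactness of the relevant restriction maps, which is precisely what standard monomial theory provides in cominuscule type.
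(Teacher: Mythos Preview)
Your proposal is correct and follows essentially the same route as the paper: both identify the submodule lattice of $M$ with a finite poset (the paper uses the Bruhat interval $[\omega_i,\gamma]\subset W\omega_i$, you use the equivalent minuscule-heap description $J(P_w)$), so that $F_n(M)$ is a finite set of multichains, and then invoke Seshadri's standard monomial theory to match these multichains with a weight basis of $\Gamma(Z,\mathscr L^{\otimes n})$. The paper dispatches your Step~1 obstacle more directly than you anticipate: rather than building $M_w$ case-by-case, it observes that the cominuscule modules $N(\gamma)$ are exactly the submodules of the injective hull $I(\omega_i)$, so the poset isomorphism $[\omega_i,\gamma]\cong\{\text{submodules of }N(\gamma)\}$ is immediate.
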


In the appendix, we give also some examples for $ G = SL_5, SL_6 $ of modules which satisfy the extra-compatibility condition for $ n = 1, 2 $.

\subsection{A general conjecture}\label{ssec:expectations}
Most $\Lambda$-modules $ M $ are not extra-compatibly paired with any
MV cycle; for example if $ G = SL_3$ and $M$ is the sum of the two simple
$\Lambda$-modules, then the rhombus $\Pol(M)$ is the
union of two MV polytopes, each a triangle.
However, for any $ \Lambda$-module $ M$, we expect that
there will be a corresponding coherent sheaf on the affine
Grassmannian, supported on a union of MV cycles.
To state our precise expectation, we introduce the following space
whose Euler characteristic coincides with $ F_{n,\mu}(M) $.
$$
\mathbb G_\mu(M[t]/t^n) := \bigl\{ N \subseteq M \otimes \C[t]/t^n \bigm| N \text{ is a $\Lambda\otimes \C[t]$-submodule, }\, \dimvec N = \mu \bigr\},
$$

We conjecture the following result (see \S \ref{se:conj} for more precise motivation).
\begin{conjecture} \label{conj:intro}
For any preprojective algebra module $ M $ of dimension vector $ \nu $, there exists a coherent sheaf $ \mathcal F_M $ supported on $ \overline{S_+^0 \cap S_-^{-\nu}} $ such that
$$
\Gamma(\Gr, \mathcal F_M \otimes \mathcal O(n)) \cong H^\bullet(\mathbb G(M[t]/t^n))
$$
as $T^\vee$-representations.
\end{conjecture}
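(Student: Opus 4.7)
The plan is to realise $\mathcal F_M$ as a direct image from a ``loop quiver Grassmannian'' attached to $M$. I would define an ind-scheme $\mathbb G^\Lambda(M)$ parameterising $\Lambda\otimes\C[[t]]$-stable $\C[[t]]$-sublattices $L \subseteq M\otimes\C((t))$, equipped with a forgetful morphism $\pi_M:\mathbb G^\Lambda(M)\to\Gr$ that records the relative position of $L$ against the reference lattice $M\otimes\C[[t]]$ via its vertex-graded pieces and the cocharacter lattice of $T^\vee$. Set $\mathcal F_M := (\pi_M)_*\OO$. The image of $\pi_M$ is forced to lie in $\overline{S_+^0\cap S_-^{-\nu}}$ by the polytope equality of Theorem \ref{th:polytopesIntro}: the $T^\vee$-fixed points in the image have weights indexed by dimension vectors of $\Lambda$-submodules of $M$, all of which fit inside the MV polytopes sitting in that closure.

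The next step is to identify the sections of $\mathcal F_M\otimes\OO(n)$. The projection formula gives $\Gamma(\Gr,\mathcal F_M\otimes\OO(n)) = \Gamma(\mathbb G^\Lambda(M),\pi_M^*\OO(n))$. Since $\OO(n)$ on $\Gr$ has a Borel--Weil description, its pullback $\pi_M^*\OO(n)$ should factor through an $n$-th truncation $\mathbb G^\Lambda(M)\twoheadrightarrow \mathbb G(M\otimes\C[t]/t^n)$, and on the latter one expects a Borel--Weil--Bott type identification of the graded sections with $H^\bullet(\mathbb G(M\otimes\C[t]/t^n))$. Equivariance with respect to $T^\vee$ is automatic: both sides carry a canonical grading by the cocharacter lattice via the vertex-wise dimension, and $\pi_M$ is equivariant for this action by construction, so the resulting isomorphism intertwines the gradings.

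The principal obstacle will be giving $\mathbb G^\Lambda(M)$ a rigorous structure as an ind-scheme and verifying ind-properness of $\pi_M$: the preprojective relations are quadratic, so a naive product presentation is not available, and one likely has to build $\mathbb G^\Lambda(M)$ by iterated local-to-global gluing, perhaps working Zastava-locally in the spirit of \cite{FFKM,FK}. A second obstacle is the Borel--Weil--Bott step matching $\Gamma(\mathbb G(M\otimes\C[t]/t^n),\pi_M^*\OO(n))$ with the full cohomology $H^\bullet(\mathbb G(M\otimes\C[t]/t^n))$ rather than only $H^0$; handling the higher cohomology may well force one to replace $\mathcal F_M$ by a perfect complex of coherent sheaves. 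Theorems \ref{th:DHDZintro} and \ref{th:DxiMintro} give a useful consistency check as $n\to\infty$, in that the leading-order characters of both sides must agree, but they do not by themselves pin down the sheaf, which must be produced by an honest geometric construction.
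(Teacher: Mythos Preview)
The statement you are addressing is a \emph{conjecture}, not a theorem: the paper does not prove it and offers no proof sketch. What the paper does provide is motivation (in \S\ref{se:conj}) and the remark that a partial case --- those $M$ that arise by restriction from a module over the path algebra of the quiver --- will be handled in the forthcoming \cite{HKW} via the Braverman--Finkelberg--Nakajima Coulomb-branch construction. So there is nothing in the paper to compare your argument against; the honest answer is that the paper leaves this open.

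That said, your proposal has a genuine structural gap at its very first step: the map $\pi_M:\mathbb G^\Lambda(M)\to\Gr$ you want does not exist in any evident way. The affine Grassmannian $\Gr$ is that of the Langlands dual group $G^\vee$, whereas a $\Lambda\otimes\C[[t]]$-lattice $L\subset M\otimes\C((t))$ records, vertex by vertex, a point in $\prod_{i\in I}\Gr_{GL(M_i)}$. There is no natural morphism from this product to $\Gr_{G^\vee}$; the two geometries are linked only through the deep (and indirect) mechanism of symplectic/Coulomb duality, which is exactly why \cite{HKW} needs the BFN machinery. Your phrase ``records the relative position \dots\ via the cocharacter lattice of $T^\vee$'' elides this: a tuple of relative positions gives a point of $P$, not a point of $\Gr$, and there is no canonical way to promote it to one.

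A secondary issue is your support argument. Theorem~\ref{th:polytopesIntro} concerns a \emph{general} point $M$ of an irreducible component $Y$, and identifies $\Pol(M)$ with a single MV polytope. For an arbitrary $\Lambda$-module $M$ (which is the generality of the conjecture), $\Pol(M)$ need not be an MV polytope at all --- the paper itself gives the example $G=SL_3$, $M=S_1\oplus S_2$, whose Harder--Narasimhan polytope is a rhombus covering two MV polytopes (see \S\ref{ssec:expectations}). So invoking Theorem~\ref{th:polytopesIntro} cannot pin down the support of $\mathcal F_M$ inside a single $\overline{S_+^0\cap S_-^{-\nu}}$ in the way you suggest; at best it constrains the union of supports of the MV cycles appearing in the support cycle of $\mathcal F_M$, which is what the discussion after Conjecture~\ref{conj:sheaf} already anticipates.
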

For example, if $Z $ and $ M $ are extra-compatible, then we can take $ \mathcal F_M = \mathcal O_Z $.

This conjecture has two important relations with recent developments.  First, an earlier version of this conjecture motivated a number of recent works by Mirkovi\'c and his coauthors \cite{M,MYZ} on the subject of local spaces.

Second, quiver varieties and affine Grassmannian slices are related using the theory of symplectic duality as introduced by Braden--Licata--Proudfoot--Webster \cite{BLPW}.  Recently, Braverman--Finkelberg--Nakajima \cite{BFN} proved that affine Grassmannian slices are Coulomb branches associated to quiver gauge theories.  Using this result, in a forthcoming paper, Hilburn, Weekes and the second author \cite{HKW} will prove Conjecture \ref{conj:intro} for those $ M $ which come from a quiver path algebra.

More generally, the relationship between MV cycles and preprojective algebra modules studied in this paper admits a generalization to arbitrary symplectic dual pairs.  It would be very interesting to understand how the results presented here generalize to that setting.

\subsection*{Acknowledgements}
We would like to thank Ivan Mirkovi\'c for many valuable discussions about geometric Satake over the years and in particular for his interest in our conjectured relationship between preprojective modules and MV cycles. J.K. would like to thank Dennis Gaitsgory for helping him to understand the multiplication of MV cycles many years ago.  We also thank David Anderson, Alexander Braverman, Michel Brion, Pavel Etingof, Michael Finkelberg, Daniel Juteau, Bernard Leclerc, Ivan Losev, Dinakar Muthiah, Ben Webster, Alex Weekes, and Xinwen Zhu for helpful conversations, and Anne Dranowski and Calder Morton-Ferguson for their work on the appendix and for carefully reading the main text. Finally, we thank the referees for their careful reading of the manuscript and the additional references which they kindly provided.

P.B. was partially supported by the ANR, project ANR-15-CE40-0012.  J.K. was supported by NSERC, by the Sloan Foundation, and the Institut Henri Poincar\'e.  A.K. was supported by NSF grant DMS–1700372.

\begin{table}[htbp]
	\caption{Dramatis person\ae}
		\begin{tabular}{c p{10cm} l}
			$G$ & a simple simply-connected complex algebraic group & \S \ref{ss:Notation} \\
			$N, B, T, W$ & usual data associated to $G$ &  \S \ref{ss:Notation} \\
			$e_i \in \lie{n}$ & Chevalley generators &  \S \ref{ss:Notation} \\
			$\{\alpha_i\},\{\alpha_i^\vee\},I$ &the simple roots and coroots,
			with common index set $I$
			& \S \ref{ss:Notation} \\
			$P \supset P_+$& the weight lattice and dominant weights of $G$ & \S \ref{ss:Notation} \\
			$Q \supset Q_+$& the root lattice and $\N$-span of the positive roots
			& \S \ref{ss:Notation} \\
			$\wt,\varepsilon_i,\tilde e_i,\phi_i,\tilde f_i$ & crystal data & \S\ref{ss:PerfBas} \\
			$B(\infty)$ & the bicrystal of $ \C[N]$ & \S \ref{ss:UniqCrys} \\
			$L(\lambda)$ & the irreducible $G$-representation of highest weight $\lambda\in P_+$ & \S\ref{ss:BasesRep} \\
			$v_\lambda,\ v_\lambda^*$ & a h.w.\ vector and a linear form on $L(\lambda)\,$ s.t.\ $v_\lambda^*(v_\lambda)=1$ & \S\ref{ss:BasesRep} \\
			$\Psi_\lambda$ & an embedding $L(\lambda)\to\ON$ taking $ v \mapsto v_\lambda^*(\bm?v) $ & \S\ref{ss:BasesRep} \\
			$G^\vee, \Gr$ & the Langlands dual group and its affine Grassmannian $ \Gr $ & \S \ref{ss:GeomSatEquiv} \\
			$t^\mu, L_\mu$ & the points in $ G^\vee(\K)$, $Gr$ defined by the $G^\vee$-coweight $\mu \in P$ & \S\ref{ss:GeomSatEquiv} \\
			$\Gr^\lambda, \ S^\mu_\pm$ & the spherical and semi-infinite orbits in $\Gr$ & \S\ref{ss:GeomSatEquiv} \\
			$F = \oplus_\mu F_\mu$
			& the fiber functor $H^\bullet(\Gr,\bm?)$ and the weight functors & \S\ref{ss:GeomSatEquiv} \\
			$\Upsilon$
			& the embedding $\Gr \into \mathbb P(V(\Lambda_0))$ & \S\ref{ss:PrincNilp} \\
			$i(Z,X\cdot V)$ & the multiplicity of $Z$ in the intersection $X\cdot V$ &
			\S \ref{ss:SoMoNo} \\
			$Z$ & a typical MV cycle & \S\ref{ss:MVCycles} \\
			$\IC_\lambda$ & the intersection cohomology sheaf of $\overline{\Gr^\lambda}$
			& \S\ref{ss:MVCycles} \\
			$\mathcal Z(\lambda)$ & the set of MV cycles of type $ \lambda $ & \S\ref{ss:MVCycles} \\
			$\mathcal Z(\infty)$ & the set of stable MV cycles & \S\ref{ss:CohMVBases} \\
			$\{b_Z\}$ & the MV basis of $\C[N]$, indexed by stable MV cycles & \S \ref{ss:MVBasisBiperf} \\
			$ n_x $ & for $ x \in \ftreg $, the element of $ N $ such that $ \Ad_{n_x}(x) = e + x $ & \S \ref{ss:EltsNx} \\
			$ \barD $ & the corresponding algebra morphism $ \C[N] \rightarrow \C[\ftreg] $ & \S \ref{ss:EltsNx} \\
			$\Seq(\nu)$ & the set of sequences of simple roots with sum $ \nu $
			& \S\ref{ssec:sequences} \\
			$\barD_\vi$ & a rational function associated to a sequence $ \vi $ 
			&    \S \ref{ssec:sequences}  \\
			$\PP$ & an algebra of distributions on $ \tR$
			& \S\ref{ssec:measures} \\
			$ \pi_\vi : \R^{p+1} \rightarrow \ftreg $ & a linear map defined using the partial sums of a sequence $ \vi $ & \S\ref{ssec:measures} \\
			$D_\vi $ & a measure associated to a sequence $ \vi $  
			& \S\ref{ssec:measures} \\
			$D$ &
			an algebra map $ \ON  \longrightarrow \PP$ &
			 \S\ref{ssec:measures} \\
			$C$ & the universal centralizer space $ \{(b,x) : \Ad_b(\dot e+x) = \dot e+x \} $ & \S\ref{se:univcent}\\
			$\psi:C\to N$ & a morphism given by $(tn,x) \mapsto n$  & \S\ref{se:univcent}\\
			$ \tau_n $ & scaling of $ \tR $ by $ 1/n $ & \S \ref{ssec:DH} \\
			$ S \subset \C[\ft]$ & the multiplicative set generated by
			$ \mu \in P  \setminus \{0\}$ & \S\ref{ssec:FTDH} \\
			$\Lambda$ & the preprojective algebra & \S\ref{ssec:preproj} \\
			\ \ $\dimvec$ & the dimension vector of a $\Lambda$-module &\S\ref{ssec:preproj} \\
			$\Lambda(\nu)$\ \  & Lusztig's nilpotent variety &\S\ref{ssec:preproj} \\
			$ F_\vi(M) $ & the variety of composition series of type $ \vi $ & \S\ref{ssec:dualsemi} \\
			$\xi_M $ & an element of $\C[N]_{-\dimvec M}$
			associated to a $\Lambda$-module $M$ & \S\ref{ssec:dualsemi} \\
			$Y$ & a typical component of $ \Lambda(\nu)$ & \S\ref{ssec:dualsemi} \\
			$c_Y $ & dual semicanonical basis element
			associated to $Y$& \S\ref{ssec:dualsemi} \\
			$F_{n,\mu}(M)$ & the space of length $n +1$ flags in $M$ of
			total dimension $\mu$
			& \S\ref{ssec:measmod} \\
			$\mathbb G_\mu(M[t]/t^n)$ & the space of dimension $\mu$ submodules in
			$M \tensor \C[t]/t^n$
			&  \S \ref{se:conj} 
		\end{tabular}
	\label{tab:notation}
\end{table}

\part{Biperfect bases}

\section{General background}
\label{se:GenBack}

\subsection{Notation}
\label{ss:Notation}
Let $G$ be a simple simply-connected complex algebraic group. Let $B$
be a Borel subgroup with unipotent radical $N $ and let $T$ be a maximal
torus of $B$. Let $\mathfrak g, \mathfrak b, \mathfrak n, \mathfrak t$ denote their Lie algebras.

Let $P$ denote the character lattice of $T$, let $P_+$ denote the set
of dominant weights, and let $Q\subset P$ denote the root lattice. Let
$Q_+\subset Q$ denote the $\mathbb Z_{\geq0}$-span of the positive roots.
We define the \defn{dominance order} on $P$ by declaring that
$\lambda\geq\mu$ if $\lambda-\mu\in Q_+$.

Let $\{\alpha_i\}_{i\in I}$ denote the set of simple roots and let
$\{\alpha_i^\vee\}_{i\in I}$ denote the set of simple coroots.
The Cartan matrix of $G$ is $A=(a_{i,j})_{i,j\in I}$ where
$a_{i,j}=\langle\alpha_i^\vee,\alpha_j\rangle$. We define $\rho\in P$
to be the half-sum of the positive roots, and we define $\rho^\vee\in
\Hom_{\mathbb Z}(P,\frac12\mathbb Z)$ to be the half-sum of the positive coroots.

Let $W=N_G(T)/T$ be the Weyl group, generated by the simple reflections
$s_i$ for $i\in I$. For $i\in I$, we set $h_i=\alpha_i^\vee$ and we choose
root vectors $e_i$ and $f_i$ in $\mathfrak g$ of weights $\alpha_i$ and
$-\alpha_i$, respectively, normalized so that $[e_i,f_i]=h_i$. Then the
element $\overline s_i=\exp(-e_i)\exp(f_i)\exp(-e_i)$ is a lift of $s_i$ in
$N_G(T)$. These elements $\overline s_i$ satisfy the braid relations
\cite[Proposition~3]{Tits}, which allows us to lift any $w\in W$ to an element $\overline w\in N_G(T)$.

The enveloping algebra $U(\mathfrak n)$ of $\mathfrak n$ is generated
by the elements $e_i$; it is graded by~$Q_+$, with $\deg e_i=\alpha_i$.
As is customary, for any natural number $n$, we denote the $n$-th divided
power of $e_i$ by $e_i^{(n)}$.

The torus $T$ acts by conjugation on $N$, which endows the algebra $\ON$
of regular functions on $N$ with a weight grading
$$\ON=\bigoplus_{\nu\in Q_+}\ON_{-\nu}.$$
The group $N$ and its Lie algebra $\mathfrak n$ act on both
sides of $\ON$; our convention is that $f\cdot n=f(n\bm?)$ and
$n\cdot f=f(\bm?n)$ for each $(n,f)\in N\times\ON$.
Denoting by $1_N$ the unit element in $N$, we have $(a\cdot f)(1_N)=
(f\cdot a)(1_N)$ for each $(a,f)\in U(\mathfrak n)\times\ON$, and
the map $\langle a,f \rangle := (a\cdot f)(1_N)$ defines a perfect pairing
$U(\mathfrak n)\times\ON\to\mathbb C$. In particular the vector
space $\ON_{-\nu}$ is linearly isomorphic to the dual of
$U(\mathfrak n)_\nu$ for each $\nu\in Q_+$.

\subsection{Crystals} \label{ss:Crystals}
Following for instance~\cite[sect.~3]{KashiwaraSaito}, we recall that
a $G$-crystal is a set $B\not\ni 0$ endowed with maps
\begin{alignat*}2
\wt:B\to P,\quad
&\varepsilon_i:B\to\mathbb Z\cup\{-\infty\},\quad
&&\tilde e_i:B\to B\cup\{0\},\\
&\varphi_i:B\to\mathbb Z\cup\{-\infty\},\quad
&&\tilde f_i:B\to B\cup\{0\}
\end{alignat*}
for $i\in I$, satisfying the following axioms:
\begin{itemize}
\item
For each $b\in B$ and $i\in I$, $\varphi_i(b)=\langle h_i,\wt(b)\rangle+
\varepsilon_i(b)$.
\item
For each $b,b'\in B$ and $i\in I$, we have $b=\tilde e_ib'\
\Longleftrightarrow\ \tilde f_ib=b'$.
\item
For each $b\in B$ and $i\in I$ such that $\tilde e_ib\neq0$, we have
$\wt(\tilde e_ib)=\wt(b)+\alpha_i$, $\varepsilon_i(\tilde e_ib)=
\varepsilon_i(b)-1$ and $\varphi_i(\tilde e_ib)=
\varphi_i(b)+1$.
\item
For each $b\in B$ and $i\in I$, if $\varphi_i(b)=\varepsilon_i(b)=-\infty$,
then $\tilde e_ib=\tilde f_ib=0$.
\end{itemize}

A crystal $B$ is said to be \defn{upper semi-normal} if, for each $b\in B$ and
$i\in I$, there exists $ n \in\mathbb Z_{\geq0} $ such that $\tilde e_i^nb = 0$ and
$$\varepsilon_i(b)=\max\{n\in\mathbb Z_{\geq0}\mid\tilde e_i^nb\neq0\}.$$
A crystal $B$ is said to be \defn{semi-normal} if additionally,
for each $b\in B$ and each $i\in I$,  there exists $ n \in\mathbb Z_{\geq0} $ such that $\tilde f_i^nb = 0$ and
$$\varphi_i(b)=\max\{n\in\mathbb Z_{\geq0}\mid\tilde f_i^nb\neq0\}.$$

All the crystals that we consider in this paper are upper semi-normal.
The maps $\varepsilon_i$ and $\varphi_i$ are then determined by the rest
of the structure.

\subsection{Perfect bases}
\label{ss:PerfBas}
We look for bases of $\ON$ that enjoy a form of compatibility with
the left and right actions of $ \{ e_i \} \subset \mathfrak n$. The following definition
matches Berenstein and Kazhdan's one (\cite{BerensteinKazhdan},
Definition~5.30), with the addition of a specific normalization.

\begin{definition}
A linear basis $B$ of $\ON$ is \textbf{perfect} if it is endowed with
an upper semi-normal crystal structure such that:
\begin{itemize}
\item
The constant function equal to $1$ belongs to $B$.
\item
Each $b$ in $B$ is homogeneous of degree $\wt(b)$ with respect to the
weight grading
$$\ON=\bigoplus_{\nu\in Q_+}\ON_{-\nu}.$$
\item
For each $i\in I$ and $b\in B$, the expansion of $e_i\cdot b$ in the basis $B$
has the form
$$e_i\cdot b=\varepsilon_i(b)\;\tilde e_ib+\sum_{\substack{b'\in B\\
\varepsilon_i(b')<\varepsilon_i(b)-1}}a_{b'}\,b'$$
with $a_{b'}\in\mathbb C$.
\end{itemize}
\end{definition}

It follows from the definition that if a linear basis $B$ of $\ON$ is
perfect, then for each $i\in I$ and each $b\in B$ we have
$$n=\varepsilon_i(b)\ \Longrightarrow\ e_i^{(n)}\cdot b=\tilde e_i^nb\
\text{ and }\ e_i^{n+1}\cdot b=0.$$

For $i\in I$ and $n\in\mathbb Z_{\geq0}$, let us define
$$K_{i,n} := \{f\in\ON\mid e_i^{n+1}\cdot f=0\}.$$
Using the fact just above, we easily check that for any perfect basis
$B$ of $\ON$, we have
$$B\cap K_{i,n}=\{b\in B\mid\varepsilon_i(b)\leq n\}$$
and moreover this set is a basis of $K_{i,n}$.

To take into account the right action of $\mathfrak n$ on $\ON$, we
now introduce biperfect bases.

\begin{definition}
A linear basis $B$ of $\ON$ is \textbf{biperfect}\footnote{This is the  same as the notion of ``basis of dual canonical type'' from \cite{McNamara}, except that we have specialized $ q= 1 $, and we do not require that $ B$ be invariant under the involution $*$ discussed in Remark \ref{re:StarInv}.}  if it is perfect
and if it is endowed with a second upper semi-normal crystal structure
$(\wt,\varepsilon_i^*,\varphi_i^*,\tilde e_i^*,\tilde f_i^*)$ which
shares the same weight map as the first crystal structure and such that for each
$i\in I$ and $b\in B$,
$$b\cdot e_i=\varepsilon_i^*(b)\;\tilde e_i^*b+\sum_{\substack{b'\in B\\
\varepsilon_i^*(b')<\varepsilon_i^*(b)-1}}a^*_{b'}b'$$
with $a_{b'}\in\mathbb C$.
\end{definition}

We will refer to the data consisting of these two crystal structures
on $B$ as the \defn{bicrystal structure of $B$}. If $B$ is a biperfect basis
of $\ON$, then for each $i\in I$ and each $n\in\mathbb Z_{\geq0}$, the set
$\{b\in B\mid\varepsilon_i^*(b)\leq n\}$ is a basis of
$$K_{i,n}^*=\{f\in\ON\mid f\cdot e_i^{n+1}=0\}.$$

The algebra $\ON$ does have biperfect bases, but the explicit constructions
of such bases rely on geometric constructions or on categorification methods.
The first example of a biperfect basis is Lusztig's canonical basis, after
specialization at $q=1$ and then dualization; in other words, Kashiwara's
upper global basis, specialized at $q=1$ (see for instance
\cite[Theorems~1.6~and~7.5]{Lusztig90}, and \cite[\S 5.3]{Kashiwara93a}).
Another example, in the simply-laced case, is the dual of Lusztig's
semicanonical basis (the compatibility of the dual semicanonical basis
with the subspaces $K_{i,n}$ and $K_{i,n}^*$ is established in
\cite[\S 3]{Lusztig00}). The basis arising using the categorification
by representations of KLR algebras is also biperfect; this fact is
transparent from Khovanov and Lauda's first paper \cite{KhovanovLauda}
and is given a detailed proof in \cite{McNamara}.
Lastly, the geometric Satake correspondence also gives rise to a biperfect
basis of $\ON$, as we shall see in \S\ref{se:MVBasisRepr}.

In types $A_1$, $A_2$ and $A_3$, we not only have existence, but also
uniqueness and explicit formulas.

\begin{example}
Suppose $G=\SL_3(\mathbb C)$, with the standard choice for $B$, $T$ and
$N$. Then $\ON=\mathbb C[x,y,z]$ where $x$, $y$ and $z$ are the three
matrix entries of an upper unitriangular matrix
$$\begin{pmatrix}1&x&z\\0&1&y\\0&0&1\end{pmatrix}\in N.$$
The unique biperfect basis of $\ON$ is
$$B=\{x^az^b(xy-z)^c\mid(a,b,c)\in\mathbb Z_{\geq0}^3\}\cup
\{y^az^b(xy-z)^c\mid(a,b,c)\in\mathbb Z_{\geq0}^3\}.$$
The action from the left of the Chevalley generators is given by
$$e_1=\frac\partial{\partial x}\ \text{ and }\
e_2=\frac\partial{\partial y}+x\frac\partial{\partial z}.$$
One can check that in $B$, the operators $e_1$ and $e_2$ act with
coefficients in $\mathbb Z_{\geq0}$ and that the structure constants of
the multiplication belong to $\mathbb Z_{\geq0}$.
\end{example}

For the explicit formulas in type $A_3$, we refer to the paper
\cite{BerensteinZelevinsky93} by Berenstein and Zelevinsky, which
was the starting point of the theory of cluster algebras.

\subsection{Uniqueness of crystal}
\label{ss:UniqCrys}
Berenstein and Kazhdan proved that up to isomorphism, the crystal of
a perfect basis of $\ON$ is independent of the choice of the basis
(\cite{BerensteinKazhdan}, Theorem~5.37). The same is true for biperfect
bases.

\begin{theorem}
\label{th:UniqCrys}
Let $B$ and $C$ be two biperfect bases of $\ON$. Then there is a unique
bijection $B\cong C$ that respects the bicrystal structure.
\end{theorem}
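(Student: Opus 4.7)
My plan is to leverage Berenstein and Kazhdan's one-sided result \cite[Theorem 5.37]{BerensteinKazhdan}. Applying it separately to the left and to the right perfect structures of $B$ and $C$ yields two weight-preserving bijections $\phi,\phi^*\colon B\to C$: $\phi$ is the unique such bijection respecting $(\tilde e_i)_{i\in I}$, and $\phi^*$ the unique one respecting $(\tilde e_i^*)_{i\in I}$. The theorem then reduces to showing $\phi=\phi^*$; uniqueness of the bicrystal isomorphism is then immediate from the uniqueness of $\phi$.

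The key intermediate lemma is a commutation statement: in any biperfect basis, $\tilde e_i\tilde e_j^*b=\tilde e_j^*\tilde e_ib$ whenever both sides are non-zero. I would establish this by expanding both sides of the identity $(e_i\cdot b)\cdot e_j=e_i\cdot(b\cdot e_j)$ in the basis. The essential observation is that the subspaces $K_{i,n}$ and $K^*_{j,n}$ are each stable under the opposite one-sided action (this follows from the commutation of the left and right $U(\mathfrak n)$-actions on $\ON$), which absorbs all non-leading contributions into the subspace $K_{i,\varepsilon_i(b)-2}+K^*_{j,\varepsilon_j^*(b)-2}$. The leading comparison then reads, in the appropriate quotient,
\[
\varepsilon_i(b)\,\varepsilon_j^*(\tilde e_ib)\,\tilde e_j^*\tilde e_ib \;\equiv\; \varepsilon_j^*(b)\,\varepsilon_i(\tilde e_j^*b)\,\tilde e_i\tilde e_j^*b,
\]
forcing $\tilde e_j^*\tilde e_ib=\tilde e_i\tilde e_j^*b$ as basis elements, together with the scalar identity $\varepsilon_i(b)\varepsilon_j^*(\tilde e_ib)=\varepsilon_j^*(b)\varepsilon_i(\tilde e_j^*b)$.

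With the commutation in hand, I would prove $\phi(b)=\phi^*(b)$ by induction on $\operatorname{ht}(-\wt b)$. The base cases $\wt b=0$ and $\wt b=-\alpha_i$ are immediate (the first because $b=1$, the second because $\ON_{-\alpha_i}$ is one-dimensional). For larger height, the identity $\bigcap_iK^*_{i,0}=\C\cdot1$ lets one pick $i$ with $\varepsilon_i^*(b)>0$, and since $\tilde e_i^*b$ then has non-zero weight one may pick $j$ with $\varepsilon_j(\tilde e_i^*b)>0$. The commutation in $B$ yields $\tilde e_j\tilde e_i^*b=\tilde e_i^*\tilde e_jb$, a basis element of strictly smaller height to which the inductive hypothesis applies, as it does to $\tilde e_ib$ and $\tilde e_i^*b$. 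Chaining these identifications through the commutation in $C$ gives
\[
\tilde e_j\tilde e_i^*\phi(b)\;=\;\phi(\tilde e_i^*\tilde e_jb)\;=\;\tilde e_j\tilde e_i^*\phi^*(b),
\]
and the injectivity of $\tilde e_j$ and $\tilde e_i^*$ on their non-zero supports (through the partial inverses $\tilde f_j,\tilde f_i^*$) concludes $\phi(b)=\phi^*(b)$.

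The main obstacle is the commutation lemma. In contrast to the one-sided Berenstein--Kazhdan argument, we must track the two unrelated filtrations $K_{i,\bullet}$ and $K^*_{j,\bullet}$ simultaneously, and the delicate point is to verify that the $\varepsilon_i$ and $\varepsilon_j^*$ values of both $\tilde e_j^*\tilde e_ib$ and $\tilde e_i\tilde e_j^*b$ are large enough that their images in the leading quotient are non-zero, so that the displayed congruence really is an equation between two basis elements rather than a tautology modulo the lower filtration.
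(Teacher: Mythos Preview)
Your commutation lemma is false when $i=j$. Take $b\in B(\infty)$ in the $i$-slice of Figure~\ref{fig:vii} at position $(j,5)$ with $1\le j\le 3$ (so $\langle h_i,\wt(b_1)\rangle=4$). Then $\tilde e_i b=\tilde e_i^* b=(j,4)$, but $\tilde e_i\tilde e_i^*b=(j,3)$ while $\tilde e_i^*\tilde e_ib=(j-1,3)$: both sides are nonzero and different. Your inductive step does not exclude $i=j$, so this breaks the argument. One can try to avoid $i=j$ by noting that if every $j\neq i$ has $\varepsilon_j(\tilde e_i^*b)=0$ then $\wt(b)\in-\mathbb Z_{\ge0}\alpha_i$ and the weight space is one-dimensional, but you have not made this reduction.

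Even restricting to $i\neq j$, the gap you flag is genuine and not merely delicate. To get a nontrivial identity in the quotient you need $\varepsilon_i(\tilde e_j^*\tilde e_ib)=\varepsilon_i(b)-1$, i.e.\ $\varepsilon_i(\tilde e_j^*c)=\varepsilon_i(c)$ for $c=\tilde e_ib$. The stability of $K_{i,n}$ under the right $e_j$-action only gives $\le$; proving equality is essentially property~(\ref{it:CharBInfty5}) of the characterization of $B(\infty)$ in \S\ref{ss:CrysRefl}, and deriving it directly from the biperfect axioms would amount to reproving a substantial part of that characterization. Moreover, your induction actually requires the one-sided version (if $\tilde e_i^*\tilde e_j\phi(b)\neq0$ then $\tilde e_j\tilde e_i^*\phi(b)$ equals it), not just the two-sided version you state.

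The paper's proof avoids all of this by a transition-matrix argument. One shows that the change-of-basis matrix $M$ from $C$ to $B$ is upper unitriangular for the total order on $B$ (and on $C$) given by string parameters in any fixed direction $\mathbf i=(i_1,i_2,\ldots)$; this yields a bijection $B\cong C$ respecting the $i_1$-crystal operators. The key observation is then purely linear-algebraic: if $M$ is upper unitriangular and $PMQ^{-1}$ is also upper unitriangular for permutation matrices $P,Q$, then $P=Q$. Varying $i_1$ over $I$ shows the bijection respects all unstarred operators; running the same construction with the starred structure gives another bijection, and the same matrix argument forces it to coincide with the first. No commutation of $\tilde e_i$ with $\tilde e_j^*$ is ever needed.
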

\begin{proof}
We study the properties of the transition matrix $M=(m_{b,c})$ between
the two bases, defined by the equation
\begin{equation*}
c=\sum_{b\in B}m_{b,c}\,b
\tag*{($*$)}
\end{equation*}
for each $c\in C$.

Fix $i\in I$, take $c\in C$, and set $n=\max\{\varepsilon_i(b)\mid
b\in B\text{ such that }m_{b,c}\neq0\}$. Then each $b$ occurring in
the right-hand side of ($*$) belongs to $K_{i,n}$, and therefore
$c\in K_{i,n}$, that is, $\varepsilon_i(c)\leq n$. Applying $e_i^{(n)}$
to ($*$), we get
$$e_i^{(n)}\cdot c\ =\sum_{\substack{b\in B\\m_{b,c}\neq0\text{ and }
\varepsilon_i(b)=n}}m_{b,c}\;e_i^{(n)}\cdot b.$$
The terms $e_i^{(n)}\cdot b=\tilde e_i^nb$ that survive in the right-hand
side belong to $B$, hence are linearly independent, which implies that
the left-hand side is not zero, and therefore that $\varepsilon_i(c)=n$.

Let us define
$$B_{i,n}=\{b\in B\mid\varepsilon_i(b)=n\}\ \text{ and }\
C_{i,n}=\{c\in C\mid\varepsilon_i(c)=n\}.$$
The above analysis shows that the matrix $M$ is block upper triangular
with respect to the decompositions
$$B=B_{i,0}\cup B_{i,1}\cup B_{i,2}\cup\cdots\ \text{ and }\
C=C_{i,0}\cup C_{i,1}\cup C_{i,2}\cup\cdots$$
of the rows and the columns, and that the diagonal blocks of $M$ are
equal under the bijections
$$\tilde e_i^n:B_{i,n}\to B_{i,0}\ \text{ and }\
\tilde e_i^n:C_{i,n}\to C_{i,0}.$$

We now replace the index $i\in I$ by a sequence $\mathbf i=
(i_1,i_2,i_3,\ldots)$, in which each element of $I$ appears infinitely
many times. To an element $b\in B$ we associate its string parameters
in direction $\mathbf i$, namely the sequence $\mathbf n_{\mathbf i}(b)=
(n_1,n_2,n_3,\ldots)$ where
$$n_1=\varepsilon_{i_1}(b),\quad
n_2=\varepsilon_{i_2}(\tilde e_{i_1}^{n_1}b),\quad
n_3=\varepsilon_{i_3}(\tilde e_{i_2}^{n_2}\tilde e_{i_1}^{n_1}b),\ \ldots$$
The weights of the sequence of elements
$$b,\quad
\tilde e_{i_1}^{n_1}b,\quad
\tilde e_{i_2}^{n_2}\tilde e_{i_1}^{n_1}b,\quad
\tilde e_{i_3}^{n_3}\tilde e_{i_2}^{n_2}\tilde e_{i_1}^{n_1}b,\ \ldots$$
increase in $(-Q_+)$ with respect to the dominance order, so this
sequence becomes eventually constant, and its final value is in
$$\bigcap_{i\in I}B_{i,0}=B\cap\left(\;\bigcap_{i\in I}K_{i,0}\right)=\{1\}.$$
It follows that the map $b\mapsto\mathbf n_{\mathbf i}(b)$ is
injective, and we can therefore transfer the lexicographic order on
string parameters to a total order on $B$. Similarly, we define the
string parameters in direction $\mathbf i$ of an element of $C$ and
totally order $C$ accordingly.

Iterating our first argument, we see that the matrix $M$ is now upper
triangular, with all diagonal elements equal --- and in fact equal to
$1$ because of the normalization condition that $1$ belongs to both
$B$ and $C$. In particular, we obtain a bijection between $B$ and $C$
that preserves string parameters in direction $\mathbf i$. This certainly
means that this bijection preserves the weight map and the crystal
operations $\varepsilon_{i_1}$, $\tilde e_{i_1}$ and $\tilde f_{i_1}$.

Now we can change $\mathbf i$ and thus replace $i_1$ by any element of
$I$. The bijection does not change, because given two sets $B$ and $C$
and a matrix whose elements are indexed by $B\times C$, there is at
most one bijection $B\cong C$ such that there exists a total order on
$B$ (and hence $C$) making $M$ upper unitriangular. In other words,
if $M$ is an upper unitriangular matrix, and $P$ and $Q$ are permutation
matrices, and $PMQ^{-1}$ is upper triangular, then $P=Q$. (For this
last assertion, it is necessary to assume that the matrices have
finitely many rows and columns. This condition does not hold in our
situation, but we can reduce to it by restricting to weight subspaces.)

We now have proved the existence of a bijection $B\cong C$ that respects
the crystal structure $(\wt,\varepsilon_i,\varphi_i,\tilde e_i,\tilde f_i)$.
But we can similarly construct a bijection $B\cong C$ that respects the
starred crystal structure $(\wt,\varepsilon_i^*,\varphi_i^*,\tilde e_i^*,
\tilde f_i^*)$, and the argument in the previous paragraph shows that the
two bijections necessarily coincide.
\end{proof}

The abstract bicrystal structure shared by all biperfect bases of
$\ON$ is denoted by $B(\infty)$.

\subsection{Bases in representations}
\label{ss:BasesRep}
Given a dominant weight $\lambda$, we denote the simple $G$-module of
highest weight $\lambda$ by $L(\lambda)$. We fix a preferred choice of
a highest weight vector $v_\lambda$ in $L(\lambda)$.

\begin{definition}
A linear basis $B_\lambda$ of $L(\lambda)$ is \textbf{perfect} if it
is endowed with an upper semi-normal crystal structure such that:
\begin{itemize}
\item
The highest weight vector $v_\lambda$ belongs to $B_\lambda$.
\item
Each $b$ in $B_\lambda$ is homogeneous of degree $\wt(b)$ with respect to the
weight grading of $L(\lambda)$.
\item
For each $i\in I$ and $b\in B_\lambda$, the expansion of $e_i\cdot b$ in the basis $B_\lambda$
has the form
$$e_i\cdot b=\varepsilon_i(b)\;\tilde e_ib+\sum_{\substack{b'\in B_\lambda\\
\varepsilon_i(b')<\varepsilon_i(b)-1}}a_{b'}\,b'$$
with $a_{b'}\in\mathbb C$.
\end{itemize}
\end{definition}

\begin{remark} \label{rem:good}
\begin{enumerate}
\item 
Any perfect basis is a good basis in the sense of Berenstein and
Zelevinsky~\cite{BerensteinZelevinsky88}. It follows that any perfect basis $ B_\lambda $ of $ L(\lambda) $ restricts to bases for tensor product multiplicity spaces. Specifically,
given dominant weights $\mu$, $\nu$, the set
$$\{b\in B_\lambda\mid
\wt(b)=\nu-\mu\ \text{ and }\ \forall i\in I,\ \varepsilon_i(b)
\leq\langle h_i,\mu\rangle\}
$$
forms a basis for $ \Hom(L(\nu), L(\lambda) \otimes L(\mu)) $ where we use the inclusion
$$
\Hom(L(\nu), L(\lambda) \otimes L(\mu)) \hookrightarrow L(\lambda) \quad \phi \mapsto (1 \otimes v_\mu^*)(\phi(v_\nu))
$$
(see below for the definition of $ v_\mu^*$).
\item
Let $B_\lambda$ be a perfect basis of $L(\lambda)$ and let
$B_\lambda^\star$ be its dual basis with respect to a contravariant
form on $L(\lambda)$. Then for any Demazure module $W\subset L(\lambda)$,
the set $B_\lambda^\star\cap W$ is a basis of $W$. In fact,
Kashiwara's proof of the same result for the global crystal basis
(\cite[\S3.2]{Kashiwara93b}) only uses the axioms of a perfect basis
(up to duality). In the case of the semicanonical basis, this property
was observed by Savage (\cite[Theorem~7.1]{Savage}).
\end{enumerate}
\end{remark}
	
Let $v_\lambda^*:L(\lambda)\to\mathbb C$ be the linear form such that
$v_\lambda^*(v_\lambda)=1$ and $v_\lambda^*(v)=0$ for any weight vector
$v\in L(\lambda)$ of weight other than $\lambda$.
We define an $N$-equivariant map
$$\Psi_\lambda:L(\lambda)\to\ON$$
by $\Psi_\lambda(v)=v_\lambda^*(\bm? v)$. As a matter of fact,
$\Psi_\lambda$ is injective and that its image is
$$\im\Psi_\lambda=\bigcap_{i\in I}K_{i,\langle h_i,\lambda\rangle}^*$$
(one can deduce this from \cite[Theorem~21.4]{Humphreys72}
with the help of a contravariant form on $L(\lambda)$ ;
see also \cite[Proposition~5.1]{BerensteinZelevinsky96}).
It follows that if $B$ is a biperfect basis of $\ON$, then
$$B\cap(\im\Psi_\lambda)=\{b\in B\mid\forall i\in I,\ \varepsilon_i^*(b)
\leq\langle h_i,\lambda\rangle\}$$
is a basis of $\im\Psi_\lambda$.

\begin{proposition}
\label{pr:PerfBasTrf}
Let $B$ be a biperfect basis of $\ON$, let $\lambda\in P_+$, and
let $B_\lambda=\Psi_\lambda^{-1}(B)$. Then $B_\lambda$ inherits from
$B$ the structure of an upper semi-normal crystal, the weight map being
shifted by $\lambda$, and $B_\lambda$ is a perfect basis of $L(\lambda)$.
\end{proposition}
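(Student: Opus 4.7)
The plan is to transfer the bicrystal data from $B$ to $B_\lambda$ via $\Psi_\lambda$ and then verify the perfect-basis axioms using the two structural facts already recorded: $\Psi_\lambda$ is $N$-equivariant (hence $U(\mathfrak n)$-equivariant) and injective, and its image is $\bigcap_{i}K_{i,\langle h_i,\lambda\rangle}^{*}$. Combined with the observation (already made in \S\ref{ss:BasesRep}) that $B\cap\im\Psi_\lambda = \{b\in B : \varepsilon_i^{*}(b)\le\langle h_i,\lambda\rangle\text{ for all }i\}$ is a basis of $\im\Psi_\lambda$, this instantly gives that $B_\lambda=\Psi_\lambda^{-1}(B\cap\im\Psi_\lambda)$ is a basis of $L(\lambda)$.

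Next I would transfer the unstarred crystal structure. Define $\wt_{B_\lambda}(b) := \wt(\Psi_\lambda(b))+\lambda$, $\varepsilon_i^{B_\lambda}(b) := \varepsilon_i(\Psi_\lambda(b))$, and $\varphi_i^{B_\lambda}(b) := \varphi_i(\Psi_\lambda(b))+\langle h_i,\lambda\rangle$. The $T$-weight computation showing $\wt_{B_\lambda}$ is indeed the weight in $L(\lambda)$ is a direct consequence of $\Psi_\lambda(v)(n) = v_\lambda^{*}(nv)$ and the fact that $v_\lambda^{*}$ only sees the $\lambda$-weight component. For the Kashiwara operators, the key point is that $\im\Psi_\lambda$ is a $U(\mathfrak n)$-submodule of $\ON$, so expanding $e_i\cdot\Psi_\lambda(b)$ in the basis $B$ and using that $B\cap\im\Psi_\lambda$ is a basis of $\im\Psi_\lambda$, every term in the expansion of $e_i\cdot b$ — in particular $\tilde e_i\Psi_\lambda(b)$ — must itself belong to $B\cap\im\Psi_\lambda$. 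Therefore $\tilde e_i$ preserves $B_\lambda$, and we set $\tilde e_i^{B_\lambda}b := \Psi_\lambda^{-1}(\tilde e_i\Psi_\lambda(b))$ and $\tilde f_i^{B_\lambda}b := \Psi_\lambda^{-1}(\tilde f_i\Psi_\lambda(b))$ if the latter lies in $B_\lambda$, and zero otherwise. The crystal axioms now follow from the corresponding axioms on $B$: the relation $b=\tilde e_ib'\Leftrightarrow\tilde f_ib=b'$ survives the restriction because $\tilde e_i$ maps $B_\lambda$ into $B_\lambda$, and the weight-shift/$\varepsilon_i/\varphi_i$ compatibilities are immediate from the definitions. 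Upper semi-normality transfers because $\tilde e_i^{n}b$ in $B_\lambda$ agrees with $\tilde e_i^{n}\Psi_\lambda(b)$ in $B$ for all $n$, and the latter is semi-normal.

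To conclude that $B_\lambda$ is a perfect basis of $L(\lambda)$, I would then check the three bullet points of the definition. First, $v_\lambda\in B_\lambda$: a quick unwinding gives $\Psi_\lambda(v_\lambda)(n)=v_\lambda^{*}(n\cdot v_\lambda)=1$ for every $n\in N$ (since $nv_\lambda$ equals $v_\lambda$ plus strictly lower weight terms), so $\Psi_\lambda(v_\lambda)$ is the constant function $1$, which lies in $B$ by definition of a (bi)perfect basis. Second, homogeneity with $\wt_{B_\lambda}(b)$ is the weight computation already done. Third, the expansion of $e_i\cdot b$ in $B_\lambda$ is just the image under $\Psi_\lambda^{-1}$ of the expansion of $e_i\cdot\Psi_\lambda(b)$ in $B$, and since every term of that expansion was shown to lie in $B\cap\im\Psi_\lambda$ and since $\varepsilon_i^{B_\lambda}$ coincides with $\varepsilon_i$ on $B\cap\im\Psi_\lambda$, the leading term and error inequality have exactly the required form.

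The only genuinely delicate step is the stability argument showing that each term $b'$ occurring in the expansion of $e_i\cdot\Psi_\lambda(b)$ remains in $\im\Psi_\lambda$; everything else is bookkeeping. Fortunately this stability is immediate from $N$-equivariance of $\Psi_\lambda$ together with linear independence of $B\cap\im\Psi_\lambda$, so no real obstacle arises.
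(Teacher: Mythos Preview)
Your argument is correct and follows essentially the same route as the paper's proof: transfer the crystal data through $\Psi_\lambda$, and use the fact that $\im\Psi_\lambda$ is an $N$-stable subspace with basis $B\cap\im\Psi_\lambda$ to ensure that $\tilde e_i\Psi_\lambda(b)$ (and indeed every term of the expansion of $e_i\cdot\Psi_\lambda(b)$) stays inside $B\cap\im\Psi_\lambda$. Your write-up is in fact more complete than the paper's, which leaves the verification of the three perfect-basis axioms (in particular that $\Psi_\lambda(v_\lambda)=1\in B$ and that the $e_i$-expansion has the required form) implicit.
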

\begin{proof}
For each $b\in B_\lambda$, we set
$$\wt(b)=\wt(\Psi_\lambda(b))+\lambda,\quad
\varepsilon_i(b)=\varepsilon_i(\Psi_\lambda(b))\quad\text{and}\quad
\varphi_i(b)=\langle h_i,\wt(b)\rangle+\varepsilon_i(b).$$
If $\varepsilon_i(b)=0$, then we set $\tilde e_ib=0$. Otherwise,
$\tilde e_i(\Psi_\lambda(b))$ appears with a nonzero coefficient
in the expansion of $e_i\cdot\Psi_\lambda(b)$ in the basis $B$. Now
$e_i\cdot\Psi_\lambda(b)$ belongs to $(\im\Psi_\lambda)$, and this
subspace is spanned by the elements of $B$ that it contains. We conclude
that $\tilde e_i(\Psi_\lambda(b))\in B\cap(\im\Psi_\lambda)$, and
therefore we can define $\tilde e_ib\in B_\lambda$ by
$$\Psi_\lambda(\tilde e_ib)=\tilde e_i(\Psi_\lambda(b)).$$
The fact that $B$ is upper semi-normal then implies that $B_\lambda$ is
upper semi-normal. Lastly, we define $\tilde f_ib\in B_\lambda\cup\{0\}$
so that
$$\Psi_\lambda(\tilde f_ib)=\begin{cases}
\tilde f_i\Psi_\lambda(b)&\text{if $\tilde f_i(\Psi_\lambda(b))\in
(\im\Psi_\lambda)$,}\\
0&\text{otherwise.}
\end{cases}$$
\end{proof}

From Remark \ref{rem:good}(i), we immediately deduce the following corollary, which can be regarded as a generalization (from the canonical basis to arbitrary biperfect bases) of \cite[Corollary 3.4]{BZ01}.
\begin{corollary}
	Let $ B $ be a biperfect basis of $ \ON $.  For any $ \lambda, \mu, \nu \in P_+ $, the set
	$$
\{b\in B(\infty)\mid
\wt(b)=\nu-\mu - \lambda\ \text{ and }\ \forall i\in I,\ \varepsilon_i(b)
\leq\langle h_i,\mu\rangle, \ \varepsilon_i^*(b) \leq \langle h_i,\lambda\rangle \} 
$$
restricts (under $ \Psi_\lambda$  and the inclusion from Remark \ref{rem:good}(i)) to a basis for $ \Hom(L(\nu), L(\lambda) \otimes L(\mu)) $.
\end{corollary}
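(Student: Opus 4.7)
The strategy is to combine Proposition~\ref{pr:PerfBasTrf} with Remark~\ref{rem:good}(i); the corollary will follow by tracking how the three constraints (weight, $\varepsilon_i$, $\varepsilon_i^*$) translate under $\Psi_\lambda$.

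First, I would apply Proposition~\ref{pr:PerfBasTrf} to produce a perfect basis $B_\lambda$ of $L(\lambda)$ from the biperfect basis $B$. Recall that $\Psi_\lambda$ identifies $L(\lambda)$ with $\bigcap_{i\in I} K_{i,\langle h_i,\lambda\rangle}^*\subset\ON$, and that
\[
B\cap(\im\Psi_\lambda)=\bigl\{b\in B\bigm|\forall i\in I,\ \varepsilon_i^*(b)\leq\langle h_i,\lambda\rangle\bigr\}.
\]
Thus $B_\lambda=\Psi_\lambda^{-1}(B\cap\im\Psi_\lambda)$ is canonically in bijection with the set of $b\in B$ satisfying the starred constraints $\varepsilon_i^*(b)\leq\langle h_i,\lambda\rangle$. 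Under this bijection the weight shifts by $\lambda$ and the crystal data $(\varepsilon_i,\tilde e_i)$ are preserved, as is recorded in the proof of Proposition~\ref{pr:PerfBasTrf}.

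Second, I would apply Remark~\ref{rem:good}(i) to the perfect basis $B_\lambda$ of $L(\lambda)$: the subset
\[
\bigl\{b\in B_\lambda\bigm|\wt_{L(\lambda)}(b)=\nu-\mu\ \text{and}\ \forall i\in I,\ \varepsilon_i(b)\leq\langle h_i,\mu\rangle\bigr\}
\]
is a basis of $\Hom(L(\nu),L(\lambda)\otimes L(\mu))$ via the embedding $\phi\mapsto(1\otimes v_\mu^*)(\phi(v_\nu))$.

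Third, I would translate these two conditions back through $\Psi_\lambda$: since $\wt_{L(\lambda)}(b)=\wt(\Psi_\lambda(b))+\lambda$ and $\varepsilon_i(b)=\varepsilon_i(\Psi_\lambda(b))$, the requirement $\wt_{L(\lambda)}(b)=\nu-\mu$ becomes $\wt(b)=\nu-\mu-\lambda$ in $B$, and $\varepsilon_i(b)\leq\langle h_i,\mu\rangle$ is unchanged. Combining with the starred constraints from the first step, the resulting set in $B$ is exactly the one named in the corollary. Since $B$ is canonically identified with $B(\infty)$ as a bicrystal by Theorem~\ref{th:UniqCrys}, this set is intrinsically described in $B(\infty)$, as stated. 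The composite inclusion $\Hom(L(\nu),L(\lambda)\otimes L(\mu))\hookrightarrow L(\lambda)\xrightarrow{\Psi_\lambda}\ON$ is the one implicit in the corollary. There is no real obstacle: the argument is a bookkeeping exercise matching the two conditions characterizing $B_\lambda$ in $L(\lambda)$ with those characterizing $B\cap\im\Psi_\lambda$ in $\ON$; the only point requiring a moment's care is the weight shift by $\lambda$ under $\Psi_\lambda$.
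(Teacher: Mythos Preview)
Your proposal is correct and follows exactly the approach the paper intends: it deduces the corollary by combining Proposition~\ref{pr:PerfBasTrf} (to pass from the biperfect basis $B$ to the perfect basis $B_\lambda$ of $L(\lambda)$, picking up the $\varepsilon_i^*$ constraints) with Remark~\ref{rem:good}(i) (to pick up the weight and $\varepsilon_i$ constraints). The paper states only that the corollary follows immediately from Remark~\ref{rem:good}(i); you have simply written out the bookkeeping in full.
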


The proof of the following lemma relies on elementary
$\mathfrak{sl}_2$-theory and is left to the reader.

\begin{lemma}
\label{le:PerfBasRep}
Let $B_\lambda$ be a perfect basis of $L(\lambda)$. Then the crystal
$B_\lambda$ is semi-normal, and for each $i\in I$ and $b\in B_\lambda$, the
expansion of $f_i\cdot b$ in the basis $B_\lambda$ has the form
$$f_i\cdot b=\varphi_i(b)\;\tilde f_ib+\sum_{\substack{b'\in B_\lambda\\
\varphi_i(b')<\varphi_i(b)-1}}a_{b'}\,b'$$
with $a_{b'}\in\mathbb C$.
\end{lemma}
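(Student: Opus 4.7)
The plan is to deduce the action of $f_i$ from that of $e_i$ by applying $e_i^{n+1}$ (with $n=\varepsilon_i(b)$) to both sides of a formal expansion $f_i b=\sum c_{b'}b'$ and reading off coefficients. Fix $i\in I$ and $b\in B_\lambda$, set $n=\varepsilon_i(b)$ and $p=\langle h_i,\wt b\rangle$, so that $\varphi_i(b)=p+n$.

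First I would iterate the perfect-basis formula for $e_i$. Because the ``lower'' tail in $e_i\cdot c=\varepsilon_i(c)\,\tilde e_i c+\cdots$ lives at $\varepsilon_i\le\varepsilon_i(c)-2$ (a ``skip-one'' gap), a short induction on $k$ shows that for any $c\in B_\lambda$,
\[ e_i^k c=\varepsilon_i(c)\bigl(\varepsilon_i(c)-1\bigr)\cdots\bigl(\varepsilon_i(c)-k+1\bigr)\,\tilde e_i^k c+(\text{terms with }\varepsilon_i\le\varepsilon_i(c)-k-1).\]
Applied with $c=b$ and $k=n$, or with $c=b'$ where $\varepsilon_i(b')=n+1$ and $k=n+1$, the error tail is empty, leaving the sharp identities $e_i^n b=n!\,\tilde e_i^n b$ and $e_i^{n+1}b'=(n+1)!\,\tilde e_i^{n+1}b'$. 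The elementary $\mathfrak{sl}_2$-commutator $[e_i^m,f_i]=m\,e_i^{m-1}(h_i+m-1)$, used with $m=n+1$ and $m=n+2$ applied to $b$ together with $e_i^{n+1}b=0$, then yields
\[ e_i^{n+1}f_i b=(n+1)\,\varphi_i(b)\,e_i^n b\qquad\text{and}\qquad e_i^{n+2}f_i b=0.\]
The second identity bounds $\varepsilon_i(f_i b)\le n+1$, so only $b'$ with $\varepsilon_i(b')\le n+1$ appear in the expansion $f_i b=\sum c_{b'}b'$.

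Applying $e_i^{n+1}$ to this expansion kills every term with $\varepsilon_i(b')\le n$ and, combined with the sharp identities above, converts the first commutator equation into
\[ \varphi_i(b)\,\tilde e_i^n b=\sum_{b'\in B_\lambda,\ \varepsilon_i(b')=n+1} c_{b'}\,\tilde e_i^{n+1}b'.\]
Both sides live in the $\varepsilon_i=0$ subspace of $B_\lambda$, carry the same weight, and the vectors on the right are distinct basis elements by injectivity of $\tilde e_i^{n+1}$ on $\{b':\varepsilon_i(b')=n+1\}$. Hence if $\varphi_i(b)>0$ there must exist a unique such $b'$ with $\tilde e_i^{n+1}b'=\tilde e_i^n b$; the crystal axioms identify this $b'$ as $\tilde f_i b$, which is therefore nonzero and satisfies $c_{\tilde f_i b}=\varphi_i(b)$, while every other coefficient at $\varepsilon_i=n+1$ vanishes. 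If instead $\varphi_i(b)=0$, a weight check finishes the argument: any $b'$ appearing in $f_i b$ has $\langle h_i,\wt b'\rangle=p-2$, so the inequality $\varphi_i(b')\ge 0$ forces $\varepsilon_i(b')\ge 2-p=n+2$, contradicting $\varepsilon_i(f_i b)\le n+1$; hence $f_i b=0$.

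The remaining coefficients in $f_i b$ (those with $\varepsilon_i(b')\le n$) correspond to basis vectors of weight $\wt b-\alpha_i$ with $\varphi_i(b')=(p-2)+\varepsilon_i(b')\le\varphi_i(b)-2$, strictly less than $\varphi_i(b)-1$, giving the desired expansion. Semi-normality then follows immediately: the crystal axiom gives $\varphi_i(\tilde f_i c)=\varphi_i(c)-1$ whenever $\tilde f_i c\ne0$, and the equivalence just established supplies the converse $\varphi_i(c)>0\Leftrightarrow\tilde f_i c\ne0$, so iteration shows the $\tilde f_i$-string through $b$ has exactly $\varphi_i(b)+1$ elements. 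The main obstacle is the coefficient extraction in the third paragraph: the skip-one precision of the perfect-basis axiom is exactly what is needed for applying $e_i^{n+1}$ to the expansion to isolate $\tilde f_i b$ cleanly, with no crosstalk between different $\varepsilon_i$-layers.
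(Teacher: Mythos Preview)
Your argument is correct and is exactly the sort of elementary $\mathfrak{sl}_2$-computation the paper had in mind; note that the paper itself leaves this lemma to the reader with only the hint ``elementary $\mathfrak{sl}_2$-theory'', so there is no proof in the paper to compare against. Your use of the commutator identity $[e_i^m,f_i]=m\,e_i^{m-1}(h_i+m-1)$ together with the ``skip-one'' precision of the perfect-basis axiom to isolate the $\varepsilon_i=n+1$ layer is the right mechanism, and the identification of the surviving $b'$ with $\tilde f_i b$ via $\tilde e_i^{n+1}b'=\tilde e_i^n b$ is clean.

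There is one small gap you should patch. You invoke the inequality $\varphi_i(b')\ge 0$ (and implicitly assume $\varphi_i(b)\ge 0$ by only treating the cases $\varphi_i(b)>0$ and $\varphi_i(b)=0$), but upper semi-normality alone only guarantees $\varepsilon_i\ge 0$. This is precisely where the finite-dimensionality of $L(\lambda)$ enters: for $b\in B_\lambda$ with $n=\varepsilon_i(b)$ you have $e_i^n b\ne 0$ and $e_i^{n+1}b=0$, so the $\mathfrak{sl}_2$-decomposition of $L(\lambda)$ forces $b$ to lie in the isotypic component of the simple module of highest weight $p+2n$, whence $p\ge -n$, i.e.\ $\varphi_i(b)=p+n\ge 0$. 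Once you state this one-line fact up front, the case $\varphi_i(b)<0$ is vacuous, your weight-check argument for $\varphi_i(b)=0$ becomes legitimate, and semi-normality follows exactly as you outline.
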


\begin{remark} \label{re:flag}
Let $\lambda\in P_+$. For each $w\in W$, the weight space
$L(\lambda)_{w\lambda}$ is one-dimensional.  We choose a basis vector for this weight space by defining $v_{w\lambda}=\overline wv_\lambda$. These elements can also
be defined by induction on the length of $w$: if $s_iw>w$, then
$$v_{s_iw\lambda}=f_i^{(n)}\cdot v_{w\lambda}$$
where $n=\langle h_i,w\lambda\rangle$. Using Lemma~\ref{le:PerfBasRep},
we see that these elements $v_{w\lambda}$ belong to each perfect basis
of $L(\lambda)$. Abusing slightly the standard terminology, we will call
\textbf{flag minors} the functions $\Psi_\lambda(w_{w\lambda})$; they
belong to each biperfect basis of $\ON$. (When $\lambda$ is minuscule,
they are the restrictions to $N$ of the \textbf{flag minors} from
\cite{BerensteinFominZelevinsky,BerensteinZelevinsky97}.)
\end{remark}

We observe that for any dominant weights $\lambda$ and $\mu$,
we have $\im\Psi_\lambda\subset\im\Psi_{\lambda+\mu}$. This
motivates the following definition.

\begin{definition}
\label{de:CohFamBas}
A \defn{coherent family of bases} is the datum of a basis $B_\lambda$ of
$L(\lambda)$ for each dominant weight $\lambda\in P_+$ such that
$$\Psi_\lambda(B_\lambda)\subset\Psi_{\lambda+\mu}(B_{\lambda+\mu})$$
for all $\lambda,\mu\in P_+$.
\end{definition}

A biperfect basis $B$ gives rise to a coherent family of perfect bases,
namely the datum of all the bases $\Psi_\lambda^{-1}(B)$. Conversely,
given a coherent family of perfect bases $(B_\lambda)$, the union
$$\bigcup_{\lambda\in P_+}\Psi_\lambda(B_\lambda)$$
is a perfect basis of $\ON$. We note that the crystal structures
automatically match, in the sense of Proposition~\ref{pr:PerfBasTrf}.

\subsection{Multiplication}
We can easily describe multiplication in $ \C[N] $ using the maps
$ \Psi_\lambda$.  First, recall that there is a unique $G$-equivariant map
$ m_{\lambda \mu} : L(\lambda) \otimes L(\mu) \rightarrow L(\lambda + \mu) $
which takes $ v_\lambda \otimes v_\mu $ to $ v_{\lambda + \mu} $.

The following result follows immediately from the definition of $ \Psi_\lambda $.
\begin{proposition} \label{pr:mult}
We have the commutativity $ m \circ (\Psi_\lambda \otimes \Psi_\mu) = \Psi_{\lambda+ \mu} \circ m_{\lambda \mu} $, where $ m : \C[N] \otimes \C[N] \rightarrow \C[N] $ is the multiplication map.
\end{proposition}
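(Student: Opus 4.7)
The plan is to evaluate both sides at an arbitrary pure tensor $v\otimes w\in L(\lambda)\otimes L(\mu)$ and at an arbitrary point $n\in N$, and unfold the definition of $\Psi_\bullet$. On one hand,
$$\bigl(m\circ(\Psi_\lambda\otimes\Psi_\mu)\bigr)(v\otimes w)(n)=v_\lambda^*(nv)\,v_\mu^*(nw)=(v_\lambda^*\otimes v_\mu^*)\bigl(n\cdot(v\otimes w)\bigr),$$
while on the other hand, using the $G$-equivariance of $m_{\lambda\mu}$,
$$\bigl(\Psi_{\lambda+\mu}\circ m_{\lambda\mu}\bigr)(v\otimes w)(n)=v_{\lambda+\mu}^*\bigl(m_{\lambda\mu}(n\cdot(v\otimes w))\bigr).$$
So it suffices to establish the identity of linear functionals
$$v_{\lambda+\mu}^*\circ m_{\lambda\mu}=v_\lambda^*\otimes v_\mu^*$$
on $L(\lambda)\otimes L(\mu)$.

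To check this identity, I would test it on a weight basis of $L(\lambda)\otimes L(\mu)$. Take homogeneous vectors $v\in L(\lambda)_\alpha$ and $w\in L(\mu)_\beta$. The right-hand side is nonzero only when $\alpha=\lambda$ and $\beta=\mu$, in which case $v$ and $w$ are scalar multiples of $v_\lambda$ and $v_\mu$ and we read off the value $1$ on $v_\lambda\otimes v_\mu$. The left-hand side applied to $v\otimes w$ is the coefficient of $v_{\lambda+\mu}$ in $m_{\lambda\mu}(v\otimes w)$, which lies in the weight space $L(\lambda+\mu)_{\alpha+\beta}$; since $\lambda-\alpha,\mu-\beta\in Q_+$, the only way to have $\alpha+\beta=\lambda+\mu$ is $\alpha=\lambda$ and $\beta=\mu$, and then $m_{\lambda\mu}(v_\lambda\otimes v_\mu)=v_{\lambda+\mu}$ by the normalization of $m_{\lambda\mu}$. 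So both sides agree on every weight vector, and hence as functionals.

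There is no real obstacle; the entire content of the proposition is the observation that, on the tensor product, the linear functional dual to the highest weight vector of $L(\lambda+\mu)$ pulls back through $m_{\lambda\mu}$ to the tensor $v_\lambda^*\otimes v_\mu^*$, and this is forced by the weight grading together with the chosen normalization $m_{\lambda\mu}(v_\lambda\otimes v_\mu)=v_{\lambda+\mu}$.
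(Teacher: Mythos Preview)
Your proof is correct and is exactly the natural unfolding of the definitions; the paper itself gives no proof, simply stating that the result ``follows immediately from the definition of $\Psi_\lambda$.'' Your argument reducing to the functional identity $v_{\lambda+\mu}^*\circ m_{\lambda\mu}=v_\lambda^*\otimes v_\mu^*$ and checking it via the weight grading and the normalization $m_{\lambda\mu}(v_\lambda\otimes v_\mu)=v_{\lambda+\mu}$ is precisely what ``immediate from the definition'' means here.
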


Thus if we have a coherent family of bases $ (B_\lambda)_{\lambda \in P_+} $ and the matrix of the maps $ m_{\lambda \mu }$ in these bases, then we will have a basis for $ \C[N] $ and the structure constants for multiplication in this basis.

\subsection{Non-uniqueness}
\label{ss:NonUniq}
In \cite{GeissLeclercSchroer}, Geiss, Leclerc and Schr\"oer present
examples where the canonical and the semicanonical bases are different.
Thus, biperfect bases are not unique in general. Let us have a closer
look at the simplest example (see \S 19.1 in \textit{loc.\ cit.}).
Here $G$ is of type $D_4$. We enumerate the vertices in the Dynkin
diagram as customary (the trivalent vertex has label $2$) and
we set $\lambda=\alpha_1+2\alpha_2+\alpha_3+\alpha_4$,
the highest root.

We consider two biperfect bases $C$ and $C'$ of $\ON$. Abusing the
notation, for each $b\in B(\infty)$, we denote by $C(b)$ and $C'(b)$
the elements of $C$ and $C'$ indexed by $b$. We focus on the subspace
$\ON_{-2\lambda}\cap(\im\Psi_{2\lambda})$. It is compatible with
both $C$ and $C'$; specifically both $\{C(b)\mid b\in\mathcal S\}$ and
$\{C'(b)\mid b\in\mathcal S\}$ are bases of this subspace, where
$$\mathcal S=\{b\in B(\infty)\mid\wt(b)=-2\lambda,\
\varepsilon_1^*(b)=\varepsilon_3^*(b)=\varepsilon_4^*(b)=0,\
\varepsilon_2^*(b)\leq2\}.$$
The table below presents the $12$ elements in $\mathcal S$. Here
$b_0$ is the element of $B(\infty)$ of weight $0$, and given a word
$abc\cdots$ in the alphabet $\{1,2,3,4\}$, the notation
$\tilde f_{abc\cdots}b_0$ is a shorthand for the element
$\tilde f_a\tilde f_b\tilde f_c\cdots b_0$ in $B(\infty)$.
Lastly, $\vec\varepsilon(b)$ is the tuple
$(\varepsilon_1(b),\varepsilon_2(b),\varepsilon_3(b),\varepsilon_4(b))$.

\bigskip
\begin{center}
\setlength\tabcolsep{1.4em}
\begin{tabular}{>{$}r<{$}>{$}c<{$}}
\toprule
b\hspace*{6ex}&\vec\varepsilon(b)\\
\midrule
b_1=\tilde f_{2134221342}\;b_0&(0,1,0,0)\\[3pt]
b_2=\tilde f_{1342221342}\;b_0&(1,0,1,1)\\[3pt]
b_3=\tilde f_{3422113422}\;b_0&(0,0,1,1)\\[3pt]
b_4=\tilde f_{1422133422}\;b_0&(1,0,0,1)\\[3pt]
b_5=\tilde f_{1322134422}\;b_0&(1,0,1,0)\\[3pt]
b_6=\tilde f_{4221133422}\;b_0&(0,1,0,1)\\[3pt]
\bottomrule
\end{tabular}
\qquad
\begin{tabular}{>{$}r<{$}>{$}c<{$}}
\toprule
b\hspace*{6ex}&\vec\varepsilon(b)\\
\midrule
b_7=\tilde f_{3221134422}\;b_0&(0,1,1,0)\\[3pt]
b_8=\tilde f_{1221334422}\;b_0&(1,1,0,0)\\[3pt]
b_9=\tilde f_{4422113322}\;b_0&(0,0,0,2)\\[3pt]
b_{10}=\tilde f_{3322114422}\;b_0&(0,0,2,0)\\[3pt]
b_{11}=\tilde f_{1122334422}\;b_0&(2,0,0,0)\\[3pt]
b_{12}=\tilde f_{2211334422}\;b_0&(0,2,0,0)\\[3pt]
\bottomrule
\end{tabular}
\end{center}

\bigskip
The proof of Theorem~\ref{th:UniqCrys} shows that in the expansion
$$C'(b')=\sum_{b\in B(\infty)}m_{b,b'}\;C(b)$$
of an element of $C'$ in the basis $C$, the coordinate $m_{b,b'}$
necessarily vanishes except when $\varepsilon_i(b)\leq\varepsilon_i(b')$
for each $i$. We then deduce from the table above that $C(b)=C'(b)$ for
all $b\in\{b_1,b_3,b_4,b_5,b_9,b_{10},b_{11}\}$, that $C(b_2)-C'(b_2)$
is a linear combination of $C(b_3)$, $C(b_4)$, $C(b_5)$, and that for
$b\in\{b_6,b_7,b_8,b_{12}\}$, the difference $C(b)-C'(b)$ is a scalar multiple
of $C(b_1)$. In fact $C(b)=C'(b)$ also holds for $b\in\{b_2,b_6,b_7,b_8\}$;
to prove this for $b=b_8$ for instance, one can note that
\begin{xalignat*}2
\varepsilon_2(b_1)&=\varepsilon_2(b_8)&
\varepsilon_4(\tilde e_2\,b_1)&=\varepsilon_4(\tilde e_2\,b_8)\\[4pt]
\varepsilon_3(\tilde e_4\tilde e_2\,b_1)&=
\varepsilon_3(\tilde e_4\tilde e_2\,b_8)&
\varepsilon_2(\tilde e_3\tilde e_4\tilde e_2\,b_1)&>
\varepsilon_2(\tilde e_3\tilde e_4\tilde e_2\,b_8)
\end{xalignat*}
and refine the previous argument (see \cite{Baumann12}, \S 2.5).
To sum up, $C$ and $C'$ only differ at the element indexed by $b_{12}$,
and $C(b_{12})-C'(b_{12})$ is a scalar multiple of $C(b_1)$.

Let us set $\eta=e_2(e_1e_3e_4)e_2^{(2)}(e_1e_3e_4)e_2$, an element
in $U(\mathfrak n)$. Then $\langle\eta,C(b_1)\rangle=1$ (see
\cite{Baumann12}, Theorem~5.2, case~III; note that the elements $b_1$
and $b_{12}$ are denoted by $b_{0,1}$ and $b_{2,0}$ in that paper), so
$$C(b_{12})-C'(b_{12})=\bigl\langle\eta,C(b_{12})-C'(b_{12})
\bigr\rangle\,C(b_1).$$
If $C$ is the dual semicanonical basis, then
$\langle\eta,C(b_{12})\rangle=2$. If $C'$ is the dual canonical
basis/upper global basis (specialized at $q=1$), then
$\langle\eta,C'(b_{12})\rangle=1$. And if $C''$ is the MV basis
of $\ON$ (see \S \ref{se:MVBasisAlg} below for the definition of
this basis), then $\langle\eta,C''(b_{12})\rangle=0$. We thus see that
these three bases are pairwise different. Specifically, we have the
relations advertised in \S \ref{ss:CompBases}:
\begin{equation}
\label{eq:D4rel}
C(b_{12})=C''(b_{12})+2C(b_1)\quad\text{and}\quad
C'(b_{12})=C''(b_{12})+C(b_1).
\end{equation}

We will not develop enough material in the present paper to be able to
provide a complete justification of the equation
$\langle\eta,C''(b_{12})\rangle=0$, but we can nonetheless sketch the
proof. Let $b_{13}=\tilde f_{1342}\;b_0$ and
$b_{14}=\tilde f_{221342}\;b_0$; then $C''(b_{13})$ and $C''(b_{14})$ are
two elements in  $(\im\Psi_\lambda)$, that is, two matrix coefficients
of the adjoint representation $L(\lambda)$. A rather straightforward
calculation then gives $\langle\eta,C''(b_{13})C''(b_{14})\rangle=2$. On
the other hand, using Theorem~\ref{th:mult}, one can expand the product
$$C''(b_{13})C''(b_{14})=2C''(b_1)+\sum_{i=2}^8C''(b_i)+C''(b_{12}).$$
(The forthcoming paper~\cite{BaumannGaussentLittelmann} will explain
how to calculate the required intersection multiplicities. The actual
computations are rather tedious ---  for the coefficient $2$ one must
deal with a variety of codimension~$10$ defined by $18$ equations ---
and are carried out with the help of the computer algebra system
Singular~\cite{Singular}.) Since the matrix coefficients of the action
of the Chevalley generators in the MV basis of a representation are
nonnegative (a consequence of Theorem~\ref{th:ActEiMV} below), we have
$\langle\eta,C''(b)\rangle\geq0$ for any $b\in\mathcal S$. This is enough
to ensure that $\langle\eta,C''(b_{12})\rangle=0$.

In the appendix (Theorem \ref{th:AppendixMain}), we prove the analog of the first equation in (\ref{eq:D4rel}), but only after applying the noninjective map $ \barD $ (see section \ref{ssec:equivinvts}).  There we also use computer algebra systems along with techniques specific to type A.

\section{More on biperfect bases}
\label{se:MoreBiperf}

\subsection{Crystal reflections}
\label{ss:CrysRefl}
A result of Kashiwara and Saito (\cite[Proposition~3.2.3]{KashiwaraSaito}),
extended by Tingley and Webster (\cite[Proposition~1.4]{TingleyWebster})
says that the bicrystal $B(\infty)$ is characterized by the following
conditions:
\begin{enumerate}
\item
Both crystals $(B(\infty),\wt,\varepsilon_i,\varphi_i,\tilde e_i,\tilde f_i)$
and $(B(\infty),\wt,\varepsilon_i^*,\varphi_i^*,\tilde e_i^*,\tilde f_i^*)$
are upper semi-normal.
\item
$\wt(b)\in -Q_+$ for each element $b\in B(\infty)$.
\item
There is a unique element $b_0\in B(\infty)$ such that $\wt(b_0)=0$.
\item
\label{it:CharBInfty4}
For each $i\in I$ and $b\in B$, we have $\tilde f_ib\neq0$ and
$\tilde f_i^*b\neq0$.
\item
\label{it:CharBInfty5}
For each $i\neq j$ and each $b\in B(\infty)$, we have
$\varepsilon_j(\tilde f_i^*b)=\varepsilon_j(b)$,
$\varepsilon_j^*(\tilde f_ib)=\varepsilon_j^*(b)$, and
$\tilde f_i\tilde f_j^*b=\tilde f_j^*\tilde f_ib$.
\item
\label{it:CharBInfty6}
If $i\in I$ and $b_1\in B(\infty)$ satisfy
$\varepsilon_i(b_1)=\varepsilon_i^*(b_1)=0$,
then $\langle h_i,\wt(b_1)\rangle\geq0$.
\item
\label{it:CharBInfty7}
Let $i\in I$ and $b\in B$. The subset of $B(\infty)$ generated by
$b$ under the action of the operators $\tilde e_i$, $\tilde f_i$,
$\tilde e_i^*$, $\tilde f_i^*$ contains a unique element $b_1$ such
that $\varepsilon_i(b_1)=\varepsilon_i^*(b_1)=0$ and has the form drawn in Figure \ref{fig:vii}, where the action of $\tilde f_i$ is indicated by the plain
arrows, the action of $\tilde f_i^*$ is indicated by the dotted
arrows, and $\langle h_i,\wt(b_1)\rangle$ is the width of the shape.
(The picture is drawn for $\langle h_i,\wt(b_1)\rangle=4$.)
\end{enumerate}
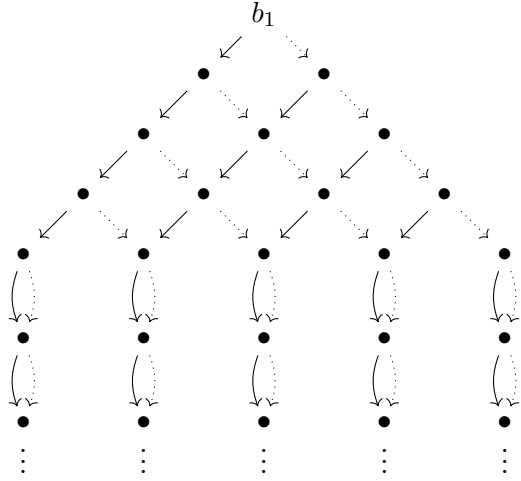
\begin{figure}[ht]  
\begin{center}
\begin{tikzpicture}[scale=.8]
\node (00) at (0,0){$b_1$};
\node (01) at (-1,-1){$\bullet$};
\node (11) at (1,-1){$\bullet$};
\node (02) at (-2,-2){$\bullet$};
\node (12) at (0,-2){$\bullet$};
\node (22) at (2,-2){$\bullet$};
\node (03) at (-3,-3){$\bullet$};
\node (13) at (-1,-3){$\bullet$};
\node (23) at (1,-3){$\bullet$};
\node (33) at (3,-3){$\bullet$};
\node (04) at (-4,-4){$\bullet$};
\node (14) at (-2,-4){$\bullet$};
\node (24) at (0,-4){$\bullet$};
\node (34) at (2,-4){$\bullet$};
\node (44) at (4,-4){$\bullet$};
\node (05) at (-4,-5.4){$\bullet$};
\node (15) at (-2,-5.4){$\bullet$};
\node (25) at (0,-5.4){$\bullet$};
\node (35) at (2,-5.4){$\bullet$};
\node (45) at (4,-5.4){$\bullet$};
\node (06) at (-4,-6.8){$\bullet$};
\node (16) at (-2,-6.8){$\bullet$};
\node (26) at (0,-6.8){$\bullet$};
\node (36) at (2,-6.8){$\bullet$};
\node (46) at (4,-6.8){$\bullet$};
\draw[->] (00) to (01);
\draw[->] (01) to (02);
\draw[->] (11) to (12);
\draw[->] (02) to (03);
\draw[->] (12) to (13);
\draw[->] (22) to (23);
\draw[->] (03) to (04);
\draw[->] (13) to (14);
\draw[->] (23) to (24);
\draw[->] (33) to (34);
\draw[->] (04) to[out=-110,in=110] (05);
\draw[->] (14) to[out=-110,in=110] (15);
\draw[->] (24) to[out=-110,in=110] (25);
\draw[->] (34) to[out=-110,in=110] (35);
\draw[->] (44) to[out=-110,in=110] (45);
\draw[->] (05) to[out=-110,in=110] (06);
\draw[->] (15) to[out=-110,in=110] (16);
\draw[->] (25) to[out=-110,in=110] (26);
\draw[->] (35) to[out=-110,in=110] (36);
\draw[->] (45) to[out=-110,in=110] (46);
\draw[->,dotted] (00) to (11);
\draw[->,dotted] (01) to (12);
\draw[->,dotted] (11) to (22);
\draw[->,dotted] (02) to (13);
\draw[->,dotted] (12) to (23);
\draw[->,dotted] (22) to (33);
\draw[->,dotted] (03) to (14);
\draw[->,dotted] (13) to (24);
\draw[->,dotted] (23) to (34);
\draw[->,dotted] (33) to (44);
\draw[->,dotted] (04) to[out=-70,in=70] (05);
\draw[->,dotted] (14) to[out=-70,in=70] (15);
\draw[->,dotted] (24) to[out=-70,in=70] (25);
\draw[->,dotted] (34) to[out=-70,in=70] (35);
\draw[->,dotted] (44) to[out=-70,in=70] (45);
\draw[->,dotted] (05) to[out=-70,in=70] (06);
\draw[->,dotted] (15) to[out=-70,in=70] (16);
\draw[->,dotted] (25) to[out=-70,in=70] (26);
\draw[->,dotted] (35) to[out=-70,in=70] (36);
\draw[->,dotted] (45) to[out=-70,in=70] (46);
\draw (06) ++(0,-.5) node{$\vdots$};
\draw (16) ++(0,-.5) node{$\vdots$};
\draw (26) ++(0,-.5) node{$\vdots$};
\draw (36) ++(0,-.5) node{$\vdots$};
\draw (46) ++(0,-.5) node{$\vdots$};
\end{tikzpicture}
\end{center}
\caption{Local structure of $ B(\infty)$.} \label{fig:vii}
\end{figure}

Note that (\ref{it:CharBInfty4}) is implied by (\ref{it:CharBInfty7})
and therefore not really needed.

\begin{remark}
These conditions imply that for any $b\neq b_0$, there exists $i\in I$
such that $\varepsilon_i(b)>0$. To show this, suppose that there exists
an element $b\neq b_0$ such that $\varepsilon_i(b)=0$ for all $i\in I$.
We may assume that among all possible elements, $b$ has been chosen to
be of maximal weight with respect to the dominance order. Since
$b\neq b_0$, the weight $\wt(b)$ is not dominant, and
(\ref{it:CharBInfty6}) implies the existence of $i\in I$ such that
$\varepsilon_i^*(b)>0$. So $b$ is on the upper right edge of the
shape drawn in Figure \ref{fig:vii}, but is not the top vertex.
Let $b_1=(\tilde e_i^*)^{\varepsilon_i^*(b)}b$ be the top vertex
of the shape. Certainly $\langle h_i,\wt(b_1)\rangle\neq0$, for
the shape has a positive width, and therefore $b_1\neq b_0$. By our
maximality condition, $b_1$ cannot satisfy the property imposed
on $b$, so there exists $j\in I$ such that $\varepsilon_j(b_1)>0$.
Necessarily $j\neq i$, and (\ref{it:CharBInfty5}) implies that
$\varepsilon_j(b)=\varepsilon_j(b_1)>0$, a contradiction.
(This argument comes from \cite[p.~16]{KashiwaraSaito}.)
\end{remark}

Using this, one easily recovers the following result due to Saito
(\cite[Corollary~3.4.8]{Saito}).
\begin{theorem}
Let $i\in I$.  The map
$$\sigma_i:\{b\in B(\infty)\mid\varepsilon_i^*(b)=0\}\to\{b\in B(\infty)
\mid\varepsilon_i(b)=0\}$$
given by $\sigma_i(b)=(\tilde f_i^*)^{\varphi_i(b)}
(\tilde e_i)^{\varepsilon_i(b)}(b)$ is bijective and
$\sigma_i^{-1}(b)=(\tilde f_i)^{\varphi_i^*(b)}
(\tilde e_i^*)^{\varepsilon_i^*(b)}(b)$.
\end{theorem}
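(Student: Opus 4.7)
The plan is to read the statement directly off the explicit local shape of $B(\infty)$ under the four operators $\tilde e_i,\tilde f_i,\tilde e_i^*,\tilde f_i^*$ provided by condition~(vii) of the preceding Kashiwara--Saito--Tingley--Webster characterisation. Fix $i\in I$, let $b$ lie in the source, and let $b_1$ denote the unique apex of the $\langle\tilde e_i,\tilde f_i,\tilde e_i^*,\tilde f_i^*\rangle$-component of $b$, so that $\varepsilon_i(b_1)=\varepsilon_i^*(b_1)=0$; set $N=\langle h_i,\wt(b_1)\rangle\ge 0$, which is the width of the top triangle in Figure~\ref{fig:vii}.

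First I would unwind the picture to identify the two sets in question inside the component of $b$. Using condition~(vii) together with the commutativity $\tilde f_i\tilde f_j^*=\tilde f_j^*\tilde f_i$ from~(v) inside the top triangle, one checks that the upper--left edge is
\[\{c\mid\varepsilon_i^*(c)=0\}\cap\text{(component)}=\bigl\{\tilde f_i^k b_1 : 0\le k\le N\bigr\},\]
and symmetrically the upper--right edge is $\{(\tilde f_i^*)^j b_1 : 0\le j\le N\}$; upper semi-normality then yields $\varepsilon_i(\tilde f_i^k b_1)=k$ and $\varepsilon_i^*((\tilde f_i^*)^j b_1)=j$. Beyond these edges, any element of the component sits in one of the infinite columns and has both $\varepsilon_i\geq1$ and $\varepsilon_i^*\geq1$.

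Next comes a one-line weight computation. For $b=\tilde f_i^k b_1$ we have $\wt(b)=\wt(b_1)-k\alpha_i$, hence
\[\varphi_i(b)=\langle h_i,\wt(b)\rangle+\varepsilon_i(b)=N-2k+k=N-k,\]
so
\[\sigma_i(b)=(\tilde f_i^*)^{N-k}(\tilde e_i)^k\,b=(\tilde f_i^*)^{N-k} b_1,\]
which lies on the upper--right edge and therefore satisfies $\varepsilon_i(\sigma_i(b))=0$. The mirror computation gives $\varphi_i^*((\tilde f_i^*)^j b_1)=N-j$, and applying the formula proposed for $\sigma_i^{-1}$ to $(\tilde f_i^*)^{N-k} b_1$ returns $(\tilde f_i)^k b_1=b$. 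The same calculation run in the opposite direction shows that the claimed formula for $\sigma_i^{-1}$ is a right inverse as well, so $\sigma_i$ is a bijection.

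The only point that requires any care is ensuring that $(\tilde e_i)^{\varepsilon_i(b)}\,b$ genuinely lands at $b_1$ rather than drifting sideways into one of the infinite columns of Figure~\ref{fig:vii}; this is immediate from the uniqueness of $b_1$ in~(vii) combined with upper semi-normality, which forces $\tilde e_i$ to trace out precisely the solid arrows of the figure. I expect this verification, together with the initial identification of the two edges, to be the only non-mechanical part of the argument; everything else is straightforward bookkeeping against the picture.
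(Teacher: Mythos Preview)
Your proof is correct and follows essentially the same route as the paper, which simply remarks that $\sigma_i$ maps the upper-left edge of the shape in Figure~\ref{fig:vii} to the upper-right edge (and notes the alternative formula $\sigma_i(b)=(\tilde e_i)^{\varepsilon_i(b)}(\tilde f_i^*)^{\varphi_i(b)}(b)$). One small slip: your appeal to condition~(v) for the commutativity $\tilde f_i\tilde f_j^*=\tilde f_j^*\tilde f_i$ is misplaced, since (v) concerns distinct indices $i\neq j$; the commutativity you actually need (for the same index $i$, in the triangular region) is part of the structure encoded directly in the picture of condition~(vii), not a consequence of~(v).
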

Specifically $\sigma_i$ maps the upper left edge of the shape in
Figure \ref{fig:vii} to the upper right edge so that $\wt(\sigma_i(b))
=s_i\wt(b)$. Note that $\sigma_i(b)=(\tilde e_i)^{\varepsilon_i(b)}
(\tilde f_i^*)^{\varphi_i(b)}(b)$.

\subsection{Biperfect bases and Weyl group action}
\label{ss:BiperfWeyl}
The automorphism
$\Ad_{\overline s_i}$ of the Lie algebra $\mathfrak g$ extends
to an automorphism $T_i$ of the enveloping algebra $U(\mathfrak g)$.
We set $U^+=U(\mathfrak n)$, a subalgebra of $U(\mathfrak g)$.
Certainly $T_i$ restricts to a linear isomorphism
$$U^+\cap T_i^{-1}(U^+)\xrightarrow\simeq T_i(U^+)\cap U^+.$$

The following theorem generalizes to any biperfect basis a property
known for the dual canonical and the dual semicanonical bases (see
\cite[Theorem~1.2]{Lusztig96} and \cite[\S1.2]{Baumann11}).

\begin{theorem}
Let $B$ be a biperfect basis of $\ON$, let $i\in I$ and $b\in B$ be
such that $\varepsilon_i^*(b)=0$, and let $u\in U^+\cap T_i^{-1}(U^+)$.
Then
$$\langle\sigma_i(b),u\rangle=\langle b,T_i(u)\rangle.$$
\end{theorem}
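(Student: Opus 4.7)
My plan is to lift the identity to an irreducible representation $L(\lambda)$ and convert the action of $T_i$ on $U^+$ into conjugation by $\overline s_i$ acting on $L(\lambda)$.

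Since $b$ has only finitely many nonzero values $\varepsilon_j^*(b)$, and the same holds for $\sigma_i(b)$, one can choose $\lambda\in P_+$ with $\langle h_j,\lambda\rangle \geq \max\{\varepsilon_j^*(b),\varepsilon_j^*(\sigma_i(b))\}$ for every $j\in I$, so that both $b$ and $\sigma_i(b)$ lie in $\im\Psi_\lambda$ by the description in \S\ref{ss:BasesRep}. Write $b=\Psi_\lambda(v)$ and $\sigma_i(b)=\Psi_\lambda(v')$ with $v,v'\in B_\lambda:=\Psi_\lambda^{-1}(B)$. The $U(\mathfrak n)$-equivariance of $\Psi_\lambda$ converts the two pairings into
\[\langle\sigma_i(b),u\rangle = v_\lambda^*(u\,v'),\qquad \langle b,T_i(u)\rangle = v_\lambda^*\bigl(T_i(u)\,v\bigr) = v_\lambda^*\bigl(\overline s_i\,u\,\overline s_i^{-1}v\bigr),\]
using $T_i=\Ad_{\overline s_i}$ on $U(\mathfrak g)$. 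Since $v_\lambda^*\circ\overline s_i = \varepsilon\,v_{s_i\lambda}^*$ for the sign $\varepsilon=(-1)^{\langle h_i,\lambda\rangle}$, where $v_{s_i\lambda}=\overline s_iv_\lambda$ is the extremal weight vector from Remark~\ref{re:flag} and $v_{s_i\lambda}^*$ is the coefficient form at that vector in $L(\lambda)_{s_i\lambda}$, the theorem is equivalent to
\[v_\lambda^*(u\,v') \;=\; \varepsilon\,v_{s_i\lambda}^*\bigl(u\,\overline s_i^{-1}v\bigr),\qquad u\in U^+\cap T_i^{-1}(U^+).\]

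I would prove this by induction on the weight of $u$, using a system of algebra generators of $U^+\cap T_i^{-1}(U^+)$ such as $\{e_j:j\ne i\}\cup\{T_i^{-1}(e_j):j\ne i\}$. The base case $u=1$ compares the coefficient of $v_\lambda$ in $v'$ with $\varepsilon$ times the coefficient of $v_{s_i\lambda}$ in $\overline s_i^{-1}v$: both are nonzero only when $b=1$, in which case $v=v'=v_\lambda$, and a direct computation of $\overline s_i^{-1}v_\lambda = \varepsilon\,v_{s_i\lambda}$ matches the two sides since $v_\lambda,v_{s_i\lambda}$ are extremal and belong to every perfect basis (Remark~\ref{re:flag}). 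For the inductive step, writing $u=e_ju'$ or $u=T_i^{-1}(e_j)u'$ with $j\ne i$ and $u'$ of smaller weight, I would propagate the inductive hypothesis through the action of the chosen generator: the biperfect axiom expands $e_j\cdot v'$ (respectively $T_i^{-1}(e_j)\cdot v'$) in the basis $B_\lambda$ with controlled leading terms, and a parallel expansion for the action of the same generator on $\overline s_i^{-1}v$ allows the comparison to be reduced term by term to the inductive assumption.

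The main obstacle is that $v'$ and $\overline s_i^{-1}v$ are generally distinct vectors in the common weight space $L(\lambda)_{s_i\wt(v)}$, so the desired equality is not a vectorial identity but holds only after pairing against $u\in U^+\cap T_i^{-1}(U^+)$. The resolution combines the Saito reflection formula $v'=(\tilde f_i^*)^{\varphi_i(b)}(\tilde e_i)^{\varepsilon_i(b)}v$ (transported from $B$ to $B_\lambda$) with the biperfect axiom controlling the $e_i$-action, to show that $\overline s_i^{-1}v-\varepsilon v'$ lies in the span of basis elements of $B_\lambda$ having strictly positive $\varepsilon_i$-value. Such elements are annihilated by the functional $w\mapsto v_{s_i\lambda}^*(u\,w)$ for $u\in U^+\cap T_i^{-1}(U^+)$, because the condition $T_i(u)\in U^+$ precludes any $e_i$-content from contributing to the pairing. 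The hypothesis $\varepsilon_i^*(b)=0$ is used at this point to ensure that $v$ sits at the top of the relevant $\tilde e_i^*$-string in $B_\lambda$, which makes the explicit $\mathfrak{sl}_2^{(i)}$-computation of $\overline s_i^{-1}v$ tractable and isolates the error terms in the $e_i$-direction as claimed.
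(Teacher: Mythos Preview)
Your proposal contains a fundamental weight-computation error that breaks the argument. You claim that $v'$ and $\overline s_i^{-1}v$ lie in the common weight space $L(\lambda)_{s_i\wt(v)}$, but this is false: $\wt(v')=\wt(\sigma_i(b))+\lambda=s_i\wt(b)+\lambda$, whereas $\wt(\overline s_i^{-1}v)=s_i(\wt(b)+\lambda)=s_i\wt(b)+s_i\lambda$, and these differ by $\langle h_i,\lambda\rangle\alpha_i$. Since you need $\langle h_i,\lambda\rangle\ge\varepsilon_i^*(\sigma_i(b))=\varphi_i(b)$ in order to place $\sigma_i(b)$ in $\im\Psi_\lambda$, the two vectors live in \emph{different} weight spaces whenever $\varphi_i(b)>0$. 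Consequently the expression ``$\overline s_i^{-1}v-\varepsilon v'$'' in your resolution paragraph is not a vector in a single weight space, and the claim that it lies in the span of basis elements with $\varepsilon_i>0$ is not even well-posed. A related confusion: you invoke the ``$\tilde e_i^*$-string in $B_\lambda$'', but the starred crystal operators are only defined on $B\subset\ON$, not on $B_\lambda$; the hypothesis $\varepsilon_i^*(b)=0$ gives no direct control over the $\mathfrak{sl}_2^{(i)}$-action on $v\in L(\lambda)$, which is governed by the \emph{unstarred} $\varepsilon_i$.

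The paper's proof avoids these pitfalls by working from the other end: it lifts $b':=\sigma_i(b)$ (not $b$) to $\overline{b'}\in L(\lambda)$ and exploits $\varepsilon_i(b')=0$, which does translate to $e_i\cdot\overline{b'}=0$ and makes the $\mathfrak{sl}_2$-computation $\overline s_i\cdot\overline{b'}=f_i^{(n)}\overline{b'}=(\tilde f_i)^n\overline{b'}$ exact. The specific choice $\langle h_i,\lambda\rangle=p:=\varphi_i(b)$ is crucial: it makes $f_i^{(p)}v_\lambda=\overline s_i\cdot v_\lambda$ hold on the nose. Then, using the contravariant form $(\,,\,)$ and the relation $b=((\tilde f_i)^nb')\cdot e_i^{(p)}$, one gets directly $b=(v_\lambda,\overline s_i^{-1}\,\bm?\,\overline s_i\cdot\overline{b'})$, and evaluating at $T_i(u)$ gives the result with no induction. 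Your inductive scheme on the weight of $u$ would also run into trouble at the step $u=u'e_j$: expanding $e_j\cdot b'$ via the perfect-basis axiom produces basis elements $b''$ that need not satisfy $\varepsilon_i(b'')=0$, so they are not of the form $\sigma_i(\cdot)$ and the inductive hypothesis does not apply to them.
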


The equation $\varepsilon_i(\sigma_i(b))=0$ implies that
$\sigma_i(b)$ annihilates $U^+e_i$. Taking into account the
decomposition $U^+=(U^+\cap T_i^{-1}(U^+))\oplus U^+e_i$,
we see that the theorem provides a completely algebraic characterization of
$\sigma_i(b)$ (previously defined only combinatorially).

\begin{proof}
Let $i\in I$ and $b\in B$ as in the statement. Set $b'=\sigma_i(b)$,
$n=\varepsilon_i(b)$, $p=\varphi_i(b)$. Then
$p=\varepsilon_i^*(b')=\varepsilon_i^*\bigl((\tilde f_i)^n(b')\bigr)$,
which implies
$$b=(\tilde e_i^*)^p(\tilde f_i)^n(b')=
\bigl((\tilde f_i)^n(b')\bigr)\cdot e_i^{(p)}.$$

We choose a dominant weight $\lambda\in P_+$ such that
$\langle h_i,\lambda\rangle=p$ and
$\langle h_j,\lambda\rangle\geq\varepsilon_j^*(b')$ for any $j\neq i$.
We adopt the notation of \S \ref{ss:BasesRep}; in particular the
simple $\mathfrak g$-module $L(\lambda)$ comes with the
$\mathfrak n$-equivariant embedding $\Psi_\lambda:L(\lambda)\to\ON$.
Then $b'\in \im\Psi_\lambda$, the set $B_\lambda=\Psi_\lambda^{-1}(B)$
is a perfect basis of $L(\lambda)$, and we can write
$b'=\Psi_\lambda(\overline{b'})$ for a certain element
$\overline{b'}\in B_\lambda$.

Now $\varepsilon_i(\overline{b'})=\varepsilon_i(b')=0$, so
$e_i\cdot\overline{b'}=0$ in the module $L(\lambda)$. Further
$$\langle h_i,\wt(\overline{b'})\rangle=
\langle h_i,\lambda+s_i(\wt(b))\rangle=
p-\langle h_i,\wt(b)\rangle =p-(p-n)=n,$$
so $\overline s_i\cdot\overline{b'}=f_i^{(n)}\cdot\overline{b'}$.
From Lemma~\ref{le:PerfBasRep}, it follows that
$\overline s_i\cdot\overline{b'}=(\tilde f_i)^n(\overline{b'})$,
and therefore
$$(\tilde f_i)^n(b')=
\Psi_\lambda\bigl((\tilde f_i)^n(\overline{b'})\bigr)=
\Psi_\lambda(\overline s_i\cdot\overline{b'}).$$

Let $\omega$ be the involutive antiautomorphism of $\mathfrak g$
such that $\omega(e_i)=f_i$ and $\omega(h_i)=h_i$ for each $i\in I$.
With respect to $\omega$, there is a unique contravariant form $(,)$ on
$L(\lambda)$ such that $(v_\lambda,v_\lambda)=1$ (see for instance
\cite[\S1.6]{Jantzen}). The embedding $\Psi_\lambda$ is given
by $\Psi_\lambda(v)=(v_\lambda,\bm?\,v)$; in particular
$(\tilde f_i)^n(b')=(v_\lambda,\bm?\;\overline s_i\cdot\overline{b'})$.
Then
$$b=(v_\lambda,e_i^{(p)}\,\bm?\;\overline s_i\cdot\overline{b'})=
(f_i^{(p)}\cdot v_\lambda,\bm?\;\overline s_i\cdot\overline{b'})=
(\overline s_i\cdot v_\lambda,\bm?\;\overline s_i\cdot\overline{b'})=
(v_\lambda,\overline s_i^{-1}\,\bm?\;\overline s_i\cdot\overline{b'}).$$
Evaluating this equation on $T_i(u)$, where $u\in U^+\cap T_i^{-1}(U^+)$,
we get $\langle b,T_i(u)\rangle=\langle b',u\rangle$, as desired.
\end{proof}

\subsection{MV polytopes} \label{ss:MVpolytopes}
Recall that $ B(\infty) $ denotes the abstract bicrystal common to all biperfect bases.  The theory of MV polytopes provides a convenient combinatorial model for $ B(\infty) $.
These polytopes also will also serve as the support for the measures to be introduced later in part~\ref{pa:Measures}.

It is simplest to introduce MV polytopes using the crystal reflections
$\sigma_i$. Specifically, we extend the definition of the crystal
reflections to all of $B(\infty)$ by defining
$\hat{\sigma_i}(b)=\sigma_i(\tilde e_i^{\max}b)$, where as usual
$\tilde e_i^{\max}b$ means $\tilde e_i^{\varepsilon_i(b)}b$.
These operators satisfy the braid relations
$$\underbrace{\hat{\sigma_i}\hat{\sigma_j}\hat{\sigma_i}\cdots}_{m_{i,j}
\text{ factors}}=\underbrace{\hat{\sigma_j}\hat{\sigma_i}\hat{\sigma_j}
\cdots}_{m_{i,j}\text{ factors}}$$
where $m_{i,j}$ is $2$, $3$, $4$, $6$ depending on $a_{ij}a_{ji}$
being $0$, $1$, $2$, $3$. To any $w\in W$ one can then attach an operator
$\hat\sigma_w$ on $B(\infty)$ so that
$\hat\sigma_w=\hat\sigma_{i_1}\cdots\hat\sigma_{i_\ell}$ for any reduced
word $w=s_{i_1}\cdots s_{i_\ell}$. Further, one can show that the map
$\hat \sigma_{w_0} $ takes every $b\in B(\infty)$ to the unique element
$b_0$ of weight~$0$.

(These facts follow from the work of Saito \cite{Saito}. Indeed, each
reduced decomposition $w_0=s_{i_1}\cdots s_{i_N}$ gives rise to a
bijection $B(\infty)\to\mathbb N^N$ called the Lusztig datum in direction
$(i_1,\ldots,i_N)$ (see \cite[\S 2]{Lusztig}). Under this bijection the action of $\hat s_{i_1}$ on
$\mathbb N^N$ has as its effect to drop the first coordinate and to insert a
zero on the right.)

We define the \textbf{MV polytope} $\Pol(b)$ of an element $b\in B(\infty)$
as the convex hull of the weights
$$\mu_w(b):=w\wt(\hat\sigma_{w^{-1}}(b))$$
for $w\in W$;
this polytope lies in $P\otimes_{\mathbb Z}\mathbb R$.
The vertices of $\Pol(b)$ are the points $ \mu_w(b) $.
The edges of $\Pol(b)$ are of the form $(\mu_w(b),\mu_{ws_i}(b))$ and
point in root directions; indeed if $ws_i>w$, then
$$\mu_{ws_i}-\mu_w=\varepsilon_i(\hat \sigma_{w^{-1}}(b))\;w\alpha_i.$$

\begin{remark} \label{rem:MVpoly}
In \cite{mvcrystal}, the second author gave an explicit combinatorial definition of MV polytopes, defined a bicrystal structure on this set of polytopes, and proved that this bicrystal is isomorphic to $ B(\infty) $.  Examining the definition of this crystal structure and its relationship with Lusztig data, it follows that we defined here the same set of polytopes with the same bijection with $ B(\infty) $.
\end{remark}

\part{Mirkovi\'c--Vilonen cycles}

\section{Background on the geometric Satake equivalence}
\label{se:RecGeomSat}

\subsection{The geometric Satake equivalence}
\label{ss:GeomSatEquiv}
Let $G^\vee$ denote the Langlands dual group of $G$. This reductive group
scheme comes with a maximal torus $T^\vee$ whose cocharacter lattice is
$P$. Our choice of positive and negative roots provide a pair of opposite
Borel subgroups $B^\vee_+$ and $B_-^\vee$ in $G^\vee$; we denote their
unipotent radicals by $N^\vee_+$ and $N_-^\vee$.

Let $\mathcal O=\mathbb C[[t]]$ be the ring of formal series and let
$\mathcal K=\mathbb C((t))$ be its fraction field of Laurent series. As a set,
the affine Grassmannian $\Gr$ of $G^\vee$ is the homogeneous space
$G^\vee(\mathcal K)/G^\vee(\mathcal O)$. It is the set of
$\mathbb C$-points of a reduced projective ind-scheme over $\mathbb C$;
see~\cite[\S 13.2.12--19]{Kumar} and \cite{Zhu} for a thorough
introduction to this object.

A weight $\mu\in P$ is a cocharacter of $T^\vee$; therefore it gives
a homomorphism of groups $\mathcal K^\times\to T^\vee(\mathcal K)$.
We denote by $t^\mu\in T^\vee(\mathcal K)\subset G^\vee(\mathcal K)$
the image of $t\in\mathcal K^\times$ under this homomorphism, and by
$L_\mu$ the image of $t^\mu$ in $\Gr$. These points $L_\mu$ are the
fixed points for the action of $T^\vee(\mathbb C)$ on $\Gr$.

Given $\lambda\in P_+$, we denote by $\Gr^\lambda$ the
$G^\vee(\mathcal O)$-orbit of $L_\lambda$ in $\Gr$. This is a smooth
variety of dimension $2\rho^\vee(\lambda)$.
The Cartan decomposition in $G^\vee(\mathcal K)$ implies that
$$\Gr=\bigsqcup_{\lambda\in P_+}\Gr^\lambda.$$
Each $\Gr^\lambda$ can be viewed as a (parabolic) Schubert cell; its closure is
obtained by adding the orbits $\Gr^\mu$ with $\mu\in P_+$ such that
$\mu\leq\lambda$.

Lusztig observed in \cite{Lusztig83} that a great deal of information about
the representation $L(\lambda)$ of $G$ is encoded in the geometry of
$\overline{\Gr^\lambda}$; for instance, the dimension of $L(\lambda)$
is equal to the dimension of the intersection homology of
$\overline{\Gr^\lambda}$.

Lusztig's insight can be regarded as a categorification of the classical
Satake isomorphism, where $G^\vee(\mathcal O)$-biinvariant compactly
supported functions on $G^\vee(\mathcal K)$ are replaced by
$G^\vee(\mathcal O)$-equivariant perverse sheaves on $\Gr$ with
coefficients in $\mathbb C$. Specifically, consider the category
$\Perv$ of such sheaves with finite dimensional support.  It is possible to endow
$\Perv$ with a convolution product along with suitable associativity
and commutativity constraints.  The total cohomology
provides a fiber functor $F$ from $\Perv$ to the category $\Vect$ of
finite dimensional $\mathbb C$-vector spaces. By Tannakian reconstruction,
$F$ induces an equivalence of categories from $\Perv$
to the category $\Rep$ of finite dimensional representations of a group
scheme $\overline G$ over $\mathbb C$ such that the diagram
$$\xymatrix@R=15pt@C=10pt{\Perv\ar[rr]^\simeq\ar[dr]_F&&\Rep\ar[dl]\\
&\Vect&}$$
commutes, where the right downward arrow is the forgetful functor.
The group scheme $\overline G$ is algebraic,
connected, reductive, and its root datum is inverse to the
root datum of $G^\vee$; in other words $\overline G$ is isomorphic to~$G$.

This program was carried out by Ginzburg~\cite{Ginzburg},
Beilinson--Drinfeld~\cite{BeilinsonDrinfeld} and
Mirkovi\'c--Vilonen~\cite{MirkovicVilonen}.
We refer the reader to the latter paper for the proof.

\subsection{Weight functors}
\label{ss:WeightFunct}
As a subgroup of $G^\vee(\mathcal K)$, the torus $T^\vee(\mathbb C)$
acts on $\Gr$. The regular dominant weight $\rho$ defines a
homomorphism $\mathbb C^\times\to T^\vee(\mathbb C)$, so provides
a $\mathbb C^\times$-action on $\Gr$. In their proof, Mirkovi\'c
and Vilonen define weight functors using the hyperbolic localization functors
defined by this action~\cite{Braden}. We recall part of their
construction.

Given $\mu\in P$, we denote by $S^\mu$ the $N^\vee_+(\mathcal K)$-orbit
through $L_\mu$ and by $S_-^\mu$ the $N_-^\vee(\mathcal K)$-orbit
through the same point. Then for each $L$ in $S_+^\mu$, respectively
$S_-^\mu$, we have
$$\lim_{a\to0}\rho(a)\cdot L=L_\mu,\quad\text{respectively }
\lim_{a\to\infty}\rho(a)\cdot L=L_\mu.$$
The Iwasawa decomposition in $G^\vee(\mathcal K)$ implies that
$$\Gr=\bigsqcup_{\mu\in P}S^\mu=\bigsqcup_{\mu\in P}S_-^\mu\,;$$
it follows that the points $L_\mu$ are the fixed points for
our $\mathbb C^\times$-action and that $S^\mu$ and $S_-^\mu$ are the
attractive and repulsive varieties around $L_\mu$.

Given $\mu\in P$, we define subsets $\overline{S^\mu}$ and
$\overline{S_-^\mu}$ by
$$\overline{S^\mu}=\bigsqcup_{\nu\leq\mu}S^\nu\quad\text{and}\quad
\overline{S_-^\mu}=\bigsqcup_{\nu\geq\mu}S_-^\nu.$$
These are closed subsets (\cite[Proposition~3.1]{MirkovicVilonen}).
Therefore we can factorize the inclusion map
$s_\mu:S_-^\mu\hookrightarrow\Gr$ as the composition of an open immersion
$\mathring s_\mu$ and a closed immersion $\overline s_\mu$ as follows
$$\xymatrix@C=36pt{S_-^\mu\ar[r]^{\mathring s_\mu}\ar@/_1.4pc/[rr]_{s_\mu}
&\overline{S_-^\mu}\ar[r]^{\overline s_\mu}&\Gr.}$$

For any weight $\mu\in P$ and any sheaf $\mathcal A\in\Perv$, the
cohomology $H^\bullet(S_-^\mu,\;s_\mu^{\;!}\mathcal A)$ is concentrated
in degree $k=2\rho^\vee(\mu)$ and we have a diagram
$$\xymatrix@C=40pt{H^k(\overline{S_-^\mu},\;\overline s_\mu^{\;!}\mathcal A)
\ar[r]^(.54){(\overline s_\mu)_!}\ar[d]_{(\mathring s_\mu)_*}^{\simeq}&
H^k(\Gr,\mathcal A)\\H^k(S_-^\mu,\;s_\mu^{\;!}\mathcal A)&}$$
where the vertical arrow is an isomorphism. Further, for each
$k\in\mathbb Z$, the maps $(\overline s_\mu)_!$ provide a decomposition
$$H^k(\Gr,\mathcal A)=\bigoplus_{\substack{\mu\in P\\[1pt] 2\rho^\vee(\mu)=k}}H^k\bigl(\overline{S_-^\mu},\;\overline s_\mu^{\;!}
\mathcal A\bigr)$$
by \cite{MirkovicVilonen}, Theorem~3.6. As a consequence, the
fiber functor $F=H^\bullet(\Gr,\bm?)$ from \S \ref{ss:GeomSatEquiv}
decomposes as a direct sum of weight functors
$$F=\bigoplus_{\mu\in P}F_\mu$$
defined by
$$F_\mu=H^{2\rho^\vee(\mu)}
\bigl(\overline{S_-^\mu},\;\overline s_\mu^{\;!}\;\bm?\bigr)=
H^\bullet\bigl(S_-^\mu,\;s_\mu^{\;!}\;\bm?\bigr).$$

Since this decomposition is compatible with the convolution product,
it defines a homomorphism $T\to\overline G$ that identifies $T$ with
a maximal torus of $\overline G$ (\cite[p.~122]{MirkovicVilonen}).

\subsection{Action of the principal nilpotent}
\label{ss:PrincNilp}
To understand how $G$ acts on the spaces $F(\mathcal A)$, we need to fix the
isomorphism $\overline G\cong G$. For this, we use an idea of Ginzburg
\cite{Ginzburg}.

Let $\mathfrak g^\vee$ be the Lie algebra of $G^\vee$ and let
$q:P\to\mathbb Q$ be the $W$-invariant quadratic form such that
$q(\alpha)=1$, if $\alpha$ is a short root of $G$.  Let $B:P\times P\to\mathbb Q$ be the polar form of
$q$ and let $\iota:P\to\mathfrak t$ be the map $\mu\mapsto B(\mu,\bm?)$.
The invariance of $q$ under the Weyl group implies that
$\iota(\alpha_i)=q(\alpha_i)\alpha_i^\vee$ for each $i\in I$.

From this data,
we can construct an affine Kac--Moody Lie algebra
$\widehat{\mathfrak g^\vee}$, as explained in~\cite[chapter~6]{Kac}.
With the standard notation set up in this reference, the dual of the
Cartan subalgebra of $\widehat{\mathfrak g^\vee}$ can be written as
$\mathfrak t\oplus\mathbb C\delta^\vee\oplus\mathbb C\Lambda_0$.
For $\mu\in P$, we set
$\pi(\mu)=\Lambda_0-\iota(\mu)-q(\mu)\delta^\vee$, an element
in~$\tilde{\mathfrak t}$.

Let $V$ denote the integrable representation of $\widehat{\mathfrak g^\vee}$
of highest weight $\Lambda_0$. This representation determines a homomorphism
$G^\vee(\mathcal K)\to\PGL(V)$~(\cite[Proposition~13.2.4]{Kumar}).  This can be
lifted to a representation of a central extension
$E(G^\vee(\mathcal K))$ of $G^\vee(\mathcal K)$ by $\mathbb C^\times$
(\cite[Proposition~13.2.8]{Kumar}). Moreover, the cocycle that defines
this extension involves the tame symbol (\cite[Theorem~12.24]{Garland});
this cocycle is trivial on $\mathcal O$, so this extension splits over
$G^\vee(\mathcal O)$, giving a diagram
$$\xymatrix{&&&G^\vee(\mathcal O)\ar[dl]_s\ar@{^{(}->}[d]&\\
1\ar[r]&\mathbb C^\times\ar[r]^(.34)i&E(G^\vee(\mathcal K))
\ar[r]^(.57)p&G^\vee(\mathcal K)\ar[r]&1.}$$

Let $v$ denote the highest weight vector of $V$. It is invariant under
the group $s(G^\vee(\mathcal O))$, so the map $g\mapsto gv$ defines an
embedding $\Upsilon:\Gr\into\mathbb P(V)$. This embedding is a morphism
of ind-varieties (\cite[\S2]{Slodowy}). We thereby obtain a (very ample)
$G^\vee(\mathcal K)$-equivariant line bundle
$\mathscr L=\Upsilon^*\mathscr O(1)$, which incidentally is known to
generate the Picard group of the identity component of $\Gr$.

 Formula~(6.5.4) in \cite{Kac} implies the
following statement (compare with \cite[(3.2)]{MirkovicVilonen}).
\begin{proposition}
	\label{pr:KacFormula}
	For each $\mu\in P$, the line $\Upsilon(L_\mu)$ is contained in the
	$\pi(\mu)$ weight space of $V$.
\end{proposition}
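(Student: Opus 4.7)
The plan is to reduce the statement to the explicit computation in Kac's textbook of how the translation elements of the extended affine Weyl group act on the fundamental weight $\Lambda_0$. Since $\Upsilon(L_\mu)$ is by construction the line spanned by $\tilde g\cdot v$ for any lift $\tilde g\in E(G^\vee(\mathcal K))$ of $t^\mu$, and since the central $\mathbb C^\times$ scales $v$ (so affects only the line's representative, not its weight), it suffices to show that some lift of $t^\mu$ sends $v$ into the $\pi(\mu)$-weight space of $V$.

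First I would recall the dictionary between $T^\vee(\mathcal K)$ and the Cartan of the affine Kac--Moody group: the cocharacter $t\mapsto t^\mu$ corresponds to the translation $t_\mu$ in the extended affine Weyl group acting on $\tilde{\mathfrak t}^*=\mathfrak t\oplus\mathbb C\delta^\vee\oplus\mathbb C\Lambda_0$. Since $V$ is the integrable highest-weight module $L(\Lambda_0)$, the weight-space decomposition is preserved by the $E(G^\vee(\mathcal K))$-action, and the translation $t_\mu$ maps the one-dimensional $\Lambda_0$-weight space $\mathbb C v$ to the one-dimensional $t_\mu(\Lambda_0)$-weight space.

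Second, I would invoke formula (6.5.4) in~\cite{Kac}, which reads (with $\lambda=\Lambda_0$, so $\langle K,\Lambda_0\rangle=1$ and $\Lambda_0|_{\mathfrak t}=0$)
\[
t_\mu(\Lambda_0)=\Lambda_0+\nu^{-1}(\mu)-\tfrac12|\mu|^2\,\delta
\]
where $\nu^{-1}$ and $|\cdot|^2$ are defined via the invariant bilinear form normalized so that short coroots have square length $2/\ell$ for the appropriate $\ell$. With our normalization ($q(\alpha)=1$ on short roots, $B$ its polar form, $\iota(\mu)=B(\mu,\bm?)$), the right-hand side is precisely $\Lambda_0-\iota(\mu)-q(\mu)\delta^\vee=\pi(\mu)$, once the sign of the translation is matched to the convention that $t^\mu$ (rather than $t^{-\mu}$) represents $t_\mu$; this boils down to noting that the Iwasawa decomposition places $L_\mu\in S_-^\mu$ (resp.\ $S^\mu$) in the correct chamber.

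The main obstacle is purely bookkeeping: aligning Kac's invariant form and his $\nu$ with the form $B$ and map $\iota$ used here, and fixing the sign convention for how the cocharacter lattice embeds into the translation subgroup of the extended affine Weyl group. Once these conventions are matched, the proposition follows from a single line of~(6.5.4) applied to $\Lambda_0$.
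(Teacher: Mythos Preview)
Your proposal is correct and follows exactly the approach indicated in the paper: the paper's entire ``proof'' is the sentence ``Formula~(6.5.4) in \cite{Kac} implies the following statement,'' and you have simply unpacked what that citation means, namely that a lift of $t^\mu$ carries the highest weight line $\mathbb C v$ to the $t_\mu(\Lambda_0)$-weight space, and then applied Kac's formula to identify $t_\mu(\Lambda_0)$ with $\pi(\mu)$. Your remarks about the bookkeeping needed to align Kac's form $\nu$ with the paper's $\iota$ and to fix the sign of the translation are apt (the parenthetical about the Iwasawa decomposition is perhaps not the cleanest way to pin down that sign, but the point stands).
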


The cohomology algebra $H^\bullet(\Gr,\mathbb C)$ acts by the cup-product
on $F(\mathcal A)=H^\bullet(\Gr,\mathcal A)$ for any object
$\mathcal A\in\Perv$, and the action is natural in $\mathcal A$. In
particular, the cup-product with the first Chern class
$c_1(\mathscr L)$ is an endomorphism $\overline e$ of the functor $F$.
By Lemma~5.1 in \cite{YunZhu}, this element $c_1(\mathscr L)$ is primitive
in $H^\bullet(\Gr,\mathbb C)$, which implies that for any sheaves
$\mathcal A$, $\mathcal B$ in $\Perv$ we have
$$\overline e_{\mathcal A*\mathcal B}=
\overline e_{\mathcal A}\otimes\id_{F(\mathcal B)}
+\id_{F(\mathcal A)}\otimes\overline e_{\mathcal B}$$
under the isomorphism $F(\mathcal A*\mathcal B)\cong F(\mathcal A)
\otimes F(\mathcal B)$. It follows that $\overline e$ belongs to the
Lie algebra $\overline{\mathfrak g}$ of $\overline G$.

Now let $\overline h\in\overline{\mathfrak g}$ be the element that
acts as the multiplication by the cohomological degree on each vector
space $F(\mathcal A)$. Clearly we have $[\overline h,\overline e]=
2\overline e$. For any $\mathcal A\in\Perv$, the hard Lefschetz
theorem guarantees the existence of an endomorphism
$\overline f_{\mathcal A}$ of the vector space $F(\mathcal A)$
such that $(\overline e_{\mathcal A},\overline h_{\mathcal A},
\overline f_{\mathcal A})$ is a $\mathfrak{sl}_2$ triple. Certainly
$\overline f_{\mathcal A}$ is unique, hence natural in $\mathcal A$,
and we conclude that there is a unique element $\overline f\in
\overline{\mathfrak g}$ such that $(\overline e,\overline h,
\overline f)$ is an $\mathfrak{sl}_2$ triple (see
\cite[Theorem~5.3.23]{Zhu}).

By the end of \S \ref{ss:WeightFunct}, $T$ is a maximal torus of
the group $\overline G$, so we may decompose $\overline{\mathfrak g}$
into root subspaces with respect to the adjoint action of $T$. The root
system is then the root system of $G$. Further, $\overline h$
identifies with the element $2\rho^\vee\in\mathfrak t$, so
$\langle\overline h,\alpha_i\rangle=2$ for each simple root
$\alpha_i$ of $G$. By \cite[chapitre~8, \S11, Proposition~8]{Bourbaki},
we can then write $\overline e=\sum_{i\in I}\overline e_i$ where
each $\overline e_i$ is a nonzero root vector of weight $\alpha_i$.

With all these ingredients in hand, we can fix the isomorphism
$\overline G\cong G$ by identifying each simple root vector
$\overline e_i\in\overline{\mathfrak g}$ with its counterpart
$q(\alpha_i)e_i\in\mathfrak g$.

\section{The Mirkovi\'c--Vilonen basis in representations}
\label{se:MVBasisRepr}

\subsection{Some more notation}
\label{ss:SoMoNo}

In this section we recall standard facts and notation about sheaves and
cycles.  Throughout this paper, we will consider sheaves of $ \C$-vector spaces.  Similarly, singular cohomology, homology, and $K$-theory will always be considered with $\C$-coefficients.

Suppose that $X$ is a complex irreducible algebraic variety of dimension
$d$.  We denote the
constant sheaf on $X$ with stalk $\C$ by $\C_X$. The Verdier dual of
$\C_X$ is the dualizing sheaf $\D_X$ on $X$; it can be defined as either
$f^!\C_{pt}$ where $f:X\to pt$ is the constant map, or as the sheafification of the complex of presheaves
$U\mapsto C_{-\bullet}(X,X\setminus U)$ of relative singular chains.
The singular cohomology of $X$ is identified with $H^\bullet(X,\C_X)$; the Borel--Moore homology of $X$ (constructed from possibly infinite singular chains with locally finite support) is identified with $H_\bullet(X):=H^{-\bullet}(X,\D_X)$.

 Let $ D_c(X) $ denote the bounded derived category of constructible sheaves of $\C$-vector spaces on $ X $.  We have the (contravariant) Verdier duality functor $ \D : D_c(X) \rightarrow D_c(X) $ defined by $ R\mathcal Hom(-, \D_X) $.

\subsubsection{Intersection cohomology sheaf}
 The open subset of regular points $X^{\mathrm{reg}}$ is a real
connected oriented manifold of dimension $2d$, so
$H_{2d}(X^{\mathrm{reg}})$ is a one dimensional vector space spanned
by the fundamental class of $X^{\mathrm{reg}}$. Since $X\setminus
X^{\mathrm{reg}}$ is a pseudomanifold of dimension $\leq2d-2$, the
restriction map $H_{2d}(X)\to H_{2d}(X^{\mathrm{reg}})$ is an
isomorphism; we denote by $[X]$ the class in $H_{2d}(X)$ that restricts
to the fundamental class of $X^{\mathrm{reg}}$ and refer to $[X]$ as the
fundamental class of $X$. The same notation $[X]$ will also be used to
denote the image (proper pushforward) in $H_{2d}(Y)$ of this class
under a closed immersion $X\to Y$.

As a topological pseudomanifold, $X$ admits a filtration
with even real-dimensional strata, which allows to define unambiguously the sheaf of intersection chains $IC(X)$ w.r.t.\ the middle perversity; it
restricts to the shifted local system $\C_S[d]$ on the open stratum $S$.
Local sections of $IC(X)$ are (possibly infinite) singular chains that
satisfy specific conditions relative to how they meet the lower dimensional
strata of $X$; forgetting these conditions gives a map 
\begin{equation}
\label{eq:ICtoD}
	IC(X)\to\D_X[-d]
\end{equation}
which restricts on $S$ to the isomorphism
$$IC(X)\bigl|_S\equiv\C_S[d]\xrightarrow\simeq
\D_S[-d]\equiv\D_X[-d]\bigl|_S$$
given by the orientation (see \cite[\S5.1]{GoreskyMacPherson}).

\subsubsection{Cup and cap products}\label{sss:cupcap}
Let $ Z $ be a locally closed subset of $ X $ and let $ i: Z \rightarrow X $ denote the inclusion.  For any object $ \mathcal A \in D_c(X)$ on $ X$, we write $ H^\bullet_Z(X, \mathcal A) := H^\bullet(Z, i^! \mathcal A) $.  In particular, when $ \mathcal A = \C_X $, then we write $ H^\bullet_Z(X) := H^\bullet(Z, i^! \mathcal \C_X) $.  This is isomorphic to the singular cohomology $ H^\bullet(X, X \setminus Z) $.

Now let $ u \in H_Z^p(X) $.  So $ u : \C_Z \rightarrow i^! \C_X[p] $ and by adjunction, we can regard  $ u$ as a map  $ u :  i_! \basefield_Z \rightarrow \basefield_X [p]$.

For any $ \mathcal A \in D_c(X) $, we define its \defn{cup product} with $ u $ to be the resulting map $  i_! i^* \mathcal{A}  \rightarrow \mathcal{A}[p] $ given by applying $ \stackrel L\otimes \mathcal A $ to the map $ u $.  Taking compactly supported global sections gives us
$$
u \cup : H^k_c(Z, i^*\mathcal A) \rightarrow H^{k+p}_c(X,\mathcal A)
$$

Similarly for any $ \mathcal A \in D_c(X) $, we define its \defn{cap product} with $ u $ to be the resulting map $  \mathcal{A}  \rightarrow i_* i^! \mathcal{A} [p]$ (given by applying $ R\mathscr{H}om( - , \mathcal A)[p] $ to the map $ u $).  Taking global sections gives us
$$
u \cap :
H^{-k}(X, \mathcal A) \rightarrow H^{-k+p}_Z(X,\mathcal A)
$$

If we take $ \mathcal{A} = \D_X $, we obtain the usual cap product map in Borel--Moore homology,
\begin{equation*}
u \cap : H_k(X) \rightarrow H_{k-p}(Z).
\end{equation*}

For any $ \mathcal A \in D_c(X) $, if we take the cup product map
$$
i_! i^* \mathcal{A}  \xrightarrow{u \cup} \mathcal{A}[p]
$$
and apply Verdier duality, we obtain a map
$$
\D(\mathcal A)[-p] \xrightarrow{\D(u \cup)} i_* i^! \D(\mathcal{A})
$$
which coincides with the map $ u \cap $ shifted by $-p$,
by~\cite[(2.6.7)]{KashiwaraSchapira}.

These cup and cap product maps are compatible with pullback.  Let us explain this compatibility in the case of cap product.  Let $ f : Y \rightarrow X $ be a morphism and let $ W = f^{-1}(Z) $.  Then we can form $ f^*u \in H_{W}^p(Y) $ and given any $ \mathcal A \in D_c(Y) $, we have the following commutative diagram
\begin{equation} \label{eq:projection}
\xymatrix{
	H^{-k}(Y, \mathcal A) \ar^{(f^*u) \cap }[r]  \ar^\cong[d] & H^{-k+p}_W(Y, \mathcal A) \ar^\cong[d] \\
	H^{-k}(X, f_* \mathcal A) \ar^{u \cap }[r]  & H^{-k+p}_Z(X, f_* \mathcal A)
}
\end{equation}
where the vertical isomorphisms come from the composition of push-forwards (and base change for the right hand vertical arrow).

\subsubsection{Cap products and cycles}
An effective way to compute cap products with Chern classes is to reduce
to calculations of intersection multiplicities. We quickly recall the
definitions and the basic results from Fulton's book \cite{Fulton} in
the very specific setup that we will need. We consider a fibre square
of $\mathbb C$-schemes
$$\xymatrix{W\ar[r]\ar[d]&V\ar[d]\\D\ar[r]_i&Y}$$
with $i$ the inclusion of an effective Cartier divisor and $V$ an
irreducible variety of dimension $k$. We assume that $V$ is not contained
in the support of $D$. Let $Z$ be an irreducible component of $W$;
it is a subvariety of $V$ of codimension $1$. The \textbf{multiplicity}
of $Z$ in the product $D\cdot V$ is defined to be the length of the
module $\mathcal O_{W,V}/(f)$ over the local ring $\mathcal O_{W,V}$
of $V$ along $W$, where $f$ is a local equation of $D|_V$ on an affine
open subset of $V$ which meets $Z$. Following \cite[chap.~7]{Fulton},
this multiplicity is denoted by $i(Z,D\cdot V)$.

The inclusion $i$ is a regular embedding of codimension $1$, hence
has an orientation class $u\in H^2(Y,Y\setminus D)$. Concretely
(see \cite[\S19.2]{Fulton}), $D$ is the zero-locus of the canonical
section $s:Y\to\mathcal O_Y(D)$ and $u$ is the pullback by $s$ of the
Thom class of the line bundle $\mathcal O_Y(D)$. Now consider a fibre square 
$$\xymatrix{X'\ar[r]\ar[d]&Y'\ar[d]^g\\D\ar[r]_i&Y}$$
Following the above discussion we have the cap product
$$H_k(Y')\xrightarrow{(g^*u)\cap}H_{k-2}(X').$$
The following result follows from Theorem~19.2 in \cite{Fulton}.

\begin{proposition} \label{pr:Fulton}
Retain the above notation and assume that 
 $V$ is an irreducible
subvariety of $Y'$ of dimension $k$, not contained in $g^{-1}(D)$. 
Then
$$(g^*u)\cap[V]=\sum_Zi(Z,D\cdot V)\,[Z]$$
the sum being taken over all irreducible components of $V\cap g^{-1}(D)$.
\end{proposition}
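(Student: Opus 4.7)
The strategy is to derive this from Theorem~19.2 of \cite{Fulton}, which is precisely the bridge between the topological cap product with an orientation class and the algebraic refined intersection.

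First, I would handle the algebraic side. The regular embedding $i\colon D\hookrightarrow Y$ of codimension one carries a refined Gysin homomorphism $i^!\colon A_k(Y')\to A_{k-1}(X')$. Because $V$ is not contained in $g^{-1}(D)$, the pullback $D\cdot V$ is a well-defined cycle of pure dimension $k-1$, and by the direct definition of the Gysin map for divisors (the codimension-one case of \cite[Chap.~2]{Fulton}) one has
\[
i^![V] \;=\; \sum_{Z} i(Z, D\cdot V)\, [Z] \quad\text{in}\ A_{k-1}(X'),
\]
the sum running over the irreducible components $Z$ of $V\cap g^{-1}(D)$, with $i(Z,D\cdot V)$ the local length defined in \cite[\S 7.1]{Fulton}.

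Next, I would transfer this identity to Borel--Moore homology via the cycle class map $\mathrm{cl}\colon A_\bullet(X')\to H_{2\bullet}(X')$. The required compatibility is the content of \cite[Theorem~19.2]{Fulton}: for a regular embedding, the operation ``cap product with the topological orientation class'' on Borel--Moore homology coincides with $\mathrm{cl}\circ i^!$. In our situation $u\in H^2(Y,Y\setminus D)$ is, by construction, $s^*\tau$, where $\tau$ is the Thom class of $\mathcal O_Y(D)$ and $s$ is its tautological section whose zero locus is $D$; the base change $g^*u\in H^2(Y',Y'\setminus g^{-1}(D))$ plays the same role for the Cartesian square. Applying naturality together with the theorem then gives
\[
(g^*u) \cap [V] \;=\; \mathrm{cl}\bigl(i^![V]\bigr) \;=\; \sum_Z i(Z, D\cdot V)\,[Z],
\]
which is the asserted formula.

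The only substantive point, which is absorbed into the citation, is matching the topologically defined orientation class $u$ to the algebraic Gysin normalization with the correct sign; this is exactly what Theorem~19.2 of \cite{Fulton} accomplishes, and is the reason one cannot simply quote Chapter~2 of that book. Once this compatibility is granted, the proposition is a direct unwinding of definitions, so I would expect the write-up to be essentially one line of reduction plus a pointer to \cite{Fulton}.
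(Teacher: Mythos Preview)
Your proposal is correct and matches the paper's approach: the paper simply states that the proposition follows from Theorem~19.2 in \cite{Fulton}, with no further argument. Your write-up is a faithful unpacking of exactly that citation, so there is nothing to add.
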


\subsection{Mirkovi\'c--Vilonen cycles}
\label{ss:MVCycles}
Let $\lambda\in P_+$, fixed for the whole section. As shown
by Mirkovi\'c and Vilonen (\cite[Theorem~3.2]{MirkovicVilonen}),
given $\mu\in P$, the intersection $\Gr^\lambda\cap S_-^\mu$, when
non-empty, has pure dimension $\rho^\vee(\lambda-\mu)$.
We define an \textbf{MV cycle} $Z$ of type $\lambda$ and weight $\mu$ to be
an irreducible component of $\overline{\Gr^\lambda\cap S_-^\mu}$.
Equivalently, an MV cycle of type $\lambda$ and weight $\mu$ is an
irreducible component of $\overline{\Gr^\lambda}\cap\overline{S_-^\mu}$
of dimension $\rho^\vee(\lambda-\mu)$.
We denote the set of these cycles by $\mathcal Z(\lambda)_\mu$
and define
$$\mathcal Z(\lambda):=\bigsqcup_{\mu\in P}\mathcal Z(\lambda)_\mu,$$
the set of all MV cycles of type $\lambda$.

Braverman and Gaitsgory endow $\mathcal Z(\lambda)$ with the structure
of an upper semi-normal $G$-crystal~\cite{BravermanGaitsgory}. Their
definition involves a geometric construction, but one can provide the
following purely combinatorial short characterization
(\cite[proof of Proposition~4.3]{BaumannGaussent}):
\begin{itemize}
\item
Let $\mu\in P$ and $Z\in\mathcal Z(\lambda)_\mu$. We set $\wt(Z)=\mu$.
The closed subset $Z$ is $\mathbb C^\times$-invariant with respect to the
action defined in \S \ref{ss:WeightFunct} and meets the repulsive
cell $S_-^\mu$, so $L_\mu\in Z$. For each $i\in I$, we can then define
$$\varepsilon_i(Z)=\max\{n\in\mathbb Z_{\geq0}\mid
L_{\mu+n\alpha_i}\in Z\}\;\text{ and }\;
\varphi_i(Z)=\varepsilon_i(Z)+\langle\alpha_i^\vee,\mu\rangle.$$
\item
Let $\mu\in P$, $i\in I$ and $(Z,Z')\in\mathcal Z(\lambda)^2$. Then
$$Z'=\tilde e_iZ\ \Longleftrightarrow\ \Bigl(Z'\subseteq Z,\;\
\wt(Z')=\wt(Z)+\alpha_i\;\text{ and }\;
\varepsilon_i(Z')=\varepsilon_i(Z)-1\Bigr).$$
\end{itemize}

We denote the intersection cohomology sheaf of the Schubert variety
$\overline{\Gr^\lambda}$ by $\IC_\lambda$. The geometric Satake
equivalence maps this perverse sheaf to the simple $G$-module
$L(\lambda)$. In other words, under the identification $\overline G
\cong G$ specified at the end of \S \ref{se:RecGeomSat}, there
is an isomorphism $F(\IC_\lambda)\cong L(\lambda)$, unique up to a
scalar. For each $\mu\in P$, the subspace of $L(\lambda)$ of weight
$\mu$ identifies with
$$F_\mu(\IC_\lambda)=H^k\bigl(\overline{S_-^\mu},\;\overline s_\mu^{\;!}
\,\IC_\lambda\bigr) = H^k_{\overline{S_-^\mu}}\bigl(\Gr, \IC_\lambda\bigr)$$
where $k=2\rho^\vee(\mu)$.

By (\ref{eq:ICtoD}), we have a map of sheaves $\IC_\lambda\to\D_{\overline{\Gr^\lambda}}[-d]$,
where $d=\dim\Gr^\lambda=2\rho^\vee(\lambda)$.
Denoting the inclusion of the open stratum
by $j:\Gr^\lambda\to\overline{\Gr^\lambda}$, we then get a commutative diagram
$$\xymatrix{\IC_\lambda\ar[r]\ar[d]&\D_{\overline{\Gr^\lambda}}[-d]\ar[d]\\
j_*j^*\,\IC_\lambda\ar[r]^-\simeq&j_*j^*\,\D_{\overline{\Gr^\lambda}}[-d]}$$
where the vertical arrows are adjunction maps. The bottom arrow is
an isomorphism because both $\IC_\lambda[-d]$ and
$\D_{\overline{\Gr^\lambda}}[-2d]$ restrict to the trivial local system
over $\Gr^\lambda$.

Base change in the Cartesian square
$$\xymatrix{\Gr^\lambda\cap\overline{S_-^\mu}\ar[r]^{j'}
\ar[d]_{\overline s'_\mu}&\overline{\Gr^\lambda}\cap\overline{S_-^\mu}
\ar[r]\ar[d]&\overline{S_-^\mu}\ar[d]^{\overline s_\mu}\\
\Gr^\lambda\ar[r]_j&\overline{\Gr^\lambda}\ar[r]&\Gr}$$
gives the isomorphism
$\overline s_\mu^{\;!}\;j_*j^*\,\D_{\overline{\Gr^\lambda}}\cong
j'_*\;\D_{\Gr^\lambda\cap\overline{S_-^\mu}}$. Applying the
functor $H^k\bigl(\overline{S_-^\mu},\;\overline s_\mu^{\;!}\,\bm?\bigr)$
to the commutative diagram above then yields
$$\xymatrix{F_\mu(\IC_\lambda)=H^k\bigl(\overline{S_-^\mu},\;
\overline s_\mu^{\;!}\,\IC_\lambda\bigr)\ar[r]\ar@<25pt>[d]_\simeq
&H^{k-d}\bigl(\overline{S_-^\mu},\;\overline s_\mu^{\;!}\,
\D_{\overline{\Gr^\lambda}}\bigr)\ar@{=}[r]\ar[d]
&H_{d-k}\bigl(\overline{\Gr^\lambda}\cap\overline{S_-^\mu},\,
\mathbb C\bigr)\ar[d]^\simeq\\
\rule{40pt}{0pt}H^k\bigl(\overline{S_-^\mu},\;\overline
s_\mu^{\;!}\,j_*j^*\,\IC_\lambda\bigr)\ar[r]^-\simeq
&H^{k-d}\bigl(\overline{S_-^\mu},\;j'_*\,\D_{\Gr^\lambda\cap
\overline{S_-^\mu}}\bigr)\ar@{=}[r]
&H_{d-k}\bigl(\Gr^\lambda\cap\overline{S_-^\mu},\,\mathbb C\bigr).}$$
Here the left vertical arrow is an isomorphism, as shown by Mirkovi\'c
and Vilonen (\cite[proof of Proposition~3.10]{MirkovicVilonen}).
The right vertical arrow is also an isomorphism, because each irreducible
component of $\overline{\Gr^\lambda}\cap\overline{S_-^\mu}$ of dimension
$(d-k)/2=\rho^\vee(\lambda-\mu)$ meets the open stratum
$\Gr^\lambda$. In fact, these irreducible components are precisely
the MV cycles of type~$\lambda$ and of weight $\mu$.

We denote by $[Z]\in H_{d-k}\bigl(\overline{\Gr^\lambda}\cap
\overline{S_-^\mu},\,\mathbb C\bigr)$ the fundamental class of such an
MV cycle~$Z$.
The set $\{[Z]\mid Z\in\mathcal Z(\lambda)_\mu\}$ is then a basis of
the weight space $F_\mu(\IC_\lambda)$. Gathering these bases for all
possible weights, we obtain a basis of $F(\IC_\lambda)$ indexed by
$\mathcal Z(\lambda)$, which we can transport to $L(\lambda)$ by
normalizing the isomorphism $F(\IC_\lambda)\cong L(\lambda)$ in such
a way that the highest weight vectors $[\{L_\lambda\}]$ and $v_\lambda$
match.

The basis of $L(\lambda)$ obtained in this manner is called the
\textbf{MV basis}. (By analogy with the case of the global basis and
following Kashiwara's terminology, we should more accurately call it the
upper MV basis.) The following result is (up to duality) Proposition~4.1
in \cite{BravermanGaitsgory}.

\begin{theorem}  \label{th:MVBasPerf}
  The MV basis is perfect. 
\end{theorem}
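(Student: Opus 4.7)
Of the three conditions required for perfection, the first two are immediate from the setup. The cycle $\{L_\lambda\}$ is the unique MV cycle of type $\lambda$ and weight $\lambda$, and the isomorphism $F(\IC_\lambda)\cong L(\lambda)$ was normalized precisely so that $[\{L_\lambda\}]\mapsto v_\lambda$; weight-homogeneity of $[Z]$ follows from $[Z]\in F_{\wt(Z)}(\IC_\lambda)\cong L(\lambda)_{\wt(Z)}$ via the decomposition $F=\bigoplus_\mu F_\mu$ into weight functors described in \S\ref{ss:WeightFunct}.

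The heart of the theorem is the formula
\[
e_i\cdot [Z]=\varepsilon_i(Z)\,[\tilde e_iZ]+\sum_{\varepsilon_i(Z')<\varepsilon_i(Z)-1}a_{Z'}\,[Z'],
\]
which is the representation-level version of \cite[Proposition~4.1]{BravermanGaitsgory}. My plan is to follow their strategy and establish, as geometric input, two preliminary assertions: (a) for each $n\geq 0$, the subspace $\{v\in L(\lambda):e_i^{n+1}v=0\}$ has basis $\{[Z]:\varepsilon_i(Z)\leq n\}$, so that the filtration by kernels of powers of $e_i$ matches the $\varepsilon_i$-filtration exactly; and (b) whenever $\varepsilon_i(Z)=n$, the divided-power action satisfies $e_i^{(n)}[Z]=[\tilde e_i^nZ]$. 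Both statements would be proved through the $SL_2^\vee$-subgroup of $G^\vee$ attached to the simple coroot $\alpha_i^\vee$: the statistic $\varepsilon_i(Z)$ measures the length of the chain of $T^\vee$-fixed points $L_{\wt(Z)},L_{\wt(Z)+\alpha_i},\ldots,L_{\wt(Z)+\varepsilon_i(Z)\alpha_i}$ contained in $Z$, and this chain governs how $Z$ transforms under the associated $\mathfrak{sl}_2$-action via the Satake correspondence.

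Granted (a) and (b), the perfection formula follows by a clean $\mathfrak{sl}_2$-theoretic manipulation. Set $n=\varepsilon_i(Z)$; condition (a) forces $e_i[Z]$ into $\ker(e_i^n)$, so we expand $e_i[Z]=\sum_{Z'}c_{Z'}[Z']$ with $\varepsilon_i(Z')\leq n-1$ and $\wt(Z')=\wt(Z)+\alpha_i$. Applying $e_i^{(n-1)}$ and using (b) for $[Z]$ on the left and for the surviving $[Z']$ on the right --- together with the fact that $e_i^{(n-1)}[Z']=0$ whenever $\varepsilon_i(Z')<n-1$ by (a) --- we obtain
\[
n\,[\tilde e_i^nZ]=\sum_{Z':\,\varepsilon_i(Z')=n-1}c_{Z'}\,[\tilde e_i^{n-1}Z'].
\]
The crystal operator $\tilde e_i$ is injective on its domain (with partial inverse $\tilde f_i$), so $Z'\mapsto \tilde e_i^{n-1}Z'$ is injective on MV cycles of $\varepsilon_i$-level $n-1$, and the basis vectors appearing on the right are pairwise distinct. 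Matching coefficients yields $c_{\tilde e_iZ}=n=\varepsilon_i(Z)$ and $c_{Z'}=0$ for every other $Z'$ of $\varepsilon_i$-level $n-1$, exactly as required.

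The principal obstacle is the geometric content of (a) and (b): showing that the kernel filtration by powers of $e_i$ matches the $\varepsilon_i$-filtration, and that the divided power $e_i^{(n)}$ recovers the combinatorial $\tilde e_i^n$ at the level of fundamental classes, requires a delicate analysis of how the $SL_2^\vee$-action on $\Gr$ interacts with the semi-infinite stratification by $S_-^\mu$ and the MV cycle structure --- precisely the inputs supplied by Braverman and Gaitsgory's geometric argument.
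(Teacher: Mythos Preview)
Your outline is sound: conditions (a) and (b) together do imply the perfection formula by the $\mathfrak{sl}_2$ manipulation you describe, and this is indeed Braverman and Gaitsgory's route. However, you explicitly defer the geometric proofs of (a) and (b) to their paper, so your argument is really a reduction to \cite{BravermanGaitsgory} rather than a self-contained proof.

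The paper takes a genuinely different path. Rather than first establishing the kernel filtration (a) and the divided-power identity (b) via the $SL_2^\vee$-action, it computes the action of $e_i$ on $[Z]$ directly as an intersection with a Cartier divisor (Theorem~\ref{th:ActEiMV}): the principal nilpotent $\overline e$ acts by cup product with $c_1(\mathscr L)$, and this is realized geometrically as capping with the orientation class of a hyperplane section $D$. The formula $q(\alpha_i)a_{Z'}=i(Z',D\cdot Z)$ shows at once that $a_{Z'}\neq 0$ forces $Z'\subseteq Z$, and the combinatorial characterization of $\varepsilon_i$ via fixed points $L_{\mu+n\alpha_i}\in Z$ then gives the triangularity $\varepsilon_i(Z')\leq\varepsilon_i(Z)-1$ with equality only for $Z'=\tilde e_iZ$. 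The leading coefficient is obtained not from your (b) but from an explicit local computation (Proposition~\ref{pr:CalcMult}) showing that $D$ vanishes along $\tilde e_iZ$ to order exactly $q(\alpha_i)\varepsilon_i(Z)$.

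What the paper's approach buys is strictly more than perfection: it gives a closed formula for \emph{every} coefficient $a_{Z'}$ as a nonnegative intersection multiplicity, not just the leading one. This refined information is essential later --- it is used to show that the MV bases form a coherent family (Proposition~\ref{pr:MVCohFam}), to prove biperfectness via the commutativity of divisor intersections (Lemma~\ref{le:UEquivar}), and to compute the Duistermaat--Heckman measure in Theorem~\ref{th:DHequalsD}. Your approach, by contrast, would only yield the leading term and the triangularity, which suffices for the theorem as stated but not for these downstream applications.
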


We will give our own proof of this result, which provides some
more refined information needed in the sequel.
The first step in the proof is carried out in \S\ref{ss:ActEiMV},
where we establish a formula that expresses in geometrical terms the
action of a Chevalley generator $e_i$ on a basis element $[Z]$. This
formula has the form
$$e_i\cdot[Z]=\sum_{Z'\in\mathcal Z(\lambda)}a_{Z'}[Z'],$$
where the coefficient $a_{Z'}$ is nonzero only if
$$\wt(Z')=\wt(Z)+\alpha_i\;\text{ and }\;Z'\subseteq Z.$$
Thus, for $[Z']$ to actually appear in $e_i\cdot[Z]$, it is necessary
that $L_{\wt(Z')+\varepsilon_i(Z')\alpha_i}\in Z$, which in turn implies
that $\varepsilon_i(Z)\geq\varepsilon_i(Z')+1$. If moreover the latter
relation is an equality, then necessarily $Z'=\tilde e_iZ$, by the
characterization of the crystal structure on $\mathcal Z(\lambda)$
given above. At this point, it remains to show that if
$\tilde e_iZ\neq0$, then the coefficient $a_{\tilde e_iZ}$ is equal to
$\varepsilon_i(Z)$. We perform this computation in \S \ref{ss:CompLeadCoef}.

\begin{remark}
As shown by Berenstein and Kazhdan~\cite{BerensteinKazhdan}, the
crystal of a perfect basis of the highest weight module $L(\lambda)$ is
independent of the choice of the basis. Therefore the crystals of the MV
basis and of the upper global basis of $L(\lambda)$ (specialized at $q=1$)
are isomorphic. This observation provides another proof of Braverman
and Gaitsgory's theorem~\cite{BravermanGaitsgory} that states that the
crystals $\mathcal Z(\lambda)$ are isomorphic to Kashiwara's normal
crystals~$B(\lambda)$.
\end{remark}

\subsection{Action of $e_i$ on an MV cycle}
\label{ss:ActEiMV}
Recall the notation set up in \S \ref{ss:PrincNilp} and the statement of Proposition \ref{pr:KacFormula}.

Fix $\mu\in P$ and pick a linear form $f$ on $V$ which is
nonzero on the line $\Upsilon(L_\mu)$ and which vanishes on all weight
subspaces of $V$ of weight other than $\pi(\mu)$. Let $D\subseteq
\Gr$ be the Cartier divisor defined as the intersection of $\Gr$
with the hyperplane in $\mathbb P(V)$ defined by $f$.
Proposition~3.1 in \cite{MirkovicVilonen} tells us that
$$D\cap\overline{S_-^\mu}=\bigcup_{i\in I}\overline{S_-^{\mu+\alpha_i}}.$$

\begin{theorem}
\label{th:ActEiMV}
Let $\lambda\in P_+$, let $i\in I$, and let $Z\in\mathcal Z(\lambda)_\mu$.
Let
$$e_i\cdot[Z]=\sum_{Z'\in\mathcal Z(\lambda)}a_{Z'}[Z']$$
be the expansion of the left-hand side in the MV basis of $L(\lambda)$.
Then
$$q(\alpha_i)a_{Z'}=
\begin{cases}
i(Z',D\cdot Z)&\text{if $\wt(Z')=\wt(Z)+\alpha_i$ and $Z'\subseteq Z$,}\\
0&\text{otherwise.}
\end{cases}$$
\end{theorem}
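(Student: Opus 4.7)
The plan is to express the action of $e_i$ on $F_\mu(\IC_\lambda)$ as a cap product with the orientation class of the divisor $D$, then apply Proposition~\ref{pr:Fulton}. First, by \S\ref{ss:PrincNilp} the operator $\overline e = \sum_i \overline e_i$ acts on cohomology as cup product with $c_1(\mathscr L)$, and the normalization $\overline e_i = q(\alpha_i)e_i$ from the end of \S\ref{ss:PrincNilp} means that the component of this cup product landing in $F_{\mu+\alpha_i}(\IC_\lambda)$ equals $q(\alpha_i)\,e_i$ acting on $F_\mu(\IC_\lambda)$. The divisor $D$ was built precisely so that its class in $H^2(\Gr)$ equals $c_1(\mathscr L)$ (it is the zero locus of a section of $\mathscr L$) and so that $D\cap\overline{S_-^\mu} = \bigcup_i \overline{S_-^{\mu+\alpha_i}}$. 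We therefore lift $c_1(\mathscr L)$ to its refinement $u\in H^2_D(\Gr)$, the orientation class of the regular embedding $D\hookrightarrow\Gr$.

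Next, translating to Borel--Moore homology via the Mirkovi\'c--Vilonen isomorphism recalled in \S\ref{ss:MVCycles}, we have $F_\mu(\IC_\lambda)\cong H_{d-k}(\overline{\Gr^\lambda}\cap\overline{S_-^\mu})$ and similarly for $\mu+\alpha_i$. Under Verdier duality, cup product with $u$ on the cohomological side becomes (up to the shift described in \S\ref{sss:cupcap}) the cap product
$$u\cap:H_{d-k}\bigl(\overline{\Gr^\lambda}\cap\overline{S_-^\mu}\bigr)\longrightarrow H_{d-k-2}\bigl(\overline{\Gr^\lambda}\cap\overline{S_-^\mu}\cap D\bigr).$$
The key point is that the weights $\mu+\alpha_i$ are pairwise incomparable in the dominance order, so for each $i$ the open stratum $S_-^{\mu+\alpha_i}$ avoids the other $\overline{S_-^{\mu+\alpha_j}}$; hence in top degree the right-hand side splits as $\bigoplus_i F_{\mu+\alpha_i}(\IC_\lambda)$, and the $i$-th projection implements $q(\alpha_i)e_i$.

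Finally, since $L_\mu\in Z$ but $f(\Upsilon(L_\mu))\neq0$ by the choice of $f$, the MV cycle $Z$ is not contained in $D$. Applying Proposition~\ref{pr:Fulton} with $Y=Y'=\Gr$, $g=\id$, and $V=Z$, we obtain
$$u\cap[Z]\;=\;\sum_{Z'}i(Z',D\cdot Z)\,[Z'],$$
summed over irreducible components $Z'$ of $Z\cap D$. Each such $Z'$ has dimension $\rho^\vee(\lambda-\mu)-1$ and is contained in some $\overline{S_-^{\mu+\alpha_i}}$, so is itself an MV cycle of type $\lambda$ and weight $\mu+\alpha_i$ (the equality $\rho^\vee(\lambda-\mu-\alpha_i)=\rho^\vee(\lambda-\mu)-1$ matches the dimension). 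Projecting onto the $F_{\mu+\alpha_i}$ summand and comparing with the expansion $e_i\cdot[Z]=\sum a_{Z'}[Z']$ gives $q(\alpha_i)a_{Z'}=i(Z',D\cdot Z)$, which is exactly the claim; coefficients for any $Z'$ failing either $\wt(Z')=\wt(Z)+\alpha_i$ or $Z'\subseteq Z$ vanish because such $Z'$ cannot appear as a component of $Z\cap D$.

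The main obstacle is the bookkeeping in the middle step: identifying the cup product with $c_1(\mathscr L)$ on the cohomology $H^k_{\overline{S_-^\mu}}(\Gr,\IC_\lambda)$ with the cap product with the orientation class $u$ on Borel--Moore homology (via the dictionary in \S\ref{sss:cupcap}), and verifying that the target homology genuinely decomposes as a direct sum over $i$ in the degree where the fundamental classes of MV cycles live. Once these two compatibilities are in place, the intersection-theoretic conclusion via Proposition~\ref{pr:Fulton} is essentially automatic.
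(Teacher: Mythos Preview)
Your proposal is correct and follows the same approach as the paper: lift $c_1(\mathscr L)$ to the orientation class of $D$, translate the cup product on $H^k_{\overline{S_-^\mu}}(\Gr,\IC_\lambda)$ into a cap product on Borel--Moore homology via $\IC_\lambda\to\D_{\overline{\Gr^\lambda}}[-d]$, apply Proposition~\ref{pr:Fulton}, and separate the result by weight. One small fix: when invoking Proposition~\ref{pr:Fulton} take $Y'=\overline{\Gr^\lambda}\cap\overline{S_-^\mu}$ (with $g$ its inclusion into $Y=\Gr$) rather than $Y'=\Gr$, so that the resulting equation lives in $H_{d-k-2}\bigl(\overline{\Gr^\lambda}\cap\overline{S_-^\mu}\cap D\bigr)$ as you stated a few lines earlier---this is exactly the fibre square the paper uses.
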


\begin{proof}
Regarding $\Gr$ as the zero section of the total space $L$ of the line
bundle $\mathscr L=\Upsilon^*\mathscr O(1)$, we can consider the Thom class
$\tau\in H^2_\Gr(L)$. Regarding $f$ as a continuous map from $\Gr$ to $L$
such that $f(\Gr\setminus D)\subseteq L\setminus\Gr$, we can form
$f^*\tau\in H^2_D(\Gr)$. With these notations, each perverse sheaf
$\mathcal A\in\Perv$ gives rise to a diagram
$$\xymatrix{H^\bullet_{\overline{S_-^\mu}}(\Gr,\mathcal A)\ar[r]
\ar[d]_{\cup f^*\tau}&H^\bullet(\Gr,\mathcal A)
\ar[d]_{\cup f^*\tau}\ar[dr]^(.65){\cup c_1(\mathscr L)}&\\
H_{D\cap\overline{S_-^\mu}}^\bullet(\Gr,\mathcal A)\ar[r]&
H_D^\bullet(\Gr,\mathcal A)\ar[r]&H^\bullet(\Gr,\mathcal A),}$$
which commutes following \cite[II.10.2 and II.10.4]{Iversen}.

Now let $\lambda\in P_+$ and set $\mathcal A=\IC_\lambda$,
$k=2\rho^\vee(\mu)$, and
$d=2\rho^\vee(\lambda)$. Similarly to the isomorphism
$$H^k_{\overline{S_-^\mu}}(\Gr,\,\IC_\lambda)\xrightarrow{\simeq}
H^{k-d}_{\overline{S_-^\mu}}(\Gr,\,\D_{\overline{\Gr^\lambda}})
=H_{d-k}\bigl(\overline{\Gr^\lambda}\cap\overline{S_-^\mu},\,
\mathbb C\bigr)$$
obtained in \S\ref{ss:MVCycles}, we have an isomorphism
$$H^{k+2}_{D\cap\overline{S_-^\mu}}(\Gr,\,\IC_\lambda)
\xrightarrow\simeq
H^{k+2-d}_{D\cap\overline{S_-^\mu}}(\Gr,\,\D_{\overline{\Gr^\lambda}})
=H_{d-k-2}\bigl(D\cap\overline{\Gr^\lambda}\cap\overline{S_-^\mu},\,
\mathbb C\bigr).$$
We then get a commutative diagram
$$\xymatrix@C=36pt@R=28pt{
H^k_{\overline{S_-^\mu}}(\Gr,\,\IC_\lambda)\ar[d]_{\cup f^*\tau}
\ar[r]^(.46)\simeq&H^{k-d}_{\overline{S_-^\mu}}
\bigl(\Gr,\,\mathbb D_{\overline{\Gr^\lambda}}\bigr)\ar[d]_{\cup f^*\tau}
\ar@{=}[r]&
H_{d-k}\bigl(\overline{\Gr^\lambda}\cap\overline{S_-^\mu},\,\mathbb C\bigr)
\ar[d]^{\cap f^*\tau}\\
H^{k+2}_{D\cap\overline{S_-^\mu}}(\Gr,\,\IC_\lambda)\ar[r]^(.46)\simeq
&H^{k+2-d}_{D\cap\overline{S_-^\mu}}
\bigl(\Gr,\,\mathbb D_{\overline{\Gr^\lambda}}\bigr)\ar@{=}[r]&
H_{d-k-2}\bigl(D\cap\overline{\Gr^\lambda}\cap\overline{S_-^\mu},
\,\mathbb C\bigr).}$$

Let $Z\in\mathcal Z(\lambda)_\mu$ and let $[Z]$ be its fundamental
class in $H_{d-k}\bigl(\overline{\Gr^\lambda}\cap\overline{S_-^\mu},
\,\mathbb C\bigr)$. The two commutative diagrams above and the
explanations in \S\ref{ss:PrincNilp} show that $[Z]\cap f^*\tau$
is the result of the action on $[Z]$ of the principal nilpotent
$\tilde e$. On the other hand, $f^*\tau$ is the orientation class
$u$ of the regular embedding $D\to\Gr$, so $[Z]\cap f^*\tau$
is the homology class of the cycle $D\cdot Z$, by Proposition \ref{pr:Fulton} applied to the fibre square
$$\xymatrix{D\cap\overline{\Gr^\lambda}\cap\overline{S_-^\mu}\ar[d]\ar[r]
&\overline{Gr^\lambda}\cap\overline{S_-^\mu}\ar[d]\\D\ar[r]&\Gr.}$$

Now any irreducible component of $D\cap Z$ must be contained in
$\overline{\Gr^\lambda}\cap\overline{S_-^{\mu+\alpha_i}}$ for some
$i\in I$; being of dimension
$\dim Z-1=\rho^\vee(\lambda-\mu-\alpha_i)$, it is
then of the form $Z'$ with $Z'\in\mathcal Z(\lambda)_{\mu+\alpha_i}$.
We eventually obtain
$$\overline e\cdot[Z]=\sum_{i\in I}
\sum_{Z'\in\mathcal Z(\lambda)_{\mu+\alpha_i}}a_{Z'}[Z']$$
where $a_{Z'}=i(Z',D\cdot Z)$ if $Z'\subseteq Z$ and $a_{Z'}=0$
otherwise. The claimed formula follows by isolating the contributions
of the different summands in $\overline e=\sum_{i\in I}q(\alpha_i)e_i$.
\end{proof}

\subsection{Computation of the leading coefficient}
\label{ss:CompLeadCoef}

\begin{proposition}
\label{pr:CalcMult}
Adopt the notation of Theorem~\ref{th:ActEiMV} and assume that
$\tilde e_iZ\neq0$. Then
$$i(\tilde e_iZ,D\cdot Z)=q(\alpha_i)\,\varepsilon_i(Z).$$
\end{proposition}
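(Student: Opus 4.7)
Set $n:=\varepsilon_i(Z)$ and $\eta:=\mu+n\alpha_i$. The plan is to reduce the multiplicity to a local analysis at the $T^\vee$-fixed point $L_\eta$, which lies in both $Z$ (by definition of $n$) and $\tilde e_iZ$ (since $\varepsilon_i(\tilde e_iZ)=n-1$ means $\eta=(\mu+\alpha_i)+(n-1)\alpha_i$ is also the top of $\tilde e_iZ$ in the $\alpha_i$-direction). The quantity $i(\tilde e_iZ,D\cdot Z)$ is the order of vanishing of $f|_Z$ along $\tilde e_iZ$, and by $T^\vee$-equivariance this order may be computed in any formal neighborhood of a point of $\tilde e_iZ$; we will work at $L_\eta$.

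\smallskip

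First, I would trivialize $\mathscr L$ at $L_\eta$ by choosing a generator of the weight-$\pi(\eta)$ line $\Upsilon(L_\eta)\subset V$, turning $f$ into a local function $\tilde f$. Combining Proposition~\ref{pr:KacFormula} with the formula $\pi(\nu)=\Lambda_0-\iota(\nu)-q(\nu)\delta^\vee$ shows that $\tilde f$ is $T^\vee$-semi-invariant of weight
\[
\iota(\mu)-\iota(\eta)\;=\;-n\,q(\alpha_i)\,\alpha_i^\vee,
\]
so $\tilde f(L_\eta)=0$ and every monomial in its Taylor expansion carries this same weight.

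\smallskip

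Next, I would apply the Bialynicki--Birula decomposition at the isolated $\rho$-fixed point $L_\eta$: the formal neighborhood of $L_\eta$ in $\Gr$ splits $T^\vee$-equivariantly as a product $S_+^\eta\times S_-^\eta$ of attractive and repulsive cells, whose coordinate functions have $T^\vee$-weights with positive (respectively negative) pairing with $\rho$. Since $Z$ and $\tilde e_iZ$ are $\rho$-invariant, they decompose compatibly. Because the target weight pairs to $-n\,q(\alpha_i)<0$ with $\rho$, every monomial in $\tilde f$ involves only coordinates from the repulsive factor $S_-^\eta$. The available $T^\vee$-weights of such coordinates are of the form $-\beta^\vee$ for positive coroots $\beta^\vee$ of $G$ (with loop exponents contributing $\delta^\vee$-shifts that are invisible to $T^\vee$).

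\smallskip

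Then I would identify the minimal-degree monomial of weight $-n\,q(\alpha_i)\,\alpha_i^\vee$ in such coordinates: it is the $(n\,q(\alpha_i))$-th power of a coordinate $x$ of weight $-\alpha_i^\vee$ coming from the $-\alpha_i^\vee$ root direction of $\mathfrak g^\vee$. Using the maximality $\varepsilon_i(Z)=n$ to ensure that $x|_Z\not\equiv 0$ near $L_\eta$, and computing the leading coefficient by a rank-one reduction to the affine Grassmannian of the subgroup of $G^\vee$ generated by the root subgroups for $\pm\alpha_i^\vee$, one concludes that the leading term of $\tilde f|_Z$ is a nonzero scalar multiple of $x^{n\,q(\alpha_i)}$. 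The order of vanishing of $f|_Z$ along $\tilde e_iZ$ is therefore $n\,q(\alpha_i)$, as claimed.

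\smallskip

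The main obstacle will be the last step: verifying that the leading term of $\tilde f|_Z$ is a \emph{pure} power of $x$ with nonzero coefficient, rather than some other polynomial of the same total degree in several coordinates of the repulsive cell. This is precisely where the restriction to $Z$ (as opposed to all of $\Gr$) matters, and where the rank-one reduction to the $SL_2$-Grassmannian associated with $\alpha_i^\vee$ together with the structure of the basic representation $V=V(\Lambda_0)$ restricted to it does the essential work.
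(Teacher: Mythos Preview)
Your overall strategy---localizing at the top fixed point $L_\eta$ and using $T^\vee$-weight considerations to determine the order of vanishing---is precisely the paper's approach. However, the step where you conclude that ``every monomial in $\tilde f$ involves only coordinates from the repulsive factor'' does not follow: a monomial of total $T^\vee$-weight $-n\,q(\alpha_i)\,\alpha_i^\vee$ can freely mix attractive, repulsive, and zero-weight coordinates (there are infinitely many of each, coming from different loop degrees), so long as the weights sum correctly. The ``main obstacle'' you flag at the end is therefore not a nuisance but the whole difficulty: the $\rho$-BB splitting together with a bare weight count cannot isolate a single coordinate direction, and the unspecified ``rank-one reduction to $SL_2$'' does not explain why the remaining coordinates on $Z$ decouple from the $\alpha_i^\vee$-direction.

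The paper resolves this not by refining the weight argument but by replacing the $\rho$-cells with a chart adapted to $\alpha_i^\vee$. It works in the $s_i\rho$-repulsive slice and invokes an explicit isomorphism from \cite[Proposition~4.5~(ii)]{BaumannGaussent},
\[
F:\ \mathbb C[t^{-1}]^+_n \times \dot Z' \xrightarrow{\ \sim\ } \dot Z,\qquad (p,y)\mapsto x_i^\vee(p\,t^r)\,y,
\]
where $\dot Z'$ is an open piece of $\tilde e_i^{\,n}Z$ and $r=\langle\alpha_i^\vee,\eta\rangle$; in these coordinates $\tilde e_iZ$ is the hyperplane $\{a_{-n}=0\}$. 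The decisive point is that $y=g\,L_\eta$ with $g$ in the loop group of $N_{-,i}^\vee$, the unipotent radical of the minimal parabolic containing the $\pm\alpha_i^\vee$ root subgroups. Hence in the expansion $g\,v_0=\sum_j v_j$ inside $V$, every $v_j$ with $j\neq0$ has weight $\pi(\eta)-\beta_j-c_j\delta^\vee$ with $\beta_j\notin\mathbb Z\alpha_i^\vee$, so $f$ annihilates the $x_i^\vee(pt^r)$-translates of these terms outright. Only the $v_0$-term survives, and matching loop exponents then forces $f\bigl(x_i^\vee(pt^r)v_0\bigr)$ to be a nonzero constant times $a_{-n}^{\,n\,q(\alpha_i)}$. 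Thus it is the parabolic factorization of the opposite unipotent---not the BB splitting for $\rho$---that achieves the separation of variables your sketch is looking for.
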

\begin{proof}
Let $s_i^\vee$ be a lift in $G^\vee$ of the simple reflection $s_i$.
The weight $s_i\rho:\mathbb C^\times\to T^\vee(\mathbb C)$ defines an
action of $\mathbb C^\times$ on $\Gr$. With respect to this action,
the repulsive cell around the fixed point $L_\nu$ is the subset
$s_i^\vee(S_-^{s_i\nu})$.

Let $x_i^\vee$ be the additive one-parameter subgroup corresponding
to the simple root $\alpha_i^\vee$ of $(G^\vee,T^\vee)$; it defines
a homomorphism $\mathcal K\to G^\vee(\mathcal K)$. Let $N_{-,i}^\vee$
be the unipotent radical of the parabolic subgroup of $G^\vee$
generated by $N_-^\vee$ and by the image of $x_i^\vee$. The subgroup
generated by $N_{-,i}^\vee$ and the image of $x_i^\vee$ is then
the maximal unipotent subgroup $s_i^\vee\,N_-^\vee(\mathcal K)\,
(s_i^\vee)^{-1}$ of $G^\vee(\mathcal K)$. We can lift it to the
central extension $E(G^\vee(\mathcal K))$ so as to make it act on $V$,
and the embedding $\Upsilon:\Gr\to\mathbb P(V)$ is equivariant for this
action.

After these preliminaries, let us genuinely start the proof. Let
\begin{gather*}
m=\varepsilon_i(Z),\quad\nu=\mu+m\alpha_i,\quad
r=\langle\alpha_i^\vee,\nu\rangle,\quad Z'=\tilde e_i^mZ,\\[2pt]
\dot Z=Z\cap\Gr^\lambda\cap s_i^\vee(S_-^{s_i\nu}),\quad
\dot Z'=Z'\cap\Gr^\lambda\cap s_i^\vee(S_-^{s_i\nu}),\\
\mathbb C\bigl[t^{-1}\bigr]^+_m=\bigl\{a_{-m}t^{-m}+a_{1-m}t^{1-m}+
\cdots+a_{-1}t^{-1}\bigm|(a_{-m},a_{1-m},\ldots,a_{-1})\in\mathbb
C^m\bigr\}.
\end{gather*}
From \cite[Proposition~4.5~(ii)]{BaumannGaussent}, we see that the
assignment $(p,y)\mapsto x_i^\vee(pt^r)y$ defines a homeomorphism
$F:\mathbb C\bigl[t^{-1}\bigr]^+_m\times\dot Z'\to\dot Z$;
it is even an isomorphism of algebraic varieties. The cycle
$\tilde e_iZ$ is a divisor in $Z$; its local equation in the open
subset~$\dot Z$ is $a_{-m}=0$.

Our goal now is to evaluate the `equation' $f\circ\Upsilon$ of the
divisor $D$ at a point $F(p,y)$. (We put quotation marks around the
word `equation' because $f\circ\Upsilon$ is a section of $\mathscr L$,
not a function.)

We write $y\in\dot Z'$ in the form $y=gL_\nu$ with
$g\in N_{-,i}^\vee(\mathcal K)$. We write $\Upsilon(L_\nu)=\mathbb Cv_0$
with $v_0\in V$ of weight $\pi(\nu)$ and we decompose $v=gv_0$ as a
sum of weight vectors $v=v_0+\cdots+v_N$ with $v_j$ is of weight
$\pi(\nu)-\beta_j-c_j\delta^\vee$, where $\beta_0=c_0=0$ and
$\beta_j\notin\mathbb Z\alpha_i^\vee$ for $j\neq0$. We put
$p=a_{-m}t^{-m}+a_{1-m}t^{1-m}+\cdots+a_{-1}t^{-1}$ and we expand
the product
$$x_i^\vee(pt^r)v=\prod_{\ell=1}^m
\Biggl(\sum_{n_\ell\geq0}\frac{a_{-\ell}^{n_\ell}}{n_\ell!}\;
X_{\alpha_i^\vee+(r-\ell)\delta^\vee}^{n_\ell}\Biggr)v,$$
where $X_{\alpha_i^\vee+q\delta^\vee}\in\widehat{\mathfrak g^\vee}$
is the derivative at zero of the additive subgroup
$a\mapsto x_i^\vee(at^q)$ of $G^\vee(\mathcal K)$.
The linear form $f$ vanishes on
$$\Biggl(\prod_{\ell=1}^m
X_{\alpha_i^\vee+(r-\ell)\delta^\vee}^{n_\ell}\Biggr)v_j$$
except if
$$\sum_{\ell=1}^mn_\ell(\alpha_i^\vee+(r-\ell)\delta^\vee)
+\pi(\nu)-\beta_j-c_j\delta^\vee=\pi(\mu),$$
which can be rewritten as a system of two equations
$$\Biggl(\sum_{\ell=1}^mn_\ell\Biggr)\alpha_i^\vee
-m\,q(\alpha_i)\alpha_i^\vee-\beta_j=0\quad\text{and}\quad
\sum_{\ell=1}^m(r-\ell)n_\ell-c_j=m(r-m)\,q(\alpha_i).$$
The first one requires that $\beta_j\in\mathbb Z\alpha_i^\vee$,
hence that $j=0$; the condition then becomes
$$\sum_{\ell=1}^mn_\ell=m\,q(\alpha_i)\;\text{ and }\;
\sum_{\ell=1}^m\ell n_\ell=m^2\,q(\alpha_i),$$
which in turn is equivalent to $n_1=\cdots=n_{m-1}=0$ and
$n_m=m\,q(\alpha_i)$. To sum up, we have
$$f\bigl(x_i^\vee(pt^r)v\bigr)=\frac{a_{-m}^n}{n!}\;
f\bigl(X_{\alpha_i^\vee+(r-m)\delta^\vee}^n\;v_0\bigr),$$
with $n=m\,q(\alpha_i)$.

Now let $g$ be another linear form on $V$, which is nonzero on the line
$\Upsilon(L_\nu)$ and which vanishes on all weight subspaces of $V$ of
weight other than $\pi(\nu)$. Then $g\circ\Upsilon$ does not identically
vanish on $\dot Z$, because
$$g\bigl(x_i^\vee(pt^r)v\bigr)=g(v_0)\neq0.$$
Therefore the rational function $(f/g)\circ\Upsilon$ has no poles
on $\dot Z$ and has value
$$a_{-m}^n\;\frac{f\bigl(X_{\alpha_i^\vee+(r-m)\delta^\vee}^n\;
v_0\bigr)}{n!\;g(v_0)}$$
at the point $F(p,y)$. The second factor does not depend on $(p,y)$,
hence is a nonzero constant. We thus see that the local equation
$(f/g)\circ\Upsilon$ of the divisor $D$ vanishes along $\tilde e_iZ$
with multiplicity $n=q(\alpha_i)\,\varepsilon_i(Z)$, as asserted.
\end{proof}

\section{The Mirkovi\'c--Vilonen basis of $\ON$}
\label{se:MVBasisAlg}

\subsection{Stabilization}
\label{ss:CohMVBases}
Let $\nu\in Q_+$. Then the subset $S_+^0\cap S_-^{-\nu}$ of $\Gr$ is
non-empty and has pure dimension $\rho^\vee(\nu)$.
We define a \textbf{stable MV cycle} of weight $-\nu$ to be an
irreducible component of $\overline{S_+^0\cap S_-^{-\nu}}$, and we
denote the set of these cycles by $\mathcal Z(\infty)_{-\nu}$.
We further define the set of all stable MV cycles
$$\mathcal Z(\infty)=\bigsqcup_{\nu\in Q_+}\mathcal Z(\infty)_{-\nu}.$$

Let $\lambda\in P_+$. By \cite[Proposition~3]{Anderson03},
for any weight $\mu\in P$, the irreducible components of
$\overline{\Gr^\lambda\cap S_-^\mu}$ are the irreducible components
of $\overline{S_+^\lambda\cap S_-^\mu}$ that are contained in
$\overline{\Gr^\lambda}$. Additionally, the action of $t^\lambda$ on
$\Gr$ induces an isomorphism $\overline{S_+^0\cap S_-^{-\nu}}\xrightarrow
\simeq\overline{S_+^\lambda\cap S_-^{\lambda-\nu}}$. It follows that
the assignment $Z\mapsto t^\lambda Z$ provides a bijection
$$\bigl\{Z\in\mathcal Z(\infty)\bigm|t^\lambda Z\subseteq
\overline{\Gr^\lambda}\bigl\}\quad\xrightarrow\simeq\quad\mathcal Z(\lambda).$$

Recall that for each $\lambda\in P_+$, we defined in \S
\ref{ss:MVCycles} the MV basis $\{[Z]\mid Z\in\mathcal Z(\lambda)\}$
of the representation $L(\lambda)$.
\begin{proposition}
\label{pr:MVCohFam}
The MV bases of the
simple representations $L(\lambda)$ form a coherent family of perfect
bases in the sense of Definition~\ref{de:CohFamBas}.
More precisely, for each $Z\in\mathcal Z(\infty)$, there exists a unique element
$b_Z\in\ON$ such that for any $\lambda\in P_+$, we have
$$t^\lambda Z\subseteq\overline{\Gr^\lambda}\ \Longrightarrow\
b_Z=\Psi_\lambda\bigl([t^\lambda Z]\bigr).$$
\end{proposition}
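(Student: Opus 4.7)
The uniqueness of $b_Z$ is immediate from the injectivity of $\Psi_\lambda$ (recalled in \S\ref{ss:BasesRep}). For existence, it suffices to show that for any $\lambda,\mu\in P_+$ with $t^\lambda Z\subseteq\overline{\Gr^\lambda}$ (which, by dominance, also forces $t^{\lambda+\mu} Z\subseteq\overline{\Gr^{\lambda+\mu}}$), one has $\Psi_\lambda([t^\lambda Z])=\Psi_{\lambda+\mu}([t^{\lambda+\mu} Z])$. I first observe that $\Psi_\mu(v_\mu)=1$: since $\mathfrak n\cdot v_\mu=0$, we have $nv_\mu=v_\mu$ for every $n\in N$, and hence $v_\mu^*(nv_\mu)=1$. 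Applying Proposition~\ref{pr:mult} to $[t^\lambda Z]\otimes v_\mu$ yields
$$\Psi_\lambda([t^\lambda Z])=\Psi_\lambda([t^\lambda Z])\cdot\Psi_\mu(v_\mu)=\Psi_{\lambda+\mu}\bigl(m_{\lambda\mu}([t^\lambda Z]\otimes v_\mu)\bigr),$$
so, using injectivity of $\Psi_{\lambda+\mu}$, the whole proposition reduces to establishing the geometric identity
$$m_{\lambda\mu}\bigl([t^\lambda Z]\otimes v_\mu\bigr)=[t^{\lambda+\mu} Z] \qquad\text{in } L(\lambda+\mu). \qquad(*)$$

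To prove $(*)$, I would use the geometric Satake correspondence. Under Satake, $m_{\lambda\mu}$ corresponds to the canonical projection $\IC_\lambda*\IC_\mu\twoheadrightarrow\IC_{\lambda+\mu}$ onto the top simple factor, realized geometrically as the proper pushforward along the convolution map $m\colon\overline{\Gr^\lambda}\twist\overline{\Gr^\mu}\to\overline{\Gr^{\lambda+\mu}}$. The highest weight vector $v_\mu\in L(\mu)$ is the fundamental class of $\{L_\mu\}=\overline{S_-^\mu}\cap\overline{\Gr^\mu}$. Under the K\"unneth-type isomorphism $F(\IC_\lambda)\otimes F(\IC_\mu)\cong F(\IC_\lambda*\IC_\mu)$ that identifies simple tensors of cycle classes with cycle classes in the twisted product, the element $[t^\lambda Z]\otimes v_\mu$ becomes the fundamental class of $t^\lambda Z\twist\{L_\mu\}$, which is a copy of $t^\lambda Z$. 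Writing any $g\GO\in t^\lambda Z\subseteq S^\lambda$ in the form $g=n t^\lambda$ with $n\in N_+^\vee(\K)$, the convolution map sends $[g,L_\mu]$ to $g t^\mu\GO=n t^{\lambda+\mu}\GO$, which traces out precisely $t^{\lambda+\mu} Z$. Thus $m$ restricts to a birational morphism $t^\lambda Z\twist\{L_\mu\}\xrightarrow{\sim} t^{\lambda+\mu} Z$, and the pushforward of the fundamental class gives $(*)$.

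The coherent family property of Definition~\ref{de:CohFamBas} is then immediate from the bijection $Z\mapsto t^\lambda Z$ between stable MV cycles satisfying $t^\lambda Z\subseteq\overline{\Gr^\lambda}$ and MV cycles of type $\lambda$ recalled at the start of \S\ref{ss:CohMVBases}. The main technical obstacle lies in the geometric identification step: $\{L_\mu\}$ is not $\GO$-stable for $\mu\ne 0$, so the meaning of $t^\lambda Z\twist\{L_\mu\}$ as a subvariety of $\overline{\Gr^\lambda}\twist\overline{\Gr^\mu}$ and the realization of $[t^\lambda Z]\otimes v_\mu$ as its fundamental class in $F(\IC_\lambda*\IC_\mu)$ need to be made precise. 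A clean implementation may pass through level structures on the affine Grassmannian, or equivalently through the Beilinson--Drinfeld Grassmannian, where the required identity arises from tracking the flat family $Z\times\{L_0\}$ as two marked points coalesce into one.
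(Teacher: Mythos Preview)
Your reduction via Proposition~\ref{pr:mult} is clean and correct: the entire proposition does boil down to the identity $(*)$. However, your route diverges sharply from the paper's, and the step you flag as incomplete is exactly where the difficulty lies.

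The paper does not use the multiplication structure at all. Instead it defines directly a linear map $\psi:L(\lambda)\to L(\lambda+\mu)$ by $[t^\lambda Z]\mapsto[t^{\lambda+\mu}Z]$ on MV basis vectors, and proves that $\psi$ intertwines the $\mathfrak n$-actions by invoking Theorem~\ref{th:ActEiMV}: the action of $e_i$ on an MV basis vector is given by intersection multiplicities $i(Z',D\cdot Z)$, and these are manifestly invariant under the translation $Z\mapsto t^\mu Z$ (since $t^\mu D$ is again a divisor of the same type). Once $\psi$ is $N$-equivariant, the difference $\Psi_\lambda-\Psi_{\lambda+\mu}\circ\psi$ is an $N$-map killing $v_\lambda$, hence lands in the augmentation ideal of $\ON$ and must vanish. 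This argument is entirely self-contained within the framework already built in \S\ref{se:MVBasisRepr}.

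Your approach, by contrast, requires identifying $[t^\lambda Z]\otimes v_\mu$ with a cycle class in $F(\IC_\lambda*\IC_\mu)$ and tracking it through the convolution morphism. As you correctly note, $\{L_\mu\}$ is not $\GO$-stable, so $t^\lambda Z\twist\{L_\mu\}$ is not a priori well-defined as a subvariety of the twisted product; making this precise requires exactly the fusion/Beilinson--Drinfeld machinery that the paper develops only later (in \S\ref{ss:fusion}--\S\ref{ssec:mult}) to prove Theorem~\ref{th:mult}. So your proof is not wrong, but it is a forward reference in disguise: it trades the elementary divisor-intersection argument for a substantial piece of geometry that has not yet been set up at this point in the paper. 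The paper's ordering---first establish the $e_i$-action formula, use it to get coherence, then build the BD framework for multiplication---is more economical.
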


\begin{proof}
For each $Z\in\mathcal Z(\infty)$, there exists $\lambda\in P_+$ such that
$t^\lambda Z\subseteq\overline{\Gr^\lambda}$
(\cite[Propositions~4 and~7]{Anderson03}).
The crux of the proof is to show that $\Psi_\lambda\bigl([t^\lambda Z]\bigr)$
does not depend on the choice of $\lambda$.

Let $\lambda,\mu\in P_+$. The orbit $N^\vee_+(\mathcal O)\cdot L_\lambda$
is dense in $\Gr^\lambda$ (\cite[proof of Theorem~3.2]{MirkovicVilonen}), so
$$
t^\mu\cdot\overline{\Gr^\lambda}
\ =\
t^\mu\,\overline{N^\vee_+(\mathcal O)\cdot L_\lambda}\ \subseteq \
\overline{N^\vee_+(\mathcal O)\,t^\mu\cdot L_\lambda}\ =\
\overline{N^\vee_+(\mathcal O)\cdot L_{\lambda+\mu}} \ =\
\overline{\Gr^{\lambda+\mu}}.$$
As a consequence,
$$\bigl\{Z\in\mathcal Z(\infty)\bigm|t^\lambda Z\subseteq
\overline{\Gr^\lambda}\bigl\}\,\subseteq
\bigl\{Z\in\mathcal Z(\infty)\bigm|t^{\lambda+\mu}Z\subseteq
\overline{\Gr^{\lambda+\mu}}\bigl\},$$
which shows that the assignment $Z\mapsto t^\mu Z$ defines an injection
$\mathcal Z(\lambda)\hookrightarrow\mathcal Z(\lambda+\mu)$.

Let $\psi:L(\lambda)\to L(\lambda+\mu)$ be the linear extension of this
injection; in other words, $\psi$ is the linear map that sends an
element $[Z]$ of the MV basis of $L(\lambda)$ to the element $[t^\mu Z]$
of the MV basis of $L(\lambda+\mu)$. By construction, $\psi$ raises the
weight by $\mu$ and maps $v_\lambda$ to~$v_{\lambda+\mu}$. We claim that
it intertwines the actions of $N$ on $L(\lambda)$ and $L(\lambda+\mu)$.

To see this, let $ \nu \in P $, let $f$ be a linear form on $V$ which is nonzero on the
line $\Upsilon(L_\nu)$ and which vanishes on all weight subspaces of $V$
of weight other than $\pi(\nu)$, and let $D\subseteq\Gr$ be the Cartier
divisor defined as the intersection of $\Gr$ with the hyperplane in
$\mathbb P(V)$ of equation~$f$. Then the linear form $g$ on $V$ defined
by $g(v)=f(t^{-\mu}v)$ is nonzero on the line $\Upsilon(L_{\nu+\mu})$ and
vanishes on all weight subspaces of $V$ of weight other than
$\pi(\nu+\mu)$, and $t^\mu D$ is the Cartier divisor of equation $g$.   Thus for any $ Z \in \mathcal Z(\lambda)_\nu $ and $ Z' \in \mathcal Z(\lambda)_{\nu + \alpha_i} $, we have
$$ i(t^\mu Z', (t^\mu D) \cdot (t^\mu Z)) =i(Z',  D \cdot  Z), $$
which implies  (Theorem~\ref{th:ActEiMV}) that $ \psi $ intertwines
the actions of each $ e_i $.

Thus, the map
$$(\Psi_\lambda-\Psi_{\lambda+\mu}\circ\psi):L(\lambda)\to\ON$$
is an homomorphism of $N$-modules, which lowers the weight by $\lambda$
and annihilates $v_\lambda$. Its image is therefore an
$N$-invariant subspace of the augmentation ideal
$$\bigoplus_{\nu\in Q_+\setminus\{0\}}\ON_{-\nu}
=\bigl\{f\in\ON\bigm|f(1_N)=0\bigr\}$$
of $\ON$. Consequently, this image is zero, and therefore
$\Psi_\lambda=\Psi_{\lambda+\mu}\circ\psi$. We conclude that for any
$Z\in\mathcal Z(\infty)$ satisfying
$t^\lambda Z\subseteq\overline{\Gr^\lambda}$, we have
$$\Psi_\lambda\bigl([t^\lambda Z]\bigr)=
\Psi_{\lambda+\mu}\bigl([t^{\lambda+\mu}Z]\bigr).$$
\end{proof}

Thus, the elements $b_Z$ constructed in Proposition~\ref{pr:MVCohFam}
form a perfect basis of $\ON$, which we call the MV basis of $\ON$, for it
is obtained by gluing the MV bases of the representations $L(\lambda)$.
By Proposition~\ref{pr:PerfBasTrf}, the crystal structure on the indexing
set $\mathcal Z(\infty)$ can be characterized by its restrictions to the
sets $\mathcal Z(\lambda)$, where it must coincide (up to a shift in the
weight map) with the structure that we used in \S \ref{se:MVBasisRepr}.
Comparing the constructions in \cite{BravermanGaitsgory} and
 \cite{BravermanFinkelbergGaitsgory}, we see that this crystal
 structure is the one defined in this latter reference.

\subsection{Biperfectness}
\label{ss:MVBasisBiperf}
Our aim in this section is to prove the following result.
\begin{theorem}
\label{th:MVBasisBiperf}
The MV basis of $\ON$ is biperfect.
\end{theorem}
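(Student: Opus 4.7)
The plan is to construct a second upper semi-normal crystal structure on $\mathcal{Z}(\infty)$ that governs the right action of each $e_i$ on the MV basis, since perfectness with respect to the left action is already established in Proposition \ref{pr:MVCohFam}.

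I would construct this second structure from a geometric involution $\ast : \mathcal{Z}(\infty) \to \mathcal{Z}(\infty)$ built from the natural symmetry of $\Gr$ given by a lift $\overline{w_0}$ of the longest Weyl element of $G^\vee$. Since $\overline{w_0}$ sends $S^\mu_\pm$ to $S^{w_0\mu}_\mp$, for $Z \in \mathcal{Z}(\infty)_{-\nu}$ we have $\overline{w_0} \cdot Z \subseteq \overline{S^0_- \cap S^{-w_0\nu}_+}$, and then $t^{w_0\nu} \overline{w_0} \cdot Z \subseteq \overline{S^0_+ \cap S^{-\nu^\ast}_-}$, where $\nu^\ast := -w_0\nu \in Q_+$ (writing $\alpha_{i^\ast} := -w_0\alpha_i$ for the Dynkin diagram involution). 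Let $Z^\ast$ denote the resulting stable MV cycle, and set $\tilde{e}_i^\ast Z := (\tilde{e}_{i^\ast}(Z^\ast))^\ast$, $\varepsilon_i^\ast(Z) := \varepsilon_{i^\ast}(Z^\ast)$, and analogously for the other starred operators. The fact that this is an involution follows from $\nu^{\ast\ast} = \nu$ and the computation $t^{w_0\nu^\ast} \overline{w_0} \cdot t^{w_0\nu} \overline{w_0} \in T^\vee$, which acts trivially on the $T^\vee$-invariant cycles $Z$.

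The main technical step is then to verify the compatibility $b_Z \cdot e_i = \varepsilon_i^\ast(Z)\, b_{\tilde{e}_i^\ast Z} + \sum a_{Z'} b_{Z'}$ (with $\varepsilon_i^\ast(Z') < \varepsilon_i^\ast(Z) - 1$). My strategy is to show that the geometric involution $Z \mapsto Z^\ast$ corresponds on the algebra side to an involution $\sigma$ of $\mathbb{C}[N]$ arising from a suitable anti-automorphism of $N$ (built from the Chevalley involution and $w_0$) that swaps the left and right actions of $e_i$, up to the Dynkin involution $i \mapsto i^\ast$. Granted this, the perfect-type expansion for the left action of $e_{i^\ast}$ on $b_{Z^\ast}$ (from Proposition \ref{pr:MVCohFam}) translates via $\sigma$ into the desired perfect-type expansion for the right action of $e_i$ on $b_Z$.

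The main obstacle will be establishing that $\sigma(b_Z) = b_{Z^\ast}$ as elements of $\mathbb{C}[N]$. This requires tracking how the automorphisms $\overline{w_0}$ and $t^{w_0\nu}$ interact with the embedding $\Psi_\lambda$ from \S\ref{ss:BasesRep} and with the stabilization procedure from Proposition \ref{pr:MVCohFam}; in particular, one must match the geometric identification of weight spaces $L(\lambda)_\mu = F_\mu(\IC_\lambda)$ with its ``opposite'' counterpart using the lowest weight vector $v_{w_0\lambda}$ in place of $v_\lambda$. Once this compatibility is in place, biperfectness of the MV basis follows from the perfectness established in Proposition \ref{pr:MVCohFam}.
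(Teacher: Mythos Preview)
Your overall strategy---build a geometric involution on stable MV cycles and use it to transport perfectness for the left action into perfectness for the right action---is precisely the paper's strategy. There are, however, two substantive differences in execution.

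First, the paper uses a different involution. Rather than $\overline{w_0}$ (which introduces the Dynkin twist $i\mapsto i^*$ and changes the weight from $-\nu$ to $-\nu^*$), it chooses an involutive antiautomorphism $g\mapsto g^\tau$ of $G^\vee$ that fixes $T^\vee$ pointwise and swaps opposite root subgroups. The induced automorphism $x\mapsto x^\dagger$ of $\Gr$ (coming from $g\mapsto (g^\tau)^{-1}$) satisfies $(S^\mu_\pm)^\dagger=S^{-\mu}_\mp$, so $Z^*:=(t^\nu Z)^\dagger$ lands in $\mathcal Z(\infty)_{-\nu}$ with the \emph{same} weight and no Dynkin twist is needed.

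Second, and more importantly, the paper sidesteps the obstacle you flag. Instead of first identifying $b_Z\mapsto b_{Z^*}$ with a known algebra involution $\sigma$, the paper simply \emph{defines} $f\mapsto f^*$ to be the linear extension of $b_Z\mapsto b_{Z^*}$ and proves directly that this map exchanges the left and right actions of each $e_i$. The key step (Lemma~\ref{le:UEquivar}) shows that $u(f):=(e_i\cdot f^*)^*$ commutes with the left $\mathfrak n$-action; its proof reduces to the commutativity $D\cdot D'\cdot Z=D'\cdot D\cdot Z$ of intersection products with two Cartier divisors on $\Gr$ (one cuts off the bottom of $Z$, the other the top---see Remark~\ref{rk:GeomInterpAction}). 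Once $u$ is known to commute with left multiplication, a degree count plus a check on the single element of $\mathcal Z(\infty)_{-\alpha_i}$ forces $u$ to be right multiplication by $e_i$. Only \emph{a posteriori} (Remark~\ref{re:StarInv}) does the paper identify $f\mapsto f^*$ with a concrete algebra automorphism. Your proposed route---prove $\sigma(b_Z)=b_{Z^*}$ first by unwinding $\Psi_\lambda$ through highest and lowest weight vectors---may be feasible, but it is exactly the nontrivial compatibility the paper's argument is designed to avoid.
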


As a first step in the proof, we need to endow $\mathcal Z(\infty)$ with
operators $(\varepsilon_i^*,\varphi_i^*,\tilde e_i^*,\tilde f_i^*)$ that
provide the structure of a bicrystal. We do this with the help of a
weight preserving involution, which we construct as follows.

We choose an involutive antiautomorphism $g\mapsto g^\tau$ of $G^\vee$
that fixes pointwise the torus $T^\vee$ and exchanges any root subgroup
with its opposite root subgroup. The automorphism
$g\mapsto(g^\tau)^{-1}$ of the group $G^\vee(\mathcal K)$ leaves
$G^\vee(\mathcal O)$ stable, hence induces an automorphism of $\Gr$
which we denote by $x\mapsto x^\dagger$. It is easy to see that $L_\mu^\dagger = L_{-\mu} $ and that
$(S^\mu)^\dagger=S_-^{-\mu}$ for each weight $\mu\in P$. Given $\nu\in Q_+$
and $Z\in\mathcal Z(\infty)_{-\nu}$, we set $Z^*=(t^\nu Z)^\dagger$,
another element in $\mathcal Z(\infty)_{-\nu}$. The map $Z\mapsto Z^*$
is involutive. We can now set
$$\varepsilon_i^*(Z)=\varepsilon_i(Z^*),\quad
\varphi_i^*(Z)=\varphi_i(Z^*),\quad
\tilde e_i^*Z=(\tilde e_i(Z^*))^*,\quad
\tilde f_i^*Z=(\tilde f_i(Z^*))^*$$
for each $i\in I$ and $Z\in\mathcal Z(\infty)$.

\begin{remark}
\label{rk:RefBFG}
The involution $Z\mapsto Z^*$ on the set $\mathcal Z(\infty)$
was first considered by Braverman, Finkelberg and
Gaitsgory~\cite{BravermanFinkelbergGaitsgory}, who show that
$\mathcal Z(\infty)\cong B(\infty)$ as bicrystals.
Theorem~\ref{th:MVBasisBiperf} provides an independent proof of the
existence of such an isomorphism.
\end{remark}

We extend the assignment $b_Z\mapsto b_{Z^*}$ to a linear bijection
of the vector space $\ON$, which we denote by $f\mapsto f^*$.

\begin{lemma}
\label{le:UEquivar}
Let $i\in I$ and let $u$ be the endomorphism of the vector space $\ON$
such that $u(f)=(e_i\cdot(f^*))^*$ for each $f\in\ON$. Then $u$ commutes
with the left action of $\mathfrak n$ on $\ON$.
\end{lemma}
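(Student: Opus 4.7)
I would prove the lemma by identifying $u$ with the right-multiplication operator $R_{e_i}\colon f\mapsto f\cdot e_i$.  Since the left and right $N$-actions on $\C[N]$ arise from commuting left- and right-translations on $N$, their differentials also commute, and hence $R_{e_i}$ (and therefore $u$, once identified with it) commutes with the left $\mathfrak n$-action.

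The identification $u=R_{e_i}$ reduces to the intertwining identity
\[
(e_i \cdot f)^{*} \;=\; f^{*} \cdot e_i, \qquad f\in\C[N], \tag{$\star$}
\]
for then $u(f)=(e_i\cdot f^{*})^{*}=(f^{*})^{*}\cdot e_i=f\cdot e_i$.  By linearity it suffices to verify $(\star)$ on MV basis elements $b_Z$ with $Z\in\mathcal Z(\infty)_{-\nu}$.  The left-hand side is handled by Theorem~\ref{th:ActEiMV} (in its $\C[N]$-form provided by the coherent family of Proposition~\ref{pr:MVCohFam}): it expresses $e_i\cdot b_Z$ via intersection multiplicities $i(Z', D\cdot Z)$ of $Z$ with a hyperplane divisor $D$ on $\Gr$ coming from a linear form on $V(\Lambda_0)$ vanishing outside the $\pi(-\nu)$-weight space (a ``top-end'' divisor localized near the fixed point contributing to the $S_+^0$-containment of $Z$), and applying $*$ simply replaces each irreducible component $Z'$ by $(Z')^{*}$.

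To match this with $b_{Z^{*}} \cdot e_i$, I would establish a right-action mirror of Theorem~\ref{th:ActEiMV}, computing the expansion of $b_W \cdot e_i$ in the MV basis via intersection multiplicities against a ``bottom-end'' hyperplane divisor, one attached to the fixed point controlling the $S_-^{-\nu}$-containment of $W$.  This mirror is proved by rerunning the arguments of \S\ref{ss:PrincNilp}--\S\ref{ss:CompLeadCoef} with the roles of $N_{\pm}^{\vee}$, and of the attractive/repulsive $\rho$-cells, systematically interchanged; the Ginzburg--Vasserot setup is manifestly symmetric under this exchange, so no additional input is required.  The relation $Z^{*}=(t^{\nu}Z)^{\dagger}$ then sends the top-end divisor for $Z^{*}$ to the bottom-end divisor for $Z$, and since $t^{\nu}$ and $\dagger$ are both automorphisms of $\Gr$, all intersection multiplicities are preserved.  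Matching coefficients yields $(\star)$.

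The main obstacle is that the right-action mirror of Theorem~\ref{th:ActEiMV} must be set up \emph{without} already appealing to biperfectness, since Theorem~\ref{th:MVBasisBiperf} is the downstream statement this lemma is meant to support.  Circularity is avoided because the mirror does not use the bicrystal structure on $\mathcal Z(\infty)$ at all---it merely transposes the cup-product-with-Chern-class calculation of \S\ref{ss:ActEiMV} to the opposite Cartan decomposition.  Once this is in place, the proof of the lemma reduces to the triviality that left and right $N$-actions on $\C[N]$ commute.
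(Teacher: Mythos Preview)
There is a genuine gap. Your strategy is to prove the stronger statement $u=R_{e_i}$ (equivalently $(\star)$) directly; that is the content of the Proposition immediately following the lemma, not of the lemma itself. For this you need an independent geometric formula for $b_W\cdot e_i$, and you claim one arises by ``transposing the cup-product-with-Chern-class calculation to the opposite Cartan decomposition''. No such mirror is available. The operator produced in \S\ref{ss:PrincNilp}--\ref{ss:ActEiMV} is cup product with $c_1(\mathscr L)$ on $F(\IC_\lambda)\cong L(\lambda)$; this is an intrinsic endomorphism, independent of which semi-infinite cells one uses to cut out weight spaces, and under $\Psi_\lambda$ it is the representation-theoretic action, i.e.\ the \emph{left} action on $\C[N]$. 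Swapping $N_\pm^\vee$ changes the weight decomposition and the basis of $F(\IC_\lambda)$ but returns the same operator, so you still compute the left action (or, if you also trade raising for lowering, the left $f_i$-action) --- never the right $\mathfrak n$-action on $\C[N]$. More decisively, the right action of $e_i$ does not even preserve $\im\Psi_\lambda$: already for $G=\SL_3$, $\lambda=\omega_1$ one has $\im\Psi_{\omega_1}=\Span(1,x,z)$ while $z\cdot e_1=y\notin\Span(1,x,z)$. Hence the right action cannot be realized as an operator on any single $L(\lambda)$ and cannot be extracted from the Satake picture for one $\IC_\lambda$.

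The paper's proof runs in the opposite order. It first derives a geometric formula for $u(b_Z)$ as intersection with a divisor $D'$, obtained by transporting the known left-action formula through the involution $\dagger$ and the shift $t^\nu$ that define $*$; at this point $u$ has not been identified with anything algebraic. Since the left $e_j$-action is intersection with another divisor $D$, the commutativity $e_j\circ u=u\circ e_j$ follows from the purely intersection-theoretic identity $D\cdot D'\cdot Z=D'\cdot D\cdot Z$ (\cite[Corollary~2.4.2]{Fulton}); this is exactly the lemma. Only afterwards, in the Proposition, is $u$ identified with $R_{e_i}$ by the algebraic observation that a degree-$\alpha_i$ endomorphism of $\C[N]$ commuting with the left $\mathfrak n$-action dualizes to left multiplication on $U(\mathfrak n)$ by a scalar multiple of $e_i$, together with a normalization check at $b_{Z_i}$.
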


\begin{proof}
We recall the formula that gives the left action of the Chevalley
generators $e_j$ on the basis elements of $\ON$. Fix $\nu\in Q_+$, pick
a linear form $f$ on the vector space $V$ which is nonzero on the line
$\Upsilon(L_{-\nu})$ and which vanishes on all weight subspaces of $V$
of weight other than $\pi(-\nu)$, and define $D$ to be the Cartier divisor
defined as the intersection of $\Gr$ with the hyperplane in $\mathbb P(V)$
of equation $f$. Then for each $j\in I$ and each
$Z\in\mathcal Z(\infty)_{-\nu}$ we have
\begin{equation} \label{eq:ActionStableMVcycles}
q(\alpha_j)e_j\cdot b_Z=\sum_{Z'\in\mathcal Z(\infty)_{-\nu+\alpha_j}}
i(Z',D\cdot Z)\,b_{Z'}.
\end{equation}

The definition of the involution $*$ leads to a similar formula for
the endomorphism $u$. As a matter of fact, for each
$Z\in\mathcal Z(\infty)_{-\nu}$ and
$Z'\in\mathcal Z(\infty)_{-\nu+\alpha_i}$, we have
$$i((Z')^*,D\cdot Z^*)=
i\bigl((t^{\nu-\alpha_i}Z')^\dagger,D\cdot(t^\nu Z)^\dagger;\Gr\bigr)=
i\bigl((t^{-\alpha_i}Z')^\dagger,(t^\nu D)\cdot Z^\dagger;\Gr\bigr).$$
Defining a Cartier divisor $D'$ on $\Gr$ by the formula
$D'=(t^\nu D)^\dagger$, we then get
$$q(\alpha_i)u(b_Z)=\sum_{Z'\in\mathcal Z(\infty)_{-\nu+\alpha_i}}
i(t^{-\alpha_i}Z',D'\cdot Z)\,b_{Z'}.$$

Using \cite[Corollary~2.4.2]{Fulton}, we obtain
$$e_j\cdot u(b_Z)=\frac1{q(\alpha_i)q(\alpha_j)}\;
\sum_{Z'\in\mathcal Z(\infty)_{-\nu+\alpha_i+\alpha_j}}
i(t^{-\alpha_i}Z',D\cdot D'\cdot Z)\,b_{Z'}=
u(e_j\cdot b_Z)$$
for each $j\in I$ and each $Z\in\mathcal Z(\infty)_{-\nu}$.

\end{proof}

\begin{remark}
\label{rk:GeomInterpAction}
The left action of the Chevalley generator $e_j$ on a basis element $b_Z$
is obtained by intersecting $Z$ with the divisor $D$ so as to jettison the
``bottom'' part of~$Z$. Similarly, the action of $u$ on $b_Z$ amounts to
intersecting $Z$ with the divisor $D'$ so as to jettison the ``top'' part of
$Z$. Thus, the lemma merely reformulates in a representation theoretic
language the general fact that $D\cdot(D'\cdot Z)=D'\cdot(D\cdot Z)$.
\end{remark}

\begin{proposition}
The involution $f\mapsto f^*$ of $\ON$ exchanges the left
and the right actions of the Chevalley generators $e_i$.
\end{proposition}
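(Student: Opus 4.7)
The plan is to recast the desired identity $(e_i \cdot f)^* = f^* \cdot e_i$ as the equality of two endomorphisms of $\ON$. Specifically, with $u(f) = (e_i \cdot f^*)^*$ the operator introduced in Lemma~\ref{le:UEquivar}, it suffices to prove that $u$ coincides with the right multiplication map $f \mapsto f \cdot e_i$. Indeed, given $u(f) = f \cdot e_i$ for all $f$, applying this identity to $f^*$ and then applying $*$ to both sides (using $(f^*)^* = f$) yields exactly $(e_i \cdot f)^* = f^* \cdot e_i$.

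To prove $u(f) = f \cdot e_i$, I note first that both operators commute with the left action of $\mathfrak n$ on $\ON$: for $u$ this is precisely Lemma~\ref{le:UEquivar}, while for right multiplication it follows from the commutation of left and right translations in $N$. The remaining ingredient is the following uniqueness principle, which I will record as a short lemma: any left-$\mathfrak n$-equivariant endomorphism $\phi$ of $\ON$ is determined by the linear functional $f \mapsto \phi(f)(1_N)$. This follows from the identity
$$\langle a, \phi(f)\rangle = (a \cdot \phi(f))(1_N) = \phi(a \cdot f)(1_N)$$
valid for every $a \in U(\mathfrak n)$, together with the perfection of the pairing between $U(\mathfrak n)$ and $\ON$, which allows one to recover $\phi(f)$ from the collection of numbers $\langle a, \phi(f)\rangle$.

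It thus suffices to check that $u(f)(1_N) = (f \cdot e_i)(1_N) = \langle e_i, f\rangle$ for every $f$. Both sides vanish unless $f$ has weight $-\alpha_i$, so the problem reduces to the one-dimensional weight space $\ON_{-\alpha_i}$. There is a unique stable MV cycle of this weight, namely $Z_i = \overline{S_+^0 \cap S_-^{-\alpha_i}}$, and a direct computation from the definitions, using $(S_\pm^\mu)^\dagger = S_\mp^{-\mu}$, gives
$$Z_i^* = (t^{\alpha_i} Z_i)^\dagger = \bigl(\overline{S_+^{\alpha_i} \cap S_-^0}\bigr)^\dagger = \overline{S_-^{-\alpha_i} \cap S_+^0} = Z_i.$$
Hence $*$ restricts to the identity on $\ON_{-\alpha_i}$; it is likewise the identity on $\ON_0 = \mathbb C\cdot 1$ since $\{L_0\}^* = \{L_0\}$. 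For $f \in \ON_{-\alpha_i}$ one then computes
$$u(f)(1_N) = (e_i \cdot f^*)^*(1_N) = (e_i \cdot f)^*(1_N) = (e_i \cdot f)(1_N) = \langle e_i, f\rangle,$$
as required. The only conceptual input is the uniqueness principle for left-$\mathfrak n$-linear endomorphisms; the rest is a bookkeeping reduction to a one-dimensional space on which the involution acts trivially, so I do not anticipate any serious obstacle beyond care with the left/right conventions.
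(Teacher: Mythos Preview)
Your proof is correct and follows essentially the same approach as the paper: both use Lemma~\ref{le:UEquivar} to see that $u$ commutes with the left $\mathfrak n$-action, then reduce to the one-dimensional weight space $\ON_{-\alpha_i}$ where the involution $*$ acts trivially because $Z_i^*=Z_i$. The only cosmetic difference is that the paper dualizes to $U(\mathfrak n)$ (showing the transpose of $u$ is left multiplication by $\lambda e_i$, then computing $\lambda=1$), whereas you stay on $\ON$ and use the evaluation-at-$1_N$ functional to distinguish left-$\mathfrak n$-equivariant endomorphisms; these are dual formulations of the same uniqueness principle.
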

\begin{proof}
Let $i\in I$ and let $u$ be the endomorphism of $\ON$ defined in
Lemma~\ref{le:UEquivar}. By construction, $u(\ON_{-\nu})\subseteq
\ON_{-\nu+\alpha_i}$ for each $\nu\in Q_+$. Therefore the dual
of $u$ can be restricted to an endomorphism $v$ of the graded dual
of $\ON$, namely $U(\mathfrak n)$, and $v$ is of degree $\alpha_i$.
Lemma~\ref{le:UEquivar} implies that $v$ commutes with the right action
of $\mathfrak n$ on $U(\mathfrak n)$, so $v$ is the left multiplication
by an element of $U(\mathfrak n)$. For degree reasons, this element is
of the form $\lambda e_i$ with $\lambda\in\mathbb C$. Thus, $u$ is the
right action of $\lambda e_i$ on $\ON$.

The set $\mathcal Z(\infty)_{-\alpha_i}$ contains just one element
--- indeed $\overline{S_+^0\cap S_-^{-\alpha_i}}$ is a Riemann sphere, hence is
irreducible. Denote this element by $Z_i$; it is fixed by the
involution $*$. Then the basis element $b_{Z_i}$ spans the weight
subspace $\ON_{-\alpha_i}$, and by construction of the pairing between
$U(\mathfrak n)$ and $\ON$, we have $e_i\cdot b_{Z_i}=b_{Z_i}\cdot e_i$
(see \S \ref{ss:Notation}; in fact, this is the constant function
on $N$ equal to~$1$). Now on the one hand we have
$u(b_{Z_i})=b_{Z_i}\cdot(\lambda e_i)$, and on the other hand, since
$b_{Z_i}=(b_{Z_i})^*$, we have
$$u(b_{Z_i})=(e_i\cdot b_{Z_i})^*=e_i\cdot b_{Z_i}.$$
Therefore $\lambda=1$, and we conclude that $u$ is the right action of
$e_i$ on $\ON$.

Thus, the involution $f\mapsto f^*$ exchanges the left and the right
actions of each Chevalley generator $e_i$.
\end{proof}

\begin{proof}[Proof of Theorem~\ref{th:MVBasisBiperf}]
Our constructions ensure that the involution $f\mapsto f^*$ exchanges
the crystal structures $(\wt,\varepsilon_i,\varphi_i,\tilde e_i,\tilde f_i)$
and $(\wt,\varepsilon_i^*,\varphi_i^*,\tilde e_i^*,\tilde f_i^*)$, as well
as the left and right actions of the Chevalley generators. Since the MV
basis of $\ON$ is perfect, it is biperfect.
\end{proof}

\begin{remark}
\label{re:StarInv}
The involution $f\mapsto f^*$ of $\ON$ is dual to the involutive
algebra antiautomorphism of $U(\mathfrak n)$ that fixes the Chevalley
generators $e_i$. One can also easily show that $f\mapsto f^*$ is the
automorphism of the algebra $\ON$ induced by the automorphism
$n\mapsto(-1)^{\rho^\vee}n^{-1}(-1)^{-\rho^\vee}$ of the variety $N$,
where $(-1)^{\rho^\vee}$ is the evaluation at $-1$ of the cocharacter
$\rho^\vee$ of $T$.
\end{remark}

\subsection{MV polytopes from MV cycles}\label{ss:MVfromMV}
To each $T$-invariant closed subvariety $ Z\subset \Gr $ (for example an MV cycle or stable MV cycle), we define
$$ \Pol(Z) := \Conv \{ \mu \mid L_\mu \in Z \}$$
For any such $ Z $ and any $ \gamma \in P $, note that $ \Pol(t^\gamma Z) = \gamma + \Pol(Z) $.  

The MV basis is biperfect (Theorem~\ref{th:MVBasisBiperf}) and is indexed by the set $\mathcal Z(\infty)$ of stable MV cycles, so by \S\ref{ss:UniqCrys} we get a canonical bijection $\mathcal Z(\infty)\cong B(\infty)$.  The construction in \S \ref{ss:MVpolytopes} allows us to represent elements in $B(\infty) $ by MV polytopes.  Thus, we get a resulting bijection from $\mathcal Z(\infty)$ onto the set of MV polytopes.  By Remark \ref{rem:MVpoly} and Theorem 4.7 from \cite{mvcrystal}, this bijection is given by $ Z \mapsto \Pol(Z) $.

\section{Multiplication}
\subsection{Generalities on cosheaves}\label{ss:cosheaves}
In this section, we will work with cosheaves, which we define as Verdier duals of sheaves\footnote{The reader may find this terminology a bit strange since this differs from the more common use of ``cosheaf''.  We justify our use of this word in two ways.  First, $ i_x^! \mathcal A$ is often called ``costalk'' in the literature.  Second, our category of cosheaves is equivalent to the usual category of cosheaves by Proposition 7.13 of \cite{Curry}.}.  Let $ X $ be an irreducible complex algebraic variety of dimension $ d$.  

\begin{definition}
\begin{itemize}
\item A {\bf (constructible) cosheaf} on $ X $ is an object $\mathcal A $ of $ D_c(X) $ which is isomorphic to $ \D(\mathcal F) $ for some sheaf $ \mathcal F $.
\item The {\bf costalk} of a cosheaf $ \mathcal A $ at a point $ x $ is the vector space $ i_x^! \mathcal A $.
\item A cosheaf $\mathcal A $ is {\bf coconstant} if it is isomorphic to the Verdier dual of a constant sheaf.  It is {\bf coconstant along} $ V$ if $ i_V^!(\mathcal A) $ is coconstant as a cosheaf on $ V $.  (Here $ V \subset X $ is a constructible subset and $ i_V :V \rightarrow X $ is the inclusion.)
\end{itemize}
\end{definition}

The cosheaves on $ X $ form an abelian category $CoSh_c(X)$ which is the heart of a $t$-structure on $ D_c(X) $; this $t$-structure is obtained by applying Verdier duality to the standard $t$-structure.  We have cohomology functors $ \hH^k : D_c(X) \rightarrow CoSh_c(X) $ which are defined by $ \hH^k(\mathcal A) = \D(\mathcal H^{-k}(\D(\mathcal A))) $,  where $ \mathcal H^k $ denotes the usual cohomology functor.

For any morphism $ f : X \rightarrow Y $,  $f^! $ is exact with respect to this $t$-structure and thus $ f^! $ commutes with the cohomology functors $ \hH^k $.  This also implies that for any cosheaf $ \mathcal A $, the costalk $ i_x^! \mathcal A $ is just a vector space (rather than a complex).

Cosheaves are very useful when studying the homology of fibres of a
morphism of varieties.  

\begin{lemma} \label{le:BMfibres}
Let $ \pi : Y \rightarrow X $ be a morphism.  For any $k \in \Z$,  define a cosheaf $ \mathcal G $ on $ X $ by
$$
\mathcal G := \hH^{-k}(\pi_* \D_Y)
$$
Then for any $ x \in X $, $i_x^! \mathcal G \cong H_k(\pi^{-1}(x)) $.
\end{lemma}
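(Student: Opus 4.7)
The plan is to unwind the definition of $\mathcal G$ using base change and the fact that the costalk functor $i_x^!$ is $t$-exact for the cosheaf $t$-structure, so it commutes with $\hH^{-k}$. More precisely, I would first observe that $\hH^{-k}(\mathcal A) = \D(\mathcal H^k(\D\mathcal A))$, and since $\D\circ i_x^! = i_x^*\circ\D$ while $i_x^*$ is $t$-exact for the standard $t$-structure on $D_c(X)$ (because the stalk of a constructible sheaf is concentrated in degree $0$), it follows that $i_x^!\circ\hH^{-k} = \hH^{-k}\circ i_x^!$ as functors $D_c(X)\to D_c(\{x\})$. Applying this to $\mathcal G = \hH^{-k}(\pi_*\D_Y)$ gives
\[
i_x^!\mathcal G \;\cong\; \hH^{-k}\bigl(i_x^!\,\pi_*\D_Y\bigr).
\]

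Next, I would exploit base change in the Cartesian square
\[
\xymatrix{Y_x\ar[r]^{j_x}\ar[d]_{\pi_x} & Y\ar[d]^{\pi}\\ \{x\}\ar[r]_{i_x} & X}
\]
where $Y_x := \pi^{-1}(x)$, to identify $i_x^!\,\pi_*\D_Y \cong (\pi_x)_*\,j_x^!\D_Y$. The dualizing complex is compatible with $!$-pullback, namely $j_x^!\D_Y = \D_{Y_x}$ (since $\D_{Z} = a_Z^!\C_{\mathrm{pt}}$ and $!$-pullbacks compose), so the right-hand side becomes $(\pi_x)_*\D_{Y_x} = R\Gamma(Y_x,\D_{Y_x})$, an object of $D^b(\mathrm{Vect}_\C)$.

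Finally, on a point the cosheaf $t$-structure coincides with the standard one up to vector space duality, and in particular $\hH^{-k}$ of a complex of finite-dimensional vector spaces agrees with ordinary $H^{-k}$. Hence
\[
i_x^!\mathcal G \;\cong\; H^{-k}\bigl(R\Gamma(Y_x,\D_{Y_x})\bigr) \;=\; H^{-k}(Y_x,\D_{Y_x}) \;=\; H_k(Y_x),
\]
which is precisely the claimed identification with Borel--Moore homology.

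The only step requiring genuine care is the $t$-exactness of $i_x^!$ for the cosheaf $t$-structure; everything else is a formal application of base change and the standard compatibilities of the dualizing complex. I do not anticipate a real obstacle here, since the Verdier-dual argument with $i_x^*$ and the standard $t$-structure is clean, but one must be a bit careful to work with constructible (not arbitrary) sheaves so that stalks are concentrated in degree $0$.
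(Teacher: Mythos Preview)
Your proposal is correct and follows essentially the same route as the paper: exactness of $i_x^!$ for the cosheaf $t$-structure (which the paper records just before the lemma as a general fact about $f^!$), base change, and the identification $j_x^!\D_Y=\D_{Y_x}$. You simply spell out in more detail why $i_x^!$ commutes with $\hH^{-k}$ and why $\hH^{-k}=H^{-k}$ over a point; the paper takes both for granted.
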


\begin{proof}
We use base change and exactness of $ i_x^! $ (with respect to the cosheaf $ t$-structure) to see that
$$
i_x^!(\mathcal G) = \hH^{-k}( i_x^! \pi_* \D_X) = H^{-k}( \pi^{-1}(x), \D_{\pi^{-1}(x)}) = H_k(\pi^{-1}(x))
$$
\end{proof}

\subsubsection{Cosheaves on the line}
We will particularly be working with cosheaves on $ \A = \A^1 $ which are coconstant along $ U = \A \smallsetminus \{0 \} $.  Let $ i = i_0 : \{0\} \rightarrow \A $ and $ j : U \rightarrow \A $ be the inclusions of the origin and its complement.  We will also fix a point $ x \in U $ and write $ i_x : \{x \} \rightarrow \A $ for the inclusion.

Let $ u \in H_{\{0\}}^2(\A) $ be the usual relative orientation class.  As above, we will think of $ u $ as a map $ i_* \C_{\{0\}} \rightarrow \C_\A[2] $. For any vector space $ V $, we have a constant sheaf $V_\A $ with stalks $ V $, and the cup product gives an isomorphism
$$
V = H_c^0(\{0\}, V_{\{0\}}) \xrightarrow{u \cup} H_c^2(\A, {V}_\A).
$$

\begin{proposition} \label{pr:SheavesLine}
Let $ \mathcal F $ be a sheaf on $ \A $ which is constant along $U$.
\begin{enumerate}
\item We have isomorphisms
$$ i_x^* \mathcal F \cong \mathcal F(U), \ i_0^* \mathcal F \cong \mathcal F(\A)$$
and hence we have a restriction map $ r : i_0^* \mathcal F \rightarrow i_x^* \mathcal F$.
\item There is an isomorphism $ i_x^* \mathcal F \cong H_c^2(\A, \mathcal F) $ making the diagram
\begin{equation} \label{eq:squarewant}
\xymatrix{
i^*_0 \mathcal F \ar^{r}[r] \ar^\cong[d] & i_x^* \mathcal F \ar^\cong[d]\\
H^0_c(\{0\}, i^*_0 \mathcal F) \ar[r]^-{u \cup} & H^2_c(\A, \mathcal F)
}
\end{equation}
commute.
\end{enumerate}
\end{proposition}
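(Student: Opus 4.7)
The proof plan is as follows. Write $W := i_0^*\mathcal F$, so that $\mathcal F$ is determined by the triple $(V, W, \varphi)$, where $\varphi : W \to V$ is the canonical generization map. For part (i), the identity $i_x^*\mathcal F = V = \mathcal F(U)$ is immediate because $U$ is connected and $\mathcal F|_U = V_U$. To identify $\mathcal F(\A)$ with $W$, I would apply the sheaf axiom to the cover $\A = U \cup D$ with $D$ a small disc around $0$; one has $\mathcal F(D) = W$ for $D$ small enough, since $D \cap U$ is connected and a section on $D$ is then determined by its stalk at $0$. The two restriction maps into $\mathcal F(U \cap D) = V$ are respectively the identity and $\varphi$, so the equalizer reduces to $W$ via projection onto the $D$-factor. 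Under these identifications, $r$ becomes $\varphi$.

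For part (ii), applying $R\Gamma_c(\A, -)$ to the short exact sequence $0 \to j_! j^*\mathcal F \to \mathcal F \to i_{0*} i_0^*\mathcal F \to 0$ and using $H_c^k(\{0\}, W) = 0$ for $k > 0$ yields an isomorphism $H_c^2(U, V_U) \xrightarrow{\sim} H_c^2(\A, \mathcal F)$. Cup product with the orientation class $u_x \in H_{\{x\}}^2(\A)$ of the regular embedding $\{x\} \hookrightarrow \A$ defines a map $u_x \cup : V = i_x^*\mathcal F \to H_c^2(\A, \mathcal F)$; composing with the inverse of the above iso recovers the Poincar\'e-duality iso $V \xrightarrow{\sim} H_c^2(U, V_U)$ for the oriented $2$-manifold $U \cong \C^\times$. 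Hence $u_x \cup$ is the desired isomorphism $i_x^*\mathcal F \cong H_c^2(\A, \mathcal F)$.

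For the commutativity of the square, take $s \in i_0^*\mathcal F$. Under part (i), $s$ corresponds to a unique $\tilde s \in \mathcal F(\A)$, which we view as a morphism of sheaves $\tilde s : \C_\A \to \mathcal F$. By naturality of cup product, the image of $s$ under the bottom arrow $u \cup$ equals $\tilde s_*(\omega_0)$, where $\omega_0 := u \cup 1 \in H_c^2(\A, \C)$ and $\tilde s_*$ is the map on $H_c^2$ induced by $\tilde s$; similarly, the stalk $\tilde s|_x \in i_x^*\mathcal F$ coincides with $r(s)$ and satisfies $u_x \cup r(s) = \tilde s_*(\omega_x)$, where $\omega_x$ is the analogous image of $u_x$. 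Both $\omega_0$ and $\omega_x$ are the canonical generator of $H_c^2(\A, \C) \cong \C$ determined by the complex orientation of $\A$, hence equal, so the square commutes. The main technicality lies in this last step --- verifying the naturality statement precisely and the equality $\omega_0 = \omega_x$ with consistent sign conventions; the remaining pieces reduce to standard long exact sequence and sheaf-axiom computations.
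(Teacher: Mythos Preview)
Your argument is correct and follows essentially the same route as the paper. Both proofs obtain the isomorphism $i_x^*\mathcal F \cong H_c^2(\A,\mathcal F)$ from the long exact sequence of $0 \to j_! j^*\mathcal F \to \mathcal F \to i_* i^*\mathcal F \to 0$, and both verify commutativity by pulling back along a map from a constant sheaf and invoking naturality of the cup product. The only cosmetic difference is that you phrase the right vertical isomorphism as $u_x\cup$ (cup with the Thom class at $x$) and then reduce commutativity to the equality $\omega_0 = \omega_x$ in $H_c^2(\A,\C)$, whereas the paper works with the whole map $(i_0^*\mathcal F)_\A \to \mathcal F$ at once and reads off the factorization directly; these are the same argument unpacked differently.
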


\begin{proof}
For the purposes of this proof, let $ V = i^*_0 \mathcal F $ and $ W = i_x^* \mathcal F $.

Part (i) is immediate; in fact, for any connected open set $ U \subset \A $, we have
$$
\mathcal F (U) = \begin{cases} V \text{ if $ 0 \in U $ } \\ W \text{ if $ 0 \notin U $ } \end{cases}
$$
In particular $ \mathcal F (\A) = V $ and $ \mathcal F (U) = W $ and so by restriction, we have a linear map $ r : V \rightarrow W $.

For part (ii), consider the standard short exact sequence of sheaves
$$
0 \rightarrow  j_! j^! \mathcal F \rightarrow \mathcal F \rightarrow i_* i^* \mathcal F \rightarrow 0
 $$
which in our case becomes
$$
0 \rightarrow j_! {W}_{U} \rightarrow \mathcal F \rightarrow i_* {V}_{\{0\}} \rightarrow 0
$$
Since $ i $ is closed embedding, $ i_* = i_! $ and so $H_c^k(\A,  i_* {V}_{\{0\}} ) =  H_c^k(\{0\}, V_{\{0\}}) = 0 $ for $ k > 0 $.  Thus we get an isomorphism
$$ W = W \otimes H_c^2(U) =  H_c^2(U, {W}_U) = H_c^2(\A, j_! {W}_U) \cong H_c^2(\A, \mathcal F)
$$

Now that we have an isomorphism $ i_x^* \mathcal F \cong H^2_c(\A, \mathcal F)$, it remains to verify the commutativity of the square (\ref{eq:squarewant}).  To that end, consider the map of sheaves $ {V}_\A \rightarrow \mathcal F $, which is the identity on connected open sets containing 0 and the map $ r $ on connected open sets not containing 0.  By functoriality, we obtain a commutative diagram
\begin{equation*}
\xymatrix{
j_! {V}_U = j_! j^! {V}_\A \ar[r] \ar^r[d]&  {V}_\A \ar[d] \\
j_! {W}_U = j_! j^! \mathcal F \ar[r] &  \mathcal F
}
\end{equation*}
 Applying $H_c^2(\A, -) $, we obtain the commutative square
\begin{equation} \label{eq:square}
\xymatrix{
V = H_c^2(U, {V}_U) \ar^=[r] \ar^r[d] & V = H_c^2(\A, {V}_\A) \ar[d] \\
 W =  H_c^2(U, {W}_U) \ar[r]^-\cong & H_c^2(\A, \mathcal F)
}
\end{equation}

Now by the naturality of cup product, we get a commutative square
\begin{equation*}
\xymatrix{
i_* i^* V_\A \ar^{u \cup}[r] \ar^\cong[d] & V_\A[2] \ar[d] \\
i_* i^* \mathcal F \ar^{u \cup}[r] & \mathcal F[2].
}
\end{equation*}
Applying $H^0_c(\A, -) $, we obtain a factoring of $ u \cup $  as
$$
V = H^0_c(\{0\}, i^*_0 \mathcal F) \xrightarrow{\cong} H^2_c(\A, {V}_\A) \rightarrow H^2_c(\A, \mathcal F)
$$
Combined with the square (\ref{eq:square}), the result follows.
\end{proof}

Applying Verdier duality implies a similar result for cosheaves on $ \A $.
\begin{proposition} \label{pr:CosheavesLine}
Let $ \mathcal G $ be a cosheaf on $ \A $ which is coconstant along $U$.
\begin{enumerate}
\item We have isomorphisms
$$ i_x^! \mathcal G \cong H^0_c(U, i_U^!\mathcal G), \ i_0^! \mathcal G \cong H^0_c(\A, \mathcal G)$$
and a corestriction map $ r^\vee : i_x^! \mathcal G \rightarrow i_0^! \mathcal G$.
\item There is an isomorphism $  i_x^! \mathcal G \cong H^{-2}(\A, \mathcal G)  $
making the diagram
\begin{equation*}
\xymatrix{
i^!_x \mathcal G \ar^{r^\vee}[r] \ar^\cong[d] & i_0^! \mathcal G \ar^\cong[d]\\
H^{-2}(\A, \mathcal G) \ar^{u \cap}[r] & H_{\{0\}}^0(\A, \mathcal G)
}
\end{equation*}
commute.
\end{enumerate}
\end{proposition}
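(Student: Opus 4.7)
The plan is to derive this statement by applying Verdier duality to Proposition~\ref{pr:SheavesLine}. By definition of a cosheaf, we may write $\mathcal{G} = \D(\mathcal{F})$ for some sheaf $\mathcal{F}$, and since $i_U^! \D \cong \D\, i_U^*$, the hypothesis that $\mathcal{G}$ is coconstant along $U$ is equivalent to $\mathcal{F}$ being constant along $U$. This places $\mathcal{F}$ in precisely the setup of Proposition~\ref{pr:SheavesLine}, whose conclusions we may then dualize.

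First I would record the six-functor identifications I plan to use: for any closed embedding $i$ one has $i^! \D \cong \D\, i^*$, and Poincar\'e--Verdier duality gives $\D H^k_c(X, \mathcal{H}) \cong H^{-k}(X, \D(\mathcal{H}))$. With these, applying $\D$ to the isomorphisms $i_x^* \mathcal{F} \cong H^0(U, i_U^* \mathcal{F})$ and $i_0^* \mathcal{F} \cong H^0(\A, \mathcal{F})$ of Proposition~\ref{pr:SheavesLine}(i) immediately produces the isomorphisms $i_x^! \mathcal{G} \cong H^0_c(U, i_U^! \mathcal{G})$ and $i_0^! \mathcal{G} \cong H^0_c(\A, \mathcal{G})$ asserted in part (i); the corestriction map $r^\vee$ is then defined as the Verdier dual of the restriction $r$.

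For part (ii), I would apply $\D$ termwise to the commutative square of Proposition~\ref{pr:SheavesLine}(ii), reversing every arrow, to obtain
$$
\xymatrix{
\D(i^*_0 \mathcal F) & \D(i_x^* \mathcal F) \ar^-{\D(r)}[l] \\
\D H^0_c(\{0\}, i^*_0 \mathcal F) \ar^\cong[u] & \D H^2_c(\A, \mathcal F) \ar^-{\D(u \cup)}[l] \ar^\cong[u]
}
$$
and then identify the four corners with $i_0^! \mathcal{G}$, $i_x^! \mathcal{G}$, $H^0_{\{0\}}(\A, \mathcal{G})$, and $H^{-2}(\A, \mathcal{G})$ respectively by the identifications above. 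The bottom arrow $\D(u \cup)$ becomes $u \cap$ thanks to the compatibility between Verdier duality and cup/cap products already recorded in \S\ref{sss:cupcap}. The main thing to watch is the routine bookkeeping of naturality: one must check that the dualized top arrow really is $r^\vee$ and the dualized bottom arrow really is $u \cap$ under the stated isomorphisms, and that no stray shifts or signs appear. This is the only obstacle, and it is purely formal given the compatibilities already established.
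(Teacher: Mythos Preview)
Your proposal is correct and follows exactly the paper's approach: the paper simply states ``Applying Verdier duality implies a similar result for cosheaves on $\A$'' and leaves the details implicit, while you have spelled out precisely those details. The key identifications you invoke---$i^!\D\cong\D i^*$, Poincar\'e--Verdier duality $H^{-k}(X,\D\mathcal H)\cong (H^k_c(X,\mathcal H))^*$, and the compatibility $\D(u\cup)=u\cap$ from \S\ref{sss:cupcap}---are exactly what is needed.
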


Now, we will see how we can use these results to describe degeneration of cycles in Borel--Moore homology.

Let $ f : Y \rightarrow \A $ be a morphism of varieties.  Fix an integer $ n$ such that all fibres of $ f $ have dimension $ \le n - 1 $ (usually $ n = \dim Y $, but not necessarily).  We write $Y_0 = f^{-1}(0), Y_x = f^{-1}(x), Y_U = f^{-1}(U) $.  Assume also that we are given an isomorphism $ Y_U \cong Y_x \times U $ compatible with the projection to~$U $.
Let $ \mathcal G = \hH^{-2n+2}(f_* \D_Y) $.

\begin{proposition} \label{pr:CosheafFamily}
We have the following results concerning $ \mathcal G $.
\begin{enumerate}
\item $\mathcal G $ is coconstant along $ U $.
\item We have isomorphisms $ i_x^! \mathcal G \cong H_{2n-2}(Y_x) $ and $ i_0^! \mathcal G \cong H_{2n-2}(Y_0) $.
\item For $ p < -2n+2$, we have $ \hH^p (f_* \D_Y) = 0 $ and thus we have a morphism $ \mathcal G \rightarrow f_* \D_Y[-2n+2] $.  This morphism induces an isomorphism $$ H^{-2}(\A, \mathcal G) \xrightarrow{\cong} H^{-2}(\A, f_* \D_Y[-2n+2]) =  H^{-2n}(Y, \D_Y) = H_{2n}(Y) $$

\item  With respect to  the isomorphism $H^{-2}(\A, \mathcal G) \cong H_{2n}(Y)$ and the isomorphism $ i_x^! \mathcal G \cong H_{2n-2}(Y_x) $, the isomorphism $  i_x^! \mathcal G \cong H^{-2}(\A, \mathcal G) $ from Proposition \ref{pr:CosheavesLine}(ii) is given on cycles as $ [Z] \mapsto [\overline{Z \times U}] $.
 \item
 The corestriction map is compatible with the cap product in homology as follows
 \begin{equation*}
 \xymatrix{
 & i_x^! \mathcal G \ar^{r^\vee}[r] \ar_{\cong}[dl] &  i_0^! \mathcal G \ar^\cong[d] \\
 H^{-2}(\A, \mathcal G) \ar^{\cong}[r] & H_{2n}(Y) \ar[r]^-{(f^*u) \cap }   & H_{2n-2}(Y_0)
 }
\end{equation*}
\end{enumerate}
\end{proposition}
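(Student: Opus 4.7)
The strategy is to handle each part in turn, leveraging base change and the cosheaf $t$-structure.

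For (i), I would use base change to rewrite $i_U^! \mathcal G = \hH^{-2n+2}((f_U)_* \D_{Y_U})$, then exploit the trivialization $Y_U \cong Y_x \times U$ together with the box product decomposition $\D_{Y_U} \cong \D_{Y_x} \boxtimes \D_U$ to see that pushing forward through the projection to $U$ yields a complex whose cohomology cosheaves are coconstant along $U$; in the specific degree $-2n+2$ this gives a coconstant cosheaf with costalk $H_{2n-2}(Y_x)$. Part (ii) is then the direct application of Lemma~\ref{le:BMfibres} with $k = 2n-2$.

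For (iii), the vanishing $\hH^p(f_*\D_Y) = 0$ for $p < -2n+2$ comes from the same costalk computation: costalks at points of $\A$ are $H_{-p}$ of the corresponding fiber, which vanishes for $-p > 2n-2$ since fibers have complex dimension at most $n-1$. The map $\mathcal G \to f_*\D_Y[-2n+2]$ is then the canonical truncation in the cosheaf $t$-structure (equivalently, the Verdier dual of the quotient $f_!\C_Y \to \mathcal H^{2n-2}(f_!\C_Y)[-(2n-2)]$). To show the induced map on $H^{-2}(\A, -)$ is an isomorphism, I would run the hypercohomology spectral sequence
$$ E_2^{p,q} = H^p(\A, \hH^q(f_*\D_Y)) \;\Longrightarrow\; H^{p+q}(\A, f_*\D_Y) = H_{-p-q}(Y). $$
Two inputs are needed: first, for a constructible cosheaf $\mathcal C = \D\mathcal F$ on $\A$ concentrated in cosheaf degree $0$, we have $H^p(\A, \mathcal C) = \Ext_\A^{p+2}(\mathcal F, \C_\A)$, which vanishes outside $p \in [-2, 0]$ by the cohomological dimension of the curve; second, $\dim Y \le n$, so $H_{2n}(Y)$ is the top Borel-Moore homology. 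The only $(p, q)$ with $p + q = -2n$, $q \ge -2n+2$, and $p \in [-2, 0]$ is $(-2, -2n+2)$, and all incoming and outgoing differentials at this entry are forced to vanish for the same range reasons. Hence $E_\infty^{-2,-2n+2} = E_2^{-2,-2n+2} = H^{-2}(\A, \mathcal G)$ equals $H_{2n}(Y)$.

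For (iv), I would trace the isomorphism $i_x^!\mathcal G \cong H^{-2}(\A, \mathcal G)$ of Proposition~\ref{pr:CosheavesLine}(ii) through its factorization $i_x^!\mathcal G \cong H^0_c(U, i_U^!\mathcal G) \to H^{-2}(\A, \mathcal G)$. Under $Y_U \cong Y_x \times U$, an irreducible cycle $Z \subset Y_x$ of dimension $n-1$ produces the top-dimensional cycle $Z \times U$ in $Y_U$, whose fundamental class in $H_{2n}(Y_U)$ corresponds under Künneth to $[Z] \otimes [U]$; this identifies the first isomorphism with $[Z] \mapsto [Z \times U]$. The subsequent map $H^0_c(U, i_U^!\mathcal G) \to H^{-2}(\A, \mathcal G)$, obtained by cup product with the orientation class $u$, matches the pushforward $H_{2n}(Y_U) \to H_{2n}(Y)$ under the open inclusion, which sends $[Z \times U]$ to $[\overline{Z \times U}]$. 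Part (v) then follows by combining Proposition~\ref{pr:CosheavesLine}(ii) (identifying $r^\vee$ with the cap product by $u$ on $H^{-2}(\A, \mathcal G)$) with the pullback compatibility~\eqref{eq:projection} applied to $f : Y \to \A$, $Z = \{0\}$, $W = Y_0$, which converts the cap product by $u$ on $\A$ into the cap product by $f^*u$ on $Y$.

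The main obstacle I anticipate is (iii): carefully establishing the cohomological range $p \in [-2, 0]$ for constructible cosheaves on the affine line and then verifying that the relevant diagonal of the spectral sequence collapses at $E_2$ to a single entry. The cycle-level identification in (iv) is the second most delicate step, requiring one to compatibly track Künneth, cup/cap products, and Verdier duality through the chain of canonical isomorphisms.
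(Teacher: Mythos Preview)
Your proposal is correct and follows essentially the same route as the paper: parts (i)--(iii) and (v) match almost verbatim (base change plus the product decomposition, Lemma~\ref{le:BMfibres}, the hypercohomology spectral sequence with the range constraints $p\ge -2n+2$ and $q\ge -2$, and the projection formula~\eqref{eq:projection} combined with Proposition~\ref{pr:CosheavesLine}(ii)). The one small imprecision is in (iv): the isomorphism $i_x^!\mathcal G\cong H^{-2}(\A,\mathcal G)$ of Proposition~\ref{pr:CosheavesLine}(ii) is not itself ``obtained by cup product with $u$''---the cap product with $u$ is what identifies $r^\vee$, not this isomorphism---so the paper instead argues via the adjunction map $\mathcal G\to j_*j^*\mathcal G$ and its analogue on $f_*\D_Y[-2n+2]$, identifying the resulting map $H_{2n}(Y)\to H_{2n}(Y_U)$ with restriction in Borel--Moore homology (so its inverse sends $[Z\times U]$ to $[\overline{Z\times U}]$); your cycle-level conclusion and overall logic are the same.
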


\begin{proof}
\begin{enumerate}
\item Since $j^! $ is exact on the cosheaf $t$-structure, $ j^! \mathcal G = \hH^{-2n+2}(f_* \D_{Y_U})  $ and because $Y_U \cong Y_x \times U $, we see that $ j^! \mathcal G  = \D_U \otimes H_{2n-2}(Y_x) $ is coconstant.
\item These isomorphisms follow from Lemma \ref{le:BMfibres}.
\item For any $ p$ and any $ y \in \A $, we have $ i_y^! \hH^{p} (f_* \D_Y) = H_{-p}(f^{-1}(y)) $.  Since the fibre dimension is $ n - 1$, we see that this homology vanishes if $ p < -2n+2 $ and thus $ \hH^p (f_* \D_Y) = 0 $ for such $ p $.  Thus we deduce the existence of a morphism $  \mathcal G \rightarrow f_* \D_Y[-2n+2] $.  We have a spectral sequence starting with $H^q(\A, \hH^p(f_* \D_Y)) $ and converging to $ H^{p+q}(Y, \D_Y) $.  Since $ H^q(\A, \hH^p(f_* \D_Y)) $ vanishes for $ q < -2 $ and for $ p < -2n+2 $, we see that the only contribution to $ H^{-2n}(Y, \D_Y) $ can come from $ H^{-2}(\A, \mathcal G) $ and thus  $ H^{-2}(\A, \mathcal G) \cong H_{2n}(Y) $.
\item We begin by repeating part of the proof of Proposition \ref{pr:SheavesLine} in the Verdier dual setting.  Consider the morphism $ \mathcal G \rightarrow j_* j^* \mathcal G $.  As $ j^* = j^! $, we obtain the morphism $ \mathcal G \rightarrow j_* \D_U \otimes H_{2n-2}(Y_x) $.  Applying $H^{-2}(\A, -) $, we obtain the isomorphism
\begin{equation} \label{eq:homfibre}
H^{-2}(\A, \mathcal G) \cong H^{-2}(\A, j_* \D_U \otimes H_{2n-2}(Y_x) ) = H_{2}(U) \otimes H_{2n-2}(Y_x) = H_{2n-2}(Y_x)
\end{equation}
Now we have the commutative square
\begin{equation*}
\xymatrix{
\mathcal G \ar[r] \ar[d] & j_*j^* \mathcal G = j_* \D_U \otimes H_{2n-2}(Y_x) \ar[d] \\
f_* \D_Y [-2n+2] \ar[r] & j_* j^* f_* \D_Y[-2n+2] = j_* f_* \D_{Y_U}[-2n+2]
}
\end{equation*}
Applying $H^{-2}(\A, -) $, we obtain the commutative rectangle of isomorphisms
\begin{equation*}
\xymatrix{
H^{-2}(\A, \mathcal G) \ar[r] \ar[d] & H^{-2}(\A,  j_* \D_U \otimes H_{2n-2}(Y_x)) \ar[r]^-\cong &  H_{2n-2}(Y_x) \ar[d] \\
H^{-2n}(Y, \D_Y) = H_{2n}(Y) \ar[r] & H^{-2n}(Y_U, \D_{Y_U}) = H_{2n}(Y_U) \ar[r]^-\cong  & H_{2n}(Y_x \times U)
}
\end{equation*}
In this diagram, the bottom left horizontal arrow is given by pullback in Borel--Moore homology, so on cycles it is given by $ [X] \mapsto [X \cap Y_U]$.  The right vertical arrow is given on cycles by $ [Z] \mapsto [Z \times U] $.  Thus if we trace from the top right to the bottom left (following the inverses of the arrows along the bottom), we see the map $ [Z] \mapsto [\overline{Z \times U}]$.

 \item We apply the projection formula (\ref{eq:projection}) to the sheaf $ \D_Y $.  We obtain the commutative square.

   \begin{equation}  \xymatrix{
  H_{2n}(Y) = H^{-2n}(Y, \mathcal \D_Y) \ar[r]^-{(f^*u) \cap }  \ar^\cong[d] & H^{-2n+2}_{Y_0}(Y, \mathcal \D_Y) = H_{2n-2}(Y_0) \ar^\cong[d] \\
   H^{-2}(\A, \mathcal G) \cong H^{-2n}(\A, f_* \D_Y) \ar[r]^-{u \cap }  & H^{-2n+2}_{\{0\}}(\A, f_* \mathcal \D_Y) \cong H_{\{0\}}^0(\A,\mathcal G) \\
   }
  \end{equation}
 The result now follows from part (ii) of Proposition \ref{pr:CosheavesLine}.
 \end{enumerate}
\end{proof}

\subsection{The Beilinson--Drinfeld Grassmannian}\label{ssec:BD}

We will need to recall the definition and properties of the Beilinson--Drinfeld Grassmannian.  All the definitions and results here are taken from \cite{MirkovicVilonen}.  The original definitions are due to Beilinson--Drinfeld \cite{BeilinsonDrinfeld}.

For any $ x \in \C $, let $\mathcal{O}_x = \C[[t-x]]$ denote the completion of the local ring of $ \A = \A^1 $ at $ x $ and $\K_x = \C((t-x))$ its fraction field.  By the Beauville--Laszlo Theorem, the $\C$-points of the affine Grassmannian $ \Gr_x := G(\K_x) / G (\mathcal{O}_x) $ can be identified with the set of pairs $ (P, \sigma) $ where $ P $ is a principal $ G $-bundle on $ \A^1 $ and $ \sigma $ is a trivialization of $ P $ over $ \A \smallsetminus \{x\} $.

Since we have chosen the local coordinate $ t -x $, we get an isomorphism $ \mathcal{O}_x \rightarrow \mathcal{O} $ and thus $ \Gr_x \rightarrow \Gr $.

Now, we consider a two-point version of the Beilinson--Drinfeld Grassmannian.  For simplicity, we will fix one point to be $ 0 $ and allow the other point to vary.   The set of $ \C$-points of $ \Gr_\A $ is given by
\begin{align*}
\Gr_\A = \{ (P, \sigma, x) :  &\ P  \text{ is a principal } G \text{ bundle on } \A ,  x \in \A,\\
 &\text { and }  \sigma \text{ is a trivialization of } P \text{ over } \A \smallsetminus \{ 0, x \} \}
\end{align*}
(See 
\cite[(5.1)]{MirkovicVilonen} for a precise description of the $ R$-points of $ \Gr_\A $ for each $\C$-algebra $ R $.)

 We have an obvious map $ \pi: \Gr_{\A} \rightarrow \A$.  Let $ U = \A^1 \smallsetminus \{0 \} $.  The following is diagram (5.9) in \cite{MirkovicVilonen}.
\begin{lemma}
There are isomorphisms
$$ \pi^{-1}(U) = \Gr_U \cong \Gr \times \Gr \times U, \quad \pi^{-1}(0) \cong \Gr $$
\end{lemma}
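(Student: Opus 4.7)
The plan is to invoke the Beauville--Laszlo gluing theorem twice, once for the generic fibre and once for the special fibre. Let me spell out the two cases separately.

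For the fibre over $0 \in \A$, when the two points of the BD Grassmannian collide, the defining data $(P,\sigma,0)$ consists of a $G^\vee$-bundle $P$ on $\A$ together with a trivialization $\sigma$ on $\A \smallsetminus \{0\}$. By Beauville--Laszlo, such a pair is determined by (and gives rise to) the restriction of $P$ to the formal disk $\Spec \mathcal O_0$ together with $\sigma$ on the formal punctured disk $\Spec \K_0$. Using the local coordinate $t$ at $0$ to identify $\mathcal O_0 \cong \mathcal O$ and $\K_0 \cong \K$, this pair is an element of $G^\vee(\K)/G^\vee(\mathcal O) = \Gr$, giving $\pi^{-1}(0) \cong \Gr$.

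For the fibre over $U$, when $x \neq 0$ the two points $0$ and $x$ are distinct, so by Beauville--Laszlo applied at the finite set $\{0,x\} \subset \A$, the datum $(P,\sigma)$ is equivalent to the combined data of
\begin{itemize}
\item the restriction of $P$ to $\Spec \mathcal O_0$ with the trivialization $\sigma$ over $\Spec \K_0$, and
\item the restriction of $P$ to $\Spec \mathcal O_x$ with the trivialization $\sigma$ over $\Spec \K_x$.
\end{itemize}
The first piece lies in $\Gr_0 \cong \Gr$ using the coordinate $t$, and the second piece lies in $\Gr_x \cong \Gr$ using the coordinate $t-x$. Recording also the point $x \in U$, we get the map $\pi^{-1}(U) \to \Gr \times \Gr \times U$. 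This map has an explicit inverse: given two points in $\Gr$ and a point $x \in U$, the Beauville--Laszlo gluing assembles them into a bundle on $\A$ trivialized away from $\{0,x\}$.

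The main thing to be careful about is making the construction functorial on $R$-points (not just $\C$-points), so that one really obtains an isomorphism of ind-schemes rather than just a bijection of sets. This is done by working with the $R$-point description from \cite[(5.1)]{MirkovicVilonen} and applying the Beauville--Laszlo theorem in the relative setting: for any $\C$-algebra $R$ and any $R$-point $x: \Spec R \to U$, the subscheme $\{0\} \sqcup \Gamma_x \subset \A_R$ (where $\Gamma_x$ is the graph of $x$) is a relative Cartier divisor, and the two components are disjoint since $x$ is invertible in $R$. Beauville--Laszlo then gives the factorization on $R$-points naturally in $R$. The coordinate identifications $\mathcal O_x \cong \mathcal O$ need the choice of $t - x$ as a local parameter, which makes sense globally over $U$ and thus yields the trivial $U$-family structure on the second factor.
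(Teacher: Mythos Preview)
Your argument via Beauville--Laszlo gluing is the standard one and is correct. Note, however, that the paper does not give a proof of this lemma at all: it simply records the statement and attributes it to diagram~(5.9) in \cite{MirkovicVilonen}. So there is nothing in the paper to compare your argument against; you have supplied what the paper leaves to the literature.
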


The action of $ T $ by left multiplication on $ \Gr $ extends to an action on $ \Gr_{\A} $ preserving all fibres.  From the fibre perspective, this is simply the diagonal action on $ \Gr \times \Gr $.

Following \cite{MirkovicVilonen}, we introduce a global version of the semi-infinite cells.  For $ \mu \in P $, define $ S^\mu_{-,\A} $ to be the subvariety of $ \Gr_{\A} $ with fibres $ S_-^\mu $ over $0$ and fibres $ \displaystyle{\cup_{\mu_1 + \mu_2 = \mu} S_-^{\mu_1} \times S_-^{\mu_2}} $ over $ x \in U $.


We write $ s_\mu $ for the inclusion of $ \overline{S^\mu_{-,\A}} $ into $\Gr_{\A}$.

We will also need the global version of the $\GO $ orbits. From \cite{Zhu}, for any pair $ \lambda_1, \lambda_2 \in P_+$, there exists a variety $ \overline{\Gr^{\lambda_1, \lambda_2}_\A} \subset \Gr_\A  $, whose fibres are $ \overline{\Gr^{\lambda_1}} \times \overline{\Gr^{\lambda_2}} $ away from $ 0 $ and $ \overline{\Gr^{\lambda_1 + \lambda_2}} $ over $ 0 $.

\subsection{The fusion product of perverse sheaves}\label{ss:fusion}
We will now define the fusion tensor product on $ P_{\GO}(\Gr) $ following \cite{MirkovicVilonen}.

Consider the diagram
\begin{equation*}
\xymatrix{
\Gr \ar[r]^i \ar[d] & \Gr_\A \ar[d]^\pi &  \ar[l]_-{j}  \Gr_U \cong \Gr \times \Gr \times U \ar[d] \\
\{0 \} \ar[r] &\A &\ar[l] U
}
\end{equation*}

\begin{definition}
Let $ \mathcal A_1, \mathcal A_2 \in P_{\GO}(\Gr) $.  The {\bf fusion product} (see \cite[(5.8)]{MirkovicVilonen}) is defined by
$$
\mathcal{A}_1 * \mathcal{A}_2 := i^!( j_{!*} (p_{12}^! (\mathcal{A}_1 \boxtimes \mathcal{A}_2)[-1]))[1]
$$
where $ p_{12} : \Gr \times \Gr \times U \rightarrow \Gr \times \Gr $ is the projection onto the first two factors.
\end{definition}

Following \cite{MirkovicVilonen} and \cite[\S  8.3]{BR}, we will explain the compatibility of the fusion product with the weight functors.

Fix $ \mathcal{A}_1, \mathcal{A}_2 \in P_{\GO}(\Gr) $ and let
$$
\mathcal{B} := j_{!*} (p_{12}^! (\mathcal{A}_1 \boxtimes \mathcal{A}_2)[-1])
$$
and for each $ \mu \in P $ let
$$
 \mathcal{F}_\mu  :=  \hH^{2 \rho^\vee(\mu) + 1} \pi_*  s_\mu^! \mathcal{B}
 $$
a cosheaf on $ \A $.  The following result follows by direct computation.

\begin{proposition}
\begin{enumerate}
\item The cosheaf $ \mathcal{F}_\mu $ is coconstant along $ U $ with costalk at $ x \in U $ given by
$$
i_x^! \mathcal F_\mu =  \bigoplus_{\mu_1 + \mu_2 = \mu} F_{\mu_1}(\mathcal A_1) \otimes F_{\mu_2}(\mathcal A_2). $$
\item The costalk at $ 0 $ is given by
$$
i_0^! \mathcal F_\mu = F_\mu(\mathcal A_1 * \mathcal A_2)
$$
\item The cosheaf $ \mathcal F_\mu $ is actually coconstant along all of $ \A $ and so the corestriction map $ r^\vee : i_x^! \mathcal F_\mu \rightarrow i_0^! \mathcal F_\mu$ provides an isomorphism
$$
 \bigoplus_{\mu_1 + \mu_2 = \mu} F_{\mu_1}(\mathcal A_1) \otimes F_{\mu_2}(\mathcal A_2) \rightarrow  F_\mu(\mathcal A_1 * \mathcal A_2)
 $$
\end{enumerate}
\end{proposition}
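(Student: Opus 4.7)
The plan is to deduce all three parts from base change applied to the inclusions $\{x\}\hookrightarrow\A$ and $\{0\}\hookrightarrow\A$, combined with the product structure of $\Gr_\A$ over $U$. Since $\hH^k$ commutes with both $i_x^{\;!}$ and $i_0^{\;!}$ (both being $t$-exact for the cosheaf $t$-structure), the costalks can be computed as fiberwise cohomologies; the essential difficulty is concentrated in part~(iii).

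For (i), base change together with the trivialization $\Gr_U\cong\Gr\times\Gr\times U$ and the identification $\mathcal B|_{\Gr_U}=p_{12}^{\;!}(\mathcal A_1\boxtimes\mathcal A_2)[-1]$ reduces the computation of $i_x^{\;!}\mathcal F_\mu$ to a K\"unneth calculation over each component $\overline{S^{\mu_1}_-}\times\overline{S^{\mu_2}_-}$ of the fiber of $\overline{S^\mu_{-,\A}}$ at $x$. The degree shift $+1$ in the definition of $\mathcal F_\mu$ absorbs the $[-1]$ in $\mathcal B$, and K\"unneth yields the advertised direct sum $\bigoplus_{\mu_1+\mu_2=\mu} F_{\mu_1}(\mathcal A_1)\otimes F_{\mu_2}(\mathcal A_2)$; coconstancy along $U$ is automatic from the product structure. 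For (ii), the same base change applied at $0$ identifies $i_0^{\;!}\mathcal F_\mu$ with $H^{2\rho^\vee(\mu)+1}\bigl(\overline{S_-^\mu},\,\overline s_\mu^{\;!}\,i^{\;!}\mathcal B\bigr)$; unwinding the definition $i^{\;!}\mathcal B=(\mathcal A_1*\mathcal A_2)[-1]$ converts $H^{2\rho^\vee(\mu)+1}$ into $H^{2\rho^\vee(\mu)}$, which is exactly $F_\mu(\mathcal A_1*\mathcal A_2)$.

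The main obstacle is (iii): promoting coconstancy from $U$ to all of $\A$ and checking that the corestriction $r^\vee$ is an isomorphism. From (i), (ii), and Mirkovi\'c--Vilonen's Tannakian identification $F(\mathcal A_1*\mathcal A_2)\cong F(\mathcal A_1)\otimes F(\mathcal A_2)$ decomposed by weights, one already knows that the costalks $i_0^{\;!}\mathcal F_\mu$ and $i_x^{\;!}\mathcal F_\mu$ have the same finite dimension. Turning this dimension equality into genuine coconstancy on $\A$ is the delicate step. My plan is to exploit the $\mathbb C^\times$-action on $\Gr_\A$ induced by the cocharacter $\rho\colon\mathbb C^\times\to T^\vee$: this action fixes the base $\A$ pointwise and has $S^\mu_{\pm,\A}$ as its attracting/repelling loci relative to $\A$. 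Applying Braden's hyperbolic localization theorem relatively over $\A$, together with Mirkovi\'c--Vilonen's concentration result for perverse sheaves on $\Gr$, one obtains that $\pi_*s_\mu^{\;!}\mathcal B$ is concentrated in degree $2\rho^\vee(\mu)+1$ and that the resulting cosheaf $\mathcal F_\mu$ is coconstant on all of $\A$. Given coconstancy, the corestriction $r^\vee$ is then automatically an isomorphism, and its source and target match the descriptions provided by (i) and (ii).
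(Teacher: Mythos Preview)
Your approach is correct and is essentially the one the paper defers to: the paper gives no argument beyond ``follows by direct computation'' and identifies $\mathcal F_\mu$ with (a shift of) the local system $\mathcal L_\mu^{2\rho^\vee(\mu)}(\mathcal A_1,\mathcal A_2)$ constructed in \cite[(6.22)]{MirkovicVilonen}, referring to \cite[\S 8.3]{BR} for the details. Your base-change computations for (i) and (ii) are exactly that direct computation, and your plan for (iii) via relative hyperbolic localization is the content of those references.

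One point in (iii) is slightly under-specified. Braden's theorem and the fiberwise concentration give you that $\pi_*s_\mu^{\;!}\mathcal B$ is concentrated in a single cohomological degree and is a direct summand of $\pi_*\mathcal B$; but to conclude coconstancy on all of $\A$ you still need to know that $\pi_*\mathcal B$ itself is a local system on $\A$ (hence constant, since $\A$ is simply connected). This is where the extra input lives: in \cite{MirkovicVilonen} and \cite{BR} it comes from the fact that $\mathcal B$ is ULA over $\A$ (equivalently, from the local triviality of the family provided by the convolution diagram). Once you invoke that, your argument is complete and matches the references the paper cites.
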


In fact, this cosheaf $\mathcal F_\mu$ is $\mathcal L[2]$, where $\mathcal L$ is the local system $\mathcal L_\mu^{2\rho^\vee(\mu)}(\mathcal A_1,\mathcal A_2)$ defined in [MV], (6.22), and pulled back to $ \A \subset \A^2 $.

\subsection{The multiplication map}\label{ssec:mult}
Let $ \lambda_1, \lambda_2 $ be two dominant weights and let  $\lambda = \lambda_1 + \lambda_2$.  We have a morphism
$$ m_{\lambda_1 \lambda_2} : \IC_{\lambda_1} * \IC_{\lambda_2} \rightarrow \IC_\lambda $$
which becomes $m_{\lambda_1 \lambda_2} : L(\lambda_1) \otimes L(\lambda_2) \rightarrow L(\lambda) $ under the geometric Satake isomorphism.

Take $(\mathcal A_1,\mathcal A_2) = (\IC_{\lambda_1},\IC_{\lambda_2})$ in the setup above. Note that
$$
\mathcal{B} := j_{!*} (p_{12}^! (\IC_{\lambda_1} \boxtimes \IC_{\lambda_2})[-1])
$$
is actually the IC sheaf of $ \overline{\Gr^{\lambda_1, \lambda_2}_\A}$.

Let $ \mu \in P $. We will compare $ \mathcal F_\mu := \hH^{2 \rho^\vee(\mu) + 1} \pi_*  s_\mu^! \mathcal{B}$ to the cosheaf
$$ \mathcal G := \hH^{2\rho^\vee(\mu) +1} \pi_* s_\mu^! (\D_{\overline{Gr^{\lambda_1, \lambda_2}_\A}}[-2\rho^\vee(\lambda) -1]) = \hH^{-2\rho^\vee(\lambda - \mu)}  \pi_* (\D_{\overline{Gr^{\lambda_1, \lambda_2}_\A} \cap \overline{S^\mu_{-,\A}}}). $$

We can apply Proposition \ref{pr:CosheafFamily} to the map $ \pi : \overline{Gr^{\lambda_1, \lambda_2}_\A} \cap \overline{S^\mu_{-,\A}} \rightarrow \A $ and thus to the sheaf $ \mathcal G $.

So from Proposition \ref{pr:CosheafFamily} (i) and (ii), it follows that $ \mathcal G $ is coconstant along $ U $ and its costalks are as follows
$$ i_0^! \mathcal G \cong H_{2\rho^\vee(\lambda - \mu)}(\overline{\Gr^{\lambda}} \cap \overline{S_-^\mu}) \cong F_\mu(\IC_\lambda) $$
and for $ x \in U $,
\begin{align*}
i_x^! \mathcal G &= \bigoplus_{\mu_1 + \mu_2 = \mu} H_{2\rho^\vee(\lambda_1 - \mu_1)}(\overline{\Gr^{\lambda_1}} \cap \overline{S_-^{\mu_1}}) \otimes H_{2\rho^\vee(\lambda_2 - \mu_2)}(\overline{\Gr^{\lambda_2}} \cap \overline{S_-^{\mu_2}}) \\
&\cong \bigoplus_{\mu_1 + \mu_2 = \mu} F_{\mu_1}(\IC_{\lambda_1}) \otimes F_{\mu_2}(\IC_{\lambda_2})
\end{align*}

\begin{lemma} \label{th:mlem}
The following diagram commutes
\begin{equation*}
\xymatrix@C=60pt@R=30pt{
\displaystyle{\bigoplus_{\mu_1 + \mu_2 = \mu}} F_{\mu_1}(\IC_{\lambda_1}) \otimes F_{\mu_2}(\IC_{\lambda_2}) \ar^\cong[d] \ar[r]^(.66){m_{\lambda_ 1\lambda_2}} & F_\mu(\IC_\lambda) \ar^\cong[d] \\
i_x^! \mathcal G \ar[r]^{r^\vee} & i_0^! \mathcal G
}
\end{equation*}
\end{lemma}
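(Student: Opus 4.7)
The idea is to compare $\mathcal{F}_\mu$ and $\mathcal{G}$ via the natural morphism (\ref{eq:ICtoD}) applied to $\mathcal{B} = \IC_{\overline{\Gr^{\lambda_1,\lambda_2}_\A}}$. Setting $d := \dim \overline{\Gr^{\lambda_1,\lambda_2}_\A} = 2\rho^\vee(\lambda)+1$, this gives a morphism $\mathcal{B} \to \D_{\overline{\Gr^{\lambda_1,\lambda_2}_\A}}[-d]$ in $D_c(\Gr_\A)$, and applying $\hH^{2\rho^\vee(\mu)+1}\pi_*s_\mu^!$ produces a morphism $\phi : \mathcal{F}_\mu \to \mathcal{G}$ of cosheaves on $\A$. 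Naturality of corestriction to costalks yields a commutative square
\begin{equation*}
\xymatrix{
i_x^!\mathcal{F}_\mu\ar[r]^{r^\vee_{\mathcal{F}}}\ar[d]_{i_x^!\phi} & i_0^!\mathcal{F}_\mu\ar[d]^{i_0^!\phi} \\
i_x^!\mathcal{G}\ar[r]^{r^\vee} & i_0^!\mathcal{G},
}
\end{equation*}
and by \S\ref{ss:fusion} the map $r^\vee_{\mathcal{F}}$ is precisely the structural isomorphism $\bigoplus_{\mu_1+\mu_2=\mu} F_{\mu_1}(\IC_{\lambda_1})\otimes F_{\mu_2}(\IC_{\lambda_2}) \xrightarrow{\cong} F_\mu(\IC_{\lambda_1}*\IC_{\lambda_2})$ that defines the fusion product. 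It therefore remains to identify the vertical maps $i_x^!\phi$ and $i_0^!\phi$ with the vertical maps in the diagram of the lemma.

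The identification at $x$ is a K\"unneth computation. The factorization $\pi^{-1}(U) \cong \Gr\times\Gr\times U$ and the corresponding factorization of $\overline{\Gr^{\lambda_1,\lambda_2}_\A}$ over $U$ exhibit both $\mathcal{B}|_{\pi^{-1}(U)}$ and $\D_{\overline{\Gr^{\lambda_1,\lambda_2}_\A}}[-d]|_{\pi^{-1}(U)}$ as external products (with the constant or dualizing sheaf on $U$, up to a cohomological shift). The restriction $\phi|_U$ is then the external tensor product of two copies of the morphism (\ref{eq:ICtoD}), one for each $\IC_{\lambda_i}$, and a direct K\"unneth argument identifies $i_x^!\phi$ with the tensor product of the isomorphisms $F_{\mu_i}(\IC_{\lambda_i}) \xrightarrow{\cong} H_{2\rho^\vee(\lambda_i-\mu_i)}(\overline{\Gr^{\lambda_i}}\cap\overline{S_-^{\mu_i}})$ already constructed in \S\ref{ss:MVCycles}.

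The essential step is the identification at $0$. Write $i : \Gr \hookrightarrow \Gr_\A$ for the inclusion of the central fiber; applying $i^![1]$ to $\mathcal{B} \to \D_{\overline{\Gr^{\lambda_1,\lambda_2}_\A}}[-d]$ and using base change $i^!\D_{\overline{\Gr^{\lambda_1,\lambda_2}_\A}} \cong \D_{\overline{\Gr^\lambda}}$ produces a morphism $\IC_{\lambda_1}*\IC_{\lambda_2} \to \D_{\overline{\Gr^\lambda}}[-2\rho^\vee(\lambda)]$ on $\overline{\Gr^\lambda}$. By the decomposition theorem (equivalently, the Satake equivalence), $\IC_{\lambda_1}*\IC_{\lambda_2} \cong \bigoplus_\nu V_\nu\otimes\IC_\nu$; only the summand $V_\lambda\otimes\IC_\lambda$ is generically supported on $\overline{\Gr^\lambda}$, and on the smooth open stratum $\Gr^\lambda$ the above morphism agrees with the projection $V_\lambda\otimes\IC_\lambda \to \IC_\lambda$ defining $m_{\lambda_1\lambda_2}$ composed with (\ref{eq:ICtoD}) for $\IC_\lambda$. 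Since $\IC_\lambda$ is the intermediate extension of its restriction to $\Gr^\lambda$, a standard adjunction argument promotes this equality to a global factorization: our morphism equals the composition $(\ref{eq:ICtoD})\circ m_{\lambda_1\lambda_2}$. Applying $F_\mu$ and using the identification $F_\mu(\IC_\lambda) \cong H_{2\rho^\vee(\lambda-\mu)}(\overline{\Gr^\lambda}\cap\overline{S_-^\mu})$ from \S\ref{ss:MVCycles} identifies $i_0^!\phi$ with the right vertical of the target diagram. This factorization at the central fiber is the main technical point; everything else is bookkeeping with base change and the K\"unneth formula.
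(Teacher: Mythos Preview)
Your overall strategy matches the paper's exactly: use the morphism $\mathcal B\to\D_{\overline{\Gr^{\lambda_1,\lambda_2}_\A}}[-d]$ from (\ref{eq:ICtoD}) to produce a map of cosheaves $\mathcal F_\mu\to\mathcal G$, invoke naturality of corestriction, and then identify the vertical maps at $x$ and at~$0$. The identification at $x$ is fine.

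The gap is in your identification at $0$. You correctly observe that after applying $i^![1]$ one obtains a morphism $\IC_{\lambda_1}*\IC_{\lambda_2}\to\D_{\overline{\Gr^\lambda}}[-2\rho^\vee(\lambda)]$, and that $\IC_{\lambda_1}*\IC_{\lambda_2}\cong\IC_\lambda\oplus\mathcal A$ with $\mathcal A$ supported on the boundary. What you need is that this morphism factors through the projection onto $\IC_\lambda$; equivalently, that its restriction to the summand $\mathcal A$ vanishes. You write ``Since $\IC_\lambda$ is the intermediate extension of its restriction to $\Gr^\lambda$, a standard adjunction argument promotes this equality to a global factorization'', but this does not do the job. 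The intermediate extension property controls morphisms \emph{out of} $\IC_\lambda$ into perverse sheaves, whereas here the target $\D_{\overline{\Gr^\lambda}}[-2\rho^\vee(\lambda)]$ is not perverse, and the issue is a morphism out of $\mathcal A$, not out of $\IC_\lambda$. Agreement on the open stratum (where $\mathcal A$ vanishes trivially) says nothing about the global map $\mathcal A\to\D_{\overline{\Gr^\lambda}}[-2\rho^\vee(\lambda)]$.

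The paper closes this gap by a direct cohomological-degree vanishing:
\[
\Hom(\mathcal A,\D_{\overline{\Gr^\lambda}}[-2\rho^\vee(\lambda)])
=\Hom(\C_{\overline{\Gr^\lambda}}[2\rho^\vee(\lambda)],\D(\mathcal A))
=H^{-2\rho^\vee(\lambda)}(\overline{\Gr^\lambda},\D(\mathcal A))=0,
\]
since $\D(\mathcal A)$ is a direct sum of sheaves $\IC_\nu$ with $\nu<\lambda$, each supported on $\overline{\Gr^\nu}$ of dimension $2\rho^\vee(\nu)<2\rho^\vee(\lambda)$. This is the missing ingredient; once you insert it, your argument is complete and coincides with the paper's.
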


\begin{proof}
From (\ref{eq:ICtoD}), we have a map
\begin{equation} \label{eq:eq1}
\mathcal B \rightarrow \D_{\overline{\Gr^{\lambda_1, \lambda_2}_\A}}[-2\rho^\vee(\lambda) -1].
\end{equation}

Let $ i : \Gr \rightarrow \Gr_\A $ denote the inclusion of the central fibre as before.  Then $ i^! \mathcal B[1] = \IC_{\lambda_1} * \IC_{\lambda_2} $ by definition.  On the other hand, $ \IC_{\lambda_1} * \IC_{\lambda_2} \cong \IC_{\lambda} \oplus \mathcal A $ where $ \mathcal A $ is supported on smaller orbits.  Thus we have a projection $ \IC_{\lambda_1} * \IC_{\lambda_2} \rightarrow \IC_{\lambda} $.  If we apply $ i^! [1]$ to the map (\ref{eq:eq1}), we obtain $
\IC_{\lambda_1} * \IC_{\lambda_2} \rightarrow \D_{\overline{\Gr^{\lambda}}}[-2\rho^\vee(\lambda)]$ which we can factor as
\begin{equation} \label{eq:factor}
\IC_{\lambda_1} * \IC_{\lambda_2} \rightarrow \IC_{\lambda} \rightarrow \D_{\overline{\Gr^{\lambda}}}[-2\rho^\vee(\lambda)]
\end{equation}
because
$$ \Hom(\mathcal A,  \D_{\overline{\Gr^{\lambda}}}[-2\rho^\vee(\lambda)]) = \Hom(\C_{\overline{\Gr^{\lambda}}}[2\rho^\vee(\lambda)], \D(\mathcal A)) = H^{-2\rho^\vee(\lambda)}(\overline{\Gr^{\lambda}}, \D(\mathcal{A})) = 0.
$$
For the last equality, note that $ \D(\mathcal{A}) $ is a direct sum of perverse sheaves, each supported on some $ \overline{\Gr^\mu} \subsetneq \overline{\Gr^{\lambda}} $, and $ \dim \overline{\Gr^\mu} < 2\rho^\vee(\lambda)$.

On the other hand, if we apply $ \hH^{2\rho^\vee(\mu)+1} \pi_*  \circ s_\mu^! $ to (\ref{eq:eq1}) we obtain a map of cosheaves $\mathcal F_\mu \to \mathcal G$, whence a commutative diagram
\begin{equation} \label{eq:eq2}
\xymatrix{
i_x^! \mathcal F_\mu \ar[r]^{r^\vee} \ar[d]&  \ar[d] i_0^! \mathcal F_\mu \\
i_x^! \mathcal G \ar[r]^{r^\vee} & i_0^! \mathcal G.
}
\end{equation}

The factoring (\ref{eq:factor}) means that the map $ i_0^! \mathcal F_\mu \rightarrow i_0^! \mathcal G $ factors as
$$
H_{\overline{S^\mu_-}}^{2\rho^\vee(\mu)}(\Gr, \IC_{\lambda_1} * \IC_{\lambda_2}) \rightarrow H_{\overline{S^\mu_-}}^{2\rho^\vee(\mu)}(\Gr, \IC_{\lambda}) \rightarrow H_{\overline{S^\mu_-}}^{2\rho^\vee(\mu)}(\Gr, \D_{\overline{\Gr^{\lambda}}}[-2\rho^\vee(\lambda)]).
$$

Thus, (\ref{eq:eq2}) can be rewritten as
\begin{equation*}
\xymatrix{
\displaystyle{\bigoplus_{\mu_1 + \mu_2 = \mu}} F_{\mu_1}(\IC_{\lambda_1}) \otimes F_{\mu_2}(\IC_{\lambda_2}) \ar[r] \ar^-\cong[dd]  \ar[dr]_-{m_{\lambda_1 \lambda_2}}& F_\mu(\IC_{\lambda_1} * \IC_{\lambda_2}) \ar[d] \\
& F_\mu(\IC_\lambda) \ar^\cong[d] \\
 i_x^! \mathcal G \ar^{r^\vee}[r]  & i_0^! \mathcal G
}
\end{equation*}
and the result follows.
\end{proof}

\subsection{Multiplication on the level of cycles}

Now we will translate Lemma \ref{th:mlem} to the cycle level.  As before, let $ \lambda_1, \lambda_2 \in P_+ $, let $ \lambda := \lambda_1 + \lambda_2$, let $ \mu_1, \mu_2 \in P $, and let $ \mu := \mu_1 + \mu_2 $.
\begin{lemma}
Let $ Z_1 \in \mathcal Z(\lambda_1)_{\mu_1},Z_2 \in \mathcal Z(\lambda_2)_{\mu_2}$.  Consider $$ Z_1 \times Z_2 \times U \subset \overline{\Gr^{\lambda_1, \lambda_2}_\A} \cap \overline{S^\mu_{-, \A}} $$  \\ Then in
$ F_\mu(\IC_\lambda) = H_{2\rho^\vee(\lambda - \mu)}\left(\overline{\Gr^{\lambda}} \cap \overline{S_-^{\mu}}\right) $,
we have an equality
$$ m_{\lambda_1, \lambda_2}\left([Z_1] \otimes [Z_2]\right) = \sum_Z i \bigl(Z, \pi^{-1}(0) \cdot \overline{Z_1 \times Z_2 \times U}\bigr) [Z]
$$
where the sum ranges over $ Z \in \mathcal Z(\lambda)_\mu$.
\end{lemma}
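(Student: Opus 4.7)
The plan is to apply the machinery assembled in \S\ref{ss:cosheaves}--\S\ref{ssec:mult} to the family
$$\pi : Y := \overline{\Gr^{\lambda_1,\lambda_2}_\A} \cap \overline{S^\mu_{-,\A}} \longrightarrow \A,$$
so that the multiplication $m_{\lambda_1,\lambda_2}$ is realized as a cap product with the orientation class of $\{0\}\subset\A$, which by Proposition~\ref{pr:Fulton} is computed by intersection multiplicities.

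First I would set $n=\rho^\vee(\lambda-\mu)+1$, which is $\dim Y$, and observe that every fibre of $\pi$ has dimension $n-1$. The trivialization $\pi^{-1}(U)\cong \Gr\times\Gr\times U$ of \S\ref{ssec:BD} restricts to a trivialization $Y_U\cong Y_x\times U$ as required by Proposition~\ref{pr:CosheafFamily}, and the cosheaf $\mathcal G = \hH^{-2n+2}\pi_*\D_Y$ obtained here coincides with the cosheaf $\mathcal G$ of \S\ref{ssec:mult}, since
$$s_\mu^!\bigl(\D_{\overline{\Gr^{\lambda_1,\lambda_2}_\A}}[-2\rho^\vee(\lambda)-1]\bigr) \;=\; \D_Y[-2n]$$
up to the shift that is absorbed into $\hH^{2\rho^\vee(\mu)+1}\pi_* s_\mu^!$.

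Next I would transport the tensor $[Z_1]\otimes[Z_2]$, regarded as a class in the $(\mu_1,\mu_2)$-summand of $i_x^!\mathcal G$, across the identification of Proposition~\ref{pr:CosheafFamily}(iv). Because $\overline{Z_1\times Z_2\times U}$ is an irreducible subvariety of $Y$ of dimension $n$ whose generic fibre over $U$ is $Z_1\times Z_2$, that proposition gives
$$[Z_1]\otimes[Z_2] \;\longleftrightarrow\; [\overline{Z_1\times Z_2\times U}]\in H_{2n}(Y).$$
By Proposition~\ref{pr:CosheafFamily}(v), the corestriction $r^\vee:i_x^!\mathcal G\to i_0^!\mathcal G$ then corresponds to the cap product
$$(\pi^* u)\cap \colon H_{2n}(Y) \longrightarrow H_{2n-2}(\pi^{-1}(0)),$$
where $u\in H^2_{\{0\}}(\A)$ is the relative orientation class. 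Applying Proposition~\ref{pr:Fulton} to the fibre square whose left edge is the Cartier divisor $\{0\}\hookrightarrow\A$ and whose right edge is $\pi:Y\to\A$ (with $V=\overline{Z_1\times Z_2\times U}$, which is not contained in $\pi^{-1}(0)$ since its image under $\pi$ is all of $\A$), this cap product evaluates to
$$\sum_{Z} i\bigl(Z,\pi^{-1}(0)\cdot\overline{Z_1\times Z_2\times U}\bigr)\,[Z],$$
with $Z$ ranging over the irreducible components of $\pi^{-1}(0)\cap\overline{Z_1\times Z_2\times U}$. Each such $Z$ has dimension $\rho^\vee(\lambda-\mu)$ and lies in $\overline{\Gr^\lambda}\cap\overline{S_-^\mu}$, so $Z\in\mathcal Z(\lambda)_\mu$.

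Finally I would invoke Lemma~\ref{th:mlem}, which identifies $r^\vee$ with $m_{\lambda_1,\lambda_2}$ after the isomorphisms $i_x^!\mathcal G \cong \bigoplus_{\mu_1'+\mu_2'=\mu}F_{\mu_1'}(\IC_{\lambda_1})\otimes F_{\mu_2'}(\IC_{\lambda_2})$ and $i_0^!\mathcal G\cong F_\mu(\IC_\lambda)$. Chaining this identification with the cycle calculation above yields the claimed formula. The only delicate point is the compatibility between the two descriptions of $\mathcal G$ (as a shift of $\pi_*\D_Y$ and as $\hH^{2\rho^\vee(\mu)+1}\pi_* s_\mu^!\mathcal B$), but this is a matter of tracking shifts and applying $s_\mu^!$ to the canonical map $\mathcal B\to \D_{\overline{\Gr^{\lambda_1,\lambda_2}_\A}}[-2\rho^\vee(\lambda)-1]$ used in the proof of Lemma~\ref{th:mlem}.
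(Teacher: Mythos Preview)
Your proposal is correct and follows essentially the same approach as the paper: combine Proposition~\ref{pr:CosheafFamily}(iv)--(v) with Lemma~\ref{th:mlem} to identify $m_{\lambda_1,\lambda_2}$ with the cap product $(\pi^*u)\cap$ on the class $[\overline{Z_1\times Z_2\times U}]$, then invoke Proposition~\ref{pr:Fulton} to express this cap product as a sum of intersection multiplicities. The paper's proof is just a terser version of what you wrote, organizing the same ingredients into a single commutative diagram rather than spelling out the cosheaf compatibilities.
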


\begin{proof}
By Proposition \ref{pr:CosheafFamily} (v) and Lemma \ref{th:mlem}, we obtain the commutative diagram
$$
\xymatrix@C=50pt@R=30pt{
\displaystyle{\bigoplus_{\mu_1 + \mu_2 = \mu}} F_{\mu_1}(\IC_{\lambda_1}) \otimes F_{\mu_2}(\IC_{\lambda_2}) \ar[d]^\cong \ar[r]^(.66){m_{\lambda_1 \lambda_2}} & F_\mu(\IC_\lambda) \ar[d]^\cong \\
H_{2\rho^\vee(\lambda - \mu)+2}(\overline{Gr^{\lambda_1, \lambda_2}_\A} \cap \overline{S^\mu_{-,\A}}) \ar[r]^(.52){\pi^*(u) \cap } & H_{2\rho^\vee(\lambda - \mu)}(\overline{Gr^{\lambda_1+ \lambda_2}} \cap \overline{S_-^\mu})
}
$$
where as before $ u \in H^2_{\{0\}}(\A) $ denotes the usual orientation class.  Now, by Proposition \ref{pr:CosheafFamily} (iv), the element $ [Z_1] \otimes [Z_2] $ in the top left is sent to $ [\overline{Z_1 \times Z_2 \times U}] $ in the bottom left.  

Now, we apply the setup from Proposition \ref{pr:Fulton} to $ Y = \mathbb A^1, D = \{0\}, X' = \overline{\Gr^\lambda} \cap \overline{S^\mu_-}, Y' = \overline{Gr_\A^{\lambda_1, \lambda_2}} \cap \overline{S^\mu_{-, \mathbb A}} $.  This gives the desired result.
\end{proof}

By Proposition \ref{pr:mult}, this immediately implies the following result concerning stable MV cycles.

\begin{theorem} \label{th:mult}
	Let $ Z_1 \in \mathcal Z(\infty)_{-\nu_1}, Z_2 \in \mathcal Z(\infty)_{-\nu_2} $. 
In the algebra $ \ON $, we have
$$
b_{Z_1}b_{Z_2} = \sum_{Z} i \bigl(Z, \pi^{-1}(0) \cdot \overline{Z_1 \times Z_2 \times U})  b_Z
$$
where the sum ranges over $ Z \in \mathcal Z(\infty)_{-\nu_1 - \nu_2}$.
\end{theorem}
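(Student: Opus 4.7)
The proof reduces Theorem~\ref{th:mult} to the preceding Lemma via the coherent family structure of the MV basis. Choose dominant weights $\lambda_1, \lambda_2 \in P_+$ large enough that $t^{\lambda_i} Z_i \subseteq \overline{\Gr^{\lambda_i}}$ for $i=1,2$, and set $\lambda = \lambda_1 + \lambda_2$, $\nu = \nu_1 + \nu_2$. Then $t^{\lambda_i} Z_i \in \mathcal{Z}(\lambda_i)_{\lambda_i - \nu_i}$, so Proposition~\ref{pr:MVCohFam} gives $b_{Z_i} = \Psi_{\lambda_i}([t^{\lambda_i} Z_i])$. Combining Proposition~\ref{pr:mult} with the preceding Lemma, and using Proposition~\ref{pr:MVCohFam} once more to rewrite $\Psi_\lambda([t^\lambda Z])$ as $b_Z$, yields
\begin{equation*}
b_{Z_1} b_{Z_2} \;=\; \Psi_\lambda\bigl(m_{\lambda_1,\lambda_2}([t^{\lambda_1}Z_1]\otimes[t^{\lambda_2}Z_2])\bigr) \;=\; \sum_{Z} i\bigl(t^\lambda Z,\,\pi^{-1}(0)\cdot\overline{(t^{\lambda_1}Z_1)\times(t^{\lambda_2}Z_2)\times U}\bigr)\, b_Z,
\end{equation*}
the sum ranging over $Z \in \mathcal{Z}(\infty)_{-\nu}$ with $t^\lambda Z \subseteq \overline{\Gr^\lambda}$.

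What remains is to rewrite the right-hand multiplicity as $i(Z,\pi^{-1}(0)\cdot\overline{Z_1\times Z_2\times U})$. For this I will construct an automorphism $\Theta$ of $\Gr_\A$ over $\A^1$ that acts on the generic fiber $\Gr\times\Gr\times U$ as $(y_1,y_2,x)\mapsto(t^{\lambda_1}y_1,\,t^{\lambda_2}y_2,\,x)$ and on the central fiber as $y\mapsto t^\lambda y$. Such $\Theta$ arises from the family of $T^\vee$-valued loops $\lambda_1(y)\,\lambda_2(y-x)$, which for each $x\in\A^1$ is regular and invertible on $\A^1\setminus\{0,x\}$ and whose action by modification of trivializations defines an automorphism of $\Gr_\A$. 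For $x\neq 0$ the two factors $\lambda_1(y)$ and $\lambda_2(y-x)$ act independently at the two punctures, giving the claimed translations on each component of $\Gr_\A|_U$; specialized at $x=0$ the product becomes $(\lambda_1+\lambda_2)(y)=\lambda(y)$, giving left multiplication by $t^\lambda$ on the central fiber.

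Because $\Theta$ is an automorphism preserving $\pi^{-1}(0)$ that sends $\overline{Z_1\times Z_2\times U}$ to $\overline{(t^{\lambda_1}Z_1)\times(t^{\lambda_2}Z_2)\times U}$ and sends $Z$ to $t^\lambda Z$, it preserves the relevant intersection multiplicities:
\begin{equation*}
i\bigl(t^\lambda Z,\,\pi^{-1}(0)\cdot\overline{(t^{\lambda_1}Z_1)\times(t^{\lambda_2}Z_2)\times U}\bigr) \;=\; i\bigl(Z,\,\pi^{-1}(0)\cdot\overline{Z_1\times Z_2\times U}\bigr).
\end{equation*}
The constraint $t^\lambda Z\subseteq\overline{\Gr^\lambda}$ can then be dropped: enlarging $\lambda_1,\lambda_2$ if necessary shows that every irreducible component of the central-fiber intersection of $\overline{Z_1\times Z_2\times U}$ corresponds to an element of $\mathcal{Z}(\infty)_{-\nu}$, and the multiplicity vanishes for any other $Z$. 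The main obstacle is the clean construction and verification of $\Theta$ across the nodal fiber $x=0$; this requires the principal-bundle description of $\Gr_\A$ from \S\ref{ssec:BD} and a careful analysis of the collision limit, but is otherwise a standard application of the global loop-group action on the Beilinson--Drinfeld Grassmannian.
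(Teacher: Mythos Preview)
Your proposal is correct and follows exactly the route the paper indicates: the paper's entire proof is the one-line remark that Theorem~\ref{th:mult} follows immediately from the preceding Lemma via Proposition~\ref{pr:mult}. You have supplied the translation step the paper leaves implicit---the construction of the automorphism $\Theta$ of $\Gr_\A$ via the global loop-group section $\lambda_1(y)\lambda_2(y-x)$, identifying the intersection multiplicities for $\overline{(t^{\lambda_1}Z_1)\times(t^{\lambda_2}Z_2)\times U}$ with those for $\overline{Z_1\times Z_2\times U}$---and your handling of the range of summation is also correct.
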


\part{Measures}
\label{pa:Measures}

\section{Measures}
\label{se:Measures}
All the objects defined in this section depend on the choice of a
principal nilpotent element $\dot e\in\mathfrak n$ and we write $ \dot e = \sum \dot e_i $, where each $ \dot e_i $ is a nonzero root vector of weight $ \alpha_i $.  These $\dot e_i $ are 
 a priori unrelated to the
choice of simple root vectors $e_i$ made in \S \ref{ss:Notation}.

\subsection{The elements $n_x$}
\label{ss:EltsNx}
We denote the set of regular elements in $\mathfrak t$ by $\ftreg$.

For each $x\in\ftreg$, the subset $x+\mathfrak n$ of $\mathfrak g$ is
a single orbit under the adjoint action of the group $N$, by
\cite{Bourbaki}, chap.~8, \S11, no.~1, lemme~2. Further, the centralizer
of $x$ in $G$, namely $T$, meets $N$ trivially, so the action of $N$ on
$x+\mathfrak n$ is simply transitive. Therefore, there is a unique
element $n_x\in N$ such that $\Ad_{n_x}(x)=x+\dot e$. Examining the proof
in~\cite{Bourbaki}, one further notes that $x\mapsto n_x$ is a regular
map $\ftreg\to N$.

We thus get an algebra map
$\barD:\ON\to\mathbb C[\ftreg]$
defined by $\barD(f)(x)=f(n_x)$,
where $f\in\ON$ and $x\in\ftreg$.

A major goal of this section is to understand the map $\barD$ and to put it in a wider setting. We first study how $n_x$
varies when the Weyl group acts on~$x$. We denote by $N_-$ the unipotent
radical of the Borel subgroup opposite to $B$ with respect to $T$.
Recall that $\overline w$ denotes a lift to the normalizer $N_G(T)$ of
an element $w$ in the Weyl group.
\begin{proposition}
\label{pr:BehNx}
Let $x\in\ftreg$ and $w\in W$. Then there exists $(y,t)\in N_-\times T$
such that
$$n_{wx}=y\,n_x\,\overline w^{-1}\,t.$$
\end{proposition}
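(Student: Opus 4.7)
The plan is to first reduce the proposition to a Lie-algebraic statement, then to establish that statement by induction on the length of $w$ with the base case handled by an explicit $\mathfrak{sl}_2$-style computation.

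Reduction. I claim the proposition follows once we show: for every $w \in W$ there is $y \in N_-$ with $\Ad_y(x + \dot e) = wx + \dot e$. Indeed, combining $\Ad_{\overline w^{-1}}(wx) = x$ with $\Ad_{n_x}(x) = x + \dot e$ gives $\Ad_{n_x\overline w^{-1}}(wx) = x + \dot e$, and applying $\Ad_y$ yields $\Ad_{y n_x \overline w^{-1}}(wx) = wx + \dot e = \Ad_{n_{wx}}(wx)$. Hence $n_{wx}^{-1}y n_x \overline w^{-1}$ lies in the centralizer in $G$ of the regular semisimple element $wx$, which is $T$; writing this element as $t^{-1}$, we obtain the desired factorization $n_{wx} = y n_x \overline w^{-1} t$.

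The Lie-algebraic claim. I first treat the case $w = s_i$ for an arbitrary $x \in \ftreg$ by looking for $y$ of the form $\exp(z f_i)$. Using $[f_i,x] = \alpha_i(x) f_i$, $[f_i,\dot e_j] = 0$ for $j \ne i$ (since $\alpha_j - \alpha_i$ is not a root of $\mathfrak g$), $[f_i,\dot e_i] = -d_i h_i$ for some nonzero scalar $d_i$ (by nondegeneracy of the pairing $\mathfrak g_{\alpha_i}\times\mathfrak g_{-\alpha_i}\to\mathfrak t$), $[f_i,h_i] = 2 f_i$, and the fact that $(\ad f_i)^3(x + \dot e) = 0$, a direct expansion of $\Ad_{\exp(z f_i)} = \sum_k \tfrac{z^k}{k!}(\ad f_i)^k$ gives
\[
\Ad_{\exp(z f_i)}(x + \dot e) \;=\; (x - z d_i h_i) + \dot e + z\bigl(\alpha_i(x) - z d_i\bigr) f_i.
\]
Choosing $z = \alpha_i(x)/d_i$ (or $z = 0$ when $\alpha_i(x) = 0$, in which case $s_i x = x$) makes the $\mathfrak t$-component equal $s_i x = x - \alpha_i(x) h_i$ and simultaneously kills the $f_i$-coefficient, so $\Ad_y(x + \dot e) = s_i x + \dot e$.

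The general case then follows by induction on $\ell(w)$: the case $\ell(w) = 0$ is trivial with $y = 1$, and if $y_w \in N_-$ works for $w$ and $y' \in N_-$ is the element just constructed in the base case applied to $wx \in \ftreg$ in place of $x$ and to the simple reflection $s_i$, then $y' y_w \in N_-$ works for $s_i w$. The only substantive step is the base-case computation; everything else is formal. The main potential obstacle is simply keeping the signs and normalizations consistent between the Chevalley root vectors $\{e_i, f_i\}$ fixed in \S\ref{ss:Notation} and the a priori independent simple root vectors $\{\dot e_i\}$ introduced at the start of this section.
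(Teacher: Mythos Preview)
Your proof is correct. Both arguments reduce to simple reflections and use the same $\mathfrak{sl}_2$ computation showing $\Ad_{\exp(zf_i)}(x+\dot e)=s_ix+\dot e$ for a suitable $z$. The organization differs, however. The paper inducts directly on the factorization $n_{wx}=y\,n_x\,\overline w^{-1}t$; having found $y_i(a)$ with $\Ad_{y_i(a)}(x+\dot e)=s_ix+\dot e$, it must still check that $y_i(a)\,n_x\,\overline s_i\,t$ lies in $N$, which it does via a parabolic-subgroup argument (writing $n_x=n'x_i(-1/a)$ with $n'$ in the unipotent radical of the minimal parabolic and using the identity $x_i(1/a)y_i(-a)x_i(1/a)=\overline s_i\,t$). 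You instead isolate the Lie-algebraic statement ``there exists $y\in N_-$ with $\Ad_y(x+\dot e)=wx+\dot e$'' and prove it by induction on $\ell(w)$, where the inductive step is simply composition in $N_-$; the group-theoretic conclusion then follows in one line from the fact that $C_G(wx)=T$ for $wx$ regular semisimple. Your route avoids the parabolic manipulation entirely and is somewhat cleaner; the paper's route, on the other hand, yields an explicit formula for $t$ in the simple-reflection case. A minor remark: since $x\in\ftreg$ and the Weyl group preserves regularity, $\alpha_i(wx)\neq0$ always, so your parenthetical case $\alpha_i(x)=0$ never arises.
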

\begin{proof}
By induction on the length of $w$, we can reduce to the case where $w$
is a simple reflection $s_i$.  Choose $\dot f_i$ such that $(\dot e_i,h_i,\dot f_i)$ is an $\mathfrak{sl}_2$ triple.
Let $x_i$ and $y_i$ be the additive one-parameter subgroups of $G$ given
by $x_i(b)=\exp(b \dot e_i)$ and $y_i(b)=\exp(b\dot f_i)$ for $b\in\mathbb C$. Set
$a=\langle\alpha_i,x\rangle$; there exists an element $t\in T$ such that
$$x_i(1/a)\,y_i(-a)\,x_i(1/a)=\overline s_i\,t.$$

Direct calculations give
\begin{align*}
\Ad_{y_i(a)}(x)&=\exp(a\ad_{\dot f_i})(x)=x+a[\dot f_i,x]=x+a^2\dot f_i,\\
\Ad_{y_i(a)}(\dot e)&=\exp(a\ad_{\dot f_i})(\dot e)=\dot e+a[\dot f_i,\dot e_i]+\frac{a^2}2[\dot f_i,[\dot f_i,\dot e_i]]
=\dot e-ah_i-a^2\dot f_i.
\end{align*}
Noting that $s_ix=x-\langle\alpha_i,x\rangle h_i$, we then get
$$\Ad_{y_i(a)}(x+\dot e)=s_ix+\dot e.$$
Since $t$ acts trivially on $\mathfrak t$ and $\overline s_i$ acts by the
simple reflection $s_i$, we deduce that
\begin{equation}
\label{eq:BehNx}
\Ad_{(y_i(a)\,n_x\,\overline s_i\,t)}
(s_ix)=\Ad_{(y_i(a)\,n_x)}(x)=\Ad_{y_i(a)}(x+\dot e)=s_ix+\dot e.
\end{equation}

On the other hand, let $P'$ be the (minimal parabolic) subgroup of $G$
generated by the Borel $B$ and the image of the one-parameter subgroup
$y_i$. We denote the unipotent radical of $P'$ by $N'$ and the Lie
algebra of $N'$ by $\mathfrak n'$. Noting that
$[x+\dot e_i,\mathfrak n']=\mathfrak n'$ and that $\dot e-\dot e_i\in\mathfrak n'$,
we can apply Bourbaki's lemme~2 quoted above and find $n'\in N'$ such
that $\Ad_{n'}(x+\dot e_i)=x+\dot e$. Since
$$\Ad_{x_i(-1/a)}(x)=x+\dot e_i,$$
we see that the adjoint action of $n'\,x_i(-1/a)$ brings $x$ to $x+\dot e$,
and thus $n_x=n'\,x_i(-1/a)$.

Since $y_i(-a)\,x_i(1/a)$ belongs to $P'$ and hence normalizes $N'$,
we can find $n''\in N'$ such that
$$y_i(a)\,n_x\;\overline s_i\,t
=y_i(a)\,n'\,y_i(-a)\,x_i(1/a)
=y_i(a)\,y_i(-a)\,x_i(1/a)\,n''
=x_i(1/a)\,n''.$$
Thus, the product $y_i(a)\,n_x\;\overline s_i\,t$ belongs
to $N$, and by \eqref{eq:BehNx} it acts on $s_ix$ in the same way as
$n_{s_ix}$. We conclude that $n_{s_ix}=y_i(a)\,n_x\;\overline s_i\,t$,
which is of the desired form.
\end{proof}

\subsection{Sequences and shuffles}\label{ssec:sequences}
Our next task is to find an expansion of $n_x$ and $n_x^{-1}$ as an
infinite linear combination of Chevalley monomials.

We need some notation concerning finite
sequences $ \vi = (i_1, \dots, i_p) $ drawn from the set $I$.

\begin{definition}
\begin{enumerate}
\item
We denote by $\Seq$ the set of all such sequences $\vi$, and
for $ \nu \in Q_+ $, we put
$$ \Seq(\nu) := \{ \vi = (i_1, \dots, i_p) : \alpha_{i_1} + \cdots + \alpha_{i_p} = \nu \} .$$
\item
A \textbf{shuffle} of two sequences $ \vj $ and $ \vk $ is a sequence $ \vi $ produced by shuffling together the sequences $ \vj$ and $ \vk $, maintaining the same relative order among the elements of $ \vj $ and $ \vk $.  We write $ \vj \shin \vk $ to denote this set of shuffles.  Thus, if $ \vj $ has length $ p $ and $ \vk $ has length $ q $, then $ \vj \shin \vk $ has $ \binom{p + q}{p} $ elements.
\item
To a sequence $\vi=(i_1,\ldots,i_p)$ in $\Seq$, we associate the weights
$$\beta_0^\vi = 0,\quad
\beta_1^\vi = \alpha_{i_1},\quad
\beta_2^\vi = \alpha_{i_1} + \alpha_{i_2},\quad\ldots,\quad
\beta_p^\vi = \alpha_{i_1} +\dots+ \alpha_{i_p}.$$
\end{enumerate}
\end{definition}

Consider the free Lie algebra $ \mathfrak f $ on the set $ \{\hat e_i:i\in I\} $ and its universal enveloping algebra $ U (\mathfrak f) $ (identified with the free associative algebra on this set).  This algebra $ U (\mathfrak f) $ is graded by $ Q_+ $.  For each $ \nu \in Q_+$, we have a basis $ \{ \hat e_\vi:= \hat e_{i_1} \cdots \hat e_{i_p} \}_{\vi \in Seq(\nu)} $ for $ U(\mathfrak f)_\nu $.

The algebra $U(\mathfrak f) $ is in fact a graded Hopf algebra with finite dimensional components, so its graded dual $ (U(\mathfrak f))^* $ is also a Hopf algebra.  For each $ \nu \in Q_+$, we consider the basis $ \{ \hat e_\vi^* \}_{\vi \in Seq(\nu)} $ for $ (U(\mathfrak f))^*_{-\nu} $ dual to the above basis $ \{ \hat e_\vi \}_{\vi \in Seq(\nu)} $ for $ U(\mathfrak f)_\nu $.  From the definition of the coproduct on $ U(\mathfrak f) $, we get the following shuffle identity in $ (U(\mathfrak f))^*$:
\begin{equation} \label{eq:shuffleUf}
\hat e_\vj^* \, \hat e_\vk^* = \sum_{\vi \in \vj \shin \vk} \hat e_\vi^*
\end{equation}

 There is a unique Hopf
algebra map $U(\mathfrak f)\to U(\mathfrak n)$ that sends $\hat e_i$ to
$\dot e_i$ for each $i\in I$. The dual map is an inclusion of algebras
$\ON\hookrightarrow (U(\mathfrak f))^*$. (This inclusion was previously
studied by various authors, including in \cite[\S 8]{GeissLeclercSchroer}.)
Each sequence $\vi=(i_1,\ldots,i_p)$ defines a monomial
$\dot e_\vi=\dot e_{i_1}\cdots \dot e_{i_p}$ in $U(\mathfrak n)$.

The following well-known functional identity seems to be related to
cat chasing and moulds \cite{MO} and to an identity of
Littlewood~\cite[Lemma p.~149]{DKnutson}.
\begin{lemma}
\label{le:RatFnId}
Given $p$ complex numbers $a_1$, \dots, $a_p$, define
$$f_p(a_1,\ldots,a_p)=\frac1{a_1(a_1+a_2)\cdots(a_1+a_2+\cdots+a_p)}$$
whenever it makes sense. Let $p$ and $q$ be positive integers and let
$\mathrm{Sh}(p,q)$ denote the set of all permutations
$\sigma\in\mathfrak S_{p+q}$ such that
$$\sigma(1)<\sigma(2)<\cdots<\sigma(p)\quad\text{and}\quad
\sigma(p+1)<\sigma(p+2)<\cdots<\sigma(p+q).$$
Then for any complex numbers $a_1$, \ldots, $a_{p+q}$, we have
$$f_p(a_1,\ldots,a_p)\,f_q(a_{p+1},\ldots,a_{p+q})=
\sum_{\sigma\in\mathrm{Sh}(p,q)}f_{p+q}(a_{\sigma^{-1}(1)},\ldots,
a_{\sigma^{-1}(p+q)})$$
whenever both members make sense.
\end{lemma}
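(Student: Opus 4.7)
The plan is to argue by induction on $p+q$, with the base case $p=q=1$ reducing to the standard partial fraction identity $\frac{1}{a_1 a_2}=\frac{1}{a_1(a_1+a_2)}+\frac{1}{a_2(a_1+a_2)}$, which is exactly the claim for $\mathrm{Sh}(1,1)=\{\mathrm{id},(1\,2)\}$. The inductive step rests on two structural observations.

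First, $f_p$ satisfies $f_p(a_1,\ldots,a_p)=\frac{1}{S'}\,f_{p-1}(a_1,\ldots,a_{p-1})$ where $S'=a_1+\cdots+a_p$, directly from the definition. Writing also $S''=a_{p+1}+\cdots+a_{p+q}$ and $S=S'+S''$, the elementary identity $\frac{1}{S'S''}=\frac{1}{S}\bigl(\frac{1}{S'}+\frac{1}{S''}\bigr)$ yields, after multiplying by $f_{p-1}f_{q-1}$,
\[ f_p(a_1,\ldots,a_p)\,f_q(a_{p+1},\ldots,a_{p+q}) = \frac{1}{S}\bigl[f_{p-1}(a_1,\ldots,a_{p-1})\,f_q(a_{p+1},\ldots,a_{p+q}) + f_p(a_1,\ldots,a_p)\,f_{q-1}(a_{p+1},\ldots,a_{p+q-1})\bigr]. \]
Second, $f_{p+q}(b_1,\ldots,b_{p+q})=\frac{1}{S}\,f_{p+q-1}(b_1,\ldots,b_{p+q-1})$, so on the right-hand side of the stated identity the factor $\frac{1}{S}$ can be pulled outside the sum. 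I would then partition $\mathrm{Sh}(p,q)$ according to whether $\sigma^{-1}(p+q)=p$ (the last letter of the shuffled sequence is $a_p$) or $\sigma^{-1}(p+q)=p+q$ (the last letter is $a_{p+q}$). Deleting this terminal letter identifies the truncated shuffle with an element of $\mathrm{Sh}(p-1,q)$, respectively $\mathrm{Sh}(p,q-1)$, acting on the expected truncated tuple of $a_i$'s. Applying the inductive hypothesis to each of the two resulting sums of $f_{p+q-1}$'s then reproduces exactly the two terms in the display above.

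The bookkeeping for the shuffle truncations is the only mildly delicate point: one must confirm that the relabeling of indices in the smaller shuffles is compatible with which $a_i$'s survive deletion. This is a finite combinatorial check and presents no genuine obstacle. An alternative, more conceptual route uses the integral representation
\[ f_p(a_1,\ldots,a_p) = \int_{t_1\geq t_2\geq\cdots\geq t_p\geq 0} e^{-t_1 a_1}\cdots e^{-t_p a_p}\,dt_1\cdots dt_p \]
(valid when $\mathrm{Re}(a_i)>0$), since the product of two ordered simplicial domains decomposes, up to a null set, into a disjoint union of shuffled ordered simplices indexed by $\mathrm{Sh}(p,q)$; Fubini gives the identity in the convergent range, and analytic continuation extends it to the equality of rational functions asserted in the lemma.
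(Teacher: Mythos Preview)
Your proposal is correct. The inductive argument via the partial-fraction identity $\frac{1}{S'S''}=\frac{1}{S}\bigl(\frac{1}{S'}+\frac{1}{S''}\bigr)$ and the last-letter dichotomy on $\mathrm{Sh}(p,q)$ works cleanly; the only point worth flagging is that when $p=1$ (or $q=1$) the induction lands on $(0,q)$ (or $(p,0)$), which you should explicitly absorb via the convention $f_0=1$ and $\mathrm{Sh}(0,q)=\{\mathrm{id}\}$, rather than relying solely on the stated base case $p=q=1$.

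The paper does \emph{not} use your inductive route. It takes exactly the ``alternative, more conceptual route'' you sketch at the end: write $f_p$ as the integral $\int_{C_p}e^{-\sum a_ix_i}\,dx$ over the cone $C_p=\{x_1>\cdots>x_p>0\}$ for $\mathrm{Re}(a_i)>0$, decompose $C_p\times C_q$ as a union of shuffled cones $\sigma^{-1}\cdot C_{p+q}$ indexed by $\mathrm{Sh}(p,q)$ (up to a null set), apply Fubini, and then analytically continue. Your inductive proof is more elementary and self-contained (no convergence or continuation issues), while the integral proof is shorter and conceptually mirrors the later decomposition of a product of simplices used in Lemma~\ref{le:shuffle}, so it fits the paper's narrative better. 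Either is entirely satisfactory here.
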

\begin{proof}
By analytic continuation, we can deduce the general result from the case
where all $a_i$ have positive real part. In this particular case,
$$f_p(a_1,\ldots,a_p)=\int_{C_p}e^{-(a_1x_1+\cdots+a_px_p)}\;dx_1\;\cdots
\;dx_p$$
where $C_p=\{(x_1,\ldots,x_p)\in\mathbb R^p\mid x_1>\cdots>x_p>0\}$.
The proposition follows by writing $C_p\times C_q$ as the disjoint union
of cones
$$\sigma^{-1}\cdot C_{p+q}=\{(x_{\sigma(1)},\ldots,x_{\sigma(p+q)})\mid
x_1>\cdots>x_{p+q}>0\}$$
for $\sigma\in\mathrm{Sh}(p,q)$, up to a nullset.
\end{proof}

For a sequence $\vi=(i_1,\ldots,i_p)$, we define
$$\barD_\vi=\prod_{k=0}^{p-1}\frac1{\beta^\vi_k-\beta^\vi_p}.$$

These rational functions $ \barD_\vi $ can be evaluated on
any $x\in\ftreg$ that satisfies $\langle\beta,x\rangle\neq0$
for all $\beta\in Q_+\setminus\{0\}$.

\begin{proposition}
\label{pr:ExpanCh}
Let $x\in\ftreg$ such that $\langle\beta,x\rangle\neq0$ for all
$\beta\in Q_+\setminus\{0\}$.Then
$$f(n_x)=\sum_{\vi\in\Seq}\; \langle \dot e_\vi, f \rangle \;\,\barD_\vi(x) $$
for all $f\in\ON$.
\end{proposition}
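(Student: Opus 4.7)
The plan is to show that the right-hand side defines an algebra homomorphism $\Phi_x\colon\C[N]\to\C$, so that it is evaluation at some point $m_x\in N$, and then to verify $m_x=n_x$ by computing on a faithful representation.

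For $f\in\C[N]_{-\nu}$ only sequences $\vi\in\Seq(\nu)$ contribute to $\Phi_x(f):=\sum_{\vi}\langle\dot e_\vi,f\rangle\,\bar D_\vi(x)$, and $\Seq(\nu)$ is finite, so the formula is well-defined on each weight component and hence on all of $\C[N]$. To see that $\Phi_x$ is an algebra map I would use two compatible shuffle identities. On the algebraic side, the coproduct $\Delta(\hat e_i)=\hat e_i\otimes 1+1\otimes \hat e_i$ turns the graded dual $(U(\mathfrak f))^*$ into a shuffle algebra with $\hat e_\vj^*\,\hat e_\vk^*=\sum_{\vi\in\vj\shin\vk}\hat e_\vi^*$ (this is \eqref{eq:shuffleUf}); the Hopf algebra surjection $U(\mathfrak f)\twoheadrightarrow U(\mathfrak n)$ dualizes to an algebra embedding $\C[N]\hookrightarrow(U(\mathfrak f))^*$, $f\mapsto\sum_\vi\langle\dot e_\vi,f\rangle\,\hat e_\vi^*$. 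On the analytic side, under the substitution $a_m=\langle\alpha_{i_{p+1-m}},x\rangle$ Lemma~\ref{le:RatFnId} becomes the matching identity $\bar D_\vj(x)\bar D_\vk(x)=\sum_{\vi\in\vj\shin\vk}\bar D_\vi(x)$. Combining the two, $\Phi_x(fg)=\Phi_x(f)\Phi_x(g)$, so $\Phi_x$ corresponds to a unique $m_x\in N$ with $f(m_x)=\Phi_x(f)$.

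It remains to check $\Ad_{m_x}(x)=x+\dot e$, which by the defining property of $n_x$ will force $m_x=n_x$. Pick a faithful finite-dimensional $G$-module $V$. For $v\in V$ and $\phi\in V^*$, the matrix coefficient $f_{v,\phi}(n):=\phi(nv)$ lies in $\C[N]$ and satisfies $\langle\dot e_\vi,f_{v,\phi}\rangle=\phi(\dot e_\vi v)$; evaluating $\Phi_x(f_{v,\phi})$ in two ways and using that $V^*$ separates points gives $m_x v=\sum_\vi\bar D_\vi(x)\,\dot e_\vi v$, a finite sum by weight grading. On a weight vector $v$ of weight $\lambda$, a direct computation then yields
\[
(x+\dot e)\,m_x v-m_x\,x v=\sum_{\vi}\Bigl(\langle\beta^\vi_p,x\rangle\bar D_\vi(x)+\bar D_{\mathrm{tail}(\vi)}(x)\Bigr)\,\dot e_\vi v,
\]
where $\mathrm{tail}(\vi)=(i_2,\dots,i_p)$; this vanishes by the one-step recursion $\langle\beta^\vi_p,x\rangle\bar D_\vi(x)=-\bar D_{\mathrm{tail}(\vi)}(x)$, which is immediate from the product definition of $\bar D_\vi$. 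Faithfulness of $V$ then gives $\Ad_{m_x}(x)=x+\dot e$ in $\mathfrak g$, hence $m_x=n_x$.

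The only substantive bookkeeping is in the reversal of variables relating Lemma~\ref{le:RatFnId} to the shuffle identity for $\bar D_\vi$; everything else is a formal consequence of the Hopf structure and the transparent one-step recursion above.
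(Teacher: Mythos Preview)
Your proof is correct and shares its overall architecture with the paper's: first use the shuffle identity (Lemma~\ref{le:RatFnId}) together with \eqref{eq:shuffleUf} to show that the right-hand side defines an algebra homomorphism $\C[N]\to\mathbb C$, hence evaluation at some $m_x\in N$; then verify $\Ad_{m_x}(x)=x+\dot e$, which pins down $m_x=n_x$.

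The genuine difference lies in how the second step is carried out. The paper works directly in the adjoint representation: it computes $\sum_{\vi\neq\varnothing}\barD_\vi(x)\,\ad_{\dot e_\vi}(x)$ by writing each sequence as a concatenation $(\vi,j,k)$ at the \emph{tail} end and exploiting the antisymmetry of $[\dot e_j,\dot e_k]$ to cancel all terms of length $\geq 2$, leaving only the length-one contribution $\dot e$. You instead pass to an arbitrary faithful module $V$, strip sequences from the \emph{front}, and use the transparent recursion $\langle\beta^\vi_p,x\rangle\,\barD_\vi(x)=-\barD_{\mathrm{tail}(\vi)}(x)$ (which is just the removal of the factor $(\beta^\vi_0-\beta^\vi_p)^{-1}=(-\beta^\vi_p)^{-1}$) to obtain a telescoping identity $(x+\dot e)\,m_x=m_x\,x$ on $V$. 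Your argument is shorter and more mechanical; the paper's has the advantage of staying inside $\mathfrak g$ and not invoking an auxiliary representation. A small remark on your sign bookkeeping: with $a_m=\langle\alpha_{i_{p+1-m}},x\rangle$ one gets $f_p=(-1)^p\barD_\vi(x)$, but the factor $(-1)^{p+q}$ appears on both sides of the shuffle identity and cancels, so the deduction goes through (alternatively take $a_m=-\langle\alpha_{i_{p+1-m}},x\rangle$).
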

These linear combinations appearing in this statement are infinite only
in appearance, for $U(\mathfrak n)$ acts locally nilpotently on $\ON$.
The proposition says that the morphism $\barD$  can be
expanded as $\mathbb C(\mathfrak t)$-linear combinations of Chevalley
monomials:
\begin{equation*}
\label{eq:ExpanBarD}
\barD=\sum_{\vi\in\Seq}\barD_\vi\;\dot e_\vi.
\end{equation*}
\begin{proof}
To prove this formula, we need to show that as linear forms on $\ON$
\begin{equation}
\label{eq:ExpanNx}
n_x=\sum_{\vi\in\Seq}\barD_\vi(x)\;\dot e_\vi.
\end{equation}
We first note that Lemma~\ref{le:RatFnId} implies that
$$\barD_\vj(x)\;\barD_\vk(x)=\sum_{\vi\in\vj\shin\vk}\barD_\vi(x)$$
for all sequences $\vj$ and $\vk$. Comparing with \eqref{eq:shuffleUf},
it follows that
$$\sum_{\vi\in\Seq}\barD_\vi(x)\;\dot e_\vi$$
is an algebra map $(U(\mathfrak f))^*\to\mathbb C$. Thus, the right hand
side of \eqref{eq:ExpanNx} is an algebra map $\ON\to\mathbb C$, so
is the evaluation at an element $n\in N$.

Let us compute how this $n$ acts on $x$ in the adjoint representation of
$N$ on $\mathfrak g$. Since $\barD_{(k)}(x)=-1/\alpha_k(x)$, we have
$$\sum_{k\in I}\barD_{(k)}(x)\;\ad_{\dot e_k}(x)=
\sum_{k\in I}\left(\frac{-[\dot e_k,x]}{\alpha_k(x)}\right)=\dot e.$$
Each sequence of length greater than $2$ can be written as a concatenation
$(\vi,j,k)$ with $\vi\in\Seq$ (possibly empty) and $(j,k)\in I^2$.
Denoting by $p$ the length of $\vi$, we compute
\begin{align*}
\barD_{(\vi,j,k)}(x)\;\ad{\dot e_{(\vi,j,k)}}(x)
&=\left(\prod_{\ell=0}^p\frac1{\beta^\vi_\ell-(\beta^\vi_p+
\alpha_j+\alpha_k)}\times\frac{-1}{\alpha_k}\right)(x)\;
\ad_{\dot e_\vi}([\dot e_j,[\dot e_k,x]])\\
&=\left(\prod_{\ell=0}^p\frac1{(\beta^\vi_\ell-\beta^\vi_p)-
(\alpha_j+\alpha_k)}\right)(x)\;\ad_{\dot e_\vi}([\dot e_j,\dot e_k]).
\end{align*}
Summing these elements with $(j,k)$ running over $I^2$ gives zero, since
terms pairwise cancel by antisymmetry of the Lie bracket $[\dot e_j,\dot e_k]$.
Taking the sum over $\mathbf i$ then yields the equality
$$\sum_{\mathbf i\neq\varnothing}\barD_\vi(x)\;\ad_{
\dot e_\vi}(x)=\dot e$$
and we conclude that $\Ad_n(x)=x+\dot e$. (Note that the above sum makes
sense since it is in fact finite.) As this is the definition of
$n_x$, this completes the proof of~\eqref{eq:ExpanNx}.
\end{proof}

\subsection{Measures from simplices}\label{ssec:measures}
In the rest of \S \ref{se:Measures}, we explain that $\barD$ is the shadow of a measure-valued morphism that carries
more information. We start with its construction.

Consider the vector space of $\C$-valued compactly supported
distributions on $ \tR $.  It forms an algebra under convolution, the pushforward 
along the addition map $\tR \times \tR \xrightarrow{+} \tR$.
Define $\PP$ to be the subspace spanned by those distributions equal to linear combinations of
piecewise-polynomial functions times Lebesgue measures on
(not necessarily full-dimensional) polytopes whose vertices lie in the weight lattice $ P $;
it is a subalgebra.
All the distributions we will consider live in $\PP$.

Let $ \Delta^p := \{(c_0, \dots, c_p) \in \mathbb R^{p+1} : \text{ each }c_i\geq0,\ c_0 + \cdots + c_p = 1 \} $ be the standard $p$-simplex.
For $ \vi \in \Seq $ of length $p$, we define the linear map
$ \pi_\vi : \R^{p+1} \rightarrow \tR $ by
$$ \pi_\vi(c_0,\dots,c_p) = -\sum_{k = 0}^p c_k\,\beta_k^\vi$$

We define the measure $ D_\vi $ on $\tR$ by $ D_\vi := (\pi_\vi)_*(\delta_{\Delta^p}) $, the push-forward of Lebesgue measure on the $p$-simplex.  Note that the total mass of $ D_\vi $ is $1/p!$.

\begin{lemma} \label{le:shuffle}
The measures $ D_\vi $ satisfy the shuffle identity
$$
D_\vj * D_\vk = \sum_{ \vi \in \vj \shin \vk} D_\vi.
$$
\end{lemma}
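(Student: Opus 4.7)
My plan is to change coordinates on $\Delta^p$ so that $D_\vi$ becomes the pushforward of Lebesgue measure on an ordered sub-cube of $[0,1]^p$, and then decompose the product of two such cubes according to shuffle patterns. This is directly analogous to the proof of Lemma~\ref{le:RatFnId}; in fact the present statement can be viewed as a ``bounded'' version of that identity.

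First I would introduce the substitution $s_\ell=c_\ell+c_{\ell+1}+\cdots+c_p$ for $\ell=1,\ldots,p$. Parametrizing $\Delta^p$ by $(c_1,\ldots,c_p)$ with $c_0=1-\sum c_k$, this is an affine change of variables with Jacobian $\pm1$ identifying $\Delta^p$ with the ordered cube
\[
\Delta'_p:=\{s\in\R^p\mid 1\geq s_1\geq s_2\geq\cdots\geq s_p\geq 0\},
\]
and carrying Lebesgue measure to Lebesgue measure. Rearranging the double sum shows that $\pi_\vi$ becomes the linear map $s\mapsto-\sum_{\ell=1}^p s_\ell\alpha_{i_\ell}$. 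Hence $D_\vi$ equals the pushforward of Lebesgue measure on $\Delta'_p$ under this map.

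Next, writing $\vj=(j_1,\ldots,j_p)$ and $\vk=(k_1,\ldots,k_q)$, the definition of convolution gives that $D_\vj*D_\vk$ is the pushforward of the product Lebesgue measure on $\Delta'_p\times\Delta'_q$ under
\[
(s,t)\longmapsto -\sum_{\ell=1}^p s_\ell\alpha_{j_\ell}-\sum_{m=1}^q t_m\alpha_{k_m}.
\]
I would then decompose $\Delta'_p\times\Delta'_q$, up to a set of measure zero, according to the \emph{shuffle pattern}: for a generic pair $(s,t)$ all $p+q$ coordinates are distinct, so they can be arranged into a strictly decreasing sequence $u_1>u_2>\cdots>u_{p+q}$, and recording whether the $\ell$th entry $u_\ell$ came from $s$ (and in which slot) or from $t$ (and in which slot) produces a shuffle $\vi\in\vj\shin\vk$. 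The stratum on which this shuffle is $\vi$ is then in measure-preserving bijection with $\Delta'_{p+q}$ via $(s,t)\mapsto u$, and under this bijection the above map becomes $u\mapsto-\sum_{\ell=1}^{p+q}u_\ell\alpha_{i_\ell}$, which is exactly the reparametrized $\pi_\vi$. Thus each stratum pushes forward to $D_\vi$, and summing over shuffles yields the identity.

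The only point requiring care is the combinatorial bookkeeping in the shuffle decomposition, but this is completely parallel to the disjoint union $C_p\times C_q=\bigsqcup_{\sigma\in\mathrm{Sh}(p,q)}\sigma^{-1}\cdot C_{p+q}$ used in Lemma~\ref{le:RatFnId}, so no genuine obstacle arises.
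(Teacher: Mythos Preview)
Your proof is correct and is essentially the same argument as the paper's: the paper simply says to triangulate $\Delta^p\times\Delta^q$ in the standard way (citing Hatcher) with one simplex per shuffle, while you make this explicit via the coordinate change $s_\ell=\sum_{k\geq\ell}c_k$ identifying $\Delta^p$ with the ordered cube $\Delta'_p$, which is exactly how that triangulation is usually described. Your version is more self-contained and nicely highlights the parallel with Lemma~\ref{le:RatFnId}, but the underlying decomposition is identical.
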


\begin{proof}
Let $p$ and $q$ be the lengths of $\vj$ and $\vk$, respectively, and
consider the composite map
$$\pi_\vj+\pi_\vk:\ \R^{p+1} \times \R^{q+1} \xrightarrow{\pi_\vj \times \pi_\vk}
\tR \oplus \tR \stackrel{+}{\to} \tR.$$
Then the left side of the
desired equality is exactly $(\pi_\vj + \pi_\vk)_*(\delta_{\Delta^p \times \Delta^q})$.
To get the right side, we triangulate the product $\Delta^p \times\Delta^q$
in one of the standard ways (see e.g.\ \cite[pp.~277--278]{Hatcher})
with one simplex for each shuffle.
\end{proof}

Comparing  Lemma \ref{le:shuffle} with \eqref{eq:shuffleUf}, we deduce that
there is an algebra morphism $ (U (\mathfrak f))^* \rightarrow \PP $
taking $ \hat e_\vi^* $ to $ D_\vi $. Composing with the inclusion of algebras
$\ON\hookrightarrow (U(\mathfrak f))^*$ from \S \ref{ssec:sequences}, we get an
algebra map $ D : \ON \rightarrow \PP $. Unpacking the above definitions,
we see that for any $ f \in \C[N] $,
\begin{equation}
\label{eq:Df} D(f) = \sum_{\vi \in \Seq} \langle \dot e_\vi, f\rangle\; D_\vi.
\end{equation}

\subsection{The Fourier Transform}\label{ssec:FT}
For each weight $\beta\in P$, we define $e^\beta$ to be the function $x\mapsto e^{\langle \beta, x \rangle} $ on $ \ft_\C $. Let $\PP'$ be the space of meromorphic functions on $ \ft_\C $ spanned by these exponentials over the field $\C(\ft)$ of rational functions. The \textbf{Fourier Transform} is defined to be the map
\begin{equation*}
\begin{aligned}
FT : \ \PP  &\rightarrow \PP' \\
\mu &\mapsto \left(x \mapsto \int_{\beta\in\ft_\R^*}
e^{\langle \beta, x \rangle}\;d\mu\right)
\end{aligned}
\end{equation*}

\begin{lemma}
The Fourier transform is one to one and satisfies
\begin{enumerate}
\item $FT(a * b) = FT(a)\, FT(b)$ for all $a,b\in \PP$.
\item Let $\beta\in P$. Denoting by $\delta_\beta$ the point measure
at $\beta$, we have $FT(\delta_\beta) = e^{\beta}$.
\end{enumerate}
\end{lemma}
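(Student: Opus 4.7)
The plan is to dispatch the three claims in order: (ii), then (i), then injectivity, with only the last requiring any real thought.

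Part (ii) is immediate from the definition: for $\mu = \delta_\beta$, the integral $\int e^{\langle\gamma,x\rangle}\,d\delta_\beta(\gamma)$ collapses to $e^{\langle\beta,x\rangle} = e^\beta(x)$. For part (i), I would unwind the definition of convolution as the pushforward of $a\otimes b$ along addition $\ft_\R^*\times\ft_\R^*\to\ft_\R^*$ and apply Fubini:
\begin{equation*}
FT(a*b)(x) = \int e^{\langle\beta_1+\beta_2,x\rangle}\,da(\beta_1)\,db(\beta_2)
= \int e^{\langle\beta_1,x\rangle}\,da(\beta_1)\cdot \int e^{\langle\beta_2,x\rangle}\,db(\beta_2),
\end{equation*}
where the factorization uses the multiplicativity $e^{\langle\beta_1+\beta_2,x\rangle} = e^{\langle\beta_1,x\rangle}e^{\langle\beta_2,x\rangle}$. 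Fubini is valid because every $\mu\in\PP$ is a compactly supported finite-order distribution (a piecewise polynomial times Lebesgue measure on a polytope), so both sides converge absolutely for each fixed $x$.

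For injectivity I would invoke classical Fourier analysis. An element $\mu\in\PP$ is a compactly supported distribution on $\ft_\R^*$, and $FT(\mu)$, restricted to $x\in i\ft_\R$, is its ordinary Fourier transform; by Paley--Wiener it extends to an entire function on $\ft_\C$. The Fourier transform is injective on compactly supported distributions (one can recover $\langle\mu,\varphi\rangle$ for any Schwartz $\varphi$ from $FT(\mu)$ by Parseval), so $FT(\mu)=0$ forces $\mu=0$. One only needs to confirm that the map $\PP\to\PP'$ we are defining really does agree with the Fourier--Laplace transform as a function on $\ft_\C$, which is transparent from the formula.

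The main obstacle, such as it is, is purely bookkeeping: one must observe that $FT$ as defined genuinely lands in $\PP'$. This amounts to checking it on the generators $D_\vi$, where an elementary direct computation of $\int_{\Delta^p}e^{-\sum c_k\langle\beta_k^\vi,x\rangle}\,dc$ yields a $\C(\ft)$-linear combination of the exponentials $e^{-\beta_k^\vi}$ (with denominators built from the linear forms $\langle\beta_k^\vi-\beta_\ell^\vi,x\rangle$, generically nonzero). Otherwise, no step beyond the standard properties of the Fourier transform is required.
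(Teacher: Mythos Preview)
Your proposal is correct. The paper does not actually supply a proof of this lemma: it is stated as a standard fact and immediately followed by the next lemma computing $FT(D_\vi)$. Your argument---direct evaluation for (ii), Fubini for (i), and reduction to classical injectivity of the Fourier--Laplace transform on compactly supported distributions for the one-to-one claim---is a sound way to fill in what the authors leave implicit. Your remark that one should check $FT$ lands in $\PP'$ by computing $FT(D_\vi)$ explicitly is exactly what the paper does in the subsequent Lemma~\ref{le:StructDi}.
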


\begin{lemma}
\label{le:StructDi}
  For a sequence $\vi = (i_1, \dots, i_p)$, the Fourier transform
  $ FT(D_\vi) $ is given by
  $$
  FT(D_\vi) = \sum_{j = 0}^p \frac{e^{-\beta^\vi_j}}{\prod\limits_{k \ne j} (\beta^\vi_k  - \beta^\vi_j)}
  $$
\end{lemma}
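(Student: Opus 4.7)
The plan is to unpack the definitions and then invoke the classical Hermite--Genocchi identity for the exponential function.

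First I would write out the Fourier transform explicitly using the pushforward description of $D_\vi$: since $D_\vi=(\pi_\vi)_*(\delta_{\Delta^p})$ and $\pi_\vi(c_0,\dots,c_p)=-\sum_k c_k\beta_k^\vi$, change of variables gives
$$FT(D_\vi)(x)=\int_{\Delta^p}\exp\!\left(-\sum_{k=0}^{p}c_k\langle\beta_k^\vi,x\rangle\right)dc_0\cdots dc_p.$$
Setting $a_k=-\langle\beta_k^\vi,x\rangle$, the claim of the lemma reduces to the closed-form evaluation
$$\int_{\Delta^p}e^{\sum_k c_k a_k}\,dc=\sum_{j=0}^p\frac{e^{a_j}}{\prod_{k\ne j}(a_j-a_k)},$$
valid whenever the $a_k$ are pairwise distinct; the formula in the lemma is precisely this identity rewritten as a function of $x$.

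Next I would prove the analytic identity by recognising it as the Hermite--Genocchi formula applied to $F(t)=e^t$. Recall that for any sufficiently smooth $F$,
$$F[a_0,\dots,a_p]=\int_{\Delta^p}F^{(p)}\!\left(\sum_k c_k a_k\right)dc,$$
where $F[a_0,\dots,a_p]$ denotes the $p$-th divided difference. Since the exponential is its own derivative, the right side for $F(t)=e^t$ becomes precisely the integral under study, while the left side admits the Lagrange-type expansion $\sum_j e^{a_j}/\prod_{k\ne j}(a_j-a_k)$ whenever the $a_k$ are distinct.

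Finally, since the partial sums satisfy $0=\beta_0^\vi<\beta_1^\vi<\cdots<\beta_p^\vi$ strictly in $Q_+$, each $\beta_k^\vi-\beta_j^\vi$ is a nonzero element of $Q$ for $k\ne j$, so the right-hand side of the lemma is a well-defined element of $\PP'$. Both sides agree on the open dense subset of $\ft_\C$ where all the linear functionals $\langle\beta_k^\vi-\beta_j^\vi,x\rangle$ are nonzero, which suffices to establish the equality in $\PP'$. There is no real obstacle here: the lemma is a reformulation in the language of weight lattices of a standard identity from numerical analysis, and if a self-contained proof of Hermite--Genocchi were desired, it follows by an easy induction on $p$ using the divided-difference recursion $F[a_0,\dots,a_p]=(F[a_1,\dots,a_p]-F[a_0,\dots,a_{p-1}])/(a_p-a_0)$ matched with the decomposition of $\Delta^p$ obtained by marginalising one coordinate.
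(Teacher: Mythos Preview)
Your proof is correct. The paper's own proof is terse: it simply cites Brion's formula for the Fourier transform of Lebesgue measure on a polytope \cite[Proposition~5.3]{Br3} and invokes compatibility of pushforward of measures with pullback of functions. You take a different but equally valid route, recognising the integral over the simplex as a special case of the Hermite--Genocchi representation of divided differences applied to the exponential. Your approach is more specific to the simplex but also more self-contained, since the divided-difference recursion yields an elementary inductive proof, whereas Brion's formula for general polytopes is a deeper result relying on equivariant methods or Stokes-type arguments. The observation that the $\beta_k^\vi$ are pairwise distinct (being strictly increasing partial sums in $Q_+$) is a useful detail you make explicit that the paper leaves implicit.
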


\begin{proof}
The Fourier Transform for the Lesbesgue measure on a polytope is well-known (see for example \cite[Proposition 5.3]{Br3}).  The current result then follows from the compatibility between pullback of functions and pushforward of measures.
\end{proof}

The exponentials $e^{-\beta^\vi_k}$ can be regarded as regular functions
on the torus $T$. On the other hand, the denominators $\prod_{k\neq j}
(\beta^\vi_k-\beta^\vi_j)$ belong to the multiplicative subset
$ S \subset \C[\ft] $ generated by the set $ Q \smallsetminus \{0\} $.
From the Lemmas, we immediately obtain the following.

\begin{corollary}
The composition $ FT \circ D $ defines an algebra morphism
$$
\C[N] \rightarrow S^{-1} \C[\ft] \otimes \C[T]
$$
\end{corollary}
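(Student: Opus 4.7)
The plan is to combine the two already-established algebra-morphism properties with a direct verification that the image of $FT \circ D$ lands in the indicated subalgebra. There is essentially nothing to do once one tracks the data through Lemma \ref{le:StructDi}.

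First, I would observe that $FT \circ D$ is automatically an algebra morphism from $\ON$ to $\PP'$: the map $D$ was shown to be an algebra morphism in \S\ref{ssec:measures} (via the shuffle identity of Lemma \ref{le:shuffle} together with \eqref{eq:shuffleUf}), and $FT$ carries convolution to pointwise multiplication by the first bullet of the preceding lemma. Hence the only substantive claim is that the image of $FT \circ D$ lies inside the subalgebra $S^{-1}\C[\ft] \otimes \C[T]$ of $\PP'$.

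Next, the formula \eqref{eq:Df}, $D(f) = \sum_{\vi \in \Seq} \langle \dot e_\vi, f \rangle\, D_\vi$, is a finite sum for each fixed $f \in \ON$, because $U(\mathfrak n)$ acts locally nilpotently on $\ON$ (as noted immediately after Proposition \ref{pr:ExpanCh}). Therefore it is enough to verify that $FT(D_\vi) \in S^{-1}\C[\ft] \otimes \C[T]$ for each individual sequence $\vi$.

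For this I would apply Lemma \ref{le:StructDi}, which gives
$$FT(D_\vi) = \sum_{j=0}^{p} \frac{e^{-\beta^\vi_j}}{\prod_{k \ne j} (\beta^\vi_k - \beta^\vi_j)}.$$
Each $\beta^\vi_j = \alpha_{i_1} + \cdots + \alpha_{i_j}$ lies in $Q \subseteq P$, so $e^{-\beta^\vi_j}$ is a regular function on $T$, that is, an element of $\C[T]$. For $k \ne j$, the difference $\beta^\vi_k - \beta^\vi_j$ is, up to sign, the sum $\alpha_{i_{\min(k,j)+1}} + \cdots + \alpha_{i_{\max(k,j)}}$, a nonzero element of $Q_+ \subseteq P \setminus \{0\}$, and therefore lies in the multiplicative set $S$. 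Each summand on the right is thus visibly in $S^{-1}\C[\ft] \otimes \C[T]$, and so is the finite sum $\sum_\vi \langle \dot e_\vi, f\rangle\, FT(D_\vi)$. No step of this argument seems delicate; the corollary is really just a repackaging of Lemma \ref{le:StructDi} in algebraic language.
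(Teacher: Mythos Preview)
Your proof is correct and follows exactly the same approach as the paper, which simply notes (in the sentence preceding the corollary) that the exponentials $e^{-\beta^\vi_k}$ lie in $\C[T]$ and the denominators lie in $S$, then says the result follows immediately from the lemmas. You have merely written out in full what the paper leaves implicit.
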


Thus, the map $FT\circ D$ can geometrically be viewed as a rational map
$\ft\times T\to N$. (Note here that $S$ can be replaced by a finitely
generated semigroup, because $\ON$ is finitely generated.)

\begin{remark} \label{rem:NotInjective}
No open subset of $\ft\times T$ maps dominantly to $N$ if
$\dim N>\dim(\ft\times T)$, so $FT\circ D$ cannot be injective if
the number of positive roots exceeds twice the rank of $G$. Since FT
is one-to-one, this means that $D$ is not injective in general.
\end{remark}

\begin{remark} \label{rem:exponentials}
For any $f\in\ON$, we can write
$$FT\circ D(f)=\sum_{\vi\in\Seq}\langle \dot e_\vi, f \rangle\;FT(D_\vi).$$
If $f\in\ON_{-\nu}$, then the sum can be restricted to sequences in
$\Seq(\nu)$, and we see from Lemma~\ref{le:StructDi} that the
exponentials $e^{-\beta}$ that appear in $FT\circ D(f)$ satisfy
$0\leq\beta\leq\nu$. 

Further, comparing with Proposition~\ref{pr:ExpanCh},
we see that $\barD(f)$ is the coefficient
of $e^{-\nu}$.  On the other hand, it is not difficult to show (using Remark~\ref{re:StarInv}) that the map $ x \mapsto n_x^{-1} $, corresponds to the coefficient of $e^0$ in $FT\circ D(f)$.
\end{remark}

\begin{theorem}
\label{th:GeomTFD}
Let $x\in\ftreg$, let $t\in T$, and let $f\in\ON$. Then $FT\circ D(f)$,
viewed as a rational function on $\ft\times T$, can be evaluated at
$(x,t)$, and we have
$$FT\circ D(f)(x,t)=f(t^{-1}n_xtn_x^{-1}).$$
\end{theorem}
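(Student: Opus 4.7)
The plan is to interpret both sides of the identity as arising from evaluating grouplike elements in the graded completion $\hat U(\mathfrak n) = \prod_{\nu\in Q_+} U(\mathfrak n)_\nu$, which is the full dual of $\ON$ under the perfect pairing of \S\ref{ss:Notation}. Under this identification, the element $n \in N$ corresponds to a grouplike element $\widetilde n \in \hat U(\mathfrak n)$ satisfying $\langle \widetilde n, f\rangle = f(n)$. Proposition~\ref{pr:ExpanCh} tells us that $\widetilde{n_x} = \sum_\vi \barD_\vi(x)\,\dot e_\vi$. The right-hand side of the theorem is then $\langle \widetilde{t^{-1} n_x t n_x^{-1}},\, f\rangle$, and I would compute this product in $\hat U(\mathfrak n)$ and recognize the resulting coefficient of each $\dot e_\vi$ as $FT(D_\vi)(x,t)$.

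Concretely, I would carry out three computations in $\hat U(\mathfrak n)$: (a) conjugation by $T$ acts via $\Ad$, which rescales each weight-$\nu_\vi$ element $\dot e_\vi$ by $t^{-\nu_\vi}$, so
\[
\widetilde{t^{-1} n_x t} \;=\; \sum_\vi t^{-\nu_\vi}\,\barD_\vi(x)\,\dot e_\vi;
\]
(b) the antipode $S$ of $U(\mathfrak n)$ sends $\dot e_{(i_1,\dots,i_p)}$ to $(-1)^p \dot e_{(i_p,\dots,i_1)}$, so
\[
\widetilde{n_x^{-1}} \;=\; S(\widetilde{n_x}) \;=\; \sum_\vj (-1)^{|\vj|}\,\barD_\vj(x)\,\dot e_{\vj^{op}};
\]
(c) multiplying the two expressions and grouping by the concatenated index $\vi = \vj \cdot \vk^{op}$ for $\vi = (i_1,\dots,i_p)$ and a split index $j \in \{0,\dots,p\}$, the coefficient of $\dot e_\vi$ becomes
\[
\sum_{j=0}^p t^{-\beta^\vi_j}\,(-1)^{p-j}\,\barD_{(i_1,\dots,i_j)}(x)\,\barD_{(i_p,\dots,i_{j+1})}(x).
\]

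The heart of the argument is then a clean rational-function identity: for each split $j$,
\[
(-1)^{p-j}\,\barD_{(i_1,\dots,i_j)}(x)\,\barD_{(i_p,\dots,i_{j+1})}(x)
\;=\; \prod_{\substack{0\le k\le p\\ k\neq j}} \frac{1}{(\beta^\vi_k - \beta^\vi_j)(x)}.
\]
This follows by reindexing: the second factor equals $\prod_{m=j+1}^{p}\bigl(\beta^\vi_j - \beta^\vi_m\bigr)^{-1}(x)$, contributing $(-1)^{p-j}$ that cancels the explicit sign, while the first factor supplies $\prod_{k=0}^{j-1}(\beta^\vi_k - \beta^\vi_j)^{-1}(x)$. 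Summing over $j$ and invoking Lemma~\ref{le:StructDi} identifies the total with $FT(D_\vi)(x,t)$; pairing with $f$ and comparing with \eqref{eq:Df} finishes the proof.

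I expect the main obstacle to be purely combinatorial/bookkeeping: one must be attentive to the antipode's reversal of sequences and the resulting orientation of the partial sums in $\barD_{(i_p,\dots,i_{j+1})}$, as every sign in the final answer enters through this reversal. A secondary subtlety is justifying that the formal products in $\hat U(\mathfrak n)$ converge in the relevant weak sense after pairing with any $f \in \ON_{-\nu}$, but this is immediate because only sequences $\vi$ with $\nu_\vi = \nu$ contribute, reducing each identity to a finite sum.
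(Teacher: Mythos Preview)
Your argument is correct and takes a genuinely different route from the paper's proof.

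The paper proceeds by \emph{characterization}: it writes down the linear form $\sum_{\vi}\Bigl(\sum_\ell A^\vi_\ell(x,t)\Bigr)\dot e_\vi$ coming from Lemma~\ref{le:StructDi}, observes (via the algebra-map property of $FT\circ D$) that it is evaluation at some $n\in N$, and then checks by a direct and fairly intricate computation that $\Ad_n(x+\dot e)=x+\Ad_{t^{-1}}(\dot e)$, which uniquely pins down $n=t^{-1}n_xtn_x^{-1}$. Your approach instead \emph{computes} $\widetilde{t^{-1}n_xtn_x^{-1}}$ directly from Proposition~\ref{pr:ExpanCh} using the Hopf-algebraic operations (adjoint $T$-action for conjugation, antipode for inversion, concatenation product), and then matches the coefficient of each $\dot e_\vi$ with the formula of Lemma~\ref{le:StructDi}. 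The key rational-function identity you isolate is exactly right, and it replaces the paper's adjoint-action calculation with a short reindexing. Your route is shorter and more transparent about where the structure comes from; the paper's route has the virtue of being self-contained and of making explicit the dynamical characterization via the adjoint action.

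One small point you should make explicit: Proposition~\ref{pr:ExpanCh} (and hence your expansion of $\widetilde{n_x}$) requires $\langle\beta,x\rangle\neq0$ for all $\beta\in Q_+\setminus\{0\}$, not merely $x\in\ftreg$. So your argument as written establishes the identity only on this dense open locus, and you then need the same one-line continuity/regularity argument the paper uses at the end (the right-hand side $f(t^{-1}n_xtn_x^{-1})$ is regular on $\ftreg\times T$ since $x\mapsto n_x$ is).
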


\begin{proof}
We first consider the particular case where $\langle\beta,x\rangle\neq0$
for all $\beta\in Q\setminus\{0\}$. Given $\vi\in\Seq$ of length $p$ and
$\ell\in\{0,\ldots,p\}$, we set
$$A^\vi_\ell(x,t)=\frac{t^{-\beta^\vi_\ell}}{\prod\limits_{\substack{m=0\\
m\neq\ell}}^p(\beta^\vi_m-\beta^\vi_\ell)(x)}$$
where $t^{-\beta^\vi_\ell}$ means the evaluation at $t^{-1}$ of the weight
$\beta^\vi_\ell$. In view of Lemma~\ref{le:StructDi}, we want to prove
that the linear form
\begin{equation}
\label{eq:GeomTFD}
\sum_{\mathbf i\in\Seq}\left(\sum_{\ell=0}^pA^\vi_\ell
(x,t)\right)\;\dot e_\vi
\end{equation}
on $\ON$ is the evaluation at the point $t^{-1}n_xtn_x^{-1}$.

We first note that the linear form \eqref{eq:GeomTFD} is an algebra map
$\ON\to\mathbb C$, because it is the composition of the algebra map
$FT\circ D$ with the evaluation at $(x,t)$. Therefore it is the
evaluation at a point $n\in N$.

By construction, the element $t^{-1}n_xtn_x^{-1}$ is the unique
element of $N$ that brings $x+\dot e$ to $\Ad_{t^{-1}}(x+\dot e)=x+\Ad_{t^{-1}}(\dot e)$.
Let us show that $n$ fulfills this task.

\smallskip
Since $A^{(k)}_0(x,t)+A^{(k)}_1(x,t)=(1-t^{-\alpha_k})/\alpha_k(x)$, we have
$$\sum_{k\in I}\Bigl(A^{(k)}_0(x,t)+A^{(k)}_1(x,t)\Bigr)\,\ad_{\dot e_k}(x)=
\sum_{k\in I}(t^{-\alpha_k}-1)\dot e_k=\Ad_{t^{-1}}(\dot e)-\dot e.$$
Moreover, for each sequence $\mathbf i$ (possibly empty) of length $p$
and each pair $(j,k)$ of elements from~$I$, we have
$$A^{(\vi,j)}_{p+1}(x,t)-A^{(\vi,j,k)}_{p+1}(x,t)\times\alpha_k(x)=0$$
and therefore
\begin{multline*}
\left(\sum_{\ell=0}^{p+1}A^{(\mathbf i,j)}_\ell(x,t)\right)\,
\ad_{\dot e_{(\mathbf i,j)}}(\dot e_k)
+\left(\sum_{\ell=0}^{p+2}A^{(\mathbf i,j,k)}_\ell(x,t)\right)\,
\ad_{\dot e_{(\mathbf i,j,k)}}(x)=\\
\left(\sum_{\ell=0}^p\Bigl(A^{(\vi,j)}_\ell(x,t)-A^{(\vi,j,k)}_\ell(x,t)
\times\alpha_k(x)\Bigr)-A^{(\vi,j,k)}_{p+2}(x,t)\times\alpha_k(x)\right)\,
\ad_{\dot e_\vi}([\dot e_j,\dot e_k]).
\end{multline*}
Summing these elements with $(j,k)$ running over $I^2$ therefore gives
zero since terms pairwise cancel; indeed
$$A^{(\vi,j)}_\ell(x,t)-A^{(\vi,j,k)}_\ell(x,t)\times\alpha_k(x)=
\frac{t^{-\beta^\vi_\ell}}{\Biggl(\prod\limits_{\substack{m=0\\m\neq\ell}}^p
(\beta^\vi_m-\beta^\vi_\ell)\times(\beta^\vi_p-\beta^\vi_\ell+
\alpha_j+\alpha_k)\Biggr)(x)}$$
and
$$-A^{(\vi,j,k)}_{p+2}(x,t)\times\alpha_k(x)=
\frac{t^{-\beta^\vi_\ell-\alpha_j-\alpha_k}}{\Biggl(\prod
\limits_{m=0}^p(\beta^\vi_m-\beta^\vi_p-\alpha_j-\alpha_k)\Biggr)(x)}$$
are symmetric in $(j,k)$, while the Lie bracket $[\dot e_j,\dot e_k]$ is antisymmetric.
Taking the sum over all $\mathbf i\in\Seq$ then yields the equality
$$\Ad_n(x+\dot e)=x+\Ad_{t^{-1}}(\dot e),$$
which completes the proof of the equality $n=t^{-1}n_xtn_x^{-1}$.

The Theorem is thus established when $\langle\beta,x\rangle\neq0$ for
all $\beta\in Q\setminus\{0\}$. The general case then follows from the
regularity of the map $(x,t)\mapsto f(t^{-1}n_xtn_x^{-1})$ on
$\ftreg\times T$.
\end{proof}

\subsection{Universal centralizer interpretation} \label{se:univcent}
We now give a reinterpretation of Theorem \ref{th:GeomTFD} using a version of the universal centralizer.

For any $ y \in \mathfrak g $, we write $ C_G(y) = \{ g \in G : \Ad_g(y) = y \} $ for its centralizer in $ G $.  This is an algebraic group of dimension at least $r$, the rank of $ G $.  An element $ y  $ is said to be \textbf{regular} if $ \dim C_G(y) =  r $.  It is well-known that regular elements form a
non-empty open subset of $\mathfrak g$ in the Zariski topology.

We will need a lemma. For $k\geq1$, denote by $\mathscr C^k$ the $k$-th
term in the lower central series of the nilpotent Lie algebra
$\mathfrak n$; hence $\mathscr C^1=\mathfrak n$ and $\mathscr C^k=0$ for
$k$ large enough. For $x\in\ft$, denote by $\mathfrak n^x=\{f\in\mathfrak
n\mid [x,f]=0\}$ the set of elements in $\mathfrak n$ that centralize $x$.

\begin{lemma}
\label{le:UCS}
Let $k\geq1$ be an integer and let $(x,a,b)\in\ft\times\mathfrak n^x\times
\mathscr C^k$. Then there is $(m,a')\in N\times\mathfrak n^x$ such
that $\Ad_m(x+a+b)\in x+a'+\mathscr C^{k+1}$. 
\end{lemma}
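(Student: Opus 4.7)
The plan is to construct $m$ as the exponential of a single element $c \in \mathscr C^k$ chosen so that its adjoint action on $x$ cancels the component of $b$ that fails to centralize $x$, modulo $\mathscr C^{k+1}$. The key observation is that since $\mathfrak t$ normalizes $\mathfrak n$ and $\mathscr C^k$ is $\ad_\ft$-stable, the operator $\ad_x$ acts semisimply on $\mathscr C^k$. Decomposing into $x$-weight spaces gives
\[
\mathscr C^k = (\mathscr C^k \cap \mathfrak n^x) \oplus (\mathscr C^k \cap \mathfrak n_x),
\]
where $\mathfrak n_x$ is the sum of root spaces $\mathfrak g_\alpha \subset \mathfrak n$ with $\alpha(x) \neq 0$, and $\ad_x$ restricts to an invertible operator on $\mathscr C^k \cap \mathfrak n_x$. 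Write $b = b_0 + b_1$ accordingly, with $b_0 \in \mathscr C^k \cap \mathfrak n^x$ and $b_1 \in \mathscr C^k \cap \mathfrak n_x$.

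Next, I would choose $c \in \mathscr C^k \cap \mathfrak n_x$ to be the unique solution of $[x, c] = b_1$, using invertibility of $\ad_x$ on this subspace. Set $m = \exp(c) \in N$ and $a' = a + b_0 \in \mathfrak n^x$ (the latter sum lies in $\mathfrak n^x$ because both summands do).

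The verification reduces to expanding $\Ad_m = \exp(\ad_c)$ and checking everything modulo $\mathscr C^{k+1}$. Since $c \in \mathscr C^k$ with $k \geq 1$, every iterated bracket $[c, [c, \cdots]]$ involving $c$ at least twice lies in $[\mathscr C^k, \mathscr C^k] \subseteq \mathscr C^{2k} \subseteq \mathscr C^{k+1}$, and $[c, a], [c, b] \in [\mathscr C^k, \mathfrak n] \subseteq \mathscr C^{k+1}$. Therefore
\[
\Ad_m(x + a + b) \equiv x + [c, x] + a + b = x - [x, c] + a + b_0 + b_1 \equiv x + a' \pmod{\mathscr C^{k+1}},
\]
which is exactly the conclusion.

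There is no real obstacle here: the only nontrivial ingredient is the semisimplicity of $\ad_x$ on $\mathscr C^k$, which is automatic from $x \in \ft$ and the fact that $\mathscr C^k$ is a sum of root spaces. The estimate $[\mathscr C^k, \mathscr C^k] \subseteq \mathscr C^{k+1}$ (in fact $\mathscr C^{2k}$) is the definition of the lower central series, so the truncation of the exponential series is immediate. The lemma will be used inductively on $k$ in the sequel to produce the factorization of elements of the universal centralizer.
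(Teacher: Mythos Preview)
Your proof is correct and follows essentially the same approach as the paper: both decompose $b$ along the splitting $\mathscr C^k = \ad_x(\mathscr C^k)\oplus(\mathscr C^k\cap\ker\ad_x)$ induced by the semisimplicity of $\ad_x$, then conjugate by the exponential of a preimage in $\mathscr C^k$ to kill the non-centralizing part modulo $\mathscr C^{k+1}$. Your $c,b_0,b_1$ correspond exactly to the paper's $u,b',[x,u]$.
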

\begin{proof}
The linear map $\ad_x:\mathfrak g\to\mathfrak g$ is semisimple and leaves
the subspace $\mathscr C^k$ stable. We can thus decompose into direct
sums $\mathfrak g=\im\ad_x\oplus\ker\ad_x$ and
$\mathscr C^k=\ad_x(\mathscr C^k)\oplus(\mathscr C^k\cap\ker\ad_x)$.
Let us write $b=[x,u]+b'$, with $u$ and $b'$ in $\mathscr C^k$ and
$[x,b']=0$. Then $m=\exp u$ satisfies
$\Ad_m(x+a+b)\in x+a+b+[u,x]+\mathscr C^{k+1}$, and one can simply take
$a'=a+b'$.
\end{proof}

\begin{proposition}
For any $ x\in\ft$, the element $ x+\dot e $ is regular and  $C_G(x+\dot e) \subset B $.
\end{proposition}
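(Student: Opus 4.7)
The plan is to use Lemma \ref{le:UCS} iteratively to bring $x + \dot e$ into Jordan-decomposed form via an element of $N$, and then to invoke Kostant's classical theorem for the Levi centralizer $L := C_G(x)$.

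First I would iterate Lemma \ref{le:UCS} starting from the triple $(x, 0, \dot e) \in \ft \times \mathfrak n^x \times \mathscr C^1$. Each step $k$ produces some $m_k \in N$ and $a_k \in \mathfrak n^x$ with $\Ad_{m_k \cdots m_1}(x + \dot e) \in x + a_k + \mathscr C^{k+1}$; since the lower central series of $\mathfrak n$ terminates, the process stabilizes, yielding $m \in N$ and $e' \in \mathfrak n^x$ with $\Ad_m(x + \dot e) = x + e'$. Since $[x, e'] = 0$ with $x$ semisimple and $e'$ nilpotent, the sum $x + e'$ is the Jordan decomposition of $\Ad_m(x + \dot e)$; by uniqueness of Jordan decomposition,
\[
 C_G(x + \dot e) = m^{-1}\bigl(C_G(x) \cap C_G(e')\bigr)\,m = m^{-1}\,C_L(e')\,m,
\]
with $e' \in \mathfrak n^x = \mathfrak n \cap \mathfrak l$ lying in the nilradical of the Borel $B_L := B \cap L$ of $L$.

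The main step is to argue that $e'$ is a principal (i.e.\ regular) nilpotent in $\mathfrak l$. Tracing through the recursion of Lemma \ref{le:UCS}, the first-order contribution to $e'$ is the projection of $\dot e$ onto $\mathfrak n^x$, namely $\sum_{i\,:\,\langle \alpha_i, x\rangle = 0} \dot e_i$, while all higher-order corrections lie in $\mathscr C^{\geq 2} \cap \mathfrak n^x$. When every simple root of $\Phi_x := \{\alpha : \langle \alpha, x\rangle = 0\}$ is already simple in $\mathfrak g$, the first-order term alone yields nonzero components in every simple root space of $\mathfrak l$, and principality follows. In the general case, some simple roots of $\Phi_x$ are non-simple sums of $\mathfrak g$-simple roots (and thus lie in $\mathscr C^{\geq 2}$), and one must verify that the higher-order iterated brackets in Lemma \ref{le:UCS} contribute nonzero components in the corresponding root spaces of $\mathfrak l$; this is the principal obstacle in the proof, requiring careful bookkeeping of the recursion.

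Once principality of $e'$ in $\mathfrak l$ is established, Kostant's theorem applied to $L$ yields $C_L(e') \subset B_L$ and $\dim C_L(e') = \mathrm{rank}\,L = r$. Conjugating back and using that $m \in N \subset B$ implies $m^{-1} B m = B$, we conclude
\[
 C_G(x + \dot e) = m^{-1}\,C_L(e')\,m \subset B, \qquad \dim C_G(x + \dot e) = r,
\]
proving simultaneously that $x + \dot e$ is regular and that its centralizer lies in $B$.
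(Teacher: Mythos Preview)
Your overall architecture---use Lemma~\ref{le:UCS} to conjugate $x+\dot e$ into Jordan form $x+e'$ with $e'\in\mathfrak n^x$, then reduce to the Levi $L=C_G(x)$---is exactly what the paper does for the containment $C_G(x+\dot e)\subset B$. But there is a genuine gap at the step you yourself flag as the ``principal obstacle'': you never actually show that $e'$ is regular nilpotent in $\mathfrak l$. You correctly observe that the first-order projection of $\dot e$ onto $\mathfrak n^x$ hits only those simple root spaces of $\mathfrak l$ that happen to be simple in $\mathfrak g$, and that the remaining simple roots of $\Phi_x$ must be reached via higher commutators generated by the recursion. Verifying that these higher contributions are all nonzero, for every $x\in\ft$ and every simple root of $\Phi_x$, is not routine bookkeeping; it is a nontrivial nonvanishing statement that you have left open. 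Without it, neither the regularity of $x+\dot e$ nor the Borel containment follows in your argument.

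The paper sidesteps this difficulty entirely by reversing the order of the two conclusions. It first proves regularity of $x+\dot e$ by an independent one-line argument: under the $\mathbb C^\times$-action $s\mapsto (s^{-\rho^\vee},s)\in G\times\mathbb C^\times$ on $\mathfrak g$, one has $s\cdot(x+\dot e)=sx+\dot e\to\dot e$ as $s\to 0$; since $\dot e$ is regular and the regular locus is open and invariant, $x+\dot e$ is regular. Now, from $\dim C_L(e')=\dim C_G(x+e')=\dim C_G(x+\dot e)=r=\operatorname{rank} L$, regularity of $e'$ in $\mathfrak l$ is immediate---no recursion bookkeeping needed. The rest then proceeds as you outline (the paper uses a Bruhat-decomposition argument \`a la Springer in place of citing Kostant, but that difference is cosmetic). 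The moral: prove regularity first by degeneration, and the Levi-side regularity of $e'$ comes for free.
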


\begin{proof}
First, we prove that $ x+\dot e $ is regular.  Consider the action of $ G \times \C^\times $ on $ \fg $, where $ G $ acts by the adjoint action and $ \C^\times $ acts by scaling.  Define $ \C^\times \rightarrow G \times \C^\times $ by $ s \mapsto (s^{-\rho^\vee},s) $.  Then $ s \cdot (x+\dot e) = sx + \dot e$.  So $ \lim_{s\rightarrow 0} s \cdot (x+\dot e) = \dot e $.   The set of regular elements is open in $ \fg $ and is invariant under the action of $ G \times \C^\times $.  Since the limit point $\dot e $ is regular, we conclude that $ x +\dot e $ is regular.

Now we prove that $C_G(x+\dot e)\subset B$.
Starting with $a=0$ and $b=\dot e$, we apply Lemma~\ref{le:UCS} several
times, taking successively $k=1,2, \dots$ . Composing all the maps $\Ad_m$
obtained in the process, we eventually find elements $m\in N$ and $f\in\mathfrak n^x$
such that $\Ad_m(x+\dot e)=x+f$. Note that $x$ and $f$ are
the components of the Jordan--Chevalley decomposition of $x+f$.

On the other hand, Theorem~2.2 in \cite{Humphreys95} states that the
centralizer $L$ of $x$ is a reductive group with maximal torus $T$
and root system
$\Phi_L=\{\alpha\text{ root of }G\mid\langle\alpha,x\rangle=0\}$.
(As a matter of fact, the statement in \cite{Humphreys95} deals with
the centralizer of a semisimple element in $G$ and not of an element
$x\in\mathfrak g$, but the proof can be adapted to our situation.)
Further $L$ is connected \cite[Theorem~2.11]{Humphreys95}. The intersection $B_L:=B\cap L$
is a Borel subgroup of $L$. We also note that $f$ belongs to the
centralizer of $x$ in $\mathfrak g$, that is, the Lie algebra of $L$.

By the uniqueness of the Jordan--Chevalley decomposition, the centralizer
$C_G(x+f)$ is the joint centralizer of $x$ and $f$ in $G$, so is the
centralizer $C_L(f)$. From the fact that $x+f$ is regular in the Lie
algebra $\mathfrak g$, we then deduce that $f$ is a regular nilpotent
element in the Lie algebra of $L$. 
Let $g\in C_L(f)$. Following the proof of \cite[Lemma~4.3]{Springer},
we write $g=b\,\overline w\,b'$ in the Bruhat decomposition of $L$, where
$b$ and $b'$ are in $B_L$ and $w$ is in the Weyl group of $L$. Both
$\Ad_{b'}(f)$ and $\Ad_{b^{-1}}(f)$ are regular nilpotent elements in
the Lie algebra of $B_L$, so are linear combinations of positive roots
vectors, each simple root in $\Phi_L$ occurring with a nonzero coefficient
(\cite{Bourbaki}, chap.~8, \S11, no.~4, proposition~10).
Since $\Ad_{\overline w}$ maps $\Ad_{b'}(f)$ to $\Ad_{b^{-1}}(f)$, this
implies that $w$ maps each simple root in $\Phi_L$ to a positive root,
and it follows that $w=1$ and $g\in B_L$. We conclude that
$C_L(f)\subset B_L$. Thus,
$$C_G(x+\dot e)=\Ad_{m^{-1}}(C_G(x+f))=\Ad_{m^{-1}}(C_L(f))
\subset\Ad_{m^{-1}}(B_L)\subset B.$$
\end{proof}

We define the \textbf{universal centralizer space} to be
$$
C := \bigl\{ (x, b) \in \ft \times B \bigm| b \in C_G(x+\dot e) \bigr\}
$$
\begin{remark}
Our space $ C$ is the base change over $ \ft \rightarrow \ft/W $ of the usual universal centralizer, as defined in for example \cite[\S 2.2]{BF}.
\end{remark}

From the definition, we have maps $ C \rightarrow \ft,\ C \rightarrow T, \ C \rightarrow N $, that send a pair $ (tn, x) \in C $ to $ x, t, n $, respectively (where $ x \in \ft, t \in T, n \in N $).   We will be particularly interested in the map $\psi : C \rightarrow N $.

\begin{proposition}\label{th:FTDpsi}
\begin{enumerate}
\item The above maps $ C \rightarrow \ft, C \rightarrow T $ restrict to an isomorphism $ C \times_\ft \ftreg = \ftreg \times T $.  
\item With respect to the isomorphism in (i), the map $ \psi $ restricts to
$$ \psi_{\mathrm{reg}} : \ftreg \times T \rightarrow N \quad (x, t) \mapsto t^{-1} n_x t n_x^{-1} $$
\item The resulting algebra morphism $ \psi_{\mathrm{reg}}^* : \C[N] \rightarrow \C[\ftreg \times T] $ agrees with $ FT \circ D $.
\end{enumerate}
\end{proposition}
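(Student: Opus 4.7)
My plan is to prove (i) by computing the fibres of $C \to \ft$ over $\ftreg$ and combining this with the Levi decomposition of $B$; then (ii) will be immediate from that decomposition, and (iii) will be a restatement of Theorem~\ref{th:GeomTFD}.

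For (i), I would fix $x \in \ftreg$ and observe that the defining relation $\Ad_{n_x}(x) = x + \dot e$ gives $C_G(x + \dot e) = n_x \, C_G(x) \, n_x^{-1}$. Since $x$ is regular semisimple, $C_G(x) = T$, so the fibre of $C \to \ft$ over $x$ equals $n_x T n_x^{-1}$. By the preceding proposition this fibre lies in $B = T \ltimes N$, hence each of its elements factors uniquely as $t \cdot n'$ with $t \in T$ and $n' \in N$. The factorization is computed directly:
\[
n_x\, t\, n_x^{-1} \;=\; t \cdot \bigl(t^{-1} n_x\, t\, n_x^{-1}\bigr),
\]
with $t^{-1} n_x t \in N$ because $T$ normalizes $N$. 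Thus the assignment $(x, n_x t n_x^{-1}) \mapsto (x, t)$ is a set-theoretic bijection $C \times_\ft \ftreg \to \ftreg \times T$, whose inverse $(x, t) \mapsto (x, n_x t n_x^{-1})$ is regular since $x \mapsto n_x$ is a regular morphism $\ftreg \to N$ (\S\ref{ss:EltsNx}). Combined with the evident regularity of the forward map, this yields the claimed isomorphism.

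Part (ii) then follows at once: by construction $\psi$ sends an element of $B$ to its $N$-component, so the formula above gives $\psi_{\mathrm{reg}}(x, t) = t^{-1} n_x t n_x^{-1}$.

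Part (iii) is a reformulation of Theorem~\ref{th:GeomTFD}, which asserts precisely that $FT \circ D(f)(x, t) = f(t^{-1} n_x t n_x^{-1})$ for every $f \in \ON$ and $(x, t) \in \ftreg \times T$. One should only note that although $FT \circ D(f)$ lives a priori in $S^{-1}\C[\ft] \otimes \C[T]$, the denominators in $S$ are products of elements of $Q \setminus \{0\}$, which are invertible on $\ftreg$; hence its restriction to $\ftreg \times T$ is a well-defined regular function, and equality on closed points suffices. I do not foresee a real obstacle here: the content is essentially geometric bookkeeping, with the heart of the matter already contained in Theorem~\ref{th:GeomTFD}.
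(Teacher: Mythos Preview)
Your proposal is correct and follows essentially the same route as the paper: identify the fibre of $C \to \ft$ over $x \in \ftreg$ with $n_x T n_x^{-1}$ via $C_G(x+\dot e) = n_x C_G(x) n_x^{-1}$, read off the Levi decomposition $n_x t n_x^{-1} = t \cdot (t^{-1} n_x t n_x^{-1})$, and then invoke Theorem~\ref{th:GeomTFD} for (iii). One small slip in your parenthetical for (iii): not every element of $Q \setminus \{0\}$ is invertible on $\ftreg$ (only the roots are), so your stated reason for regularity on $\ftreg \times T$ is wrong; however this is harmless, since Theorem~\ref{th:GeomTFD} already asserts directly that $FT \circ D(f)$ can be evaluated at every point of $\ftreg \times T$.
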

\begin{proof}
\begin{enumerate}
\item Since $ Ad(n_x)(x) = x+\dot e $,  $ C_G(x+\dot e) = n_x C_G(x) n_x^{-1} $.  Since $ x \in \ftreg $, $ C_G(x) = T $ and so $ C_G(x+\dot e) = \{ n_x t n_x^{-1} : t \in T \} $.  The map $(x,t)\mapsto(n_xtn_x^{-1},x)$ from $\ftreg\times T$ to $C$ is the converse of the desired isomorphism. 
\item Since $ n_x t n_x^{-1} = t t^{-1} n_x t n_x^{-1} $ and $t^{-1} n_x t n_x^{-1} \in N $, the result follows.
\item Given the previous result, this is just a restatement of Theorem \ref{th:GeomTFD}.
\end{enumerate}
\end{proof}

\section{Generalities on Duistermaat--Heckman measures} \label{se:DH}
\subsection{Duistermaat--Heckman measures}\label{ssec:DH}
We will now define Duistermaat--Heckman measures algebraically,
following Brion--Procesi \cite{BrionProcesi}.  In this section, we work in a general context of a projective variety with the action of a torus.  Later, we will apply these ideas to the case of an MV cycle with the action of $T^\vee$.

Let $ V $ be a (possibly infinite-dimensional) vector space with a linear action of a torus $ T $.  Let $P $ be the weight lattice of $ T $ and let $P^\vee $ be its coweight lattice.  Let $ X \subset \mathbb P V $ be a finite-dimensional $T$-invariant closed subscheme of the projectivization of $ V $.
Let $ \mathcal O(n) $ denote the usual line bundle on $ \mathbb P(V) $.  Since $ T $ acts linearly on $ V$, $ \mathcal O(n) $ carries a natural $ T$-equivariant structure.

We do not assume that the torus $ T $ acts effectively on $ X $ (or even on $ V$).  We write $ T' $ for the quotient of $ T $ acting effectively on $ X $.

On the other hand, we do assume that $ X^T $ is finite.  For each $ p \in X^T$, let $ \Phi_T(p) $ be the weight of the action of $T $ on the fibre
of $ \mathcal O(1) $ at the point $ p$.  Equivalently, $ p = [v] $ for some weight vector $ v \in V $ and $\Phi_T(p) $ is negative the weight of $ v $.

Define the {\bf moment polytope} $ \Pol(X) $ to be
$$ \Pol(X) := \Conv(\Phi_T(p) : p \in X^T)
$$
If $X$ is connected (e.g. irreducible), then $ \Pol(X) $ is
contained in a translate of $ (\mathfrak t')^*_{\mathbb R} \subset \tR $.

In fact, it is easy to see that $ \Pol(X) $ is the convex hull of all
negatives of weights of the smallest linear subspace of $ V $
containing $ X $.

The torus $ T $ acts on the space of sections $ \Gamma(X, \mathcal O(n))$.  We consider $ [\Gamma(X, \mathcal O(n))] $, the class of $ \Gamma(X, \mathcal O(n)) $ in $ R(T) $, the complexified representation ring of $ T $.  We can embed $ R(T) $ into the space of distributions on $ \tR $ by setting
$$
[U] \mapsto \sum_{\mu \in P} \dim U_\mu \, \delta_\mu
$$
Let $ \tau_n : \tR \rightarrow \tR $ be the automorphism given by scaling by $ \frac{1}{n}$.

\begin{definition}
  The \emph{Duistermaat--Heckman measure of the triple $ (X, T, V) $}
  is defined to be the weak limit $ DH(X) = \displaystyle{\lim_{n \to \infty}} \frac{1}{n^{\dim X}} (\tau_n)_*[\Gamma(X, \mathcal O(n))] $
  within the space of distributions on $ \tR $.
\end{definition}

Note that each $ (\tau_n)_*[\Gamma(X, \mathcal O(n))] $ is supported on $ \Pol(X) $, and hence so is $ DH(X)$.
In fact, we have the following result of Brion--Procesi \cite{BrionProcesi}.
\begin{proposition}
The measure $DH(X) $ is well-defined, has support $ \Pol(X) $, and is piecewise polynomial of degree $ \dim X - \dim T' $.
\end{proposition}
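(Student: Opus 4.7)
I will follow the approach of Brion--Procesi: express the equivariant Hilbert function $\chi_n(z) := \sum_\mu \dim\Gamma(X,\mathcal{O}(n))_\mu\, z^\mu$ as the coefficient of $q^n$ in a rational bivariate generating function, and extract its $n\to\infty$ asymptotics.

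\textbf{Reduction to the effective case.} First I would reduce to $T = T'$. Let $K\subseteq T$ be the kernel of $T\to T'$; it acts trivially on $X$ but may act nontrivially on $\mathcal{O}(1)$. On each irreducible component $X_\alpha$, connectedness forces $K$ to act on $\mathcal{O}(1)|_{X_\alpha}$ by a single character $\chi_\alpha$, which must agree with $\Phi_T(p)|_K$ at every $T$-fixed $p\in X_\alpha$; thus $\Pol(X_\alpha)$ lies in a single affine translate of $(\mathfrak{t}')^*_{\mathbb R}$ inside $\tR$. It follows that $K$ acts on $\Gamma(X_\alpha,\mathcal{O}(n))$ by the single character $n\chi_\alpha$, so $(\tau_n)_*[\Gamma(X_\alpha,\mathcal{O}(n))]$ is supported on the corresponding affine hyperplane. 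The statement splits over the components, and we may assume $T$ acts effectively and $X$ is irreducible.

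\textbf{Rational Hilbert series and asymptotics.} Consider the section algebra $A = \bigoplus_{n\geq 0}\Gamma(X,\mathcal{O}(n))$, a finitely generated $(T\times\mathbb{C}^\times)$-graded $\mathbb{C}$-algebra of Krull dimension $\dim X + 1$. Choosing a homogeneous presentation with generators of $T$-weight $\mu_j$ and degree $d_j$, the bivariate Hilbert series
\[
H(z,q) \;=\; \sum_{n\geq 0} \chi_n(z)\, q^n \;=\; \frac{P(z,q)}{\prod_j\bigl(1 - z^{\mu_j} q^{d_j}\bigr)}
\]
is a rational function. A partial fraction/residue analysis (as in \cite{BrionProcesi}) yields, on each chamber of the affine hyperplane arrangement cut out by the $\mu_j$, a quasi-polynomial formula for $\chi_n$ of total degree $\dim X$ in $n$ and the weight variable. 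Applying $(\tau_n)_*$ and dividing by $n^{\dim X}$, only the top-degree piece survives in the weak limit, producing a piecewise polynomial density of degree $\dim X - \dim T$ times Lebesgue measure on $\tR$.

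\textbf{Support and main obstacle.} Every $T$-weight of $\Gamma(X,\mathcal{O}(n))$ is a sum of $n$ weights of $\mathcal{O}(1)|_X$, hence $n$ times a point of $\Pol(X)$, giving $\supp DH(X)\subseteq\Pol(X)$. For the reverse inclusion, near each vertex $\Phi_T(p)$ the multiplicities grow polynomially of degree $\dim X$ (by a local analysis at the $T$-fixed point $p$), which forces the leading density to be nonzero on every open chamber of $\Pol(X)$. The principal technical difficulty is the piecewise polynomiality and the precise degree: one must track which poles of $H(z,q)$ contribute in each asymptotic regime, and verify that no cancellations reduce the degree below $\dim X - \dim T'$ on any measure-positive subchamber. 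Brion--Procesi handle this by induction on $\dim T$, pushing forward along one-parameter subquotients to reduce to the classical numerical Hilbert polynomial.
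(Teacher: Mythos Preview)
The paper does not give its own proof of this proposition: it is stated as a result of Brion--Procesi and cited to \cite{BrionProcesi} without further argument. So there is nothing to compare your proposal against on the paper's side.

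That said, your sketch is in the spirit of the Brion--Procesi argument, and a few points deserve tightening if you want it to stand on its own. First, in the reduction step, ``the statement splits over the components'' is not quite right as written: $DH(X)$ is normalized by $n^{\dim X}$, so only top-dimensional components of $X$ survive the limit; lower-dimensional components contribute zero rather than a separate $DH$ measure. Second, the density you obtain is with respect to Lebesgue measure on the affine span of $\Pol(X)$ (a translate of $(\mathfrak{t}')^*_{\mathbb R}$), not on all of $\tR$; this is what makes the degree count $\dim X - \dim T'$ work. Third, your argument for the reverse support inclusion is the weakest part: saying that multiplicities ``grow polynomially of degree $\dim X$'' near each vertex does not by itself rule out that the leading density vanishes identically on some interior chamber. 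Brion--Procesi handle both the piecewise polynomiality and the full-support claim by a more structured argument (degeneration to a toric variety, or equivalently the induction on $\dim T$ you allude to), and you would need to actually carry that out rather than gesture at it.
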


\subsection{Fourier transform of DH measures and equivariant multiplicities}\label{ssec:FTDH}
 For this section, assume that each fixed point $ p \in X^T $ is non-degenerate and attracting.  This means that for each $ p $, there exists $ \gamma \in P^\vee$ such that if $ \mu $ is a weight of $ T $ acting on $ T_p X$, then $ \langle \gamma, \mu \rangle > 0 $.  We write $ \sigma_p^0 $ for the set of such $ \gamma $ (this is the intersection of $ P^\vee $ with an open cone in $\tR $ and thus is Zariski dense in $ \mathfrak t^* $).

We can compute the Fourier transform (as defined in \S \ref{ssec:FT}) of DH measures with the help of
localization in equivariant $K$-theory and equivariant homology.
Let $\hat{S}$ be the multiplicative set in $ R(T) $ generated by
$ 1 - \delta_\mu $ for $ \mu \in P\setminus\{0\} $
and let $ S $ be the multiplicative set in
$ H^\bullet_T = \Sym \ft^* = \C[\mathfrak t]$ generated by
$ \mu \in P  \setminus \{0\}$.

Let $ K^T(X) $ denote the Grothendieck group of $T$-equivariant
coherent sheaves on $ X $.  This is a module over $ R(T)$.  The
following result is due to Thomason \cite[Th\'eor\`eme 2.1]{Thomason}, for
$K$-theory, and many authors independently (such as Brion \cite[Lemma 1]{Br2} or Evens and Mirkovi\'c \cite[Theorem B.2]{EM}), for homology.
\begin{theorem}
The inclusion $X^T \rightarrow X $ induces isomorphisms:
$$ \hat S^{-1} K^T(X^T) \xrightarrow{\sim} \hat S^{-1} K^T(X)
\quad \text{ and }  \quad
S^{-1} H^T_\bullet(X^T) \xrightarrow{\sim} S^{-1} H^T_\bullet(X).$$
\end{theorem}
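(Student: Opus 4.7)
The plan is a standard localization argument: reduce to vanishing on the complement $U := X \setminus X^T$ via an excision long exact sequence, then stratify $U$ by isotropy type and show that on each stratum the localization vanishes. Let $i : X^T \hookrightarrow X$ and $j : U \hookrightarrow X$ be the closed and open inclusions. Equivariant $K$-theory (of coherent sheaves) gives a right exact sequence
$$ K^T(X^T) \xrightarrow{i_*} K^T(X) \xrightarrow{j^*} K^T(U) \to 0, $$
which extends to a long exact sequence involving higher $K$-groups; equivariant Borel--Moore homology gives a long exact Gysin sequence
$$ \cdots \to H^T_k(X^T) \xrightarrow{i_*} H^T_k(X) \xrightarrow{j^*} H^T_k(U) \xrightarrow{\partial} H^T_{k-1}(X^T) \to \cdots $$
Since localization at a multiplicative set is exact, proving that $\hat S^{-1} K^T_\bullet(U) = 0$ and $S^{-1} H^T_\bullet(U) = 0$ will force $i_*$ to become an isomorphism after inverting the respective sets.

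For the vanishing on $U$, I would stratify $U$ into finitely many $T$-invariant locally closed subvarieties on each of which the identity component of the stabilizer is constant. For any $p \in U$, the stabilizer $T_p \subset T$ is a proper closed subgroup (since $p$ is not a fixed point), so $T_p^\circ$ is a proper subtorus of $T$. Using Sumihiro's theorem to reduce locally to the quasi-projective case, together with a generic-isotropy argument, one obtains a finite decomposition $U = \bigsqcup_\ell U_\ell$ where each $U_\ell$ is $T$-invariant locally closed with constant connected stabilizer $T_\ell \subsetneq T$. Iterated use of the long exact sequences then reduces the problem to proving the vanishing on each stratum $U_\ell$ individually.

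On a stratum $U_\ell$ with connected stabilizer $T_\ell$, the action of $R(T)$ on $K^T(U_\ell)$ factors essentially through the restriction $R(T) \to R(T_\ell)$, and the action of $\Sym \mathfrak{t}^*$ on $H^T_\bullet(U_\ell)$ factors through $\Sym \mathfrak{t}_\ell^*$. I would then choose a character $\mu \in P \setminus \{0\}$ whose restriction to $T_\ell$ vanishes (possible since $T_\ell \subsetneq T$). Then $\mu$ acts as zero on $H^T_\bullet(U_\ell)$, so inverting $\mu \in S$ kills the module; similarly $e^{-\mu}$ acts as the identity on $K^T(U_\ell)$, so $1 - e^{-\mu} \in \hat S$ acts as zero, killing the $K$-theory localization. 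The main obstacle is the careful construction of the stratification by isotropy type and the treatment of the finite component groups $T_p / T_\ell^\circ$ in the $K$-theoretic setting (where one must worry about torsion that can fail to be killed by localization); the homology case is noticeably cleaner. These technical points are worked out in detail in Thomason~\cite{Thomason} for $K$-theory and in Brion~\cite{Br2} and Evens--Mirkovi\'c~\cite{EM} for Borel--Moore homology, which is why the result is invoked here rather than reproved.
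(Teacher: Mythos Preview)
The paper does not prove this theorem at all; it simply cites Thomason~\cite{Thomason} for the $K$-theory statement and Brion~\cite{Br2} and Evens--Mirkovi\'c~\cite{EM} for the homology statement. Your proposal correctly sketches the standard localization-by-isotropy-strata argument underlying those references and then defers to the same sources for the technical details, so you and the paper are in agreement: this is a result quoted from the literature rather than proved here.
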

Because of this theorem, we can write
$$
[\mathcal O_X] = \sum_{p \in X^T} \hat \varep^T_p(X) [\mathcal O_p] \qquad
[X] = \sum_{p \in X^T} \varep^T_p(X)[\{p\}]
$$
for unique $ \hat \varep^T_p(X) \in \hat S^{-1} R(T) $ and
$ \varep^T_p(X) \in S^{-1}\C[\ft] $.  Following Brion \cite{Br}, we
call $ \varep_p^T(X)$ the \textbf{equivariant multiplicity} of $ X $
at $p $.

One advantage of these equivariant multiplicities is that they can be computed locally.  Let $ X_p^\circ $ denote an affine open $T$-invariant neighbourhood of $p $ in $ X $.  (In \cite[Proposition 4.4]{Br}, Brion observes that the only such $X_p^\circ $  is the attracting set of $p$.)

We will need the following preliminary definition.  Let $ A = \oplus_{m=0}^\infty A_m $ be an $\mathbb N$-graded finitely-generated commutative algebra of Krull dimension $ d $.  Then the multiplicity of $ A $ is defined to be $ (d-1)! \displaystyle{ \lim_{m \to \infty} } \frac{ \dim A_m}{m^{d-1}} $ (this limit exists and is always non-zero).
\begin{proposition} \label{pr:localcalc}
\begin{enumerate}
 \item
 $[\C[X_p^\circ]]$ is a well-defined element of $ \hat S^{-1}R(T) $ and we have $\hat \varep^T_p(X) = [\C[X_p^\circ]] $ in $ \hat S^{-1}R(T)$.
 \item
 We have $ \varep^T_p(X) [\{p \}] = [X_p^\circ]$ in $S^{-1}H_\bullet^T(X_p^\circ) $.
 \item
 For any $ \gamma \in \sigma_p^0$, the rational function $ \varep^T_p(X) $ is well-defined at $ \gamma $ and $ \varep^T_p(X)(\gamma) $ is the multiplicity of the algebra $ \C[X_p^\circ] $ graded with respect to $ - \gamma $.
\end{enumerate}

\end{proposition}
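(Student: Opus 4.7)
The plan is to treat the three parts essentially independently. For (i) and (ii) the key tool is localization, applied to the open $T$-stable affine $X_p^\circ$, whose unique $T$-fixed point is $p$. For part (i), pick $\gamma\in\sigma_p^0$: since $p$ is attracting for $\gamma$, every $T$-weight $\mu$ appearing in $A:=\C[X_p^\circ]$ satisfies $\langle -\gamma,\mu\rangle\geq 0$, with only finitely many weights at each value of this pairing; from a $T$-equivariant surjection onto $A$ by a polynomial ring with generators in such weights, one sees that the formal character $\sum_\mu(\dim A_\mu)\,\delta_\mu$ is a well-defined element of $\hat S^{-1}R(T)$. To identify it with $\hat\varep_p^T(X)$, I would apply the $K$-theoretic localization theorem on $X_p^\circ$ to write $[\mathcal O_{X_p^\circ}]=c\,[\mathcal O_p]$ in $\hat S^{-1}K^T(X_p^\circ)$, push forward to a point to get $c=[A]$, and use compatibility of localization with the open restriction $X_p^\circ\hookrightarrow X$ to identify $c=\hat\varep_p^T(X)$. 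Part (ii) proceeds identically but in equivariant Borel--Moore homology: localization on $X_p^\circ$ writes $[X_p^\circ]=c'\,[\{p\}]$ in $S^{-1}H^T_\bullet(X_p^\circ)$, and compatibility with restriction from $X$ identifies $c'=\varep_p^T(X)$.

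For part (iii), I would grade $A$ by $n=\langle -\gamma,\cdot\rangle$, so that $A=\bigoplus_{n\geq 0}A_n$ with finite-dimensional pieces, and consider the Hilbert series $H(q):=\sum_n(\dim A_n)\,q^n$. By part (i), $H(q)$ is the image of $\hat\varep_p^T(X)$ under the ring homomorphism $\delta_\mu\mapsto q^{\langle -\gamma,\mu\rangle}$. Setting $q=e^{-\tau}$ and expanding $\delta_\mu = 1+\mu+O(\mu^2)$ expresses $\hat\varep_p^T(X)$ as a Laurent series in $\tau$ whose lowest-order term is $\varep_p^T(X)(\gamma)\cdot\tau^{-d}$, where $d=\dim X$; this reflects the Chern-character relation between equivariant $K$-theoretic and equivariant cohomological localization near the identity of $T$. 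Translating back, $H(q)$ has leading behavior $\varep_p^T(X)(\gamma)/(d-1)!\cdot(1-q)^{-d}$ as $q\to 1^-$, which by the definition of Hilbert--Samuel multiplicity recalled immediately before the statement identifies $\varep_p^T(X)(\gamma)$ with the multiplicity of $A$ carrying its $-\gamma$-grading.

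The main obstacle is the asymptotic step in (iii): rigorously pinning down $\varep_p^T(X)(\gamma)\cdot\tau^{-d}$ as the leading Laurent coefficient of $\hat\varep_p^T(X)$ after the substitution $\delta_\mu\mapsto e^\mu$. For this I would appeal directly to Brion \cite{Br}, where this precise identification between the $K$-theoretic and cohomological localization coefficients at an attracting fixed point is established in essentially the form needed.
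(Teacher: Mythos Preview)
Your proposal is correct and follows essentially the same approach as the paper: well-definedness of $[\C[X_p^\circ]]$ from attractiveness (the paper cites \cite[Prop~6.6.6]{CG}), parts (i) and (ii) via pullback/restriction to the open affine $X_p^\circ$ and compatibility with localization, and part (iii) by citing Brion \cite[Prop~4.4]{Br}. You supply considerably more detail than the paper's terse proof, but the underlying strategy is identical.
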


\begin{proof}
$[\C[X_p^\circ]]$ is well-defined because of the assumption of attractiveness (see \cite[Prop 6.6.6]{CG}).  The rest of part (i) and (ii) then follow immediately from pullback to the open set $X_p^\circ$.

Part (iii) is due to Brion \cite[Prop 4.4]{Br}.
\end{proof}

To facilitate further computation of $ \varep^T_p(X) $, suppose that we have a representation $W$ of $ T $, all of whose weights are non-zero,
and assume we are given a $T$-equivariant closed embedding $ X_p^\circ \rightarrow W $.  The \textbf{multidegree} $ \mdeg[W](X_p^\circ) \in H_T^\bullet$ is defined by the equation $\mdeg[W](X_p^\circ) [W] = [X_p^\circ] $ in $ H^T_\bullet(W) $.  This notion of multidegree is useful, since it can be computed using the methods of commutative algebra (see for example \cite[\S 1.5]{KZJ}).  On the other hand, the multidegree of $ X_p^\circ $ determines the equivariant multiplicity of $ X $ at $p $ as follows.

\begin{proposition} \label{prop:mdegeqvtmult}
 With the above setup, we have
 $$ \varep^T_p(X) = \frac{ \mdeg[W](X_p^\circ)}{\displaystyle{\prod_{\mu \text{ wt of } W}} \mu} $$
\end{proposition}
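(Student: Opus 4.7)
The plan is to derive the formula by combining Proposition~\ref{pr:localcalc}(ii), the definition of multidegree, and the standard localization formula for the class of a point in the equivariant homology of a vector space representation.

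First I would push the equivariant class $[X_p^\circ] \in H^T_\bullet(X_p^\circ)$ forward along the closed immersion $i : X_p^\circ \hookrightarrow W$. Both sides of the desired equality live naturally in $S^{-1} H^T_\bullet(W)$, so the strategy is to compute $[X_p^\circ]$ in $S^{-1} H^T_\bullet(W)$ in two ways. On the one hand, the definition of multidegree gives
\begin{equation*}
i_*[X_p^\circ] \;=\; \mdeg[W](X_p^\circ)\,[W] \quad\text{in } H^T_\bullet(W).
\end{equation*}
On the other hand, we have $[X_p^\circ] = \varep^T_p(X)\,[\{p\}]$ in $S^{-1}H^T_\bullet(X_p^\circ)$ by Proposition~\ref{pr:localcalc}(ii); pushing forward gives $i_*[X_p^\circ] = \varep^T_p(X)\,[\{0\}]$ in $S^{-1}H^T_\bullet(W)$, where $0 \in W$ is the image of $p$.

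Next I would invoke the localization relation in $S^{-1}H^T_\bullet(W)$ between $[W]$ and $[\{0\}]$. Since $W$ decomposes as a sum of equivariant one-dimensional subspaces $W = \bigoplus_\mu \C_\mu$, all of whose weights are nonzero, the inclusion $\{0\} \hookrightarrow W$ is a regular embedding with equivariant normal bundle $W$ itself. The standard computation of the equivariant Euler class of a sum of one-dimensional $T$-representations then yields
\begin{equation*}
[\{0\}] \;=\; \Biggl(\prod_{\mu \text{ wt of } W} \mu\Biggr)\,[W] \quad\text{in } S^{-1}H^T_\bullet(W).
\end{equation*}
(One way to see this: the class $[\{0\}]$ is the cap of $[W]$ with the equivariant orientation class of the regular embedding, which for a trivial normal bundle that splits as a sum of equivariant lines $\C_\mu$ is $\prod_\mu \mu$; this is precisely the case of Proposition~\ref{pr:Fulton} applied iteratively, or equivalently the equivariant self-intersection formula.)

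Combining the two expressions for $i_*[X_p^\circ]$, we obtain
\begin{equation*}
\varep^T_p(X)\,\Biggl(\prod_\mu \mu\Biggr)[W] \;=\; \mdeg[W](X_p^\circ)\,[W]
\end{equation*}
in $S^{-1}H^T_\bullet(W)$. Since $[W]$ is a free generator of $S^{-1}H^T_\bullet(W)$ (being the fundamental class of the smooth connected variety $W$, which is $T$-equivariantly contractible to $\{0\}$, whose class becomes a unit multiple of $[W]$ after inverting the weights), we may cancel $[W]$ and divide by $\prod_\mu \mu$ to obtain the desired formula. The only delicate point is the Euler-class computation in step two, but this is a well-known property of equivariant homology of linear representations; the rest is a matching exercise between the two localization-style formulas.
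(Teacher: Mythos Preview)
Your proof is correct and follows essentially the same approach as the paper's: both combine Proposition~\ref{pr:localcalc}(ii), the defining relation $\mdeg[W](X_p^\circ)\,[W]=[X_p^\circ]$, and the Euler-class identity $(\prod_\mu \mu)\,[W]=[\{p\}]$ in $H^T_\bullet(W)$, then cancel $[W]$. Your version is simply more explicit about the pushforward and the localization step.
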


\begin{proof}
We know that $ [X_p^\circ] = \varep^T_p(X)[\{p\}] $ in $H^T_\bullet(W) $.  Since $ \mdeg[W](X_p^\circ) [W] = [X_p^\circ] $ and $ (\displaystyle{\prod_{\mu \text{ wt of } W}} \mu) [W] = [\{p \}] $ in $ H^T_\bullet(W) $, the result follows.
\end{proof}

We are ready to relate the
Duistermaat--Heckman measure to equivariant multiplicity.

\begin{theorem} \label{th:FTDH}
 We have
 $$FT(DH(X)) = \sum_{p \in X^T} \varep^T_p(X) e^{\Phi(p)}
 $$
\end{theorem}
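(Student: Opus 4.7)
The plan is to apply the Fourier transform to the defining limit of $DH(X)$ and reduce the problem to $K$-theoretic localization plus the standard asymptotic comparison between $K$-theoretic and cohomological equivariant multiplicity at each fixed point. Since all the measures $\frac{1}{n^{\dim X}}(\tau_n)_*[\Gamma(X,\mathcal O(n))]$ are supported in the compact polytope $\Pol(X)$, $FT$ commutes with the defining weak limit. Using $(\tau_n)_*\delta_\mu = \delta_{\mu/n}$, so that $FT((\tau_n)_*[V])(x) = FT([V])(x/n)$ for any $T$-representation $V$, one gets
\[
  FT(DH(X))(x) \;=\; \lim_{n \to \infty} \frac{1}{n^{\dim X}}\; FT\bigl([\Gamma(X,\mathcal O(n))]\bigr)(x/n).
\]

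Next, I would apply Serre vanishing (valid since $X$ is projective and $\mathcal O(1)|_X$ is ample) to replace $[\Gamma(X,\mathcal O(n))]$ by the Euler characteristic $\chi(X,\mathcal O(n)) \in R(T)$ for $n \gg 0$. Applying $K$-theoretic localization to the expansion $[\mathcal O_X] = \sum_{p} \hat\varep_p^T(X)[\mathcal O_p]$ from Proposition~\ref{pr:localcalc}(i), tensoring with $\mathcal O(n)$, and pushing forward to a point gives
\[
  \chi(X, \mathcal O(n)) \;=\; \sum_{p \in X^T} \hat\varep_p^T(X)\cdot e^{n\Phi_T(p)} \qquad\text{in } \hat S^{-1} R(T),
\]
since the fibre of $\mathcal O(n)$ at $p$ has $T$-character $e^{n\Phi_T(p)}$. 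Taking $FT$ and evaluating at $x/n$ (so that $e^{n\Phi_T(p)}$ becomes $e^{\langle \Phi_T(p), x\rangle}$) recasts the previous limit as
\[
  FT(DH(X))(x) \;=\; \sum_{p \in X^T} \Bigl(\lim_{n \to \infty}\tfrac{1}{n^{\dim X}}\,\hat\varep_p^T(X)|_{e^{x/n}}\Bigr)\, e^{\langle \Phi_T(p), x\rangle}.
\]

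The main remaining step, and the main obstacle, is to verify that the bracketed limit equals $\varep_p^T(X)(x)$. This is the classical asymptotic relationship between $K$-theoretic and cohomological equivariant multiplicity: the function $x \mapsto \hat\varep_p^T(X)(e^{x})$ has, near $x=0$, a Laurent-type expansion whose most singular part equals $\varep_p^T(X)(x)$, itself homogeneous of degree $-\dim X$ in $\ft^*$. Combined with $\varep_p^T(X)(x/n) = n^{\dim X}\varep_p^T(X)(x)$, this yields the desired limit.

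I would prove this asymptotic locally via Proposition~\ref{pr:localcalc}: both classes depend only on the attracting affine neighborhood $X_p^\circ$, with $\hat\varep_p^T(X) = [\mathbb C[X_p^\circ]]$ and $\varep_p^T(X)(\gamma)$ recording the leading Hilbert coefficient of $\mathbb C[X_p^\circ]$ graded by $-\gamma$. After choosing a $T$-equivariant embedding $X_p^\circ \hookrightarrow W$ into a representation with weights $\{\mu_i\}$, Proposition~\ref{prop:mdegeqvtmult} gives $\varep_p^T(X) = \mdeg[W](X_p^\circ)/\prod_i \mu_i$, while a parallel $K$-theoretic multidegree writes $\hat\varep_p^T(X)$ as a polynomial in the $e^{-\mu_i}$ divided by $\prod_i (1-e^{-\mu_i})$. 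The fact that the Chern character sends the $K$-theoretic multidegree to the cohomological one in leading order, combined with $\tfrac{1}{1-e^{-\mu_i(x/n)}} = \tfrac{n}{\mu_i(x)}(1 + O(1/n))$, then supplies the required asymptotic and completes the proof.
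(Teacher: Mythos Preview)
Your proof is correct and follows the same overall architecture as the paper's: Serre vanishing, $K$-theoretic localization to write $[\Gamma(X,\mathcal O(n))]=\sum_p \hat\varep_p^T(X)\,\delta_{n\Phi(p)}$, then a fixed-point-by-fixed-point asymptotic comparison of $\hat\varep_p^T(X)$ with $\varep_p^T(X)$.

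The one genuine difference is in how the last asymptotic is established. The paper does not embed $X_p^\circ$ in a linear representation or invoke a Chern-character/multidegree comparison. Instead, it specializes to a single attracting coweight $\gamma\in\sigma_p^0$: by Proposition~\ref{pr:localcalc}(i), $FT((\tau_n)_*\hat\varep_p^T(X))(\gamma)=\sum_m a_m e^{-m/n}$ where $a_m$ is the dimension of the $m$th graded piece of $\mathbb C[X_p^\circ]$ under $-\gamma$, and an elementary calculus limit using $a_m=\tfrac{c}{(d-1)!}m^{d-1}+\cdots$ gives $\lim_{n\to\infty} n^{-d}\sum_m a_m e^{-m/n}=c$, which equals $\varep_p^T(X)(\gamma)$ by Proposition~\ref{pr:localcalc}(iii). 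Since $\sigma_p^0$ is Zariski dense, this suffices. Your multidegree route is more structural and avoids the one-variable reduction, at the cost of having to justify the ``leading order'' Chern-character statement; the paper's route is more bare-hands but entirely self-contained given Proposition~\ref{pr:localcalc}.
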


\begin{proof}
  For sufficiently large $ n$, $H^i(X, \mathcal O(n)) = 0 $ for
  $i > 0$.  Thus for sufficiently large $ n$,
  $[ \Gamma(X, \mathcal O(n))] $ equals the integral of
  $ [\mathcal O_X \otimes \mathcal O(n)]$ in equivariant $K$-theory.
  Hence from
$$ [\mathcal O_X] = \sum_{p \in X^T} \hat \varep^T_p(X) [\mathcal O_p] $$
we deduce that for sufficiently large $ n $, we have
 $$
 [\Gamma(X, \mathcal O(n))] = \sum_{p \in X^T} \hat \varep^T_p(X) \delta_{n\Phi(p)}
 $$
Let $ d = \dim X $.   We see that
 \begin{align*}
 FT(DH(X)) &= FT \left(\lim_{n\to\infty}  \frac{1}{n^d}
(\tau_n)_*\left( \sum_p \hat \varep^T_p(X) \delta_{n\Phi(p)}\right) \right)\\
 &= \sum_p \lim_{n\to\infty}  \frac{1}{n^d}
FT\left((\tau_n)_*\left( \hat \varep^T_p(X)\right)\right) e^{\Phi(p)}
 \end{align*}
 So it suffices to show that for each $ p$, we have
$$ \lim_{n\to\infty}  \frac{1}{n^d} FT((\tau_n)_* (\hat \varep^T_p(X))) = \varep^T_p(X) $$
 Now pick $ \gamma \in \sigma_p^0 $ and let $ c $ be the multiplicity of the algebra $ \C[X_p^\circ] $ graded with respect to $ - \gamma $.  By Proposition \ref{pr:localcalc}(iii), it suffices to show that $ \displaystyle{\lim_{n\to\infty}}  \frac{1}{n^d} FT((\tau_n)_*( \hat \varep^T_p(X))) (\gamma) = c$.
Using Proposition \ref{pr:localcalc}(i) we see that
 $$ \lim_{n\to\infty}  \frac{1}{n^{\dim X}} FT((\tau_n)_*(\hat \varep^T_p(X))) (\gamma) = \lim_{n \to \infty} \frac{ \sum_{m=0}^\infty a_m e^{\frac{-m}{n}}}{n^d} $$
 where $ a_m = \displaystyle{\sum_{\langle \mu, -\gamma \rangle = m}} \dim\C[X_p^\circ]_\mu $.  Using the fact that $ a_m = \frac{c}{(d-1)!} m^{d-1} + \cdots$, by elementary calculus, we compute that this limit equals $ c $ as desired.
\end{proof}

  \begin{remark}
The above theorem holds without the assumption that the fixed points are attractive (though this assumption suffices for our purposes).  In fact, the only place that attractiveness assumption is used in this section is in Proposition \ref{pr:localcalc} part (i).  We can avoid using attractiveness by using degeneration to normal cones of the fixed points (thereby making them attractive
with respect to the attendant new circle action).
\end{remark}

\subsection{An extension to coherent sheaves}\label{ssec:supp}
We continue in the above setup, but we consider a $T$-equivariant coherent sheaf $ \mathcal F $ on $ X $.  Following \cite[Def. 5.9.4]{CG}, we define the \textbf{support cycle} of $ \mathcal F $ as
$$
[\supp \mathcal F] := \sum_{S} mult_S(\mathcal F) [S] \in H_\bullet(X)
$$
where the sum ranges over all maximal dimensional irreducible components of the support of $ \mathcal F $.  These components are necessarily $T$-invariant.

We define the Duistermaat--Heckman measure of $ \mathcal F $ by
$$ DH(\mathcal F) := \displaystyle{\lim_{n \to \infty}} \frac{1}{n^{\dim \supp \mathcal F}} (\tau_n)_*[\Gamma(X, \mathcal F \otimes \mathcal O(n))] $$

The main result of this section is that $ DH(\mathcal F) $ only depends on the support cycle.
\begin{theorem} \label{th:limCoherent}
We have the following expansion
 $$ DH(\mathcal F) = \sum_S mult_S(\mathcal F) DH(S)
 $$
 where the sum ranges over all maximal dimensional irreducible components of the support of $ \mathcal F $.
\end{theorem}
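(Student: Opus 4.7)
The strategy is equivariant d\'evissage: reduce to structure sheaves of integral $T$-invariant subvarieties, and show that lower-dimensional components contribute zero to the limit.

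\emph{Step 1 (Equivariant filtration).} First, I would produce a finite $T$-equivariant filtration
$$0=\mathcal F_0\subset\mathcal F_1\subset\cdots\subset\mathcal F_k=\mathcal F$$
of $\mathcal F$ by $T$-equivariant coherent subsheaves such that each quotient $\mathcal F_i/\mathcal F_{i-1}$ is isomorphic to the pushforward $(j_i)_*\mathcal L_i$ along the inclusion $j_i:Z_i\hookrightarrow X$ of a $T$-invariant integral closed subscheme, with $\mathcal L_i$ a $T$-equivariant coherent sheaf on $Z_i$ that is generically of rank one. Such a filtration exists by the usual Noetherian d\'evissage argument (see for instance \cite[\S1.5]{Fulton}), carried out equivariantly by repeatedly choosing a $T$-invariant associated point of the remaining quotient. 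The key bookkeeping is that for every maximal-dimensional component $S$ of $\supp\mathcal F$, the subscheme $Z_i$ equals $S$ for exactly $\mathrm{mult}_S(\mathcal F)$ indices $i$, by the standard length/associated-primes count. All other $Z_i$ either have strictly smaller dimension, or are irreducible components of $\supp\mathcal F$ of non-maximal dimension.

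\emph{Step 2 (Additivity).} For $n\gg 0$, Serre vanishing gives $H^{>0}(X,\mathcal F_i\otimes\mathcal O(n))=0$ and similarly for subsheaves and quotients, so the functor $[\Gamma(X,-\otimes\mathcal O(n))]$ is additive on the filtration. Hence for $n\gg 0$,
$$[\Gamma(X,\mathcal F\otimes\mathcal O(n))]=\sum_i[\Gamma(Z_i,\mathcal L_i\otimes\mathcal O(n)|_{Z_i})].$$
Apply $(\tau_n)_*$ and divide by $n^d$ (with $d=\dim\supp\mathcal F$). Passing to the weak limit, the theorem reduces to the claim that
$$\lim_{n\to\infty}\frac1{n^d}(\tau_n)_*[\Gamma(Z_i,\mathcal L_i\otimes\mathcal O(n)|_{Z_i})]=\begin{cases}DH(Z_i)&\text{if }\dim Z_i=d,\\ 0&\text{if }\dim Z_i<d.\end{cases}$$

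\emph{Step 3 (Reduction to the structure sheaf).} For an integral $T$-invariant subvariety $Y\subset X$ of dimension $d'$ and a generically rank-one $T$-equivariant coherent sheaf $\mathcal L$ on $Y$, choose a $T$-equivariant rational isomorphism $\mathcal L\dashrightarrow\mathcal O_Y$ defined on a $T$-invariant dense open of $Y$. Clearing denominators by twisting with a sufficiently ample equivariant line bundle, one obtains a two-term $T$-equivariant complex relating $\mathcal L$ to $\mathcal O_Y\otimes\chi$ (for a character $\chi$) whose cone is supported in dimension $<d'$. A twist by $\chi$ replaces the character class by its translate by $\chi$, so after $(\tau_n)_*$ the distribution is shifted by $\chi/n$, and this shift disappears in the weak limit. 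Hence, modulo lower-dimensional contributions handled by induction on $d'$, we are reduced to $\mathcal L=\mathcal O_Y$, in which case the claim is immediate from the definition of $DH(Y)$ when $d'=d$, and from the fact that $\dim\Gamma(Y,\mathcal O(n)|_Y)=O(n^{d'})$ (so, after division by $n^d$ with $d'<d$, the limit vanishes).

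\emph{Main obstacle.} The technical heart is Step~3: verifying that equivariant-line-bundle twists and generically rank-one modifications contribute only shifts by $O(1/n)$ and lower-dimensional pieces. The base of the induction on $d'$, together with additivity of weak limits of distributions supported on the (uniformly bounded) moment polytope $\Pol(X)$, then packages everything to give $DH(\mathcal F)=\sum_S\mathrm{mult}_S(\mathcal F)\,DH(S)$ as required.
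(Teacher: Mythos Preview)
Your proposal is correct and takes a genuinely different route from the paper. The paper argues via equivariant localization at the $T$-fixed points: it expands $[\mathcal F]$ in the fixed-point basis of $\hat S^{-1}K^T(X)$, computes $FT(DH(\mathcal F))$ exactly as in the proof of Theorem~\ref{th:FTDH}, observes that $\hat\varep_p^T(\mathcal F)=[\Gamma(X_p^\circ,\mathcal F)]$, and then invokes \cite[Theorem~6.6.12]{CG} as a black box on each attracting affine $X_p^\circ$ to extract the leading asymptotics. Your global d\'evissage avoids the Fourier transform and localization entirely, working directly with Hilbert-function asymptotics; it is more elementary and self-contained, and does not use the attractiveness hypothesis on the fixed points (which the paper remarks can be circumvented by degeneration anyway). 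Two small points worth making explicit in Step~3: first, after modding out torsion one chooses a $T$-eigenvector section of $\mathcal L\otimes\mathcal O(n_0)$ for $n_0\gg0$ to produce the injection $\mathcal O_Y\otimes\chi\hookrightarrow\mathcal L\otimes\mathcal O(n_0)$, so besides the character shift there is also an index shift $n\mapsto n-n_0$; this is harmless in the limit since $(n-n_0)/n\to 1$, but it should be said. Second, your Step~1 could be streamlined by taking the subquotients to be $\mathcal O_{Z_i}\otimes\chi_i$ directly (the standard equivariant d\'evissage, e.g.\ as in \cite[Lemma~5.5.1]{CG}), which makes Step~3 essentially trivial.
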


\begin{proof}
As in the previous section, we can consider the expansion of $ [\mathcal F] \in \hat S^{-1} K^T(X) $  in terms of the fixed points and we define $ \hat \varep_p^T(\mathcal F) $ by
$$
[\mathcal F] = \sum_p \hat \varep_p^T(\mathcal F) [\mathcal O_p]
$$
Then proceeding as in the proof of Theorem \ref{th:FTDH}, we obtain that
$$
FT(DH(\mathcal F)) = \ \sum_p \lim_{n\to\infty}  \frac{1}{n^d} FT(\tau_n( \hat \varep^T_p(\mathcal F))) e^{\Phi(p)}
$$
where $ d = \dim \supp \mathcal F $.

Now, as above $ \hat \varep^T_p(\mathcal F) = [\Gamma(X_p^\circ, \mathcal F)] $ in $ \hat S^{-1}R(T)$.  Thus we are reduced to a local computation on each open set $ X_p^\circ$.  The desired equation then follows from a result in Chriss--Ginzburg \cite[Theorem 6.6.12]{CG}.
\end{proof}

\subsection{A formula using BB strata}\label{ssec:BB}
The third author has given a formula in \cite{Knutson} for computing
this DH measure using components of Bia\l ynicki--Birula strata.  We
now recall this formula, and for simplicity assume $X$ is irreducible.

Choose a coweight $ \nu $ of $ T $ and thus an embedding
$ \Cx \hookrightarrow T $.  Assume that $ X^{\Cx} = X^T $ and moreover that
the composite map
$X^{\Cx} \xrightarrow{\Phi_T} P \xrightarrow{\nu} \Z $ is injective.
(If $\Phi_T$ is injective, then it is easy to construct such $\nu$,
and otherwise it is of course impossible.)
By this assumption, $ X^{\Cx} $ acquires the structure of a
totally-ordered set (very much depending on the choice of $\nu$).
We write $ p_{top}, p_{bot} $ for the maximal and minimal points in this set.

\begin{definition}
  For each $ p \in X^{\Cx} $, let $ X_p $ denote the Bia\l ynicki--Birula
  stratum of $ p$, defined by
  $$
  X_p := \{ x \in X : \lim_{s \to 0} s x = p \}
  $$
  An irreducible component $ Y $ of $ \overline{X_p} $ will be a
  called a \defn{BB cycle} in $ X $ based at $ p $.
\end{definition}
Note that the only BB cycle of weight $ \Phi(p_{top}) $ is $X$
and the only BB cycle of weight $ \Phi(p_{bot}) $ is  $\{ p_{bot} \} $.

For each $ p \in X^{\Cx}$, fix $ f_p \in V^* $ such that $ f_p $ is a $T$-weight vector and $ f_p(p) \ne 0 $.  Note that this implies that $ f_p $ vanishes at every other fixed point (since they live in different weight spaces in $ \mathbb P (V) $).  The section $f_p$ defines a $T$-invariant Cartier divisor $ D_p \subset X $.

For any two BB cycles $ Y_1, Y_2 $ based at $p_1 < p_2$,
we define $ c(Y_1, Y_2) := i(Y_1, D_{p_2} \cdot Y_2) $.

\begin{definition}
A \defn{chain} of BB cycles is a sequence $ Y_\bullet = (Y_0, Y_1, \dots, Y_m) $ such that $ Y_0 = \{p_{bot} \} $, $ Y_m = X$, and $ c(Y_{k-1}, Y_k) \ne 0 $ for all $ k $. A \defn{weight chain} is a sequence $\mu_\bullet =  (\mu_0, \dots, \mu_m) $ such that there exists a chain of BB cycles $ Y_\bullet $ such that the weight of $ Y_k $ is $\mu_k $ for all $ k $.
\end{definition}
Note that if $ Y $ is a BB cycle based at $ p $, then $ \dim D_p \cap Y = \dim Y - 1$, so the length of any chain is the dimension of $ X $.

For any weight chain $ \mu_\bullet$, define
$$
c(\mu_\bullet) = \sum_{Y_\bullet} c(Y_0, Y_1) \cdots c(Y_{m-1}, Y_m)
$$
where the sum is over all chains of BB cycles $Y_\bullet $ of weight $ \mu_\bullet $.

As before, let $ \Delta^m$ be the standard $m$-simplex.

For any weight chain $ \mu_\bullet $,  define $ \pi_{\mu_\bullet} : \R^{m+1} \rightarrow \tR $  by $ \pi_{\mu_\bullet}(a_0, \dots, a_m) = \sum_{k = 0}^m a_k \mu_k.$
The following result (due to the third author \cite{Knutson}) explains how these weight chains can be used to compute the DH measure.
\begin{theorem} \label{th:Allen}
$$DH(X) = \sum_{\mu_\bullet} c(\mu_\bullet) (\pi_{\mu_\bullet})_*(\delta_{\Delta^m}) $$
where the sum is over all weight chains.
\end{theorem}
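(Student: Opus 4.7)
The plan is to prove the theorem by induction on $m = \dim X$, using the intersection of $X$ with the divisor $D_{p_{top}}$ to peel off one dimension at a time. The base case $m = 0$ is immediate: $X = \{p\}$, $DH(X) = \delta_{\Phi(p)}$, and the only weight chain is the singleton $(\Phi(p))$ with $c = 1$.

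For the inductive step, the key geometric input is the cycle-level decomposition
$$D_{p_{top}} \cdot X = \sum_{Y} c(Y, X)\, Y,$$
where $Y$ ranges over BB cycles of dimension $m-1$. This holds because $D_{p_{top}}$ is an effective Cartier divisor not containing $p_{top}$ while $X$ does contain $p_{top}$, so the intersection is pure of dimension $m-1$ and avoids the open BB stratum $X_{p_{top}}^\circ$; its irreducible components are therefore $T$-invariant, based at fixed points strictly below $p_{top}$, and hence are BB cycles of dimension $m-1$, occurring with the multiplicities $c(Y,X)$ by definition. The bridge from cycles to measures is the \emph{cone identity}
$$DH(X) \;=\; \int_0^1 \sum_{Y} c(Y,X)\,(\sigma_t)_*\bigl(DH(Y)\bigr)\,t^{\,m-1}\,dt, \qquad \sigma_t(v) := (1-t)\Phi(p_{top}) + tv, \qquad (\ast)$$
which I would prove by Fourier transform (see below). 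Its geometric content is that the contributions to $DH(X)$ are swept outward from $\Phi(p_{top})$ over the DH measures of the $(m-1)$-dimensional BB cycles.

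Granting $(\ast)$, apply the inductive hypothesis to each $Y$ to expand $DH(Y) = \sum_{\nu_\bullet} c^Y(\nu_\bullet)\,(\pi_{\nu_\bullet})_*(\delta_{\Delta^{m-1}})$ over weight chains $\nu_\bullet$ for $Y$. Substitute into $(\ast)$ and change variables from $(s_0,\dots,s_{m-1},t) \in \Delta^{m-1}\times[0,1]$ to the parametrization $(ts_0,\dots,ts_{m-1},1-t)$ of $\Delta^m$; the Jacobian is exactly $t^{m-1}$, so it is absorbed, and the integrand becomes $(\pi_{\mu_\bullet})_*(\delta_{\Delta^m})$ for $\mu_\bullet = (\nu_\bullet,\Phi(p_{top}))$. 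Each chain $Y_\bullet = (Y_0,\ldots,Y_m)$ for $X$ is obtained uniquely as an initial chain $(Y_0,\ldots,Y_{m-1})$ for $Y = Y_{m-1}$ followed by $Y_m = X$, giving the multiplicity identity $c(\mu_\bullet) = \sum_Y c(Y,X)\,c^Y(\nu_\bullet)$ where $Y$ ranges over $(m-1)$-dim BB cycles with $\Phi(\text{base}) = \mu_{m-1}$. This completes the induction.

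The main obstacle is the cone identity $(\ast)$. Since the Fourier transform is injective on $\PP$, it suffices to compare Fourier transforms. By Theorem \ref{th:FTDH}, $FT(DH(X)) = \sum_p \varep_p^T(X)\,e^{\Phi(p)}$ and similarly for each $Y$. Using the homogeneity $\varep_p^T(Y)(tx) = t^{-(m-1)}\varep_p^T(Y)(x)$ (which follows from Proposition \ref{prop:mdegeqvtmult}), the integral over $t$ on the Fourier side of $(\ast)$ becomes elementary, and the identity reduces to the system
$$(\Phi(p) - \Phi(p_{top}))\,\varep_p^T(X) \;=\; \sum_Y c(Y,X)\,\varep_p^T(Y), \qquad p \in X^T. \qquad (\dagger)$$
Now $(\dagger)$ is exactly the fixed-point localization in $S^{-1}H_\bullet^T(X)$ of the Borel--Moore homology identity $(c_1^T(\mathcal O(1)) - \Phi(p_{top}))\cap [X] = \sum_Y c(Y,X)\,[Y]$, obtained from the decomposition above together with $[D_{p_{top}}] = c_1^T(\mathcal O(1)) - \Phi(p_{top})$ in $H^2_T(X)$ (the section $f_{p_{top}}$ has $T$-weight $\Phi(p_{top})$). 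The relation $(\dagger)$ is vacuous at $p = p_{top}$ since both sides vanish ($\Phi(p_{top}) - \Phi(p_{top}) = 0$ and $p_{top} \notin Y$ for any $(m-1)$-dim BB cycle), so $(\dagger)$ uniquely determines $\varep_p^T(X)$ for $p \neq p_{top}$ recursively, while the value at $p_{top}$ is pinned down by the regularity of $FT(DH(X))$ as a rational function on $\ft$ --- a divided-difference cancellation identity of the Lagrange interpolation type. This last cancellation step is the subtlest piece and will require the classical identity $\sum_{j=0}^m \mu_j^k / \prod_{\ell\neq j}(\mu_j - \mu_\ell) = 0$ for $0 \le k < m$.
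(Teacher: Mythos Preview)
The paper does not prove Theorem~\ref{th:Allen}; it is quoted from \cite{Knutson}, so there is no in-paper argument to compare against. Your inductive scheme via the divisor $D_{p_{\mathrm{top}}}$ and the cone identity $(\ast)$ is essentially the method of that reference, and your reduction of $(\ast)$ to the localization identity $(\dagger)$ via Theorem~\ref{th:FTDH} together with the equivariant class $[D_{p_{\mathrm{top}}}]=c_1^T(\mathcal O(1))-\Phi(p_{\mathrm{top}})$ is correct.

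Two places need tightening. First, your handling of the $e^{\Phi(p_{\mathrm{top}})}$ coefficient is muddled, and the Lagrange interpolation identity you invoke is a red herring (it is the special case $X=\mathbb P^m$). The clean argument: once $(\dagger)$ holds for all $p\neq p_{\mathrm{top}}$, the difference of the Fourier transforms of the two sides of $(\ast)$ collapses to $R(x)\,e^{\langle\Phi(p_{\mathrm{top}}),x\rangle}$ with $R$ rational and homogeneous of degree $-m$. Since both sides of $(\ast)$ are compactly supported measures, their Fourier transforms are entire, hence $R$ is a polynomial, hence zero. Equivalently, use $\sum_{p\in X^T}\varep_p^T(X)=0$, which is just the vanishing of the pushforward $H_{2m}^T(X)\to H_{2m}^T(\mathrm{pt})=0$ for $m>0$.

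Second, the recursive bookkeeping is not innocent. Your inductive hypothesis for a $(m{-}1)$-dimensional $Y$ gives $DH(Y)$ as a sum over chains of BB cycles \emph{in $Y$}, whereas the theorem for $X$ speaks of BB cycles \emph{in $X$}. For $Y$ a component of $D_{p_{\mathrm{top}}}\cap X$ your argument that $Y$ is a BB cycle in $X$ works (the open stratum $X_{p_{\mathrm{top}}}$ being dense forces $\dim\overline{X_q}\le m-1$), but one step deeper a component of $D_{p}\cap Y$ need not obviously be a component of some $\overline{X_q}$. You should either check this under the stated hypotheses, or rephrase the induction so that ``chain'' means a sequence of iterated divisor components (which is all that $(\ast)$ and the simplex change of variables actually use) and only at the end identify these with BB cycles in~$X$.
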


\section{Measures from MV cycles}

Recall that at the beginning of \S \ref{se:Measures}, we fixed a choice of regular nilpotent, $ \dot e $.  From this point on, we specialize $ \dot e $ = $ \overline e $ where $ \overline e $ is the principal nilpotent coming from the geometric Satake correspondence \S \ref{ss:PrincNilp}.  In particular, we have $ \dot e_i = q(\alpha_i) e_i $.

\subsection{DH measures of MV cycles}
Let $ \nu \in Q_+$ and let $ Z $ be a stable MV cycle of weight $ -\nu$.  $ Z $ is a $T^\vee$-invariant subvariety of the affine Grassmannian and we have a projective embedding $ \Upsilon : \Gr \rightarrow \mathbb P(V) $, from \S \ref{ss:PrincNilp}.

We will apply the constructions of the last two sections to the triple $ (Z, T^\vee, V) $.  In particular, in \S \ref{ssec:DH} we defined a polytope $ \Pol(Z)$; note that this polytope lives in $ \mathfrak t_{\mathbb R} = (\mathfrak t^\vee_{\mathbb R})^*$.  On the other hand, in \S \ref{ss:MVfromMV} we defined the MV polytope (also denoted $ \Pol(Z)$) which lives in $ \tR$.

Recall that we have the map $ \iota : P \rightarrow \ft_\R $ which we can extend to a linear bijection $ \iota : \ft^*_\R \rightarrow \ft_\R $.

\begin{lemma}
 For any stable MV cycle $Z$, we have an equality $ \iota(\Pol(Z)) = \Pol(Z) $.
\end{lemma}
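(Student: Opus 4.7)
The plan is to unpack both definitions and see that they match term-by-term via $\iota$, once we identify $T^\vee$-fixed points with the lattice points $L_\mu$ inside $Z$.

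First, I would observe that the $T^\vee$-fixed points on $\Gr$ are precisely $\{L_\mu : \mu \in P\}$, so since $Z$ is $T^\vee$-invariant we have
$$Z^{T^\vee} = \{L_\mu : L_\mu \in Z\}.$$
Thus it suffices to compute the moment value $\Phi_{T^\vee}(L_\mu)$ and match it with $\iota(\mu)$.

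Next, I would apply Proposition~\ref{pr:KacFormula}, which places $\Upsilon(L_\mu)$ in the $\pi(\mu) = \Lambda_0 - \iota(\mu) - q(\mu)\delta^\vee$ weight space of $V$ for the Cartan of $\widehat{\mathfrak g^\vee}$. Because the central extension $E(G^\vee(\mathcal K)) \to G^\vee(\mathcal K)$ splits over $G^\vee(\mathcal O)$ via the map $s$ of \S\ref{ss:PrincNilp}, the restriction of this weight to the subtorus $T^\vee \subset G^\vee \subset G^\vee(\mathcal O)$ kills the $\Lambda_0$ and $\delta^\vee$ components, leaving only $-\iota(\mu)$. Since $\Phi_{T^\vee}$ is defined as the negative of the weight of $v$ when $p = [v]$, this yields $\Phi_{T^\vee}(L_\mu) = \iota(\mu)$.

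Combining these two observations with the linearity of $\iota$,
$$\Pol(Z) \text{ (moment)} = \Conv\bigl\{\Phi_{T^\vee}(p) : p \in Z^{T^\vee}\bigr\} = \Conv\bigl\{\iota(\mu) : L_\mu \in Z\bigr\} = \iota\bigl(\Conv\{\mu : L_\mu \in Z\}\bigr) = \iota(\Pol(Z) \text{ (MV)}),$$
as desired. The only delicate point, and the main place to be careful, is tracking the identifications between $\ft^*_\R$, $\ft_\R$, and the dual of the Cartan of $\widehat{\mathfrak g^\vee}$, together with the sign convention that flips the $T^\vee$-weight of $v$ to the weight on the fiber of $\mathcal O(1)$; everything else is bookkeeping.
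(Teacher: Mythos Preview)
Your proposal is correct and takes essentially the same approach as the paper: identify the $T^\vee$-fixed points in $Z$ with the $L_\mu$ it contains, then invoke Proposition~\ref{pr:KacFormula} to compute $\Phi_{T^\vee}(L_\mu)=\iota(\mu)$. The paper's proof is terser (it states these two facts without elaboration), while you spell out the restriction of $\pi(\mu)$ to $T^\vee$ and the sign flip from weight-of-$v$ to weight-on-the-fibre; this added detail is accurate and welcome.
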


\begin{proof}
The action of $ T^\vee $ on $ \Gr $ has fixed points $\{ L_\mu\ :\ \mu \in P\}$.
Moreover the weight of $ T^\vee $ acting on the fibre of $ \mathcal O(1) $ at the point $L_\mu $ is $\iota(\mu) $ by Proposition \ref{pr:KacFormula}.
\end{proof}

Similarly, by definition, $ DH(Z) $ is a distribution on $ \mathfrak t_{\mathbb R} = (\mathfrak t^\vee_{\mathbb R})^*$ and we also have the distribution $ D(b_Z) $ on $ \tR $.

\begin{theorem} \label{th:DHequalsD}
 For any stable MV cycle $Z$, we have an equality $ \iota_* (D(b_Z)) = DH(Z) $.
\end{theorem}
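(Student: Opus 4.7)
The strategy is to expand both sides of the equality into sums over chains of codimension-one intersections and match them term-by-term: the right-hand side via Theorem~\ref{th:Allen}, the left-hand side via iterated application of~\eqref{eq:ActionStableMVcycles}. I set up the Bia\l ynicki--Birula decomposition of $Z$ using a generic cocharacter of $T^\vee$ close to $-\rho$. Since $\Gr^{\mathbb C^\times}=\Gr^{T^\vee}$ and the attracting cell of $L_\mu$ is $S^\mu_-$ for any cocharacter in this cone, one has $p_{bot}=L_0$ and $p_{top}=L_{-\nu}$; in particular $Y_m=Z$, since $Z=\overline{Z\cap S^{-\nu}_-}$ by the definition of a stable MV cycle.

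The geometric heart of the argument is the identification, for each $\mu$, of the irreducible components of dimension $\rho^\vee(-\mu)$ of $\overline{Z\cap S^\mu_-}$ with the stable MV cycles of weight $\mu$ contained in $Z$. Any such component $Y$ lies in $\overline{S^0_+}\cap\overline{S^\mu_-}$, so by the purity statement \cite[Theorem~3.2]{MirkovicVilonen} it is an irreducible component of $\overline{S^0_+\cap S^\mu_-}$, hence a stable MV cycle; conversely, any stable MV cycle $Z'\subseteq Z$ of weight $\mu$ meets $S^\mu_-$ in a dense open, hence is a top-dimensional component of $\overline{Z\cap S^\mu_-}$. Furthermore, the divisor $D_{p}$ at $p=L_\mu$ used in Theorem~\ref{th:Allen} is precisely the divisor $D$ used in \S\ref{ss:ActEiMV} at weight $\mu$, since in both cases it is cut out by a weight-$\pi(\mu)$ linear form on $V$. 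The Mirkovi\'c--Vilonen identity $D\cap\overline{S^\mu_-}=\bigcup_i\overline{S^{\mu+\alpha_i}_-}$ then forces every BB chain $Y_0\subseteq\cdots\subseteq Y_m=Z$ to have base weights $\mu_k=-\beta_k^\vi$ for a unique sequence $\vi=(i_1,\ldots,i_m)\in\Seq(\nu)$, with each $Y_k$ equal to one of the stable MV cycles $Z_k$ arising when Theorem~\ref{th:ActEiMV} is iterated from $Z_m=Z$ down to $Z_0=\{L_0\}$.

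Given this identification, the two formulas match on the nose. Iterating~\eqref{eq:ActionStableMVcycles} yields
\[
\prod_{k=1}^m q(\alpha_{i_k})\cdot\langle e_\vi,b_Z\rangle
=\sum_{Y_\bullet}\prod_{k=1}^m i\bigl(Y_{k-1},D_{p_k}\cdot Y_k\bigr)=c(\mu_\bullet),
\]
and this equals $\langle\dot e_\vi,b_Z\rangle$ because of the normalization $\dot e_i=q(\alpha_i)e_i$ adopted at the start of this part. On the measure side, Proposition~\ref{pr:KacFormula} gives $\Phi_T(L_\mu)=\iota(\mu)$, hence $\pi_{\mu_\bullet}=\iota\circ\pi_\vi$ and therefore $(\pi_{\mu_\bullet})_*\delta_{\Delta^m}=\iota_*D_\vi$. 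Summing over the bijection between weight chains $\mu_\bullet$ and sequences $\vi\in\Seq(\nu)$,
\[
DH(Z)=\sum_{\mu_\bullet}c(\mu_\bullet)\,(\pi_{\mu_\bullet})_*\delta_{\Delta^m}
=\sum_{\vi}\langle\dot e_\vi,b_Z\rangle\,\iota_*D_\vi=\iota_*D(b_Z).
\]
The main technical obstacle is the BB-vs-MV identification of the previous paragraph; everything else is accounting, and the factor $\prod_k q(\alpha_{i_k})$ produced by iterating Theorem~\ref{th:ActEiMV} is precisely absorbed by the definition of $D$ via the $\dot e_\vi$.
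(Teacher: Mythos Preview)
Your proof is correct and follows essentially the same strategy as the paper's: expand $DH(Z)$ via Theorem~\ref{th:Allen} as a sum over chains of BB cycles, expand $D(b_Z)$ via~\eqref{eq:Df} and iterated application of~\eqref{eq:ActionStableMVcycles} as a sum over chains of stable MV cycles, and match the two expansions term by term. Your handling of the orientation (choosing a cocharacter near $-\rho$ so that attracting cells are $S^\mu_-$, with $p_{bot}=L_0$ and $p_{top}=L_{-\nu}$) and the identification $\Phi_T(L_\mu)=\iota(\mu)$ giving $\pi_{\mu_\bullet}=\iota\circ\pi_{\vi}$ are carefully set up, and your absorption of the factor $\prod_k q(\alpha_{i_k})$ into $\dot e_{\vi}$ is exactly right. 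One point you leave implicit, as does the paper, is that every BB cycle appearing in a length-$m$ chain is necessarily top-dimensional at its base (hence a stable MV cycle): this follows by downward induction from $Y_m=Z$ using $\dim Y_{k-1}=\dim Y_k-1$ together with your observation that $Y_{k-1}\subset\overline{S^{\mu_k+\alpha_i}_-}$ forces $\rho^\vee(-\mu_{k-1})\le\rho^\vee(-\mu_k)-1$.
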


\begin{proof}
We choose a generic dominant embedding $ \C^\times \hookrightarrow T^\vee$.
Then the BB cycles inside $ Z $ are simply the 
stable MV cycles contained in $ Z $.  Moreover, for each weight chain $ \mu_\bullet $, there exists a unique $\vi \in \Seq(\nu) $
 such that $ \mu_k = \alpha_{i_1} + \cdots + \alpha_{i_k} $
 for all $ k$.

Thus by Theorem \ref{th:Allen}, we conclude that upon identifying $ \mathfrak t_{\mathbb R} $ with $ \tR$ using $ \iota $, we have
\begin{equation} \label{eq:DHSeqnu}
DH(Z) = \sum_{\vi \in \Seq(\nu)} c(\vi) D_\vi, \qquad c(\vi) = \sum_{Y_\bullet} c(Y_0, Y_1) \cdots c(Y_{m-1}, Y_m) .
\end{equation}
where the second sum ranges over the set of sequences $ Y_\bullet $ where $ Y_k $ is a stable MV cycle of weight $ -(\alpha_{i_1} + \cdots + \alpha_{i_k}) $ and where
$$ c(Y_k, Y_{k+1}) =  i(Y_k, D_{\mu_{k+1}} \cdot Y_{k+1})
$$
Here $ D_{\mu_{k+1}} $ is the divisor in $ \Gr $ coming from a vector $ f \in V^* $ such that $ f(L_{\mu_{k+1}}) \ne 0 $ and $ f $ vanishes on all other weight spaces. 

On the other hand, we begin with (\ref{eq:Df})
$$ D(b_Z) = \sum_{\vi \in \Seq(\nu)} \langle \dot e_{i_1} \cdots \dot e_{i_m} , b_Z\rangle D_\vi = \sum_{\vi \in \Seq(\nu)} (\dot e_{i_1} \cdots \dot e_{i_m} b_Z)(1_N) D_\vi  $$  
Then we expand out the right hand side using (\ref{eq:ActionStableMVcycles}) from the proof of Lemma \ref{le:UEquivar} and we reach exactly the same formula (\ref{eq:DHSeqnu}).
\end{proof}

\subsection{Reformulation and alternate proof of Theorem \ref{th:DHequalsD}}
\label{ssec:alternate}
We apply the Fourier Transform and then $ (\iota^{-1})^* $ to the equality in Theorem \ref{th:DHequalsD}  and we obtain
$$
FT( D(b_Z)) = (\iota^{-1})^* FT(DH(Z))
$$
This is an equality of analytic functions on $ \ft_\C $; more precisely by the analysis in \S \ref{ssec:FT}, this is an equality in the space $ S^{-1}\C[\ft]\otimes \C[T] $.

Now, by Proposition \ref{th:FTDpsi} (iii), we have that $
FT(D(b_Z)) = \psi_{\mathrm{reg}}^*(b_Z) $.

On the other hand, by Theorem \ref{th:FTDH}, we know that  $ FT(DH(Z)) $ is closely related to the expansion of the homology class $ [Z] $ in the fixed point basis under the isomorphism $\iota(S)^{-1}H_\bullet^{T^\vee}(\Gr) \cong \iota(S)^{-1}H_\bullet^{T^\vee}(\Gr^{T^\vee}) $.  In order to avoid confusion, in this section we will write $ [Z]_{T^\vee} $ for the image of $[Z] $ in $ \iota(S)^{-1}H_\bullet^{T^\vee}(\Gr^{T^\vee}) $.

The fixed point set $\Gr^{T^\vee} $ is in correspondence with the weight lattice $ P $ and thus has a group structure.  Hence $  H_\bullet^{T^\vee}(\Gr^{T^\vee}) $ is an algebra and we have an obvious isomorphism  $$\iota(S)^{-1} \C[\ft^*] \otimes \C[T] \cong H_\bullet^{T^\vee}(\Gr^{T^\vee}) $$ 

From these observations (and the injectivity of the Fourier Transform), we see that Theorem \ref{th:DHequalsD} is equivalent to the following statement. 
\begin{theorem} \label{th:DHequalsD2}
	Let $ Z $ be a stable MV cycle.  In the algebra $S^{-1}\C[\ft]\otimes \C[T] $, we have an equality 
	$$
	\psi_{\mathrm{reg}}^*(b_Z) = (\iota^{-1})^*([Z]_{T^\vee})
	$$
\end{theorem}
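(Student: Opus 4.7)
The plan is to interpret both sides of the equation as the images of $b_Z$ under algebra homomorphisms $\C[N] \to S^{-1}\C[\ft] \otimes \C[T]$, and then apply the work of Yun--Zhu \cite{YunZhu} to identify these two algebra maps.

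First, I would verify that both sides indeed fit into algebra maps. The assignment $f \mapsto \psi^*_{\mathrm{reg}}(f)$ is manifestly an algebra homomorphism, being pullback along the morphism $\psi_{\mathrm{reg}}\colon \ftreg \times T \to N$ of Proposition \ref{th:FTDpsi}. For the other side, I would combine Theorem \ref{th:mult} --- which translates the multiplication of MV basis elements into the fusion product in equivariant homology via the Beilinson--Drinfeld Grassmannian --- with the well-known fact that $H_\bullet^{T^\vee}(\Gr)$ carries a convolution product matching the fusion product, and with the observation that equivariant localization to the discrete set of fixed points $\Gr^{T^\vee}=P$ is a ring homomorphism with respect to convolution (since the fixed-point locus is a subgroup).

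Next, I would invoke Yun--Zhu's theorem, which produces a canonical $\C[\ft]$-algebra isomorphism between $H_\bullet^{G^\vee(\mathcal O)}(\Gr)$ (with convolution) and the coordinate ring $\C[C]$ of the universal centralizer. Base-changing along $\ftreg \hookrightarrow \ft$ and invoking the isomorphism $C \times_\ft \ftreg \cong \ftreg \times T$ of Proposition \ref{th:FTDpsi}(i), this yields an algebra isomorphism between the localized equivariant homology and $\C[\ftreg \times T]$ that should send $[Z]_{T^\vee}$, appropriately transported by $\iota^{-1}$, to $\psi^*_{\mathrm{reg}}(b_Z)$. The matching of classes rests on the compatibility of both constructions with the geometric Satake correspondence: the principal nilpotent $\overline e$, which acts on the cohomology of perverse sheaves by cup product with $c_1(\mathscr L)$ as recalled in \S\ref{ss:PrincNilp}, corresponds under Yun--Zhu to the element defining the regular centralizer data that enters into the construction of $\psi$.

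The main obstacle will be to verify that Yun--Zhu's isomorphism is compatible with all our normalizations --- specifically, that the principal nilpotent $\overline e$ from \S\ref{ss:PrincNilp} matches the one implicit in their statement, and that the MV cycle class $[Z]_{T^\vee}$ is sent to exactly $\psi^*_{\mathrm{reg}}(b_Z)$ rather than to some rescaling of it. Once this identification is pinned down on a well-chosen family (for instance on the point MV cycles $\{L_\mu\}$ corresponding to flag minors via Remark~\ref{re:flag}, where both sides can be computed directly using the explicit form of $\psi_{\mathrm{reg}}$ and the behavior of $n_x$ under the Weyl group from Proposition~\ref{pr:BehNx}), the algebra homomorphism property together with the fact that $\C[N]$ is generated by flag minors will propagate the equality to all stable MV cycles.
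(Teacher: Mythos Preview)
Your idea of invoking Yun--Zhu is exactly right, but the proposed execution has two genuine gaps. First, ``point MV cycles $\{L_\mu\}$ corresponding to flag minors'' is a confusion: the only stable MV cycle that is a single point is $\{L_0\}$, corresponding to $1\in\C[N]$. The flag minors of Remark~\ref{re:flag} correspond to translated Schubert-type MV cycles of positive dimension, for which neither side is trivially computable, and Proposition~\ref{pr:BehNx} does not by itself compute them. Second, your argument that the right-hand assignment $b_Z\mapsto(\iota^{-1})^*([Z]_{T^\vee})$ extends to an algebra homomorphism is circular as written: Theorem~\ref{th:mult} is non-equivariant, so to match the convolution product in $H_\bullet^{T^\vee}(\Gr)$ you would need its equivariant analogue, and importing multiplicativity from Yun--Zhu's isomorphism $\theta$ presupposes that $\theta\circ\psi^*$ sends $b_Z$ to $[Z]$, which is precisely the statement to be proved.

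The paper's alternate proof bypasses both issues by working linearly rather than multiplicatively. The key is Lemma~\ref{le:tau}: for \emph{every} $v\in H^\bullet(\Gr,\IC_\lambda)$ one has $\theta(\psi^*(\Psi_\lambda(v)))=(t^{-\lambda})_*\tau_\lambda(v)$, where $\tau_\lambda$ is the map to $H_\bullet^{T^\vee}(\Gr)$ induced by $\IC_\lambda\to\D_{\overline{\Gr^\lambda}}[-d]$. This is proved by unwinding Yun--Zhu's explicit dual-basis description of $\sigma_{can}$: one has $\theta(\psi_B^*(f_v))=\sum_i v_\lambda^*(h^iv)\,h_i$ for dual bases $\{h^i\},\{h_i\}$ of $H^\bullet_{T^\vee}(\Gr)$ and $H_\bullet^{T^\vee}(\Gr)$, and since $v_\lambda^*=p_*\circ\tau_\lambda$ with $\tau_\lambda$ an $H^\bullet_{T^\vee}(\Gr)$-module map, the sum collapses to $\tau_\lambda(v)$. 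Specializing $v=[t^\lambda Z]_{IH}$ gives $\theta(\psi^*(b_Z))=[Z]$ directly; the claimed equality then follows from the commutative diagram in Theorem~\ref{th:HTGr} after passing to $\ftreg$. No choice of generators and no multiplicativity is needed.
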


We will now give an alternate proof of Theorem \ref{th:DHequalsD2} using results of Yun--Zhu \cite{YunZhu}.  These authors define a commutative convolution algebra structure on $ H_\bullet^{T^\vee}(\Gr) $ and describe this algebra using the geometric Satake correspondence.

To formulate their result, recall the universal centralizer space $ C $ and its morphisms $ C \rightarrow \ft \times T $ and $ \psi : C \rightarrow N $ from \S \ref{se:univcent}.  The following theorem follows from combining Propositions 3.3, 5.7 and Remark 3.4 from \cite{YunZhu}.

\begin{theorem} \label{th:HTGr}
	There is an isomorphism $\theta : \C[C] \rightarrow H_\bullet^{T^\vee}(\Gr) $ making the following diagram commute
	$$
	\xymatrix{  
		\C[\ft] \otimes \C[T] \ar^{\iota\otimes 1}[r] \ar[d] & \C[\ft^*] \otimes \C[T] \ar[r] & H_\bullet^{T^\vee}(\Gr^{T^\vee}) \ar[d] \\
		\C[C] \ar^\theta[rr] && H_\bullet^{T^\vee}(\Gr)
	}
$$
\end{theorem}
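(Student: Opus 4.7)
The plan is to construct the isomorphism $\theta$ via equivariant geometric Satake combined with Kostant's description of the centralizer of the principal nilpotent, and then verify the diagram by localization. The first step is to upgrade the geometric Satake equivalence of \S \ref{ss:GeomSatEquiv} to the $T^\vee$-equivariant setting. The convolution product on $\Perv$ lifts naturally to the $G^\vee(\mathcal O) \rtimes T^\vee$-equivariant derived category, and equivariant hypercohomology $H^\bullet_{T^\vee}(\Gr, -)$ is a symmetric monoidal fiber functor valued in $\C[\ft]$-modules. Tannakian reconstruction then yields a smooth affine commutative group scheme $\mathcal G$ over $\ft$ such that equivariant Satake becomes an equivalence between the equivariant Satake category and the category of representations of $\mathcal G$.

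Second, I would identify $\mathcal G$ with the universal centralizer $C$. The equivariant version of the cup-product action of $c_1(\mathscr L)$ from \S \ref{ss:PrincNilp} becomes a global section of $\mathrm{Lie}(\mathcal G)$ over $\ft$ whose specialization at $0$ is the principal nilpotent $\dot e$, and equivariant Hard Lefschetz extends the $\mathfrak{sl}_2$-structure consistently. Over $\ftreg$, localization to $T^\vee$-fixed points identifies $\mathcal G|_{\ftreg}$ with $\ftreg \times T$, matching the corresponding identification of $C|_{\ftreg}$ from Proposition \ref{th:FTDpsi}(i). Using the Kostant slice to glue these local descriptions, both $\mathcal G$ and $C$ become smooth affine commutative group schemes over $\ft$ whose generic and special fibers match canonically, yielding $\mathcal G \cong C$.

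Third, I would translate the Tannakian description into Borel--Moore homology. The equivariant homology $H_\bullet^{T^\vee}(\Gr)$ with the convolution product is naturally $\C[\mathcal G]$: it can be recovered as the algebra of endomorphisms of the equivariant fiber functor, which Tannakian duality identifies with $\C[\mathcal G]$ (an equivariant refinement of Ginzburg's classical identification of $H_\bullet^{G^\vee(\mathcal O)}(\Gr)$ with the coordinate ring of the centralizer of a principal nilpotent). Composing with $\mathcal G \cong C$ produces the desired $\theta : \C[C] \xrightarrow{\sim} H_\bullet^{T^\vee}(\Gr)$. Commutativity of the diagram is then checked by localizing both sides to $\Gr^{T^\vee}$: the right vertical arrow is the usual equivariant localization, the left vertical arrow is the restriction of $C \to \ft$ to its generic fiber, and the twist by $\iota$ in the top row appears because the $T^\vee$-weight on the fiber of $\mathscr L$ at $L_\mu$ equals $\iota(\mu)$ by Proposition \ref{pr:KacFormula}.

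The main obstacle will be the identification of $\mathcal G$ with $C$ canonically, not merely abstractly. One must track how the principal $\mathfrak{sl}_2$ arising from cup product with $c_1(\mathscr L)$ produces $\dot e$ with the normalization $\dot e_i = q(\alpha_i) e_i$ fixed earlier, and verify that the equivariant deformation of this $\mathfrak{sl}_2$-structure over $\ft$ is precisely what is recorded by the equation $\Ad_b(\dot e + x) = \dot e + x$ defining $C$. This is the core of Yun--Zhu's argument and, once done, pins down not only the isomorphism but also its compatibility with the $\iota$-twist appearing in the diagram.
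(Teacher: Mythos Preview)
The paper does not supply its own proof of this theorem: the sentence preceding the statement says it ``follows from combining Propositions 3.3, 5.7 and Remark 3.4 from \cite{YunZhu}.'' Your outline is essentially a sketch of the Yun--Zhu argument itself---equivariant Satake, Tannakian reconstruction of a group scheme over $\ft$, identification with the universal centralizer via the principal nilpotent and Kostant's slice, and verification of the diagram by localization---so you are aligned with what the paper invokes, and you correctly identify the $\iota$-twist as coming from Proposition~\ref{pr:KacFormula}.
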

		
In order to apply this result, we will need to setup a bit more notation.

For any $ \lambda \in P_+$, we have a map
\begin{equation} \label{eq:tauDef}
\begin{aligned}
 \tau_\lambda : H_{T^\vee}^\bullet(\Gr, \IC_\lambda) 
 \rightarrow H^{T^\vee}_{d-\bullet}(\overline{\Gr^\lambda}) 
 \rightarrow H^{T^\vee}_{d-\bullet}(\Gr)
 \end{aligned}
 \end{equation}
 where the first arrow comes from (\ref{eq:ICtoD}) (with $ d = \rho^\vee(\lambda)$). We will also make use of the inclusion $$ H^\bullet(\Gr, \IC_\lambda) \hookrightarrow H_{T^\vee}^\bullet(\Gr, \IC_\lambda) $$
coming from the isomorphism $ H_{T^\vee}^\bullet(\Gr, \IC_\lambda) \cong H^\bullet(\Gr, \IC_\lambda) \otimes \C[\ft^*] $ given in \cite[Lemma 2.2]{YunZhu}.

Also recall the map 
$$
\Psi_\lambda : H^\bullet(\Gr, \IC_\lambda) = L(\lambda) \hookrightarrow \C[N]
$$
from \S \ref{ss:BasesRep}.

In this section, we will also need to consider the automorphism $ t^{-\lambda} $ of $ \Gr $ and we write $ t^{-\lambda}_* :  H^{T^\vee}_\bullet(\Gr) \rightarrow  H^{T^\vee}_\bullet(\Gr) $ for the resulting map on equivariant homology.

\begin{lemma} \label{le:tau}
	For any $ v \in H^\bullet(\Gr, \IC_\lambda) $, we have
	$$
	\theta(\psi^*(\Psi_\lambda(v))) = (t^{-\lambda})_* \tau_\lambda(v)
	$$
\end{lemma}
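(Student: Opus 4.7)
The plan is to factor both sides through the matrix-coefficient map $\Phi_\lambda : L(\lambda) \to \C[C]$ defined by $\Phi_\lambda(v)(c) = v_\lambda^*(c \cdot v)$, where $c = (b,x) \in C$ acts on $L(\lambda)$ through the projection $C \to B$, $(b,x) \mapsto b$, and the $B$-module structure on $L(\lambda)$.

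First, I would verify that $\psi^*(\Psi_\lambda(v)) = t^{-\lambda} \cdot \Phi_\lambda(v)$ in $\C[C]$, where $t^{-\lambda}$ denotes the regular function $(tn,x) \mapsto t^{-\lambda}$ on $C$. Since $v_\lambda^*$ is a weight vector of weight $-\lambda$ on $L(\lambda)$, a direct computation gives $\Phi_\lambda(v)(tn,x) = v_\lambda^*(tnv) = t^\lambda\, v_\lambda^*(nv) = t^\lambda \cdot \psi^*(\Psi_\lambda(v))(tn,x)$, which rearranges to the claim.

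Second, I would identify the action of multiplication by $t^{-\lambda}$ in $\C[C]$ under $\theta$. The commutative diagram of Theorem~\ref{th:HTGr} shows that the function $t^\mu$ on $C$ (which lies in the image of $\C[T] \hookrightarrow \C[\ft] \otimes \C[T]$ along the top row) corresponds under $\theta$ to the fixed-point class $[\{L_\mu\}] \in H^{T^\vee}_\bullet(\Gr)$. Since $\theta$ is an algebra isomorphism for the convolution product and convolution by $[\{L_{-\lambda}\}]$ is precisely the translation operator $(t^{-\lambda})_*$, combining with the first step yields $\theta(\psi^*(\Psi_\lambda(v))) = (t^{-\lambda})_* \cdot \theta(\Phi_\lambda(v))$.

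Third, and most substantially, I would establish $\theta(\Phi_\lambda(v)) = \tau_\lambda(v)$. This is a reformulation of the Yun--Zhu enhancement of geometric Satake to $T^\vee$-equivariant cohomology: under $\theta$, the $H^{T^\vee}_\bullet(\Gr)$-module structure on $H^\bullet_{T^\vee}(\Gr, \IC_\lambda) \cong L(\lambda) \otimes \C[\ft^*]$ (by cap product) corresponds to the $\C[C]$-module structure coming from $C \to B$ and the $B$-action on $L(\lambda)$, and unpacking what this means for $\tau_\lambda$ yields precisely the matrix-coefficient formula $\theta^{-1}(\tau_\lambda(v))(c) = v_\lambda^*(cv)$; this can be bootstrapped from the highest-weight case $v = v_\lambda$, where $\Phi_\lambda(v_\lambda) = t^\lambda$ corresponds under $\theta$ to $[\{L_\lambda\}] = \tau_\lambda(v_\lambda)$ using the MV-basis identification. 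The main obstacle is carefully extracting this consequence from \cite{YunZhu}; alternatively, I would verify the identity after localization to $\ftreg \subset \ft$, where the LHS becomes the explicit function $(x,t) \mapsto v_\lambda^*(t^{-1}n_x t n_x^{-1}v)$ by Theorem~\ref{th:GeomTFD} and the RHS is computed by equivariant localization of $\tau_\lambda(v)$ at the $T^\vee$-fixed points $L_\mu$, reducing the equality to matching the coefficient functions of the two resulting Laurent expansions in $t$.
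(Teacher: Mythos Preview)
Your outline matches the paper's architecture exactly: factor $\psi^*\circ\Psi_\lambda$ through the full matrix-coefficient map $\Phi_\lambda:L(\lambda)\to\C[C]$ (the paper writes $f_v=\lambda\otimes\Psi_\lambda(v)\in\C[T]\otimes\C[N]\cong\C[B]$ and pulls back along $C\to B$), then strip off the $t^{-\lambda}$ factor using that $\theta$ intertwines multiplication by $t^\mu$ with $(t^\mu)_*$. So the reduction to proving $\theta(\Phi_\lambda(v))=\tau_\lambda(v)$ is exactly what the paper does.

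The gap is in your third step. You correctly flag that this is ``the main obstacle,'' but neither of your two suggestions (bootstrap from $v=v_\lambda$, or localize to $\ftreg$) is what the paper does, and neither is made precise enough to count as a proof. The paper's argument is short and direct: choose dual bases $\{h^i\}$, $\{h_i\}$ of $H_{T^\vee}^\bullet(\Gr)$ and $H^{T^\vee}_\bullet(\Gr)$, observe that $v_\lambda^*=p_*\circ\tau_\lambda$ (after the $\C[\ft^*]$-linear extension), and then compute directly from Yun--Zhu's definition of $\sigma_{can}$ that
\[
\theta(\Phi_\lambda(v))=\sum_i v_\lambda^*(h^i v)\,h_i=\sum_i p_*(\tau_\lambda(h^i v))\,h_i=\sum_i p_*(h^i\,\tau_\lambda(v))\,h_i=\tau_\lambda(v),
\]
using that $\tau_\lambda$ is an $H^\bullet_{T^\vee}(\Gr)$-module map and that the pairing is $(a,h)\mapsto p_*(ah)$. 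This is the missing concrete mechanism; your ``unpacking'' should be replaced by exactly this computation.
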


\begin{proof}
To begin, fix the isomorphism of varieties $  T \times N \cong B $ given by $ (t,n) \mapsto tn $.  Thus we get an isomorphism on coordinate rings $ \C[B] \cong \C[T] \otimes \C[N] $.  

Recall the linear map $ v_\lambda^* : L(\lambda) \rightarrow \C $ defined in \S \ref{ss:BasesRep}.  We extend $ v_\lambda^*$ to a  $\C[\ft^*]$-linear map $ v_\lambda^* : H^\bullet_{T^\vee}(\Gr, \IC_\lambda) \rightarrow \C[\ft^*] $ using the isomorphism $ H^\bullet_{T^\vee}(\Gr, \IC_\lambda) \cong H^\bullet(\Gr, \IC_\lambda) \otimes \C[\ft^*] $.  Note that $ v_\lambda^* = p_* \circ \tau_\lambda $, where $ \tau_\lambda $ in defined in (\ref{eq:tauDef}) and where $ p_* : H_\bullet^{T^\vee}(\Gr) \rightarrow H_\bullet^{T^\vee}(pt) $ is the proper push-forward in Borel--Moore homology.

Given $ v \in L(\lambda)$, we define $ f_v \in \C[B] $ by $ f_v(b) = v_\lambda^*(bv) $. Under the isomorphism $ \C[B]\cong \C[T] \otimes \C[N] $, we find that $ f_v = \lambda \otimes \Psi_\lambda(v) $.  

We can factor $ \psi: C \rightarrow N $ into $ C \rightarrow B \rightarrow N $ and which gives rise to a factorization of $ \psi^* $ as $ \C[N] \hookrightarrow \C[B] \xrightarrow{\psi_B^*} \C[C] $.  Following \cite[\S 3]{YunZhu}, we will now describe $ \theta \circ \psi_B^* $.  

Let $ \{h^i\} , \{h_i \} $ be dual bases of the free $ \C[\ft^*] $ modules $ H_{T^\vee}^\bullet(\Gr) $ and $ H^{T^\vee}_\bullet(\Gr) $. By the definition of $ \sigma_{can} $ from \cite[\S 3]{YunZhu}, for any $ v \in L(\lambda) = H^\bullet(\Gr, \IC_\lambda) $, we have
$$
\theta(\psi_B^*(f_v)) = \sum_i v_\lambda^*(h^i v) h_i \in H_\bullet^{T^\vee}(\Gr)
$$
where we use the action of $ H_{T^\vee}^\bullet(\Gr) $ on $ H^\bullet_{T^\vee}(\Gr, \IC_\lambda)$.

Thus,
$$
\theta(\psi_B^*(f_v)) = \sum_i p_*(\tau_\lambda(h^i v))h_i  = \sum_i  p_*(h^i \tau_\lambda(v)) h_i
$$
since  $\tau_\lambda $ is a $ H^\bullet_{T^\vee}(\Gr) $-module morphism. 

Now, since the pairing between $ H^\bullet_{T^\vee}(\Gr) $ and $ H_\bullet^{T^\vee}(\Gr) $ is given by $ a \otimes h \mapsto p_*(ah) $, we conclude that
\begin{equation} \label{eq:fvtau}
\theta(\psi_B^*(f_v)) = \tau_\lambda(v)
\end{equation}

The commutative diagram in Theorem \ref{th:HTGr} implies that the isomorphism $ \theta : \C[C] \rightarrow H_\bullet^{T^\vee}(\Gr) $ is a $\C[T]$-module isomorphism, where $ \C[T]$ acts on $ \C[C] $ using $ C \rightarrow T $ and acts on $ H_\bullet^{T^\vee}(\Gr) $ using the maps $(t^\mu)_* $.  Thus we see that (\ref{eq:fvtau}) along with $ f_v = \lambda \otimes \Psi_\lambda(v) $ implies the desired result.
\end{proof}

Finally, here is our promised alternate proof.
\begin{proof}[Alternate proof of Theorem \ref{th:DHequalsD2}]
	Let $ Z $ be a stable MV cycle.  Choose $ \lambda \in P_+ $ such that $ t^\lambda Z \subset \overline{\Gr^\lambda}$.  Then $ t^\lambda Z $ is an MV cycle of type $ \lambda $.  We consider its class $ [t^\lambda Z]_{IH} \in H^k(\Gr, \IC_\lambda) $.  By the definition of $ \tau_\lambda $, we have that $ \tau_\lambda([t^\lambda Z]_{IH}) = [t^\lambda Z] $ in $ H_{d-k}^{T^\vee}(\Gr) $ where $ d = 2\rho^\vee(\lambda)$. 
	
	Thus by Lemma \ref{le:tau}, we have $ \theta(\psi^*(b_Z)) = [Z]$ in $ H^{T^\vee}_{d-k}(\Gr)  $.
	
	By Theorem \ref{th:HTGr}, the map $H^{T^\vee}_\bullet(\Gr^{T^\vee}) \rightarrow H^{T^\vee}_\bullet(\Gr) $ is dual to $ C \rightarrow \ft^* \times T $.  Thus passing to $ \ftreg $ and inverting this map, we obtain the desired equality.
\end{proof}

\subsection{Proof of Muthiah's conjecture}
Let $ Z $ be a stable MV cycle.  As a corollary of Theorem \ref{th:DHequalsD2}, it is easy to see that $ \barD(b_Z) $ is given by equivariant multiplicities at the bottom of $ Z $.

More precisely, applying Remark \ref{rem:exponentials} and Theorem \ref{th:FTDH}, we immediately deduce the following.

\begin{corollary} \label{co:barDEM}Let $ Z $ be a stable MV cycle of weight $ -\nu $.
	We have the following equality in $ \C[\ftreg] $.
	$$ \barD(b_Z) = (\iota^{-1})^* \varep_{L_{-\nu}}^{T^\vee}(Z)
	$$
\end{corollary}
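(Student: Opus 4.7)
The plan is to extract the stated identity by pulling out a single exponential coefficient from the equality in Theorem~\ref{th:DHequalsD2} (or equivalently, by combining Theorem~\ref{th:DHequalsD} with Theorem~\ref{th:FTDH}). Concretely, I would start by invoking Proposition~\ref{th:FTDpsi}(iii), which tells us that $\psi_{\mathrm{reg}}^*(b_Z) = FT\circ D(b_Z)$ as elements of $S^{-1}\C[\ft]\otimes\C[T]$. Since $Z$ has weight $-\nu$, the basis element $b_Z$ lies in $\C[N]_{-\nu}$, so Remark~\ref{rem:exponentials} applies and gives that the only exponentials $e^{-\beta}$ appearing in this sum satisfy $0\le\beta\le\nu$, with $\barD(b_Z)$ being precisely the coefficient of $e^{-\nu}$.

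Next I would use Theorem~\ref{th:DHequalsD2} to rewrite $\psi_{\mathrm{reg}}^*(b_Z) = (\iota^{-1})^*([Z]_{T^\vee})$, and expand $[Z]_{T^\vee}$ in the fixed-point basis of $\iota(S)^{-1}H_\bullet^{T^\vee}(\Gr^{T^\vee})$. Under the isomorphism $\iota(S)^{-1}\C[\ft^*]\otimes\C[T]\cong\iota(S)^{-1}H_\bullet^{T^\vee}(\Gr^{T^\vee})$, the fixed point class $[L_\mu]$ corresponds to the character $e^\mu\in\C[T]$, and by definition of the equivariant multiplicity the coefficient of $[L_\mu]$ in $[Z]_{T^\vee}$ is $\varep_{L_\mu}^{T^\vee}(Z)\in\iota(S)^{-1}\C[\ft^*]$. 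Only fixed points $L_\mu\in Z$ contribute, and since $\Pol(Z)\subset-Q_+$ has a unique vertex at $-\nu$, the term $e^{-\nu}$ appears with coefficient exactly $\varep_{L_{-\nu}}^{T^\vee}(Z)$.

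Finally, I would compare the two expansions of $\psi_{\mathrm{reg}}^*(b_Z)$ by reading off the coefficient of $e^{-\nu}\in\C[T]$ on both sides. On the left this coefficient is $\barD(b_Z)\in\C[\ftreg]$; on the right, applying $(\iota^{-1})^*$ to $\varep_{L_{-\nu}}^{T^\vee}(Z)\in\iota(S)^{-1}\C[\ft^*]$ produces an element of $S^{-1}\C[\ft]$, which upon restriction to $\ftreg$ gives $(\iota^{-1})^*\varep_{L_{-\nu}}^{T^\vee}(Z)$. The equality of these coefficients is the desired identity.

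There is no serious obstacle: the corollary is essentially a bookkeeping consequence of Theorem~\ref{th:DHequalsD2}. The only subtlety worth spelling out is that the coefficient extraction is well-defined — i.e.\ that the sum on the right of $FT\circ D(b_Z) = \sum_\beta c_\beta(x)\,e^{-\beta}$ really is indexed by distinct characters of $T$, so that extracting the $e^{-\nu}$ component is unambiguous — but this is immediate from the fact that the $e^{-\beta}$ in the Fourier expansion are genuinely distinct group-like elements of $\C[T]$.
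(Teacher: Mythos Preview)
Your proposal is correct and follows essentially the same approach as the paper: the paper's one-line proof cites Remark~\ref{rem:exponentials} (to identify $\barD(b_Z)$ as the $e^{-\nu}$-coefficient of $FT\circ D(b_Z)$) and Theorem~\ref{th:FTDH} (to express $FT(DH(Z))$ as a sum of equivariant multiplicities times exponentials), which together with Theorem~\ref{th:DHequalsD} is exactly what you unpack via Theorem~\ref{th:DHequalsD2} and the fixed-point expansion of $[Z]_{T^\vee}$. The only cosmetic difference is that you route through $\psi_{\mathrm{reg}}^*$ and the localization isomorphism directly, whereas the paper phrases it via the Fourier transform of the DH measure; these are equivalent by the discussion at the start of \S\ref{ssec:alternate}.
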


Now, we are in a position to prove Mutiah's conjecture, Theorem \ref{th:Dinakar}.  We begin by recalling the setup.  Let $ \lambda \in P_+ \cap Q $ and let $ Z $ be an MV cycle of type $ \lambda $ and weight $0 $, so $ t^{-\lambda} Z $ is a stable MV cycle of weight $ -\lambda $.

Note that we have an equality $\varep_{L_{-\lambda}}^{T^\vee}(t^{-\lambda}Z) =\varep_{L_{0}}^{T^\vee} (Z) $.  Thus, in light of Corollary \ref{co:barDEM}, in order to establish Theorem \ref{th:Dinakar} we are left to prove the following result.

\begin{theorem} \label{th:MuthiahMain}
	The map $L(\lambda)_0 \rightarrow \C[\ftreg] $ defined by $ v \mapsto \barD(\Psi_\lambda(v)) $ is $ W$-equivariant.
\end{theorem}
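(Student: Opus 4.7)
The plan is to verify the $W$-equivariance by a short direct calculation, whose only real input is Proposition~\ref{pr:BehNx}. Unwinding the definitions, for $v \in L(\lambda)_0$ and $x \in \ftreg$ we have
\[
\barD(\Psi_\lambda(v))(x) \;=\; \Psi_\lambda(v)(n_x) \;=\; v_\lambda^*(n_x \cdot v).
\]
So the claim reduces to the identity
\[
v_\lambda^*\bigl(n_{w^{-1}x} \cdot v\bigr) \;=\; v_\lambda^*\bigl(n_x \cdot \overline{w}\,v\bigr)
\]
for every $w \in W$, $x \in \ftreg$, and $v \in L(\lambda)_0$ (this is precisely the statement that $\barD(\Psi_\lambda(\overline{w}v))(x) = \barD(\Psi_\lambda(v))(w^{-1}x)$).

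To handle the left-hand side, apply Proposition~\ref{pr:BehNx} with $x$ replaced by $w^{-1}x$ (and the same $w$). This produces $y \in N_-$ and $t \in T$ with $n_x = y\,n_{w^{-1}x}\,\overline{w}^{-1}\,t$. Rearranging and using that $T$ is normalized by $\overline{w}$ (so $\overline{w}^{-1}T\overline{w}=T$), we obtain
\[
n_{w^{-1}x} \;=\; y^{-1}\,n_x\,\overline{w}\,t'
\]
for some $t' \in T$. Substituting gives $v_\lambda^*(n_{w^{-1}x}\,v) = v_\lambda^*\bigl(y^{-1}\,n_x\,\overline{w}\,t'\,v\bigr)$.

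Two elementary simplifications now collapse this expression. First, $v$ has weight zero, so $t'v=v$. Second, because $y^{-1} \in N_-$ strictly decreases weights, the coefficient of $v_\lambda$ in $y^{-1}u$ equals the coefficient of $v_\lambda$ in $u$ for every $u \in L(\lambda)$: writing $u$ as a sum of weight vectors, only the $\lambda$-weight component of $u$ can contribute a $v_\lambda$ term after applying $y^{-1}$, and on that component $y^{-1}$ acts as the identity plus strictly lower-weight terms. Hence $v_\lambda^*(y^{-1}u) = v_\lambda^*(u)$ for all $u$, and we conclude
\[
v_\lambda^*(n_{w^{-1}x}\,v) \;=\; v_\lambda^*\bigl(y^{-1}\,n_x\,\overline{w}\,v\bigr) \;=\; v_\lambda^*(n_x\,\overline{w}\,v),
\]
which is the desired identity.

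There is no genuine obstacle: the whole argument is three or four lines once Proposition~\ref{pr:BehNx} is available, and the only subtleties are the bookkeeping of $T$-conjugation to express $n_{w^{-1}x}$ in the form $(\text{something in }N_-)\cdot n_x \cdot \overline{w} \cdot (\text{something in }T)$, and the standard observation that $v_\lambda^*$ is $N_-$-invariant from the left. Combined with Corollary~\ref{co:barDEM}, this yields Muthiah's conjecture in the form of Theorem~\ref{th:Dinakar}.
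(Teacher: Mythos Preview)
Your proof is correct and follows essentially the same approach as the paper's: both arguments reduce to expressing $n_{w^{-1}x}$ in the form $(\text{element of }N_-)\cdot n_x\cdot\overline{w}\cdot(\text{element of }T)$ via Proposition~\ref{pr:BehNx}, then using that $v$ has weight zero and that $v_\lambda^*$ is left $N_-$-invariant. The only cosmetic difference is that the paper applies Proposition~\ref{pr:BehNx} directly (with $w$ replaced by $w^{-1}$) to obtain $n_{w^{-1}x}=y\,n_x\,\overline{w}\,t$ in one step, whereas you substitute $w^{-1}x$ for $x$ and then rearrange.
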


\begin{proof}
	Let $ w \in W $ and $ x \in \ftreg$.  Then by Proposition \ref{pr:BehNx}, there exist $ (y,t) \in N_-\times T $ such that
	$$ n_{w^{-1} x} = y n_x \overline w t.$$
	Hence, we have
$$
\barD(\Psi_\lambda(v))(w^{-1} x) = v_\lambda^*(n_{w^{-1} x} v) 
= v_\lambda^*(y n_x \overline w t v) 
= v_\lambda^*(n_x \overline w v) 
= \barD(\Psi_\lambda(\overline w v))(x)
$$
where we used that $ t v = v $, since $ v $ is of weight 0, and that $  v_\lambda^*(y u) = v_\lambda^*(u)$, since $ y \in N_- $.

Since this holds for all $w \in W$ and $ x \in \ftreg$, we conclude that $ v \mapsto \barD(\Psi_\lambda(v))$ is $W$-equivariant.
\end{proof}

\section{Preprojective algebra modules}
From this point on, we assume that $ G $ is simply-laced. In particular, this means that $ q(\alpha_i) = 1 $ for all $ i $ and thus $ \iota(\alpha_i) = \alpha^\vee_i $, so $\iota $ corresponds to the usual identification of the root and coroot lattices.  Thus, we can (and will) drop $ \iota $ from our notation without possibility of confusion.

\subsection{Preprojective algebras and their modules}
\label{ssec:preproj}
 Let $ H $ denote the set of oriented edges of the Dynkin diagram (so $ (i,j), (j,i) \in H $ whenever $ i,j $ are connected in $ I$).  If $ h = (i,j) $, write $ \bar{h} = (j,i) $.  Fix a map $ \tau : H \rightarrow \{1,-1\} $ such that for each $ h $, $ \tau(h) + \tau(\bar{h}) = 0 $ (such a $\tau $ corresponds to an orientation of each edge of the Dynkin diagram).

Define the \defn{preprojective algebra} $ \Lambda $ to be the quotient of the
path algebra of $ (I,H) $ by the relation $\sum_{h \in H} \tau(h) h \bar{h} = 0 $.

So a $ \Lambda$-module $ M $ consists of vector spaces $ M_i $ for $ i \in I $ and linear maps $ M_h : M_i \rightarrow M_j $ for each $ h = (i,j) \in H $, such that
\begin{equation} \label{eq:preproj}
\sum_{h \in H} \tau(h) M_h M_{\bar{h}} = 0 .
\end{equation}
Given a $ \Lambda$-module $ M$, we define its dimension vector by
$$
\dimvec M = \sum_{i \in I} (\dim M_i)\, \alpha_i
$$

We write $ S_i $ for the simple module at vertex $ i $, i.e.\ the module with $ \dimvec S_i = \alpha_i $.
The map $ M \mapsto \dimvec M $ gives rise to an isomorphism $ K(\Lambda\text{-mod}) \cong Q $.

For each $ \nu = \sum_{i \in I} \nu_i \alpha_i  \in Q_+ $, we consider the affine variety of $ \Lambda$-module structures on $ \oplus_{i \in I} \C^{\nu_i} $.  More precisely, we define
$$
\Lambda(\nu) \subset \bigoplus_{(i,j) \in H} \Hom(\C^{\nu_i}, \C^{\nu_j})
$$
to be the subvariety defined by the equation (\ref{eq:preproj}).

\subsection{The dual semicanonical basis}\label{ssec:dualsemi}
Let $ M$ be a $ \Lambda $-module with dimension vector~$ \nu $.
Following Lusztig \cite{Lusztig00} and Geiss--Leclerc--Schr\"oer \cite[\S 5]{GeissLeclercSchroer}, we define an element
$ \xi_M \in \C[N]_{-\nu} $ as follows.

First, for each $ \vi \in \Seq(\nu) $, we define the projective variety of composition series of type $ \vi$,
$$
F_\vi(M) = \bigl\{ 0=M^0 \subseteq M^1 \subseteq \dots \subseteq M^m = M \bigm| M^k/M^{k-1} \cong S_{i_k} \text{ for all } k \bigr\}
$$
and then we define $\xi_M \in \C[N]_{-\nu} $ by requiring that
$$ \langle e_{i_1} \cdots e_{i_m} , \xi_M \rangle = \chi(F_\vi(M))$$
for any $\vi \in \Seq(\nu)$, where $ \chi $ denotes topological Euler characteristic.
(Note: Lusztig and Geiss--Leclerc--Schr\"oer consider decreasing composition
series, whereas we chose to use increasing ones. Our choice accounts for
the use of the dual setup and ensures that the crystal structure on the
dual semicanonical basis coincides with the crystal structure defined in
\cite[\S 5]{KashiwaraSaito}.)

With this definition, the following result is immediate (see \cite[Lemma 7.3]{GeissLeclercSchroer}).  
\begin{lemma} \label{le:multXi}
For any $ \Lambda $-modules $M, N $, we have $ \xi_{M \oplus N} = \xi_M \xi_N $.
\end{lemma}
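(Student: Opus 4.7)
The plan is to test the identity against monomials in the Chevalley generators, exploiting that multiplication in $\ON$ is dual to the coproduct of $U(\mathfrak n)$. Concretely, for any $\vi=(i_1,\ldots,i_m)\in\Seq(\dimvec M+\dimvec N)$, the identity $\Delta(e_j)=e_j\otimes 1+1\otimes e_j$ extended multiplicatively gives
$$\langle e_{\vi},\xi_M\,\xi_N\rangle
=\sum_{S\subseteq\{1,\ldots,m\}}\langle e_{\vi_S},\xi_M\rangle\,\langle e_{\vi_{S^c}},\xi_N\rangle
=\sum_{S}\chi(F_{\vi_S}(M))\,\chi(F_{\vi_{S^c}}(N)),$$
where $\vi_S$ denotes the subsequence of $\vi$ indexed by $S$ in the induced order. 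On the other hand, by the very definition of $\xi_{M\oplus N}$, we have $\langle e_{\vi},\xi_{M\oplus N}\rangle=\chi(F_{\vi}(M\oplus N))$. Since $\ON_{-\nu}$ is dual to $U(\mathfrak n)_\nu$, it therefore suffices to prove the combinatorial identity
\begin{equation*}
\chi(F_{\vi}(M\oplus N))=\sum_{S\subseteq\{1,\ldots,m\}}\chi(F_{\vi_S}(M))\,\chi(F_{\vi_{S^c}}(N)).
\end{equation*}

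To establish this, I would introduce the $\mathbb C^\times$-action on $M\oplus N$ that acts by weight $1$ on $M$ and weight $0$ on $N$. This is a $\Lambda$-module automorphism, so it induces an action on $F_{\vi}(M\oplus N)$, and hence the Euler characteristic localizes to the fixed-point set:
$$\chi(F_{\vi}(M\oplus N))=\chi\bigl(F_{\vi}(M\oplus N)^{\mathbb C^\times}\bigr).$$
A composition series $0\subseteq M^1\subseteq\cdots\subseteq M^m=M\oplus N$ is fixed by $\mathbb C^\times$ if and only if each $M^k$ splits as $(M^k\cap M)\oplus(M^k\cap N)$. Since each successive quotient $M^k/M^{k-1}\cong S_{i_k}$ is simple, exactly one of the two intersections jumps at step $k$. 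Recording the set $S\subseteq\{1,\ldots,m\}$ of steps at which $M^k\cap M$ jumps yields a disjoint-union decomposition
$$F_{\vi}(M\oplus N)^{\mathbb C^\times}=\bigsqcup_{S\subseteq\{1,\ldots,m\}}F_{\vi_S}(M)\times F_{\vi_{S^c}}(N),$$
from which the desired identity on Euler characteristics follows by multiplicativity.

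The one point that needs a little care is the torus localization step: one must check that $F_{\vi}(M\oplus N)$ is a suitable variety (it is a closed subvariety of a product of Grassmannians) so that the classical equality $\chi(X)=\chi(X^{\mathbb C^\times})$ applies; this is the only mildly non-formal ingredient. Everything else is bookkeeping about the Hopf-algebra pairing and the splitting of fixed composition series, both of which are routine once the torus action is in place.
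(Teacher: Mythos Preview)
Your argument is correct. The reduction to the Euler-characteristic identity via the Hopf pairing is exactly right, and the $\mathbb C^\times$-localization (scaling $M$ by weight $1$ and $N$ by weight $0$) cleanly identifies the fixed locus with the disjoint union $\bigsqcup_S F_{\vi_S}(M)\times F_{\vi_{S^c}}(N)$; since $F_{\vi}(M\oplus N)$ is a projective variety with an algebraic torus action, the equality $\chi(X)=\chi(X^{\mathbb C^\times})$ is available.

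The paper itself does not give a proof: it simply declares the lemma ``immediate'' from the definition and points to \cite[Lemma~7.3]{GeissLeclercSchroer}. Your write-up supplies precisely the details that this reference contains, and the torus-localization device you use is a tidy way of organizing the splitting of composition series in a direct sum. So there is no substantive difference in approach---you have just made explicit what the paper leaves to the citation.
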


This map $ M \mapsto \xi_M $ is constructible and so for any component $ Y \subset \Lambda(\nu) $, we can define $ c_Y \in \ON_{-\nu} $ by setting $ c_Y = \xi_M $, for $ M $ a general point in $ Y $.

The following result is due to Lusztig \cite{Lusztig00}.
\begin{theorem} \label{th:LusztigPerfect}
\begin{enumerate}
\item
For each $ \nu \in Q_+$, $ \{ c_Y \mid Y \in \Irr \Lambda(\nu) \} $ is a basis for~$ \ON_{-\nu}$.
\item
The union of these bases forms a biperfect basis of $ \ON $.
\end{enumerate}
\end{theorem}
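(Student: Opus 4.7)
The plan is to follow Lusztig's original strategy from \cite{Lusztig00}, using the defining identity $\langle e_\vi,\xi_M\rangle=\chi(F_\vi(M))$ to derive explicit formulas for the left and right actions of the Chevalley generators on $\xi_M$, and then to read off perfectness, biperfectness and the basis property directly.

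First I would compute the action. Given a $\Lambda$-module $M$ and $i\in I$, I stratify the variety $F_{(\vi,i)}(M)$ of composition series whose top factor is $S_i$ according to the isomorphism class of the kernel $M^m = \ker(M\twoheadrightarrow S_i)$. The stratum indexed by $[M']$ fibres over the locally closed subset $U_{[M']}\subset\mathbb P\,\Hom_\Lambda(M,S_i)$ of surjections with kernel isomorphic to $M'$, with fibre $F_\vi(M')$. Additivity of the Euler characteristic then gives
$$\chi(F_{(\vi,i)}(M))=\sum_{[M']}\chi(U_{[M']})\,\chi(F_\vi(M')),$$
and since $\langle e_\vi e_i,\xi_M\rangle=\langle e_\vi,e_i\cdot\xi_M\rangle$, this translates to the expansion
$$e_i\cdot\xi_M=\sum_{[M']}\chi(U_{[M']})\,\xi_{M'}.$$
The symmetric argument, stratifying $F_{(i,\vi)}(M)$ by $M^1=S_i\subseteq M$ (equivalently by the image of a morphism $S_i\to M$), produces a parallel formula for $\xi_M\cdot e_i$ involving $\Hom_\Lambda(S_i,M)$.

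Next I set $\varepsilon_i(M):=\dim\Hom_\Lambda(M,S_i)$ and $\varepsilon_i^*(M):=\dim\Hom_\Lambda(S_i,M)$. Both are constructible and so are constant on an open dense subset of each component $Y\in\Irr\Lambda(\nu)$; I denote their generic values by $\varepsilon_i(Y)$, $\varepsilon_i^*(Y)$. I define $\tilde e_iY$ to be the component containing the generic kernel $M'=\ker(M\twoheadrightarrow S_i)$ for general $M\in Y$; this is well-defined and satisfies $\varepsilon_i(\tilde e_iY)=\varepsilon_i(Y)-1$. The point to check is that in the expansion above, $\chi(U_{[M']})=\varepsilon_i(M)=\dim\mathbb P\,\Hom_\Lambda(M,S_i)+1$ when $[M']$ is the generic kernel (the open stratum is an open subset of a projective space), while every other surviving stratum satisfies $\varepsilon_i(M')<\varepsilon_i(\tilde e_iY)$. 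This is precisely the perfect basis condition; combined with the corresponding right-action statement using $\tilde e_i^*$, it yields biperfectness of $\{c_Y\}$, giving (ii) modulo the basis property.

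Finally, for part (i) I argue linear independence by the same triangularity used in the proof of Theorem~\ref{th:UniqCrys}: the expansion of $e_i^{(\max)}\cdot c_Y$ has a controlled leading term, so an iterated application along a sequence $\mathbf i$ filling all of $I$ reduces any alleged linear dependence to the already-known case of $Y=\{0\}$. For spanning I invoke Lusztig's Lagrangian construction, which identifies $\Irr\Lambda(\nu)$ with a basis of $U(\lie n)_\nu$; since $\dim\ON_{-\nu}=\dim U(\lie n)_\nu$, linear independence together with the correct cardinality forces $\{c_Y\}_{Y\in\Irr\Lambda(\nu)}$ to be a basis of $\ON_{-\nu}$. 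The main technical obstacle is the precise bookkeeping of the stratification in the first step, specifically confirming that the open stratum in $\mathbb P\,\Hom_\Lambda(M,S_i)$ corresponding to the generic kernel contributes exactly $\varepsilon_i(M)$; everything else is either bookkeeping or a direct appeal to Lusztig's counting of the irreducible components of $\Lambda(\nu)$.
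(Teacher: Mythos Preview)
Your stratification formula $e_i\cdot\xi_M=\sum_{[M']}\chi(U_{[M']})\,\xi_{M'}$ is correct, but the key claim that the open stratum contributes $\chi(U_{[M'_0]})=\varepsilon_i(M)$ is false, and this breaks the argument. The open stratum is in general a \emph{proper} open subset of $\mathbb P\,\Hom_\Lambda(M,S_i)$, and its Euler characteristic is not that of the whole projective space. Concretely, in type $A_2$ take $M=P\oplus S_1$ with $P$ the indecomposable of top $S_1$ and socle $S_2$; this $M$ is the general point of its component $Y$ and has $\varepsilon_1(M)=2$. The stratification of $\mathbb P^1=\mathbb P\,\Hom_\Lambda(M,S_1)$ by kernel type has open stratum $\mathbb A^1$ (kernel $\cong P$, $\chi=1$) and a closed point (kernel $\cong S_1\oplus S_2$, $\chi=1$). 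So your formula gives $e_1\cdot\xi_M=\xi_P+\xi_{S_1\oplus S_2}$. Now $S_1\oplus S_2$ is \emph{not} general in any component, and $\xi_{S_1\oplus S_2}$ is not a basis element: one computes $\xi_{S_1\oplus S_2}=c_{\tilde e_1Y}+c_{Y''}$ with $\varepsilon_1(Y'')=0$. The total coefficient of $c_{\tilde e_1Y}$ is therefore $1+1=2$, but it arises from \emph{both} strata, not from the open one alone. So neither ``the open stratum gives the full leading coefficient'' nor ``the lower strata contribute only terms with $\varepsilon_i<\varepsilon_i(Y)-1$'' is true; the latter fails even at the level of the functions $\xi_{M'}$, since $\varepsilon_i(M')\ge\varepsilon_i(M)-1$ for every kernel $M'$.

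The paper avoids this by reversing your order of (i) and (ii) and by using the divided power rather than a single $e_i$. Statement (i) is taken directly from Lusztig \cite[Theorem~2.7]{Lusztig00}. Then \cite[\S2.9]{Lusztig00} together with the Kashiwara--Saito crystal structure gives the clean identity $c_Y\cdot e_i^{(p)}=c_{(\tilde e_i^*)^pY}$ for $p=\varepsilon_i^*(Y)$: a statement relating two basis elements with no stratification and no non-generic $\xi_{M'}$ in sight. (Geometrically, $e_i^{(p)}$ amounts to quotienting by the full $i$-head in one step, so the only ``kernel'' that appears is the $i$-radical, which is automatically general in $(\tilde e_i^*)^pY$.) With (i) in hand one knows $c_Y\cdot e_i$ lies in $K^*_{i,p-1}$ and hence expands over $Y''$ with $\varepsilon_i^*(Y'')\le p-1$; applying $e_i^{(p-1)}$ to the expansion then isolates the coefficient of $c_{\tilde e_i^*Y}$ as $p$, with all other survivors forced to equal $\tilde e_i^*Y$. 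Your attempt to deduce (i) from (ii) via triangularity would in fact go through \emph{if} you had the divided-power identity (which does not need the basis property), but as written your route to perfectness via a single $e_i$ does not stand on its own.
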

\begin{proof}
Statement (i) is a direct consequence of Theorem~2.7 in \cite{Lusztig00}.
From \S 2.9 in \textit{loc.\ cit.} and the definition of the bicrystal
structure on the set of irreducible components of the nilpotent varieties
(see \S 5 in \cite{KashiwaraSaito}), we deduce that for any irreducible
component $Y\subset\Lambda(\nu)$ and any $i\in I$, if we set
$p=\varepsilon_i^*(Y)$ and $Y'=(\tilde e_i^*)^p\;Y$, then
$c_Y\cdot e_i^{(p)}=c_{Y'}$. With these notations, write
$$c_Y\cdot e_i=\sum_{Y''}\alpha_{Y''}\;c_{Y''}.$$
Routine arguments show then that
$$\alpha_{Y''}=\begin{cases}
p&\text{if }Y''=\tilde e_i^*Y,\\
0&\text{if }\varepsilon_i^*(Y'')\geq{p-1}\text{ and }Y''\neq\tilde e_i^*Y,
\end{cases}$$
which proves the half of the statement (ii) related to the right action
of $N$ on $\ON$. The other half can be proved analogously or deduced
from Theorem~3.8 in \cite{Lusztig00}.
\end{proof}

This basis for $ \ON $ is called the dual semicanonical basis. By \S\ref{ss:UniqCrys}, it carries a bicrystal structure isomorphic to $B(\infty)$. Thus, a MV polytope is uniquely associated to each element in the dual semicanonical basis.

On the other hand, if $ M $ is a $\Lambda $-module, then we define its
\defn{Harder--Narasimhan polytope} to be
$$
\Pol(M) := \Conv\bigl\{-\dimvec N \bigm| N \subseteq M \text{ is a submodule}\bigr\}.
$$
This map $ M \mapsto \Pol(M) $ is constructible and so for any component $ Y \subset \Lambda(\nu) $, we can define $ \Pol(Y) := \Pol(M) $ for $ M $ a general point in $ Y$.  (We added the $ - $ sign into the definition since $ \xi_M $ has weight $ -\dimvec M $.)

The following result was obtained by the first two authors with Tingley (see \cite[\S 1.3]{BaumannKamnitzerTingley}).
\begin{theorem}\label{thm:BKT}
  Let $ Y $ be a component of $ \Lambda(\nu) $.  Then $\Pol(Y)$ is the
  MV polytope of the basis vector $c_Y$.
\end{theorem}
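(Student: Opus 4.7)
My plan is to identify both polytopes via their Lusztig data (edge-length sequences along Weyl chambers), using the combinatorial characterization of MV polytopes recalled in \S\ref{ss:MVpolytopes}. Concretely, for each reduced word $\mathbf{i}=(i_1,\dots,i_N)$ for $w_0$, the MV polytope of $c_Y$ has a distinguished path of vertices $0=\mu_e, \mu_{s_{i_1}}, \mu_{s_{i_1}s_{i_2}},\dots,\mu_{w_0}=-\nu$, with edge $\mu_{w_k}-\mu_{w_{k-1}} = n_k\, w_{k-1}\alpha_{i_k}$ where $w_k=s_{i_1}\cdots s_{i_k}$ and $(n_1,\dots,n_N)$ is the Lusztig datum of $c_Y$ in direction $\mathbf{i}$, determined by iterating Saito reflections (\S\ref{ss:CrysRefl}) and measuring $\varepsilon_{i_k}$ at each step. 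I would match this to an analogous sequence of distinguished submodules $0 \subseteq M^{w_1} \subseteq \cdots \subseteq M^{w_N} = M$ of a generic $M \in Y$, whose successive quotients contribute the factors $n_k$.

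First I would establish the vertex description of $\Pol(M)$: for each $w \in W$ and a generic cocharacter $\xi \in wC_+$, there is a unique submodule of $M$ that achieves the maximum of $\langle \xi, -\dimvec(\,\cdot\,)\rangle$ over all submodules of $M$; taking $M$ generic in $Y$ the resulting submodule depends only on the chamber, and its dimension vector gives $-\mu_w(\Pol(Y))$. Then, along a fixed reduced word $\mathbf{i}$, the successive quotients $M^{w_k}/M^{w_{k-1}}$ should be isotypic of type $S_{i_k}$ with multiplicity $n_k$, so the edge $-\dimvec M^{w_k}+\dimvec M^{w_{k-1}} = -n_k\alpha_{i_k}$ after the rotation by $w_{k-1}$ reads $n_k w_{k-1}\alpha_{i_k}$ as required.

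The main obstacle, and the technical heart of the proof, is to match the crystal-theoretic Lusztig datum of $c_Y$ with the module-theoretic multiplicities $n_k$ above. For this I would use the reflection functors $\Sigma_i$ on $\Lambda$-modules: show (a) that $\varepsilon_i(c_Y)=\dim \Hom_\Lambda(S_i, M)$ (or the analogous $i$-socle invariant) for generic $M \in Y$, using the Geiss--Leclerc--Schr\"oer formula $\langle e_\vi,\xi_M\rangle=\chi(F_\vi(M))$ applied to composition series beginning with $i$; (b) that $\Sigma_i$ sends a generic point of $Y$ to a generic point of $\sigma_i(Y)$, so $\Pol(\Sigma_i M) = s_i\cdot\Pol(M) + (\text{translation by }\varepsilon_i(c_Y)\alpha_i)$; and (c) that iterating along $\mathbf{i}$ produces the Saito chain of $c_Y$. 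Each of these steps rests on the short exact sequence relating $M$ with $\Sigma_i(M)$ through the $S_i$-socle, and on properties of $\Lambda$ established by Crawley-Boevey.

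Finally, having verified that $\Pol(Y)$ has the correct Lusztig datum for one reduced word $\mathbf{i}$, the same argument applied to every reduced word shows the full tropical-Pl\"ucker/2-face compatibility of \cite{mvcrystal}, which characterizes MV polytopes. (Alternatively and more elegantly: one can check the 2-face conditions directly by reducing to rank-$2$ sub-preprojective algebras, of finite type $A_1\times A_1$ and $A_2$, where the submodule lattice is fully understood; this localization is where the main calculation really takes place.) Combining with Theorem~\ref{th:UniqCrys} and the bijection $\mathcal Z(\infty)\cong B(\infty)\cong\Irr\Lambda(\nu)$, we conclude that $\Pol(Y)$ is exactly the MV polytope of $c_Y$.
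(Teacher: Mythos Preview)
The paper does not give a proof of this theorem; it simply attributes the result to \cite{BaumannKamnitzerTingley} and moves on. So there is no ``paper's own proof'' to compare against here.

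That said, your sketch is essentially the strategy carried out in \cite{BaumannKamnitzerTingley} (building on \cite{BaumannKamnitzer}): one shows that the Harder--Narasimhan polytope $\Pol(M)$ of a generic $M\in Y$ satisfies the tropical Pl\"ucker/2-face relations characterizing MV polytopes, and one matches its Lusztig data to those of $c_Y$ via reflection functors. A couple of places where you should expect the real work to lie: (i) the identification $\varepsilon_i(c_Y)=\dim\Hom_\Lambda(S_i,M)$ (or its $*$-version) is the crystal structure of Kashiwara--Saito \cite{KashiwaraSaito} and Lusztig \cite{Lusztig00}, not something you deduce directly from the GLS formula $\langle e_\vi,\xi_M\rangle=\chi(F_\vi(M))$; (ii) the statement that the reflection functor $\Sigma_i$ takes a generic point of $Y$ to a generic point of the component corresponding to $\sigma_i(c_Y)$ is the technical heart and is proved in \cite{BaumannKamnitzer}; (iii) the rank-2 reduction you propose is exactly how \cite{BaumannKamnitzerTingley} verifies the 2-face conditions, but making ``restrict to a rank-2 sub-preprojective algebra'' precise requires the torsion-pair/HN-filtration machinery developed there, not just a naive restriction. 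With those caveats, your outline is on target.
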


\subsection{Measures from $ \Lambda$-modules}\label{ssec:measmod}
Let $ M $ be a $ \Lambda $-module of dimension vector $ \nu $.  By the definition of $\xi_M $ and the map $ D : \ON \rightarrow \PP $, we have that
$$
D(\xi_M) =  \sum_{\vi \in \Seq(\nu)} \chi(F_\vi(M))\, D_\vi
$$
Note that the measure $ D(\xi_M) $ is supported on the polytope $ \Pol(M) $.

In the previous section (Theorem \ref{th:DHequalsD}), we showed that the measure $ D(b_Z) $ of an MV basis vector equals the push-forward of the Duistermaat--Heckman measure $ DH(Z) $ of the corresponding MV cycle. The Duistermaat--Heckman measure is defined as the asymptotics of sections of line bundles on $ Z$.  In a similar fashion, we will now explain that $ D(\xi_M) $ can also be regarded as an asympototic.

We define $ F_n(M) $ to be the space of $(n+1)$-step chains of submodules of $ M$, so
$$
F_n(M) = \{ 0=M^0 \subseteq M^1 \subseteq \dots \subseteq M^{n+1} = M \}
$$
and let $ F_{n,\mu}(M) $ denote the locus in $ F_n(M) $ where $ \dimvec M^1 + \dots + \dimvec M^n = \mu $.

We will record the information of the Euler characteristics of $ F_{n,\mu}(M) $ as an element of $ R(T) $ by
$$ [H^\bullet(F_n(M))] = \sum_{\mu \in Q_+} \chi(F_{n,\mu}(M)) \; \delta_{-\mu} $$
Note that $[H^\bullet(F_n(M))]$ is supported
on the polytope $ n\Pol(M) $.

\begin{theorem} \label{th:limLambda}
For any $ \Lambda$-module $ M$, with $ \nu = \dimvec M$, we have $$ D(\xi_M) = \lim_{n \rightarrow \infty} \frac{1}{n^{\rho^\vee(\nu)}} (\tau_n)_* [H^\bullet(F_n(M))] $$
\end{theorem}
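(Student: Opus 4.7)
My plan is to stratify $F_n(M)$ by the tuple of successive quotient dimension vectors and then perform an asymptotic (Riemann sum) analysis. For a chain $(M^j)_{j=0}^{n+1}$, write $\rho_j=\dimvec M^j/M^{j-1}$ and $\vec\rho=(\rho_1,\dots,\rho_{n+1})$, and let $F_{n,\vec\rho}(M)\subset F_n(M)$ denote the corresponding stratum. Positions $j$ with $\rho_j=0$ force $M^j=M^{j-1}$, so a chain in this stratum is determined by its nonzero quotients alone; if $\vec\sigma=(\sigma_1,\dots,\sigma_k)$ is the subsequence of nonzero entries of $\vec\rho$, occurring at positions $1\le j_1<\cdots<j_k\le n+1$, then $F_{n,\vec\rho}(M)\cong F_{\vec\sigma}(M)$, where $F_{\vec\sigma}(M)$ denotes the variety of strict flags $0\subset N^1\subset\cdots\subset N^k=M$ with $\dimvec N^l/N^{l-1}=\sigma_l$. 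Moreover the weight of any such chain is $\mu(\vec\rho)=\sum_l(n+1-j_l)\sigma_l$.

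Collecting strata will yield
\begin{equation*}
[H^\bullet(F_n(M))]=\sum_{\vec\sigma}\chi(F_{\vec\sigma}(M))\sum_{1\le j_1<\cdots<j_k\le n+1}\delta_{-\sum_l(n+1-j_l)\sigma_l},
\end{equation*}
summed over finite sequences $\vec\sigma$ of strictly positive dimension vectors with $\sum\sigma_l=\nu$. Writing $p=\rho^\vee(\nu)$, the constraint $\sum_l|\sigma_l|=p$ with $|\sigma_l|\ge1$ forces $k\le p$, with equality exactly when every $\sigma_l$ is a simple root, i.e.\ $\vec\sigma\in\Seq(\nu)$. For any $\vec\sigma$ with $k<p$, the inner positional sum has $\binom{n+1}{k}=O(n^k)$ terms while $\chi(F_{\vec\sigma}(M))$ is independent of $n$, so after applying $\frac1{n^p}(\tau_n)_*$ the contribution is $O(n^{k-p})$ and vanishes in the weak limit.

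For the remaining terms $\vec\sigma=\vi=(i_1,\dots,i_p)\in\Seq(\nu)$, we have $F_{\vec\sigma}(M)=F_\vi(M)$, and substituting $m_l=n+1-j_l$ the surviving contribution becomes
\begin{equation*}
\chi(F_\vi(M))\cdot\frac1{n^p}\sum_{n\ge m_1>\cdots>m_p\ge0}\delta_{-\sum_l m_l\alpha_{i_l}/n}.
\end{equation*}
This is a standard Riemann sum converging weakly to $\chi(F_\vi(M))$ times the pushforward of Lebesgue measure on $\{1\ge u_1\ge\cdots\ge u_p\ge0\}$ under $\vec u\mapsto-\sum_l u_l\alpha_{i_l}$. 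The change of coordinates $c_r=u_r-u_{r+1}$ (with $u_0=1$, $u_{p+1}=0$) has Jacobian $1$ and identifies this pushforward with $D_\vi=(\pi_\vi)_*\delta_{\Delta^p}$. Summing over $\vi\in\Seq(\nu)$ and invoking the formula $D(\xi_M)=\sum_{\vi}\chi(F_\vi(M))D_\vi$ from the start of \S\ref{ssec:measmod} will then complete the proof.

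The hard part is the stratification bookkeeping in the first paragraph — recognizing that only the nonzero quotients and their positions matter, and correctly expressing $\mu(\vec\rho)$ in terms of these positions. Once this structural observation is in place, what remains is a straightforward polynomial-degree count isolating the $k=p$ strata, together with a classical Riemann sum convergence on the order simplex.
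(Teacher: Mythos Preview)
Your proof is correct and follows essentially the same strategy as the paper's: both stratify $F_n(M)$ by the tuple of successive quotient dimension vectors, observe that only the nonzero entries matter (giving the isomorphism with $F_{\vec\sigma}(M)$), count $O(n^k)$ placements for a length-$k$ nonzero pattern so that only $k=p$ survives after the $n^{-p}$ rescaling, and then identify the surviving Riemann sum with $D_\vi$. The only cosmetic difference is that the paper parametrizes the placements by the lengths of the zero-strings (lattice points in a dilated simplex), while you parametrize by the positions $j_l$ and substitute $m_l=n+1-j_l$; your final change of coordinates $c_r=u_r-u_{r+1}$ is exactly what the paper encodes via $\pi_\vi$.
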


\begin{proof}
  Let $ m = \dim M = \rho^\vee(\nu)$.

  The proof is largely parallel to that of \cite[\S 3]{LittDH}.
  We begin by defining the locally constant function
  \begin{align*}
  \mu_\bullet: F_n(M) &\rightarrow Q_+^{\ n} \\
M^\bullet  &\mapsto (\dimvec (M^k/M^{k-1}))_{k=1,\ldots,n}
\end{align*}

  For any $ M^\bullet \in F_n(M) $, the number of non-zero terms in $\mu_\bullet(M^\bullet) $ is at most $ m $.  Given a sequence $\gamma \in Q_+^{\ n}$,
  let $\gamma^{\neq 0}$ be the sequence with its $0$s removed. 
  Thus we can decompose
  $$ [H^\bullet(F_n(M))] = 
  \sum_{\ell=0}^{m} 
  \sum_{\theta \in (Q_+ \smallsetminus \{0\})^\ell}
  \sum_{\gamma \in Q_+^n \atop \gamma^{\neq 0} = \theta}
  \chi\left( \{M^\bullet \in F_n(M)\, :\, \mu_\bullet(M^\bullet) = \gamma \} \right)
  \, \delta_{-\sum_{k=1}^n (n+1-k)\, \gamma_k} $$
  Now observe that if $ \theta = \gamma^{\neq 0}$, we have an obvious isomorphism
  $$\{M^\bullet \in F_n(M)\ :\ \mu_\bullet(M^\bullet) = \gamma \} \ \cong\ 
  \{M^\bullet \in F_\ell(M)\ :\ \mu_\bullet(M^\bullet) = \theta  \}$$
  We let $ \chi_\theta $ denote the Euler characteristic 
  of this space.  Thus,
  $$ [H^\bullet(F_n(M))] = 
  \sum_{\ell=0}^{m} \ 
  \sum_{\theta \in (Q_+ \smallsetminus \{0\})^\ell} \chi_{\theta}
  \sum_{\gamma \in Q_+^n \atop \gamma^{\neq 0} = \theta}
  \delta_{-\sum_{k=1}^n (n+1-k)\, \gamma_k} $$
  The number of $\gamma$ with $\gamma^{\neq 0}=\theta$ is plainly ${n \choose \ell} = O(n^\ell)$.
  We now rescale
  $$ \frac{[H^\bullet(F_n(M))]}{n^{\rho^\vee(\nu)}} =
  \sum_{\ell=0}^{m}\ 
  \sum_{\theta \in (Q_+ \smallsetminus \{0\})^\ell} \chi_\theta \
  \frac{1} {n^m}
  \sum_{\gamma \in Q_+^n \atop \gamma^{\neq 0} = \theta}   \delta_{-\sum_{k=1}^n (n+1-k)\, \gamma_k}
  \qquad \text{using $ \rho^\vee(\nu) = m$}, $$
  and let $$ D_{\theta, n} := \frac{1}{n^{m}} \sum_{\gamma:\ \gamma^{\neq 0} = \theta}\, \delta_{-\sum_{k=1}^n (n+1-k)\, \gamma_k} $$
  This term has total mass $O(n^{\ell-m})$. In the limit $n\to \infty$,
  we can therefore neglect all $\ell < m$, and independently
  we apply $(\tau_n)_*$:
  $$ \lim_{n\to\infty} (\tau_n)_* \frac{[H^\bullet(F_n(M))]}{n^{\rho^\vee(\nu)}} =
  \sum_{\theta \in (Q_+ \smallsetminus \{0 \})^{m}} \chi_\theta   \lim_{n\to\infty}  (\tau_n)_* D_{\theta,n}
  $$
  Since $ \theta \in (Q_+ \smallsetminus \{0 \})^{m} $ (i.e. $ \theta $ has the same length as $ \dim M $), for the locus $\{ M^\bullet \in F_\ell(M)\ :\ \mu_\bullet(M^\bullet) = \theta \}$ to be non-empty, we need each $\theta_k $ to be a simple root and that $ \sum \theta_k = \nu $.  Such a
  $\theta$ uniquely determines (and is determined by) a sequence $\vi \in \Seq(\nu)$ with $\theta_k = \alpha_{i_k}$ for all $ k $.  Moreover, we have $\chi_\theta = \chi(F_\vi(M))$.

  Now that $\ell = m$, the term $ D_{\theta,n} $ has mass $1/m!$ as 
  $ n\to \infty$, the volume of the $m$-simplex.  
  We proceed to determine how that mass is distributed.

  To index the $n\choose m$ terms in the $D_{\theta,n} $ sum,
  we count how long the individual strings of $0$s are between
  the $m$ non-zero terms in $\gamma$: there are $1+m$ strings of $0$s,
  of total length $n - m$.  Thus the set of all $ \gamma $ such that $ \gamma^{\neq 0} = \theta $ are naturally in bijection with lattice points in
  the dilated simplex $(n-m)\Delta^{m}$.
  (We will soon apply $(\tau_n)_*$, resulting in the nearly-standard
  simplex $(1-\frac{m}{n})\Delta^{m}$.)
  The $j$th vertex of this simplex corresponds to the case that 
  $\gamma$ has $j$ nonzero terms up front, its $n-m$ zeros all in
  the middle, and $m-j$ nonzero terms at the end (its nonzero terms $\theta$
  determined by $\gamma$).

  Recall the linear map $ \pi_\vi : \R^{m+1} \to \tR $ which takes the $(k+1)$st standard basis vector  to the negative partial sum $ -\beta^\vi_k = -(\theta_1 + \dots + \theta_k) $.  The map $ \pi_\vi $ intertwines the above bijection with the map $ \gamma \mapsto -\sum_{k=1}^n (n+1-k)\, \gamma_k + \zeta $ where $ \zeta:= \sum_{k=1}^m (m+1-k)\, \theta_k$ is a shift which is independent of $ n$.  Thus, we obtain that
  $$
 D_{\theta, n} = \sum_{x \in (n-m)\Delta^m \cap \Z^{m+1}} \delta_{\pi_\vi(x) - \zeta}
 $$
 and thus $ \lim_{n\to \infty}  (\tau_n)_* \frac{1}{n^m} D_{\theta, n} = D_\vi $ as desired.

\end{proof}

\section{Comparing measures from MV cycles and
  from $\Lambda$-modules}
\subsection{From measures to sections} \label{se:measec}
Let $ Z $ be a stable MV cycle of weight $ \nu$.  Let $ Y$ be an irreducible component of $ \Lambda(\nu) $.  We say that $Y $ and $ Z$ \textbf{correspond} if $ \Pol(Y) = \Pol(Z)$; in other words, if the corresponding basis elements $c_Y$ and $b_Z$ give the same element in the bicrystal $B(\infty)$.

For the remainder of this section, fix a pair $ Y, Z $ which correspond.  Recall that the measures $ D(c_Y) $ and $ D(b_Z) $ are both supported on $  \Pol(Y) = \Pol(Z)$.  Thus an enhancement of the equality of polytopes would be the equality of measures.  Note that the equality of basis vectors $ c_Y =  b_Z $ would imply the equality of measures $ D(c_Y) = D(b_Z) $, but not vice versa (because of Remark \ref{rem:NotInjective}).

By Theorems \ref{th:DHequalsD} and \ref{th:limLambda}, we see that each of the measures $ D(c_Y) $ and $ D(b_Z) $ are the limits of (scalings of) measures $ [H^\bullet(F_n(M))] $ (where $ M$ is a general point of $ Y $) and $ [\Gamma(Z, \mathscr L^{\otimes n})] $.  This motivates the following definition.

\begin{definition}
We say that $ Y $ and $ Z $ are \textbf{extra-compatible} if for all $ n \in \N $ and $ \mu \in Q_+$, we have
$$
\dim \Gamma(Z, \mathscr L^{\otimes n})_{-\mu} = \chi(F_{n,\mu}(M))
$$
where $M $ is a general point of $ Y $.
\end{definition}

The following is clear from the above results.
\begin{proposition} \label{prop:4statements}
Consider the following four statements
\begin{enumerate}
\item \label{st1} $ Y $ and $ Z$ are extra-compatible.
\item \label{st2} $c_Y = b_Z $.
\item \label{st3} $ D(c_Y) = D(b_Z) $.
\item \label{st4} $ \barD(c_Y) = \barD(b_Z) $.
\end{enumerate}
We have the implications
$$
(\ref{st1}) \Rightarrow (\ref{st3}) \quad (\ref{st2}) \Rightarrow (\ref{st3}) \Rightarrow (\ref{st4})
$$
\end{proposition}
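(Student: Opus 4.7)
The plan is to verify the three implications in turn, each of which follows directly from results already established earlier in the paper; there is no serious obstacle here, only a careful bookkeeping of the scalings.

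For $(\mathrm{i})\Rightarrow(\mathrm{iii})$, I will combine Theorems~\ref{th:DHequalsD} and~\ref{th:limLambda}. The first gives $D(b_Z)=\lim_{n\to\infty}\frac{1}{n^{\rho^\vee(\nu)}}(\tau_n)_*[\Gamma(Z,\mathscr L^{\otimes n})]$ (using $\dim Z=\rho^\vee(\nu)$ and the identification $\iota$, which is the identity in the simply-laced case), while the second gives $D(c_Y)=D(\xi_M)=\lim_{n\to\infty}\frac{1}{n^{\rho^\vee(\nu)}}(\tau_n)_*[H^\bullet(F_n(M))]$ for $M$ a general point of $Y$ (note $\dim M=\sum_i\dim M_i=\rho^\vee(\nu)$ since the paper has normalized $\langle\rho^\vee,\alpha_i\rangle=1$). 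The extra-compatibility hypothesis is precisely the statement that the pre-limit point measures $[\Gamma(Z,\mathscr L^{\otimes n})]$ and $[H^\bullet(F_n(M))]$ coincide in $R(T)$ for every $n$, so their rescaled limits coincide as well.

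For $(\mathrm{ii})\Rightarrow(\mathrm{iii})$, the map $D:\ON\to\PP$ is a fixed linear (in fact algebra) morphism, so $c_Y=b_Z$ in $\ON$ immediately yields $D(c_Y)=D(b_Z)$.

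For $(\mathrm{iii})\Rightarrow(\mathrm{iv})$, I will invoke Remark~\ref{rem:exponentials}, which identifies $\barD(f)$ with the coefficient of $e^{-\nu}$ in the Fourier expansion $FT\circ D(f)$ for any $f\in\ON_{-\nu}$. Since both $c_Y$ and $b_Z$ lie in $\ON_{-\nu}$, applying the (injective) Fourier transform to the equality $D(c_Y)=D(b_Z)$ and extracting the $e^{-\nu}$ coefficient gives $\barD(c_Y)=\barD(b_Z)$, completing the proof.
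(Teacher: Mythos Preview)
Your proof is correct and matches the paper's approach: the paper simply declares the proposition ``clear from the above results,'' and you have correctly identified those results as Theorems~\ref{th:DHequalsD} and~\ref{th:limLambda} for $(\ref{st1})\Rightarrow(\ref{st3})$, the fact that $D$ is a map for $(\ref{st2})\Rightarrow(\ref{st3})$, and Remark~\ref{rem:exponentials} for $(\ref{st3})\Rightarrow(\ref{st4})$. Your bookkeeping of the dimensions and the identification via $\iota$ in the simply-laced case is accurate.
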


\subsection{General conjecture} \label{se:conj}
We can translate the Euler characteristic of $ F_{n, \mu}(M) $ into the Euler characteristic of another variety.  Consider the algebra $ \Lambda[t] := \Lambda \otimes_\C \C[t] $.  We define
$$
\mathbb G_{\mu}(M[t]/t^n) = \bigl\{ N \subset M \otimes \C[t]/t^n \bigm| N \text{ is a $\Lambda[t]$-submodule, }\, \dimvec N = \mu \bigr\},
$$

\begin{lemma}
For any $ M, n, \mu $, we have $ \chi \left( \mathbb G_{\mu}(M[t]/t^n) \right) = \chi(F_{n, \mu}(M)) $.
\end{lemma}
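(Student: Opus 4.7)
The plan is to introduce an algebraic $\Cx$-action on $\mathbb G_\mu(M[t]/t^n)$ whose fixed locus is naturally isomorphic to $F_{n,\mu}(M)$, and conclude via the standard fact that $\chi(X) = \chi(X^{\Cx})$ for any algebraic $\Cx$-action on a complex quasi-projective variety.

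First I would define the action. Let $\sigma_s : M \otimes \C[t]/t^n \to M \otimes \C[t]/t^n$ be the $\Lambda$-linear automorphism $\sigma_s(m \otimes t^k) = s^k\, m \otimes t^k$, i.e., the ``rescale $t$ by $s$'' map. A direct computation gives $\sigma_s \circ t = s\cdot(t \circ \sigma_s)$ on $M[t]/t^n$, so although $\sigma_s$ does not commute with multiplication by $t$, it carries $t$-stable subspaces to $t$-stable subspaces. Being $\Lambda$-linear, $\sigma_s$ therefore sends $\Lambda[t]$-submodules to $\Lambda[t]$-submodules; as it preserves dimension vectors, this defines an algebraic $\Cx$-action on the projective variety $\mathbb G_\mu(M[t]/t^n)$.

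Next, I would identify the fixed locus. A $\Lambda[t]$-submodule $N$ is $\Cx$-fixed iff it is $\C[t]$-graded, i.e., $N = \bigoplus_{k=0}^{n-1} M^{(k)} \otimes t^k$ for $\C$-subspaces $M^{(k)} \subseteq M$. The $\Lambda$-stability of $N$ is equivalent to each $M^{(k)}$ being a $\Lambda$-submodule of $M$, while $t$-stability amounts to the inclusions $M^{(k)} \subseteq M^{(k+1)}$ for $0 \le k \le n-2$. The condition $\dimvec N = \mu$ reads $\sum_{k=0}^{n-1} \dimvec M^{(k)} = \mu$. Setting $N_k := M^{(k-1)}$ for $k = 1, \ldots, n$, this is precisely the data of a point of $F_{n,\mu}(M)$, and the resulting morphism $F_{n,\mu}(M) \to \mathbb G_\mu(M[t]/t^n)^{\Cx}$ is an isomorphism of varieties, with inverse extracting the graded pieces of $N$.

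Finally, I would apply the general principle that for any algebraic $\Cx$-action on a complex quasi-projective variety $X$, the orbits in $X \setminus X^{\Cx}$ are copies of $\Cx$ (or quotients thereof) and so have vanishing Euler characteristic, yielding $\chi(X) = \chi(X^{\Cx})$ (in the smooth projective case this follows from the Bia\l ynicki--Birula decomposition). Applied to $X = \mathbb G_\mu(M[t]/t^n)$ together with the identification of the previous paragraph, this gives $\chi(\mathbb G_\mu(M[t]/t^n)) = \chi(F_{n,\mu}(M))$. The argument is essentially formal; the only mildly delicate point is to confirm that the scheme-theoretic fixed locus coincides with the natural scheme structure on $F_{n,\mu}(M)$, which is routine from the functor-of-points description of both sides, and the bijection on geometric points (which is all that is needed at the level of Euler characteristics) has already been established.
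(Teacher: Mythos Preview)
Your proof is correct and follows essentially the same approach as the paper: define the $\Cx$-action on $\mathbb G_\mu(M[t]/t^n)$ by rescaling $t$, identify the fixed locus with $F_{n,\mu}(M)$ via the graded-submodule description, and invoke $\chi(X)=\chi(X^{\Cx})$. Your write-up is more detailed than the paper's (which simply states the inclusion and the action and asserts the isomorphism onto the fixed locus), but the idea is identical.
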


\begin{proof}
First, define an inclusion $ F_{n, \mu}(M) \rightarrow \mathbb G_\mu(M[t]/t^n) $ by
$$
(M^1, \dots, M^n) \mapsto M^1\otimes \C t^0 \oplus M^2 \otimes \C t^1 \oplus \cdots \oplus M^n \otimes \C t^{n-1} \subset M\otimes \C[t]/t^n
$$
It is easy to see that the right hand side really is a $ \Lambda[t]$-submodule.

On the other hand, we can define an action of $ \C^\times $ on $Gr_{n,\mu}(M) $ using the action of $ \C^\times $ on $ \C[t]/t^n $ given by $ s \cdot p(t) = p(st) $.  It is easy to see that the above map gives an isomorphism
$$F_{n, \mu}(M) \rightarrow  \mathbb G_\mu(M[t]/t^n)^{\C^\times} $$
and so the result follows.
\end{proof}

Thus we deduce that $Y $ and $ Z$ are extra-compatible if for all $n \in \N,\mu \in Q_+$ we have
$$
\dim \Gamma(Z, \mathcal L^{\otimes n})_{-\mu} = \chi \left( \mathbb G_\mu(M[t]/t^n) \right)
$$
where $ M $ is a general point of $ Y $.

If we assume that the odd cohomology of $\mathbb G_\mu(M[t]/t^n) $ vanishes, this implies that there is an equality of $ T^\vee$-representations,
\begin{equation} \label{eq:AllenConj}
\Gamma(Z, \mathscr L^{\otimes n}) = H^\bullet(\mathbb G(M[t]/t^n)), \quad \text{ for all } n \in \N,
\end{equation}
where $T^\vee $ acts on the right hand side through the decomposition $ \mathbb G(M[t]/t^n) = \sqcup \mathbb G_\mu(M[t]/t^n) $.

\begin{remark}
The right hand side of (\ref{eq:AllenConj}) carries a cohomological grading.  We expect that (up to appropriate shift) this matches the $ \Z $-grading on the left hand side which comes from the loop rotation $ \C^\times $ action on $ Z$.
\end{remark}

The left hand sides of (\ref{eq:AllenConj}) form the components of a graded algebra, so it is natural to search for a similar structure on the right hand side.  After studying this question for some time, we are pessimistic about finding this algebra structure.  On the other hand, $\displaystyle{ \bigoplus_n } \Gamma(Z, \mathscr L^{\otimes n}) $ is also a graded module over the ring
$$ \bigoplus_n \Gamma\left(\overline{S_+^0 \cap S^{-\nu}_-}, \mathscr L^{\otimes n}\right) $$
We believe that such a module structure naturally exists for the direct sums of the right hand side of (\ref{eq:AllenConj}).  In fact, we believe that this structure is present for any module $M $, not just general modules corresponding to extra-compatible components.

\begin{conjecture}\label{conj:sheaf}
  For any preprojective algebra module $ M $ of dimension vector $ \nu $, the $ \mathbb Z_{\geq0} \times P $-graded vector space
  $$
  \bigoplus_{n \in \N}  H^\bullet(\mathbb G(M[t]/t^n))
  $$
  carries the structure of a $T^\vee$-equivariant graded $  \bigoplus_n \Gamma\left(\overline{S_+^0 \cap S^{-\nu}_-}, \mathcal L^{\otimes n}\right) $-module.
\end{conjecture}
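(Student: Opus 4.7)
The cleanest strategy is to strengthen the conjecture to Conjecture \ref{conj:intro}: produce a $T^\vee$-equivariant coherent sheaf $\mathcal{F}_M$ on $Z_\nu := \overline{S_+^0 \cap S_-^{-\nu}}$ with a natural isomorphism $\Gamma(Z_\nu, \mathcal{F}_M \otimes \mathscr{L}^{\otimes n}) \cong H^\bullet(\mathbb{G}(M[t]/t^n))$. Once $\mathcal{F}_M$ is in hand, the required module structure is automatic: the multiplication $\mathscr{L}^{\otimes n} \otimes \mathscr{L}^{\otimes m} \to \mathscr{L}^{\otimes (n+m)}$ induces pairings on global sections that are $T^\vee$-equivariant and associative by construction, giving $\bigoplus_n H^\bullet(\mathbb{G}(M[t]/t^n))$ the structure of a graded module over $\bigoplus_n \Gamma(Z_\nu, \mathscr{L}^{\otimes n})$.

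First, I would dispose of the case where $M$ comes from a quiver path algebra by invoking the forthcoming theorem of Hilburn, Kamnitzer, and Weekes \cite{HKW}, which uses the Braverman--Finkelberg--Nakajima realization of $Z_\nu$ as a Coulomb branch of a quiver gauge theory, together with the Coulomb branch module construction from Nakajima quiver varieties, to produce exactly such a sheaf $\mathcal{F}_M$. For such $M$, the cohomology of $\mathbb{G}(M[t]/t^n)$ is identified with representations of the associated Coulomb branch, and the module structure follows intrinsically from the Coulomb branch module formalism.

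The principal obstacle is the extension from path-algebra modules to arbitrary $\Lambda$-modules. The natural plan is to present $M$ via a finite resolution $P_\bullet \to M$ by projective $\Lambda$-modules, apply the previous step to each $P_i$ to obtain sheaves $\mathcal{F}_{P_i}$, and then define $\mathcal{F}_M$ as the appropriate (derived) cokernel. Three difficulties must be overcome: (i) establishing that the construction of \cite{HKW} is functorial enough to yield morphisms $\mathcal{F}_{P_{i+1}} \to \mathcal{F}_{P_i}$ from $\Lambda$-module maps $P_{i+1} \to P_i$; (ii) verifying that $\mathcal{F}_M$ is independent of the chosen resolution and of the quiver orientation underlying the BFN construction; and (iii) proving the cohomology vanishing $H^{>0}(Z_\nu, \mathcal{F}_M \otimes \mathscr{L}^{\otimes n}) = 0$ for $n \gg 0$ needed to identify global sections with $H^\bullet(\mathbb{G}(M[t]/t^n))$ degree by degree (and not only up to spectral sequence). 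An alternative, more elementary route that sidesteps Coulomb branches would be to construct the module structure directly from a Beilinson--Drinfeld-style correspondence relating $\mathbb{G}(M[t]/t^n)$ and $\mathbb{G}(M[t]/t^{n+m})$ via the $t^m$-twist inclusion $M[t]/t^n \hookrightarrow M[t]/t^{n+m}$, pairing with sections of $\mathscr{L}$ pulled back from the affine Grassmannian; but verifying associativity and identifying the resulting action with the expected ring structure appears to require essentially the same geometric input.
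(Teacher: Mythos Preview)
This statement is a \emph{conjecture}, not a theorem: the paper does not provide a proof. Immediately after stating it, the authors discuss its relationship to recent work and note that a forthcoming paper \cite{HKW} will establish the stronger Conjecture~\ref{conj:intro} in the special case where $M$ comes from a quiver path algebra, but the general case remains open. There is therefore no proof in the paper to compare your proposal against.

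Your outline is in fact a reasonable sketch of how one might approach the problem, and it aligns with the hints the authors give: reduce to the stronger sheaf-theoretic statement, invoke \cite{HKW} for the path-algebra case via Coulomb branches, and then attempt to bootstrap to general $\Lambda$-modules. But you have correctly identified the genuine obstacles yourself in points (i)--(iii), and none of them is resolved in the paper. In particular, the functoriality of the Coulomb-branch module construction under $\Lambda$-module maps and the independence of resolution are not established anywhere in the text, and the authors do not claim them. So your proposal is not a proof but a plausible research program---which is appropriate, since that is also the status the paper assigns to this statement.
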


If we assume this conjecture, then we get a  coherent sheaf $ \mathcal F_M$  on $ \overline{S_+^0 \cap S^{-\nu}_-} $ such that for large enough $n $,
$$
 \Gamma\left(\overline{S_+^0 \cap S^{-\nu}_-}, \mathcal F_M \otimes \mathscr L^{\otimes n}\right) \cong  H^\bullet(\mathbb G(M[t]/t^n)).
$$
as $ T^\vee$-representations.  Assuming the vanishing of odd cohomology, Theorem \ref{th:limLambda} implies that 
$ DH(\mathcal F_M) =  D(\xi_M) $.

On the other hand, we have the support cycle $ [\supp \mathcal F_M] = \sum a_k[Z_k] $ where $ Z_k $ ranges over the stable MV cycles of weight $ -\nu $.  By Theorem \ref{th:limCoherent}, we know that $ DH(\mathcal F_M) = \sum a_k DH(Z_k) $.  Thus by Theorem \ref{th:DHequalsD}, we reach $
\sum a_k D(b_{Z_k}) = D(\xi_M) $.

This suggests that $ \xi_M = \sum a_k b_{Z_k} $ in $\C[N]$.  In conclusion, the expansion of $ \xi_M $ in the MV basis should be given by the support cycle of $ \mathcal F_M$.

The conjecture extends to direct sums $M = \oplus_{k=1}^d M_k$ of
$\Lambda$-modules. Such an $M$ carries a $(\C^\times)^d$-action with
$\mathbb G(M[t]/t^n)^{(\C^\times)^d} \iso \prod_{k=1}^d \mathbb G(M_k[t]/t^n)$.  We will explain a conjectural relation between the sheaf $ \mathcal F_M $ and the sheaves $ \mathcal F_{M_k} $.  For this we will use the Beilinson--Drinfeld Grassmannian.  In \S \ref{ssec:BD}, we recalled the definition of $ \Gr_\A $, a family over $ \A $.  In a similar fashion, there is the BD Grassmannian $ \Gr_{\A^d} $ defined by $G$-bundles trivialized away from a collection of $ d $ points.  (There is a small difference: in \S \ref{ssec:BD}, we fixed one of the points to be 0; here we let all the points vary.)  As in \S \ref{ssec:BD}, the fibre of $ \Gr_{\A^d} \rightarrow \A^d $ over a point $ (x_1, \dots, x_d) $ is isomorphic to a product of copies of $ \Gr $, indexed by the set $ \{x_1, \dots, x_d\} $.

\begin{conjecture}\label{conj:BDsheaf}
  For any tuple $(M_k)_{k=1,\ldots,d}$ of $\Lambda$-modules,
  there is a $T^\vee $-equivariant sheaf $\mathcal F_{(M_k)}$ on $ \Gr_{\A^d} $,
  flat over $\A^d$, that
  \begin{enumerate}
  \item has global sections $ \Gamma( \Gr_\A, \mathcal F_{(M_k)} \otimes \mathscr L^{\otimes n}) \cong H^\bullet_{(\C^\times)^d} (\mathbb G(\oplus_k M_k[t]/t^n))$ as representations of $ T^\vee $ and as modules over $ \C[\A^d] = H^\bullet_{(\C^\times)^d}(pt) $,
  \item over points on the diagonal, restricts to the sheaf $ \mathcal F_{\oplus_k M_k} $ from Conjecture \ref{conj:sheaf}
    associated to $\oplus_k M_k$
  \item over general points of $\A^d$, restricts to
    the $\boxtimes$ product $ \boxtimes_k \mathcal F_{M_k} $ of the sheaves from Conjecture \ref{conj:sheaf}
    associated to the individual $M_k$.
  \end{enumerate}
\end{conjecture}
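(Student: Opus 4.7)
The plan is to construct the sheaf $\mathcal F_{(M_k)}$ by globalizing Conjecture \ref{conj:sheaf} along the base $\A^d$, using the factorization structure of the Beilinson--Drinfeld Grassmannian. Throughout, I assume Conjecture \ref{conj:sheaf}, taking as a model the forthcoming construction of Hilburn--Weekes--Kamnitzer \cite{HKW} in the case where the modules come from quiver path algebras.

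First, I would exhibit a family of ind-schemes $\widetilde{\mathbb G} \to \A^d$ whose fibres interpolate between the two desired degenerations. A natural candidate is the family whose fibre over $(x_1,\ldots,x_d) \in \A^d$ parametrizes $\Lambda \otimes \C[t]$-submodules of $\bigoplus_k M_k \otimes \C[t]/\prod_k(t-x_k)^n$ of fixed dimension vector. At generic points where all $x_k$ are distinct, the Chinese Remainder Theorem splits the quotient ring into factors indexed by $k$, and $\Lambda[t]$-submodules split accordingly; the fibre decomposes as $\prod_k \mathbb G(M_k[t]/(t-x_k)^n)$, matching the $\boxtimes$-product of (iii). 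At the small diagonal $x_1 = \cdots = x_d = x$, the points collide and the fibre reduces to the single-site space from Conjecture \ref{conj:sheaf} applied to $\bigoplus_k M_k$, matching (ii). The $\C[\A^d]$-module structure on the $(\C^\times)^d$-equivariant cohomology supplies the required flatness over $\A^d$ and compatibility with the base.

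Next, I would take $(\C^\times)^d$-equivariant cohomology of these fibres, push down to $\A^d$, and sum over $n$. The main task is to promote this graded family of $\C[\A^d]$-modules into a coherent sheaf on $\Gr_{\A^d}$ itself. This requires constructing an action of a BD-factorized version of the section algebra $\bigoplus_n \Gamma(\overline{S_+^0 \cap S_-^{-\nu}}, \mathscr L^{\otimes n})$, which now lives as a factorization algebra over $\A^d$, on the cohomology family above. Once such a module structure is in place, the coherent sheaf $\mathcal F_{(M_k)}$ is determined by the same module-to-sheaf passage as in Conjecture \ref{conj:sheaf}, and the specializations (ii) and (iii) follow from the corresponding specializations of $\widetilde{\mathbb G}$ together with fibrewise application of Conjecture \ref{conj:sheaf}.

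The main obstacle is precisely the construction of this factorization-compatible module structure. In the quiver case, the BFN Coulomb branch realization \cite{BFN} is intrinsically a factorization space over the Ran space of $\A^1$, so the factorization structure on sections of $\mathscr L$ should propagate to the desired module structure on $\widetilde{\mathbb G}$; this is the avenue I expect will yield the BD version as an upgrade of \cite{HKW}. Outside of the quiver setting, however, even Conjecture \ref{conj:sheaf} is open, and one must construct both it and its BD refinement together from scratch, presumably through the symplectic duality framework indicated in \S\ref{ssec:expectations}. This is the principal difficulty: producing the factorization module without a preexisting Coulomb-branch style construction appears to require genuinely new ideas on the preprojective algebra side, and I expect this to be the hardest step by far.
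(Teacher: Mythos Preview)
The statement you are attempting to prove is a \emph{conjecture}, not a theorem: the paper does not give a proof, and indeed offers none. After stating Conjecture~\ref{conj:BDsheaf}, the authors only discuss the simplest special case (each $M_k$ one-dimensional, so $M_k = S_{i_k}$), where they expect $\mathcal F_{(M_k)}$ to be the structure sheaf of the compactified Zastava space, and remark that in this case the conjecture should follow from \cite{BFN} via the forthcoming \cite{HKW}. No general argument is offered or claimed.

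Your proposal is therefore not comparable to any proof in the paper. What you have written is a reasonable heuristic outline of a possible approach---construct a family $\widetilde{\mathbb G}$ over $\A^d$ interpolating via the Chinese Remainder Theorem, take equivariant cohomology, then promote to a factorization module over the BD section algebra---but you yourself correctly identify that the last step is the entire content of the conjecture and that you do not know how to do it. In particular, your construction assumes Conjecture~\ref{conj:sheaf} (also open in the paper) and then asserts the existence of a factorization-compatible module structure without supplying one. This is not a gap in an otherwise sound proof; it is an acknowledgment that the conjecture remains open. Your outline is a fair description of what a proof would have to accomplish, but it is not itself a proof, and the paper does not contain one either.
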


The simplest case is that each $M_k$ is one-dimensional and so there exists $ \vi \in \Seq(\nu) $ such that $ M_k = S_{i_k} $.  In this case, we expect that $ \mathcal F_{M_k} = \mathcal O_{\overline{S_+^0 \cap S^{-\alpha_{i_k}}_-}} $ for each $k $ and that $ \mathcal F_{(M_k)} = \mathcal O_{\overline{Z^\nu}}$, the structure sheaf of the compatified Zastava space.  In fact, we will explain in \cite{HKW}, that in this case, the conjecture follows from Remark 3.7 in Braverman--Finkelberg--Nakajima \cite{BFN}.

\subsection{Shifting MV cycles}
In order to compute the sections of line bundles over stable MV cycles, it will sometimes be useful to shift them, since they often appear more naturally as MV cycles in some $ \overline{\Gr^\lambda} $.  So the following result will be helpful for us.

\begin{proposition} \label{pr:shiftWeights}
	Let $ Z$ be any MV cycle.  Let $ \nu \in P $ and $ n \in \N $.   We have an isomorphism of $ T^\vee $-representations
	$$
	\Gamma(Z, \mathscr L^{\otimes n}) =  \Gamma(t^\nu Z, \mathscr L^{\otimes n}) \otimes \C_{n \iota(\nu)}
	$$
\end{proposition}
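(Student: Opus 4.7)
The plan is to construct the isomorphism via left multiplication by $t^\nu$ and to track the resulting twist of the equivariant structure on $\mathscr L$. Since $T^\vee$ is abelian and $t^\nu \in T^\vee(\mathcal K)$, translation by $t^\nu$ defines a $T^\vee$-equivariant isomorphism of varieties $\phi : Z \xrightarrow{\sim} t^\nu Z$. First I would argue that $\phi^*(\mathscr L|_{t^\nu Z})$ and $\mathscr L|_Z$ are isomorphic as (non-equivariant) line bundles on $Z$. This uses the $E(G^\vee(\mathcal K))$-equivariance of $\mathcal O(1) \to \mathbb P(V)$ from \S\ref{ss:PrincNilp}: any lift of $t^\nu$ to the central extension $E$ acts linearly on $V$ and hence induces an isomorphism $(t^\nu)^*\mathcal O(1) \cong \mathcal O(1)$ on $\mathbb P(V)$; two different lifts produce isomorphisms that differ by a nonzero global scalar, so the underlying isomorphism of line bundles is canonical even though the choice of linearization is not.

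Both $\phi^*(\mathscr L|_{t^\nu Z})$ and $\mathscr L|_Z$ are $T^\vee$-equivariant line bundles on $Z$, and two equivariant structures on the same line bundle differ by a single character of $T^\vee$. I would pin down this character by evaluating at a $T^\vee$-fixed point $L_\mu \in Z$ (MV cycles always contain such points), at which $\phi(L_\mu) = L_{\mu+\nu}$. Proposition~\ref{pr:KacFormula} identifies the weight of $\Upsilon(L_\mu) \in V$ as $\pi(\mu) = \Lambda_0 - \iota(\mu) - q(\mu)\delta^\vee$; combined with the fact that the fibre of $\mathcal O(1)$ at $[v]$ has weight opposite to that of $v$, this shows that $\mathscr L_{L_\mu}$ has $T^\vee$-weight $-\iota(\mu)$. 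Comparing the weights of the two equivariant structures on the common line bundle at $L_\mu$ then gives
\[
\mathscr L|_Z \cong \phi^*(\mathscr L|_{t^\nu Z}) \otimes \mathbb C_{\iota(\nu)},
\]
and the twist character is manifestly independent of the chosen fixed point.

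Raising to the $n$-th tensor power multiplies the twist by $n$, and passing to $T^\vee$-equivariant global sections (using that $\phi$ is an isomorphism of varieties and that tensoring with a character commutes with taking sections) yields the claimed identification
\[
\Gamma(Z, \mathscr L^{\otimes n}) \cong \Gamma(t^\nu Z, \mathscr L^{\otimes n}) \otimes \mathbb C_{n\iota(\nu)}.
\]
The only real subtlety — not really an obstacle — is the careful handling of the central extension: because $\mathscr L$ is only projectively $G^\vee(\mathcal K)$-equivariant, one genuinely needs $E$ to produce linear pullback isomorphisms, but the scalar ambiguity introduced by different lifts cancels out as soon as one compares equivariant structures at a single fixed point.
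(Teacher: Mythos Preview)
Your approach is essentially the paper's: both lift $t^\nu$ to the central extension $E(G^\vee(\mathcal K))$ to produce the isomorphism of line bundles and then measure the failure of $T^\vee$-equivariance of that isomorphism. The paper reads off the twist directly from the commutator $t^\nu a^\mu t^{-\nu} a^{-\mu}$ in $E$ (which is central and acts on $\mathscr L^{\otimes n}$ with weight $n$), whereas you extract it by comparing fibre weights at a fixed point via Proposition~\ref{pr:KacFormula}; these are two packagings of the same computation. One caution on the bookkeeping: the $T^\vee$-part of $\pi(\mu)$ is $-\iota(\mu)$, so after taking the opposite sign for the fibre of $\mathcal O(1)$ you should get $\mathscr L_{L_\mu}$ of weight $\iota(\mu)$ (as stated in the proof of the Lemma in \S\ref{ss:MVfromMV}), not $-\iota(\mu)$; double-check that your final sign comes out consistently with this.
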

\begin{proof}
	We can lift $ t^\nu $ to an element of $ E(G^\vee(\mathcal K)) $.  Using this lift, the isomorphism $ Z \rightarrow t^\nu Z $ extends to an isomorphism of line bundles and thus an isomorphism of sections.  However, this isomorphism is not $ T^\vee$-equivariant, because $ T^\vee $ and $ t^\nu $ do not commute inside $ E(G^\vee(\mathcal K)) $. 
	
	Let $ \mu \in P $ and let $ a\in \C^\times $.  In $ E(G^\vee(\mathcal K)) $, the commutator $ t^\nu a^\mu t^{-\nu} a^{-\mu} $ lies in the central $ \mathbb C^\times $ and equals $ \iota(\nu)(\mu) $.  Since this central $ \mathbb C^\times $ acts by weight $ n $ on $  \mathscr L^{\otimes n} $ the result follows.
\end{proof}

\subsection{Spherical Schubert varieties}\label{ssec:spherical}
Let $ \lambda \in P_+$  and consider the spherical Schubert variety $ \overline{\Gr^\lambda}$.  We shift it to form the stable MV cycle $ t^{-\lambda} \overline{\Gr^\lambda} $, a component of $ \overline{S_+^0 \cap S_-^{-\lambda + w_0 \lambda}} $.  The MV polytope of $t^{-\lambda} \overline{\Gr^\lambda} $ is the shifted Weyl polytope $$ \Conv( w \lambda \mid w \in W) - \lambda. $$

The corresponding $\Lambda$-module is injective.  More precisely, for each $ i \in I $, let $ I(\omega_i)$ be the injective hull of the simple module $ S_i $.  Let $ I(\lambda) = \oplus_i I(\omega_i)^{\oplus \langle \alpha_i^\vee, \lambda \rangle} $.  This is a rigid module and so the closure of the corresponding locus (consisting of those module structures isomorphic to $M$) in $ \Lambda(\lambda - w_0 \lambda) $ is an irreducible component $ Y(\lambda) $.

We have equalities of basis vectors
$$
b_{t^{-\lambda}\overline{\Gr^\lambda}}= \Psi_\lambda(v_{w_0 \lambda}) 
= c_{Y(\lambda)}
$$
as both are flag minors (see Remark \ref{re:flag}).

We conjecture that $ t^{-\lambda}\overline{\Gr^\lambda} $ and $ Y(\lambda) $ are extra-compatible in the above sense, in other words for all $n \in N $ and $ \mu \in Q_+$, we have
$$
\dim \Gamma(t^{-\lambda}\overline{\Gr^\lambda}, \mathscr L^{\otimes n})_{-\mu} = \chi(F_{n, \mu}(I(\lambda)))
$$

We will now prove a stronger version of this statement when $ n = 1$.

Consider the Nakajima quiver variety $ \mathcal{M}(\mathbf{w}) $, defined using the framing vector $ \mathbf{w} $, with $ w_i = \langle \alpha_i^\vee, \lambda \rangle $.  This quiver variety has a ``core'' $ \mathcal L(\mathbf w) $ and there is a homotopy retraction of $ \mathcal M(\mathbf w) $ onto $ \mathcal L(\mathbf w)$.  We have the following result of Shipman \cite[Corollary 3.2]{Ship}.

\begin{theorem}
There is an isomorphism $ \mathcal L(\mathbf w) \cong F_1(I(\lambda))$.
\end{theorem}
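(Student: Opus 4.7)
The plan is to identify both sides by unwinding Nakajima's construction of the core and then applying the projective-injective duality of the preprojective algebra.

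I would begin by recalling Nakajima's description of $\mathcal{M}(\mathbf{v}, \mathbf{w})$ as a GIT quotient of the space of tuples $(V, x, i, j)$ --- with $V$ a graded vector space of dimensions $\mathbf{v}$, arrow maps $x = (x_h)_{h\in H}$, and framings $i: W \to V$, $j: V \to W$ where $W = \bigoplus_k \C^{w_k}$ --- satisfying the moment map equation $\sum_h \tau(h) x_h x_{\bar h} + ij = 0$ and a GIT stability condition, modulo $\mathrm{GL}(\mathbf{v})$. The core $\mathcal{L}(\mathbf{v},\mathbf{w})$ identifies with the locus $j = 0$; on this locus the moment map equation is exactly the preprojective relation, so $V$ acquires a $\Lambda$-module structure, and stability reduces to the statement that $V$ is generated as a $\Lambda$-module by $\im(i: W \to V)$.

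Next, I would observe that this generation condition is equivalent to saying that $i$ extends uniquely to a surjection $\pi: P(\lambda) \twoheadrightarrow V$ of $\Lambda$-modules, where $P(\lambda) := \bigoplus_k P(\omega_k)^{\oplus w_k}$ is the projective cover with top canonically $W$. Applying the canonical contravariant duality of $\Lambda$-modules (the $\C$-linear dual twisted by the natural anti-involution of $\Lambda$, which in simply-laced Dynkin type makes $\Lambda$ Frobenius and exchanges projective with injective indecomposables) converts $\pi$ into an injection $D(V) \hookrightarrow I(\lambda)$, yielding a submodule $N := D(V) \subseteq I(\lambda)$ of dimension vector $\mathbf{v}$. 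The construction is invertible: an inclusion $N \hookrightarrow I(\lambda)$ dualizes to a surjection $P(\lambda) \twoheadrightarrow D(N)$, whose restriction to $W = \mathrm{top}(P(\lambda))$ is a generating framing $W \to D(N)$, recovering a point of the core.

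To promote this set-theoretic bijection to an isomorphism of varieties, I would embed both sides into Grassmannians --- the core via its GIT description, and $\{N \subseteq I(\lambda) : \dimvec N = \mathbf{v}\}$ as the $\Lambda$-stable closed subscheme of $\mathrm{Gr}(|\mathbf{v}|, I(\lambda))$ --- and verify that the duality construction extends to flat families, which follows from exactness of the duality functor. Assembling over all $\mathbf{v}$ then gives the desired global isomorphism. The main obstacle is the bookkeeping around the Nakayama automorphism: the naive $\C$-linear duality sends $P(\omega_k)$ to the injective hull of $S_{k^*}$, where $k \mapsto k^*$ is the Nakayama involution coming from $-w_0$, so a direct application produces submodules of $I(-w_0\lambda)$ rather than $I(\lambda)$. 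To match the statement exactly one must either compose with the diagram involution of $\mathcal{M}(\mathbf{w})$ (which sends $\mathbf{w}$ to its $*$-twisted version and respects the core) or equivalently use a twisted duality that absorbs the involution. Once this convention is pinned down as in \cite{Ship}, the remaining verification is mechanical.
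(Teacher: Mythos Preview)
The paper does not supply its own proof here; the theorem is quoted as a result of Shipman \cite[Corollary~3.2]{Ship}. Your sketch is essentially the idea behind that result: stable framed representations with $j=0$ are $\Lambda$-modules equipped with a generating map from $W$, hence quotients of the projective $P(\lambda)$, and dualizing turns quotients of $P(\lambda)$ into submodules of $I(\lambda)=D(P(\lambda))$. The Nakayama-twist issue you flag is genuine and is handled as you indicate.

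The one step that deserves more care is your opening identification of the core $\mathcal L(\mathbf w)=\pi^{-1}(0)\subset\mathcal M(\mathbf w)$ with the locus $\{j=0\}$. This equality is not definitional: the core is the central fibre of the affinization map, while $\{j=0\}$ is Nakajima's Lagrangian subvariety, and in general (e.g.\ affine type) the two differ. Their coincidence here is a feature of Dynkin type, where every $\Lambda$-module is automatically nilpotent, and establishing it is part of what Shipman's argument actually does, drawing on Lusztig's and Nakajima's earlier work. Once that identification is in hand, the rest of your outline goes through.
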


Thus we get a chain of isomorphisms
$$
H^\bullet(\mathcal M(\mathbf w)) \cong H^\bullet(\mathcal L(\mathbf w)) \cong H^\bullet(F_1(I(\lambda))
$$

By work of Varagnolo, there is an action of $\mathfrak g^\vee[[t]] $ on $ H^\bullet(\mathcal M(\mathbf w))$.  Since $ \Gr^\lambda $ is an orbit of $ G^\vee(\mathcal O) $ we also have a $ \mathfrak g^\vee[[t]]$ action on $  \Gamma(\overline{\Gr^\lambda}, \mathscr L)$.  The following result is essentially due to Kodera--Naoi \cite{KN} and Fourier--Littelmann \cite{FL} (see \cite[Theorem 8.5]{KTWWY}).

\begin{theorem}
  $ \Gamma(\overline{\Gr^\lambda}, \mathscr L) \cong  H^\bullet(F_1(I(\lambda))$
  as representations of $ \mathfrak g^\vee[[t]]$.
\end{theorem}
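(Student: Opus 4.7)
The plan is to identify both sides of the isomorphism with a common $\mathfrak g^\vee[[t]]$-module, namely (up to duality) the global Weyl module $W(\lambda)$ for the current algebra of highest weight $\lambda$. The theorem then reduces to citing two well-established comparison results and verifying that the $\mathfrak g^\vee[[t]]$-actions match on the nose.

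First I would treat the sheaf-theoretic side. The space of sections $\Gamma(\overline{\Gr^\lambda}, \mathscr L)$ is, by the affine Borel--Weil theorem for $G^\vee$, dual to a level-one affine Demazure module sitting inside the basic representation $V(\Lambda_0)$ of the affine Kac--Moody algebra $\widehat{\mathfrak g^\vee}$ (see \S\ref{ss:PrincNilp}, where $\mathscr L$ is identified with the pullback of $\mathcal O(1)$ under $\Upsilon$). Fourier--Littelmann's theorem \cite{FL} describes such level-one Demazure modules as fusion products of local Weyl modules, and Kodera--Naoi \cite{KN} identifies the resulting module (in the simply-laced case) with the global Weyl module $W(\lambda)$ for $\mathfrak g^\vee[[t]]$. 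Hence $\Gamma(\overline{\Gr^\lambda}, \mathscr L) \cong W(\lambda)^*$ as $\mathfrak g^\vee[[t]]$-modules.

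Next I would treat the quiver variety side. Combining Shipman's isomorphism $\mathcal L(\mathbf w) \cong F_1(I(\lambda))$ with the homotopy retraction $\mathcal M(\mathbf w) \simeq \mathcal L(\mathbf w)$ yields $H^\bullet(F_1(I(\lambda))) \cong H^\bullet(\mathcal M(\mathbf w))$ as graded vector spaces. Varagnolo's construction (via Hecke correspondences and cup products with tautological Chern classes) endows $H^\bullet(\mathcal M(\mathbf w))$ with a $\mathfrak g^\vee[[t]]$-action, and in the simply-laced case Nakajima's comparison theorem identifies this module with $W(\lambda)^*$. Stringing the two chains together gives the desired $\mathfrak g^\vee[[t]]$-equivariant isomorphism.

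The main obstacle is not conceptual but bookkeeping: matching the two $\mathfrak g^\vee[[t]]$-actions requires reconciling Varagnolo's geometric action on $H^\bullet(\mathcal M(\mathbf w))$ with the current algebra action on $W(\lambda)^*$ coming from the Kodera--Naoi/Fourier--Littelmann picture, and checking that the two appearances of ``$\lambda$'' (as a $G^\vee$-coweight indexing the Schubert variety versus as a $\mathfrak g^\vee$-weight indexing the Weyl module) are identified consistently. This compatibility is exactly what \cite[Theorem 8.5]{KTWWY} was designed to establish, so in practice the proof consists of a careful appeal to that theorem, together with the identification of $\mathscr L$ as the level-one generator of the Picard group (which we have already recorded in \S\ref{ss:PrincNilp}).
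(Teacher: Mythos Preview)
Your proposal is correct and takes essentially the same approach as the paper. The paper does not supply its own argument for this theorem; it simply attributes the result to Kodera--Naoi \cite{KN} and Fourier--Littelmann \cite{FL} with a pointer to \cite[Theorem~8.5]{KTWWY}, having already set up the Shipman isomorphism $\mathcal L(\mathbf w)\cong F_1(I(\lambda))$ and the Varagnolo action on $H^\bullet(\mathcal M(\mathbf w))$ in the preceding paragraphs. Your write-up is precisely an unpacking of those citations, with the global Weyl module $W(\lambda)$ made explicit as the common intermediary, so there is no substantive difference in strategy.
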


 Unpacking the weight spaces on both sides (and using the odd cohomology vanishing established by Nakajima \cite[Prop. 7.3.4]{Nak}), the above theorem implies the $n =1 $ condition appearing in the definition of extra-compatibility.  (Note that on both sides the weight spaces get shifted by $ \lambda $.  On the left hand side, this is because of Proposition \ref{pr:shiftWeights} and on the right hand side, this is because of the definition of the action of the Cartan in the Varagnolo action.)
 \begin{corollary}
$$
\dim \Gamma(t^{-\lambda}\overline{\Gr^\lambda}, \mathscr L)_{-\mu} = \chi(F_{1,\mu}(I(\lambda)))
$$
\end{corollary}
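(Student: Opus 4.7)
The plan is to combine Proposition~\ref{pr:shiftWeights} with the displayed Kodera--Naoi/Fourier--Littelmann theorem, taking care to match the two different conventions for the Cartan action.

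First I would rewrite the left-hand side to live on $\overline{\Gr^\lambda}$ itself. Applying Proposition~\ref{pr:shiftWeights} with $Z=\overline{\Gr^\lambda}$, $\nu=-\lambda$, $n=1$ gives
$$\Gamma(\overline{\Gr^\lambda},\mathscr L) \cong \Gamma(t^{-\lambda}\overline{\Gr^\lambda},\mathscr L)\otimes\C_{-\iota(\lambda)}$$
as $T^\vee$-modules, so that
$$\dim\Gamma(t^{-\lambda}\overline{\Gr^\lambda},\mathscr L)_{-\mu} = \dim\Gamma(\overline{\Gr^\lambda},\mathscr L)_{-\mu-\iota(\lambda)}.$$
This reduces the problem to computing a weight multiplicity of $\Gamma(\overline{\Gr^\lambda},\mathscr L)$, where the $\mathfrak g^\vee[[t]]$-action is available.

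Second, I would invoke the Kodera--Naoi/Fourier--Littelmann isomorphism $\Gamma(\overline{\Gr^\lambda},\mathscr L)\cong H^\bullet(F_1(I(\lambda)))$ of $\mathfrak g^\vee[[t]]$-modules. Taking weight spaces for the constant-term Cartan $\mathfrak t\subset\mathfrak g^\vee\subset\mathfrak g^\vee[[t]]$ on both sides is legitimate, and reduces the problem to an identification
$$\dim H^\bullet(F_1(I(\lambda)))_{-\mu-\iota(\lambda)} \;\stackrel{?}{=}\; \chi(F_{1,\mu}(I(\lambda))).$$

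Third, I would unpack the right-hand side via the chain of isomorphisms $H^\bullet(F_1(I(\lambda)))\cong H^\bullet(\mathcal L(\mathbf w))\cong H^\bullet(\mathcal M(\mathbf w))$ coming from Shipman's theorem and the core retraction. The key bookkeeping point is that Varagnolo's action of $\mathfrak g^\vee[[t]]$ on $H^\bullet(\mathcal M(\mathbf w))$ assigns to the piece indexed by dimension vector $\mu$ the Cartan weight $-\mu + (\text{highest weight shift})$; with the standard normalization used in the references cited above (see e.g.\ \cite[Theorem~8.5]{KTWWY}), this shift is precisely $-\iota(\lambda)$, so the Varagnolo weight $-\mu-\iota(\lambda)$ piece corresponds to the geometric component $\mathcal L_\mu(\mathbf w)\cong F_{1,\mu}(I(\lambda))$. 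Then Nakajima's odd cohomology vanishing gives $\dim H^\bullet(\mathcal L_\mu(\mathbf w)) = \chi(\mathcal L_\mu(\mathbf w)) = \chi(F_{1,\mu}(I(\lambda)))$, closing the loop.

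The main obstacle is not analytic but bookkeeping: verifying that the two shifts --- the tensoring by $\C_{-\iota(\lambda)}$ forced on the geometric Satake side by moving from $\overline{\Gr^\lambda}$ to the stable MV cycle $t^{-\lambda}\overline{\Gr^\lambda}$, and the $\lambda$-shift built into Varagnolo's Cartan action on $H^\bullet(\mathcal M(\mathbf w))$ --- agree under the identification furnished by Kodera--Naoi/Fourier--Littelmann. Once these conventions are matched (a short but unavoidable check, best done by evaluating both sides on the unique highest weight line, which sits in $\Gamma(\overline{\Gr^\lambda},\mathscr L)_\lambda$ and in the fundamental-class component of $H^0(\mathcal M(\mathbf w))$), the three identifications combine directly to yield the stated equality.
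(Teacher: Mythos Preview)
Your proposal is correct and follows essentially the same route as the paper: the paper deduces the corollary by ``unpacking the weight spaces'' in the Kodera--Naoi/Fourier--Littelmann isomorphism, invoking Nakajima's odd-cohomology vanishing, and noting that both sides acquire a $\lambda$-shift --- on the left via Proposition~\ref{pr:shiftWeights}, on the right via the normalization of the Cartan in Varagnolo's action. Your write-up is in fact more detailed than the paper's terse parenthetical, but the ingredients and the order in which you assemble them are the same.
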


More generally, it seems reasonable that $ H^\bullet(F_n(I(\lambda)) $ should carry an action of $ \mathfrak g^\vee[[t]]$, extending the torus action (which comes from the decomposition $ F_n(I(\lambda)) = \sqcup_\mu F_{n,\mu}(I(\lambda)) $.

More generally, we expect that the extra compatibility extends to those MV cycles and the quiver variety components which represent the flag minors from Remark \ref{re:flag}.

\subsection{Schubert varieties inside cominuscule Grassmannians} \label{se:Schubert}
We now examine a class of MV cycles which we can prove are extra-compatible: Schubert varieties inside cominuscule Grassmannians.  These represent flag minors from minuscule representations.

Let $ \omega_i \in P_+ $ be {\bf cominuscule}, meaning, $ \omega_i $ is a minimal element of $ P_+ \setminus \{0\}$ with respect to the dominance order.  In this case, it is easy to see that $ \Gr^{\omega_i} $ is closed and that the action of $ G^\vee $ on $ \Gr^{\omega_i} $ gives rise to an isomorphism $ G^\vee / P_i \cong \Gr^{\omega_i} $ where $ P_i $ is a maximal parabolic subgroup of $ G^\vee$.

Moreover, the MV cycles of type $\omega_i $ are the Schubert varieties in $G^\vee/P_i $. For each $ \gamma \in W \omega_i $, we have  a Schubert variety $ \Gr^{\omega_i} \cap \overline{S_-^\gamma} $.  We can then translate to obtain a stable MV cycle $ Z(\gamma) := t^{-\omega_i} (\Gr^{\omega_i} \cap \overline{S_-^\gamma)}$ of weight $ \gamma - \omega_i$.

We introduce a partial order (the Bruhat order) on $ W \omega_i $ by $ \tau \le \gamma $ if $ \tau - \gamma \in Q_+ $ (caution: this is opposite to our convention for dominance order).  This partial order corresponds to the order on MV cycles of type $ \omega_i $ by containment.  The minimal element of this partial order is $ \omega_i $ and the maximal element is $ w_0 \omega_i $.  We will be particularly interested in intervals in this poset of the form
$$ [\omega_i, \gamma] = \{ \tau \in W \omega_i : \omega_i \le \tau \le \gamma \} $$
The MV polytope $P(\gamma) $ of $Z(\gamma) $ is easily described using this order (see for instance \cite[Proposition~2.5.1]{KatoNaitoSagaki}) as
$$ P(\gamma) = \Conv \{ -\omega_i + \tau : \tau \in [\omega_i, \gamma] \}$$

On the other hand, for each $ \gamma \in W \omega_i$, there is a unique (up to isomorphism) $\Lambda$-module $ N(\gamma) $ with dimension-vector $ \omega_i - \gamma $ and socle $ S_i $ (except if $\gamma = \omega_i $ in which case $ N(\gamma) = 0 $).   We have a corresponding component $ Y(\gamma) $ of $ \Lambda(\omega_i - \gamma) $.  Note that $ Y(\gamma) $ and $ Z(\gamma) $ correspond because they both represent the unique elements of $ B(\infty) $ of weight $ \gamma - \omega_i $ which lies in the image of $ \Psi_{\omega_i} $.

As a special case, we have $ N(w_0\, \omega_i) = I(\omega_i) $, the injective hull of $ S_i $.  In fact,
for each $ \gamma \in W \omega_i$, $ N(\gamma) $ occurs once as a submodule of $ I(\omega_i) $ and these are all the submodules of $ I(\omega_i) $.  The map $ \gamma \mapsto N(\gamma) $ is an isomorphism of posets between $ W \omega_i $ and the poset of submodules of $ I(\omega_i) $ (under inclusion).

This implies that $ \Pol(N(\gamma)) = P(\gamma) $.

\begin{theorem} \label{th:cominusExtra}
Let $ \gamma \in W \omega_i $.  The pair $ Z(\gamma), Y(\gamma) $ is extra-compatible.
\end{theorem}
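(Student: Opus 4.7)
The plan is to match both sides of the extra-compatibility equality with the number of weakly increasing chains of length $n$ in the Bruhat interval $[\omega_i,\gamma]\subset W\omega_i$ that satisfy a specific weight constraint.

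For the module side, $N(\gamma)$ is rigid, so the general point of $Y(\gamma)$ is $N(\gamma)$ itself. By the discussion preceding the theorem statement, the submodules of $N(\gamma)$ are exactly the $N(\tau)$ for $\omega_i\le\tau\le\gamma$, and $N(\tau')\subseteq N(\tau)$ iff $\tau'\le\tau$ in Bruhat order. Consequently $F_n(N(\gamma))$ is a finite discrete space, in bijection with chains $\omega_i=\tau_0\le\tau_1\le\cdots\le\tau_n\le\tau_{n+1}=\gamma$, and $\chi(F_{n,\mu}(N(\gamma)))$ is just the number of such chains. Using $\dimvec N(\tau_k)=\omega_i-\tau_k$, the mass condition $\sum_{k=1}^n\dimvec N_k=\mu$ translates into $\sum_{k=1}^n\tau_k=n\omega_i-\mu$.

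For the geometric side, apply Proposition \ref{pr:shiftWeights} with $\nu=\omega_i$ to identify $\Gamma(Z(\gamma),\mathscr L^{\otimes n})$ (up to an explicit shift of $T^\vee$-weights) with $\Gamma(X_\gamma,\mathscr L^{\otimes n})$, where $X_\gamma:=\overline{\Gr^{\omega_i}}\cap\overline{S_-^\gamma}$ is a Schubert variety in the cominuscule flag variety $\overline{\Gr^{\omega_i}}\cong G^\vee/P_i$. I would then invoke standard monomial theory for cominuscule Schubert varieties (originally due to Seshadri, Musili, and Lakshmibai, and available uniformly via Littelmann's path model): $\Gamma(X_\gamma,\mathscr L^{\otimes n})$ admits a $T^\vee$-eigenbasis of straightened monomials $p_{\tau_1}\cdots p_{\tau_n}$ in Pl\"ucker-style extremal weight sections, indexed by weakly increasing chains $\omega_i\le\tau_1\le\cdots\le\tau_n\le\gamma$, with the basis element indexed by such a chain carrying weight $\sum_k\iota(\tau_k)$. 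Tracking the shift from Proposition \ref{pr:shiftWeights}, the corresponding section on $Z(\gamma)$ has weight $-\iota(\mu)$ precisely when $\sum_k\tau_k=n\omega_i-\mu$, matching the chain count from the module side.

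The main obstacle is deploying the correct version of standard monomial theory for the variety $X_\gamma$, which is an $N_-^\vee$-orbit closure (i.e., an opposite Schubert variety) in $G^\vee/P_i$: one must verify that in our sign conventions the relevant Bruhat interval really is $[\omega_i,\gamma]$, that the straightening basis is indexed by weakly increasing chains, and that the $T^\vee$-weights of the basis sections take the asserted form. Once this dictionary is in place, the match with the module side is immediate, modulo a routine sign check against Proposition \ref{pr:shiftWeights} to ensure that the weight $-\iota(\mu)$ on $Z(\gamma)$ really corresponds to the constraint $\sum_k\tau_k=n\omega_i-\mu$.
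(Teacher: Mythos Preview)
Your proposal is correct and follows essentially the same route as the paper: both sides are matched with the set of weakly increasing chains in the Bruhat interval $[\omega_i,\gamma]$, using the submodule lattice of $N(\gamma)$ on the $\Lambda$-module side and Seshadri's standard monomial theory for cominuscule Schubert varieties on the geometric side. You are in fact more explicit than the paper about tracking the $T^\vee$-weights via Proposition~\ref{pr:shiftWeights}, whereas the paper's proof records only the ungraded bijection and leaves the weight match implicit.
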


\begin{proof}
For each $ n \in \mathbb N $, let
$$ F_n(\gamma) := \{ (\tau_1, \dots, \tau_n) : \omega_i \le \tau_1 \le \cdots \le \tau_n \le \gamma \}$$
be the set of chains in the poset $ [\omega_i, \gamma] $.

From the above discussion, we see that $ \tau \mapsto N(\tau) $ gives a bijection $ F_n(\gamma) = F_n(N(\gamma)) $.

On the other hand, Seshadri \cite{Se} has given a bijection between $ F_n(\gamma) $ and the standard monomial basis for $ \Gamma(Z(\gamma), \mathscr L^{\otimes n}) $ (see also \cite[Theorem 8.1.0.2]{LR}) .

\end{proof}

\begin{remark}
 More generally, we can consider a pair $ \tau, \gamma \in W \omega_i$ such that $ \tau \le \gamma $.  Then we can consider the translated Richardson variety
 $$ Z(\tau, \gamma) := t^{-\tau} (\Gr^{\omega_i} \cap \overline{S^\tau} \cap \overline{S_-^\gamma)}) $$
 This will also be an MV cycle. (In the case of type A, these MV cycles were studied by Anderson--Kogan \cite[\S 2.6]{AndersonKogan}.)

 The corresponding $\Lambda$-module is $ N(\gamma) / N(\tau) $ which underlies a component $ Y(\tau, \gamma) $ of $ \Lambda(\tau - \gamma)$.  The above analysis carries over to this case by considering chains in the interval $[\tau, \gamma] $.  Thus we see that the pair $Z(\tau, \gamma), Y(\tau, \gamma) $ is also extra-compatible.
\end{remark}

\begin{appendix}
\newpage

\section{Extra-compatibility of MV cycles and preprojective algebra modules and non-equality of bases, by Anne Dranowski, Joel Kamnitzer, and Calder Morton-Ferguson}
\subsection{Tableaux, MV cycles, and preprojective algebra modules}
In this appendix, we will work with $ G = SL_m, G^\vee = PGL_m$ (and in fact with $ m = 5,6$).  Our goal is to compare the MV and dual semicanonical basis elements $ b_Z $ and $ c_Y $ where $ Z, Y $ correspond in the sense of \S \ref{se:measec}.  In order to construct compatible pairs $ Z, Y $, we will work with the combinatorics of semistandard Young tableaux.

\subsubsection{Tableaux and Lusztig data}
We identify $ P = \Z^m / \Z(1, \dots, 1) $ and $P_+ = \{ \lambda = (\lambda_1, \dots, \lambda_m) \in \Z^m \mid \lambda_1 \ge \cdots \ge \lambda_m = 0 \} $ and we identify $ P_+ $ with the
set of Young diagrams having at most \(m-1\) rows.  We also identify $ Q = \{ (\nu_1, \dots, \nu_m) \in \Z^m \mid \nu_1 + \cdots + \nu_m = 0 \} $ and we have the simple roots $ \alpha_i = \varep_i - \varep_{i+1} $.

We will write the positive roots for $ SL_m $ as $ \Delta_+ = \{ \varep_i - \varep_j \mid 1 \le i < j \le m \} $.
For any choice of convex order for $ \Delta_+ $, there is a bijection $ \psi : B(\infty) \rightarrow \mathbb N^{\Delta_+} $ constructed by Lusztig (see \cite[\S 2]{Lusztig}). In this appendix, we will work with one choice of convex order
\begin{equation}
\label{eq:order}
\varep_1 - \varep_2 \le  \dots \le \varep_1 - \varep_m \le \varep_2 - \varep_3 \le \cdots \le  \varep_2 - \varep_m \le  \cdots \le \varep_{m-1} - \varep_m
\end{equation}
(this convex order corresponds to the reduced word $ s_1 \cdots s_{m-1} \cdots s_1 s_2 s_1 $ for $w_0$).
A \textbf{Lusztig datum} is an element $ n_\bullet \in \mathbb N^{\Delta_+}$.  The \textbf{weight} of $ n_\bullet $ is defined to be $ \sum_{\beta \in \Delta_+} n_\beta \beta $.

Fix $\lambda \in P_+$ and let $ N = \sum \lambda_i $.  Let \(YT(\lambda)\) be the set of semi-standard Young tableaux of shape \(\lambda\). We say that a tableau \(\tau\in YT(\lambda)\) has weight \(\mu \in \Z^m \)
if \(\tau\) has \(\mu_i\) boxes numbered \(i\) for \(1\le i\le m\).  Let \(YT(\lambda)_\mu \subset YT(\lambda) \) denote the tableaux having weight \(\mu\).
For \(1\le i\le m\), let \(\tau^{(i)}\) denote the restricted tableau obtained from \(\tau\) by deleting all boxes numbered \(j\), for \(j>i\). Let $ \sh(\tau^{(i)}) \in \N^m $ denote the shape of \(\tau^{(i)}\).

Given a tableau \(\tau\in YT(\lambda)_\mu\) we define its Lusztig datum \(n(\tau)_\bullet\)  by setting
\[
n(\tau)_{\varep_i - \varep_j}:= \sh(\tau^{(j)})_i - \sh(\tau^{(j-1)})_i = \text{ number of boxes on the $i$-row filled with $ j$}
\]
Note that $ n(\tau)_\bullet $ has weight $ \lambda - \mu $.
\subsubsection{Generalized orbital varieties}
\label{ss:ss11}
We will study MV cycles by identifying open affine subsets of them with generalized orbital varieties.  This identification comes from the Mirkovi\'c--Vybornov isomorphism, which we now recall.  Fix $ \lambda \in P_+$ and let $ N = \sum \lambda_i $.  Fix $ \mu \in P_+ $ such that $ \lambda - \mu \in Q_+ $ (in particular $ N = \sum \mu_i$). We will work inside the space of $N\times N $ matrices and we consider such matrices in block form, where we have $m \times m $ blocks, with the $ i,j$ block of size $ \mu_i \times \mu_j $.  

We will need the following spaces of matrices.
\begin{align*}
\mathfrak n &= \text{ upper triangular matrices } \\
\mathbb O_\lambda &= \text{ matrices of Jordan type $\lambda$ } \\
\mathbb T_\mu &= \{J_\mu + x : \text{all entries of $ x $ are 0, except the } \\
&\qquad \text{first \(\min(\mu_i,\mu_j)\) columns of the last row of each
\( i,j \) block} \} 
\end{align*}
where $ J_\mu $ is the Jordan form matrix of type $ \mu $.

\begin{example}
	\label{ex2}
	If \(\mu = (3,2)\) then an element of \(\mathbb T_\mu\) takes the form
	\[
	\left[
	\begin{BMAT}(e)[5pt]{ccc|cc}{ccc|cc}
	& 1 &  &   &   \\
	&  & 1 &   &   \\
	*  & * & * & * & *  \\
	&   &   &  & 1  \\
	* & * &  & * & * \\
	\end{BMAT}
	\right]
	\]
	where all blank entries are 0.
\end{example}
We will study \(\overline{\mathbb O_\lambda}\cap \mathbb T_\mu \) and \(\overline{\mathbb O_\lambda}\cap \mathbb T_\mu \cap \mathfrak n\). The irreducible components of this latter variety are called \textbf{generalized orbital varieties}.  Note that if $ \mu = (1, \dots, 1)$, then $ \mathbb T_\mu $ is the space of all matrices and so we recover the usual orbital varieties.

Given \(\tau\in YT(\lambda)_\mu\) define

\begin{equation}
\begin{aligned}
\label{eq:zo}
\Zoo_\tau &= \bigl\{A\in \overline{\mathbb O_\lambda}\cap \mathbb T_\mu \cap \mathfrak n \bigm| A\big|_{\C^{\mu_1 +\cdots + \mu_i}} \in \mathbb O_{\sh(\tau^{(i)})} \textrm{ for } 1\le i\le m \bigr\} 
\end{aligned}
\end{equation}
Here and elsewhere, \(\C^{k}\) denotes the subspace of \(\C^N\) spanned by the first \(k\) standard basis elements.

The following result is due to the first appendix author.
\begin{theorem}[\cite{D18}]
	\label{thm:s1t1}
	\begin{enumerate}
		\item For each $ \tau $, $ \Zoo_\tau $ has a unique irreducible component of dimension $ \rho^\vee(\lambda - \mu) $ and all other components have smaller dimension.
		\item Letting $ \Zo_\tau $ denote the closure of this component of maximal dimension, the map $ \tau \mapsto \Zo_\tau $ gives a bijection between $ YT(\lambda)_\mu $ and the set of irreducible components of $ \overline{\mathbb O_\lambda}\cap \mathbb T_\mu \cap \mathfrak n$.
	\end{enumerate}
\end{theorem}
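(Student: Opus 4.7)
The plan is to combine the classical orbital-variety techniques of Spaltenstein with the Mirkovi\'c--Vybornov isomorphism so as to get both the dimension count and the bijection essentially for free, reducing the work to identifying the correct stratification.

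First, I would invoke the Mirkovi\'c--Vybornov isomorphism \cite{MirkovicVybornov}, which gives a $T$-equivariant identification of $\overline{\mathbb O_\lambda}\cap \mathbb T_\mu$ with an open subset of a transverse slice $\overline{\Gr^\lambda}\cap\mathcal W_\mu$ in the affine Grassmannian of $PGL_m$, matching $\overline{\mathbb O_\lambda}\cap \mathbb T_\mu\cap\mathfrak n$ with (a translate of) $\overline{\Gr^\lambda}\cap\mathcal W_\mu\cap\overline{S_-^\gamma}$ for the appropriate weight $\gamma$. From \cite[Theorem~3.2]{MirkovicVilonen}, this intersection is of pure dimension $\rho^\vee(\lambda-\mu)$, and the number of its irreducible components equals $\dim L(\lambda)_\mu$, which by Kostka's theorem is $|YT(\lambda)_\mu|$. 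This gives the a priori dimension and count; the remaining task is purely combinatorial/geometric within the matrix setup.

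Second, I would set up the stratification. The crucial observation is that matrices in $\mathbb T_\mu\cap \mathfrak n$ are block upper-triangular with respect to the standard partial flag $V_0\subset V_1\subset\cdots\subset V_m$, where $V_i=\mathbb C^{\mu_1+\cdots+\mu_i}$: indeed, the defining block structure of $\mathbb T_\mu$ combined with strict upper-triangularity forces every $V_i$ to be $A$-stable. Hence each restriction $A|_{V_i}$ has a well-defined Jordan type $\sigma_i(A)$, and the interlacing conditions $\sigma_{i-1}(A)\subseteq \sigma_i(A)$ with $|\sigma_i(A)|-|\sigma_{i-1}(A)|=\mu_i$ package the sequence $(\sigma_1(A),\ldots,\sigma_m(A))$ into a semistandard tableau $\tau(A)\in YT(\sigma_m(A))_\mu$. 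On $\overline{\mathbb O_\lambda}\cap \mathbb T_\mu\cap\mathfrak n$ one has $\sigma_m(A)=\lambda$, giving the decomposition $\overline{\mathbb O_\lambda}\cap \mathbb T_\mu\cap\mathfrak n=\bigsqcup_{\tau\in YT(\lambda)_\mu}\Zoo_\tau$.

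Third, I would show that each $\Zoo_\tau$ contains a unique irreducible component of dimension $\rho^\vee(\lambda-\mu)$ by an induction on $m$ that peels off the last block. Let $\tau'$ be obtained from $\tau$ by removing all boxes numbered $m$, let $\lambda'=\sh(\tau^{(m-1)})$ and $\mu'=(\mu_1,\ldots,\mu_{m-1})$. Restriction of $A$ to $V_{m-1}$ gives a morphism $\Zoo_\tau\to \Zoo_{\tau'}^{(\lambda',\mu')}$ to the analogous stratum for the smaller data. A generic fibre parametrizes choices of the last-row entries in the blocks $(i,m)$ for $i\leq m$ that realize the jump from Jordan type $\lambda'$ on $V_{m-1}$ to Jordan type $\lambda$ on $V_m$. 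A direct rank/commutator computation shows this generic fibre is an affine space of the correct relative dimension $\rho^\vee(\lambda-\mu)-\rho^\vee(\lambda'-\mu')$, and the inductive hypothesis then supplies a unique top-dimensional component of $\Zoo_\tau$ of dimension $\rho^\vee(\lambda-\mu)$.

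The main obstacle will be controlling the failure of the projection $\Zoo_\tau\to \Zoo_{\tau'}^{(\lambda',\mu')}$ to be a fibration at points where the Jordan type of $A|_{V_i}$ jumps: the locus of prescribed Jordan type is only locally closed, so one must check that degenerations flow into strata $\Zoo_{\tau''}$ indexed by tableaux $\tau''$ whose shape sequence strictly dominates that of $\tau$, and hence live in $\Zoo_\tau$ only in codimension $\geq 1$. Assuming this, part (i) is established. Part (ii) then follows from a counting argument: the $|YT(\lambda)_\mu|$ distinct closures $\Zo_\tau$ are irreducible of the generic dimension $\rho^\vee(\lambda-\mu)$, and by the first step this is also the exact number of irreducible components of $\overline{\mathbb O_\lambda}\cap \mathbb T_\mu\cap\mathfrak n$; hence $\tau\mapsto \Zo_\tau$ is a bijection onto the set of irreducible components.
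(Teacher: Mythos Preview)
The paper does not prove this theorem; it is quoted from Dranowski's paper \cite{D18} with the attribution ``The following result is due to the first appendix author,'' and no argument is supplied here. So there is nothing in the present paper to compare your proposal against.

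That said, a remark on the internal logic of your sketch. Your first step imports the dimension and the component count from the affine Grassmannian via the Mirkovi\'c--Vybornov isomorphism restricted to $\mathfrak n$. In the paper, that restriction is precisely Theorem~\ref{thm:s1t2}(i), which is also cited from \cite{D18} and is stated \emph{after} Theorem~\ref{thm:s1t1}. If in \cite{D18} the argument for \ref{thm:s1t2}(i) does not itself rely on \ref{thm:s1t1}, then your ordering is harmless; otherwise you have built in a circularity. You should either verify directly from the explicit formula for $\varphi$ that strict upper-triangularity corresponds exactly to landing in $S_-^\mu$ (this is a concrete matrix check, independent of any component analysis), or reorganize so that the equidimensionality and the count are obtained intrinsically on the matrix side before invoking the Grassmannian.

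Your inductive peel-off-the-last-block argument is the right shape and is indeed the Spaltenstein mechanism adapted to the slice $\mathbb T_\mu$. The obstacle you flag---that the restriction map $\Zoo_\tau\to\Zoo_{\tau'}$ is only a fibration over the open Jordan-type stratum and one must bound the closure contributions---is real and is exactly where the work lies; your sketch acknowledges it without resolving it. A full proof needs a careful semicontinuity/closure-order argument for Jordan types under the block filtration, which is what \cite{D18} presumably supplies.
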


The affine space $ \mathbb T_\mu $ (and the subvariety $ \overline{\mathbb O_\lambda}\cap \mathbb T_\mu \cap \mathfrak n$) carries an action of the maximal torus $ T^\vee \subset PGL_m $ where $ [t_1, \dots, t_m] \in T^\vee $ acts by conjugating by the $N \times N $ diagonal matrix whose entries $ t_1, \dots, t_1, \dots,t_m, \dots, t_m $ are constant in each block.

In this appendix, we will work with polynomials in the weights of $T^\vee $.  In particular, we introduce the notation
$$
p(\mu) := \prod_{1\le i < j \le m}(\varep_i - \varep_j )^{\mu_j} =  \prod_{1\le i < j \le m}(\alpha_i + \cdots + \alpha_{j-1} )^{\mu_j} 
$$
for the product of the weights of $ T^\vee $ acting on the affine space $ \mathbb T_\mu \cap \mathfrak n $.

\subsubsection{Generalized orbital varieties and MV cycles} \label{se:Ztau}

We will now recall the Mirkovi\'c--Vybornov isomorphism \cite{MVy} in the form of \cite{CK18}.

Given a nilpotent $ N \times N $ matrix $ A \in \mathbb T_\mu $, we define $ \varphi(A) \in G^\vee_1[t^{-1}] t^\mu $ (where $ G^\vee = PGL_m $ and $ G^\vee_1[t^{-1}] $ denotes the kernel of $ G^\vee[t^{-1}] \rightarrow G^\vee $) by the formulae
\begin{equation*}
\begin{aligned}
\varphi(A) &= t^\mu I_m + (a_{ij}(t)) \\
a_{ij}(t) &= -\sum_{k=1}^{\mu_i} A_{ij}^k t^{k-1} \\
A_{ij}^k &= k\textrm{th entry on the last row of the } j,i \textrm{ block of }A
\end{aligned}
\end{equation*}

The following result follows from Theorem 3.1 of \cite{CK18} (except we have applied transpose to the domain).

\begin{theorem} \label{thm:iso}
	The map $ A \mapsto [\varphi(A)] $ gives an isomorphism
	\[
	\varphi:\overline{\mathbb O_\lambda} \cap \mathbb T_\mu \rightarrow \overline{\Gr^{\lambda}} \cap G^\vee_1[t^{-1}]L_\mu
	\]
\end{theorem}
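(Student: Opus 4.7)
The plan is to reduce the theorem directly to Theorem~3.1 of \cite{CK18}, which establishes essentially the same isomorphism but with a different convention on which row/column of each block is used to encode matrix entries into the polynomial $a_{ij}(t)$. The parenthetical remark in the statement (``except we have applied transpose to the domain'') tells us precisely how to bridge the two formulations.

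Concretely, I would proceed in three steps. First, I would observe that the transpose map $A \mapsto A^T$ on $N \times N$ matrices restricts to an isomorphism $\mathbb T_\mu \xrightarrow{\sim} \mathbb T_\mu^{op}$, where $\mathbb T_\mu^{op}$ is the analogue of $\mathbb T_\mu$ in which the free entries are placed in the first column (rather than the last row) of each block; this is the form of the slice used in \cite{CK18}. Since transposition preserves Jordan type, it also carries $\overline{\mathbb O_\lambda} \cap \mathbb T_\mu$ isomorphically onto $\overline{\mathbb O_\lambda} \cap \mathbb T_\mu^{op}$. Second, I would verify by a direct unpacking of the definition of $\varphi$ that, if $\varphi': \overline{\mathbb O_\lambda} \cap \mathbb T_\mu^{op} \to \overline{\Gr^\lambda} \cap G^\vee_1[t^{-1}] L_\mu$ denotes the Cautis--Kamnitzer map, then $\varphi(A) = \varphi'(A^T)$. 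Composing transposition with $\varphi'$ then recovers the map $\varphi$ of the theorem. Third, I would invoke \cite[Theorem~3.1]{CK18} to conclude that $\varphi'$, and hence $\varphi$, is an isomorphism.

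For orientation (though I would not reprove it), the geometric content of the Cautis--Kamnitzer theorem uses the lattice model: a point of $\Gr$ for $PGL_m$ is a $\C[[t]]$-lattice $L \subset \C((t))^m$ modulo scaling, with $L_\mu$ corresponding to $\bigoplus_i t^{-\mu_i}\C[[t]]$. The slice $G^\vee_1[t^{-1}] L_\mu$ consists of lattices agreeing with $L_\mu$ at $t = \infty$, and $\varphi(A)$ is the change of basis between the standard lattice and the lattice reflecting the $\C[t]$-module structure on $\C^N$ where $t$ acts as $A$. The closure condition $A \in \overline{\mathbb O_\lambda}$, phrased as dominance bounds on Jordan block sizes, translates via $\varphi$ to the dimension conditions cutting out $\overline{\Gr^\lambda}$.

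The main obstacle is purely notational: one has to carefully unwind the conventions, in particular the fact that the $(j,i)$ block in our setup corresponds under transpose to the $(i,j)$ block in \cite{CK18}, and that our choice to record matrix entries on the last row (with ascending $t$-degree running along the $\mu_i$ columns of that row) is transpose-equivalent to their choice along the first column. Once this bookkeeping is carried out, no further geometric argument is required.
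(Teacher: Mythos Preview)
Your proposal is correct and matches the paper's approach exactly: the paper does not give a proof at all but simply states that the result ``follows from Theorem~3.1 of \cite{CK18} (except we have applied transpose to the domain).'' Your three-step reduction via the transpose map is precisely the bookkeeping the parenthetical remark is gesturing at, so you have spelled out what the paper leaves implicit.
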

Note that  $ A \mapsto [\varphi(A)] $  is $ T^\vee $-equivariant with respect to the above defined action of $T^\vee $ on $ \mathbb T_\mu $ and the usual $ T^\vee $ action on $ \Gr $.

In \cite{D18}, the first appendix author restricted this isomorphism to the generalized orbital varieties and obtained the following result.

\begin{theorem}
	\label{thm:s1t2}
	\begin{enumerate}
		\item  $ \varphi $ restricts to an isomorphism $$ \varphi : \overline{\mathbb O_\lambda} \cap \mathbb T_\mu \cap \mathfrak n \rightarrow \overline{\Gr^{\lambda}} \cap S^\mu_- $$
		\item For any tableau $ \tau \in YT(\lambda)_\mu $, the closure of the image of the generalized orbital variety, $ Z_\tau := \overline{\varphi(\Zo_\tau)} $, is the MV cycle whose Lusztig datum equals $ n(\tau)_\bullet $.  In other words, the following diagram commutes
		\begin{equation*}
		\xymatrix{ YT(\lambda)_\mu \ar[r] \ar[dd] &\mathcal Z(\lambda)_\mu \ar[d] \\
			& \mathcal Z(\infty) \ar[d] \\
			\mathbb N^{\Delta_+} & B(\infty) \ar[l]
		}
		\end{equation*}
		where the top horizontal arrow is $ \tau \mapsto Z_\tau$, the left vertical arrow is $ \tau \mapsto n(\tau)_\bullet $, the right vertical arrows are constructed in \S \ref{se:MVBasisAlg} and the bottom arrow is Lusztig's bijection.
	\end{enumerate}
	
\end{theorem}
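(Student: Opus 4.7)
Proposal for the proof of Theorem~\ref{thm:s1t2}.

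For part (i), I would proceed by a direct matrix computation unpacking the definition of $\varphi$. The formula $\varphi(A) = t^\mu I_m + (a_{ij}(t))$ with $a_{ij}(t) = -\sum_{k} A_{ij}^k t^{k-1}$ where $A_{ij}^k$ records an entry in the $(j,i)$-block of $A$ means that upper triangularity of $A$ (i.e.\ vanishing of all $(j,i)$-blocks with $j>i$ and of the strictly lower-triangular entries of each diagonal block) forces $a_{ij}(t) = 0$ for $j > i$. This makes $\varphi(A)$ block lower-triangular, which one checks factors as $n \cdot t^\mu$ for some $n \in N^\vee_-(\mathcal K)$; hence $[\varphi(A)] \in N^\vee_-(\mathcal K)L_\mu = S^\mu_-$. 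The reverse inclusion is obtained by running the argument backwards, using that Theorem~\ref{thm:iso} already gives a bijection on the full ambient spaces.

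For part (ii), the initial assertion that $Z_\tau$ is an MV cycle is then immediate: combining (i) with Theorem~\ref{thm:s1t1}, $\overline{\varphi(\Zo_\tau)}$ is an irreducible closed subvariety of $\overline{\Gr^\lambda}\cap\overline{S_-^\mu}$ of the correct dimension $\rho^\vee(\lambda-\mu)$. Since $\varphi$ is an isomorphism, the resulting map $\tau\mapsto Z_\tau$ is injective, and surjectivity onto $\mathcal Z(\lambda)_\mu$ follows by comparing cardinalities (both equal $\dim L(\lambda)_\mu$, via Kostka numbers on the tableaux side and the geometric Satake basis of \S\ref{ss:MVCycles} on the MV side).

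The substantive content is the identification of the Lusztig datum as $n(\tau)_\bullet$. The convex order in \eqref{eq:order} corresponds to the reduced word $w_0 = (s_1 s_2 \cdots s_{m-1})(s_1 \cdots s_{m-2}) \cdots (s_1)$, which yields an inductive reading of the Lusztig datum: the first block $(n_{\varep_1-\varep_j})_{j=2}^m$ is determined by iterated crystal reflections $\sigma_1$ (see \S\ref{ss:CrysRefl}), after which one recurses into a Lusztig datum for a rank-$(m-1)$ crystal. I would proceed by induction on $m$. On the matrix side, the restriction $A \mapsto A|_{\C^{\mu_1 + \cdots + \mu_{m-1}}}$ projects the generalized orbital variety onto a variety of the same shape for a rank-$(m-1)$ situation (using $\tau^{(m-1)}$), and the defining condition on Jordan type at the top level isolates precisely the entries $n(\tau)_{\varep_i - \varep_m} = \sh(\tau^{(m)})_i - \sh(\tau^{(m-1)})_i$ for $i = 1, \dots, m-1$.

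The main obstacle will be matching the ``one-step'' combinatorics: proving that the Lusztig datum coordinates corresponding to $\varep_1 - \varep_j$ (the minimal roots in the convex order) can be read from $\tau$ as the count of boxes in row 1 with entry $j$. The cleanest way to do this is to characterize these leading coordinates geometrically — for instance using Theorem~\ref{th:ActEiMV} with a hyperplane section transverse to the appropriate semi-infinite cell, so that $n_{\varep_1-\varep_j}$ equals an intersection multiplicity $i(\tilde e Z, D\cdot Z)$ — and then compute the same multiplicity on the matrix side via Proposition~\ref{pr:Fulton} and the local equation for the Cartier divisor. The Mirkovi\'c--Vybornov isomorphism being $T^\vee$-equivariant, together with compatibility of $\varphi$ with Levi restriction at the top level (removing the last block), should let the inductive step go through and close the argument.
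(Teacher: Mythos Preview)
The paper does not prove Theorem~\ref{thm:s1t2}; it is quoted from \cite{D18}, so there is no in-paper argument to compare your proposal against. What follows is feedback on the proposal itself.

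Part~(i) is correct. Upper-triangularity of $A$ kills the $(j,i)$-blocks with $j>i$, hence all $a_{ij}(t)$ with $j>i$; the diagonal blocks, being strictly upper-triangular, have last row identically zero, so $a_{ii}(t)=0$ as well, and $\varphi(A)$ is lower triangular with diagonal $t^\mu$. Writing $\varphi(A)=(I+L)\,t^\mu$ with $L$ strictly lower triangular over $\mathcal K$ places $[\varphi(A)]$ in $N_-^\vee(\mathcal K)L_\mu=S_-^\mu$. The converse is mechanical given Theorem~\ref{thm:iso}, and your bijectivity argument for $\tau\mapsto Z_\tau$ via dimension and Kostka numbers is fine.

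For the Lusztig-datum identification in~(ii), the two inductions you set up do not match. Peeling off the first segment $(s_1\cdots s_{m-1})$ of the reduced word via Saito reflections (this uses $\hat\sigma_1,\hat\sigma_2,\ldots,\hat\sigma_{m-1}$ in sequence, not ``iterated $\sigma_1$'') removes the entries $n_{\varep_1-\varep_j}$, $j=2,\ldots,m$, and the residual Lusztig datum lives on the roots $\varep_i-\varep_j$ with $i\ge 2$. Your matrix-side recursion $A\mapsto A|_{\C^{\mu_1+\cdots+\mu_{m-1}}}$ (equivalently $\tau\mapsto\tau^{(m-1)}$) instead removes the entries $n_{\varep_i-\varep_m}$, $i=1,\ldots,m-1$, and the residual lives on the roots with $j\le m-1$. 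These are different Levi restrictions, so the inductive step does not close as written; Theorem~\ref{th:ActEiMV} computes $\varepsilon_i$ and single Saito reflections, but will not convert one of these restrictions into the other. To repair the argument you would either have to reorganize the matrix side (e.g.\ pass to a quotient by the first block rather than restrict to the first $m-1$ blocks, which is not what the definition of $\Zoo_\tau$ immediately provides), or bypass induction and identify the Lusztig datum of $Z_\tau$ through its MV polytope (\S\ref{ss:MVfromMV}), relating the $T^\vee$-fixed points in $\overline{\varphi(\Zo_\tau)}$ to the chain of Jordan types $\bigl(\sh(\tau^{(i)})\bigr)_i$.
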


We will use this theorem to compute the value of $ \barD$ on an MV basis vector, with the aid of the following Proposition. 

\begin{proposition} \label{pr:Dmdeg}
	We have
	$$
	\barD(b_{t^{-\lambda} Z_\tau}) = \varep_{L_{\mu - \lambda}}(t^{-\lambda}Z_\tau) = \varep_{L_\mu}(Z_\tau) = \frac{\mdeg[\mathfrak n \cap \mathbb T_\mu] \Zo_\tau}{p(\mu)}
	$$
\end{proposition}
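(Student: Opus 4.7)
The plan is to verify the three equalities in sequence, using Corollary~\ref{co:barDEM} and the Mirkovi\'c--Vybornov isomorphism as the main tools.

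For the first equality $\barD(b_{t^{-\lambda}Z_\tau}) = \varep_{L_{\mu-\lambda}}(t^{-\lambda}Z_\tau)$: I would apply Corollary~\ref{co:barDEM} directly to the stable MV cycle $t^{-\lambda}Z_\tau$ of weight $\mu-\lambda$. Since $G$ is simply-laced, $q(\alpha_i)=1$ for all $i$, so $\iota$ is the canonical root/coroot identification and $(\iota^{-1})^*$ is suppressed in our conventions.

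For the second equality $\varep_{L_{\mu-\lambda}}(t^{-\lambda}Z_\tau)=\varep_{L_\mu}(Z_\tau)$: The element $t^{-\lambda}$ lies in the abelian group $T^\vee(\mathcal K)$, hence commutes with $T^\vee\subset G^\vee$. Therefore left multiplication by $t^{-\lambda}$ is a $T^\vee$-equivariant automorphism of $\Gr$ taking $Z_\tau$ to $t^{-\lambda}Z_\tau$ and $L_\mu$ to $L_{\mu-\lambda}$. Equivariant multiplicities are invariant under $T^\vee$-equivariant isomorphisms.

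The third equality $\varep_{L_\mu}(Z_\tau) = \mdeg[\mathfrak n\cap\mathbb T_\mu]\Zo_\tau/p(\mu)$ is the main content, and I would prove it in two steps. First, I would transport the computation across the Mirkovi\'c--Vybornov isomorphism $\varphi$ of Theorem~\ref{thm:iso}. Since $\varphi$ is $T^\vee$-equivariant, and since $\varphi(J_\mu) = t^\mu$ (all the polynomials $a_{ij}(t)$ vanish when $A=J_\mu$), we have $\varphi(J_\mu) = L_\mu$. The point $L_\mu$ lies in the \emph{open} part $S^\mu_-$, not in the boundary $\overline{S^\mu_-}\setminus S^\mu_-$ added to form $Z_\tau$, so $\varphi(\Zo_\tau)$ is an affine open $T^\vee$-invariant neighborhood of $L_\mu$ in $Z_\tau$. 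Using Proposition~\ref{pr:localcalc} this yields $\varep_{L_\mu}(Z_\tau)=\varep_{J_\mu}(\Zo_\tau)$.

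Second, I would apply Proposition~\ref{prop:mdegeqvtmult} with ambient $T^\vee$-representation $W = \mathfrak n\cap\mathbb T_\mu$ translated so that $J_\mu$ sits at the origin. The point $J_\mu$ is $T^\vee$-fixed since its non-zero entries lie on super-diagonals of diagonal blocks, all of weight $\varep_a-\varep_a=0$. The $T^\vee$-weights on the vector space $\mathfrak n\cap\mathbb T_\mu$ come from the strictly upper triangular ``star'' positions: for each pair $a<b$, the last row of block $(a,b)$ contributes exactly $\min(\mu_a,\mu_b)=\mu_b$ entries (using that $\mu$ is a partition so $\mu_a\geq\mu_b$), each of weight $\varep_a-\varep_b$; no diagonal-block or lower-triangular stars contribute. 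Taking the product gives
\[
\prod_{a<b}(\varep_a-\varep_b)^{\mu_b}=p(\mu),
\]
and $J_\mu$ is attractive with respect to a dominant coweight (all tangent weights $\varep_a-\varep_b$ for $a<b$ are positive roots). The formula of Proposition~\ref{prop:mdegeqvtmult} then reads $\varep_{J_\mu}(\Zo_\tau)=\mdeg[\mathfrak n\cap\mathbb T_\mu]\Zo_\tau/p(\mu)$. The main technical care is the enumeration of the star weights and the verification that $L_\mu$ is identified via $\varphi$ with $J_\mu$ in a suitable local neighborhood; everything else is a direct assembly of previously established results.
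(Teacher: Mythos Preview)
Your proof is correct and follows the same approach as the paper, which simply records that the first equality is Corollary~\ref{co:barDEM} and the third comes from Proposition~\ref{prop:mdegeqvtmult}. You have carefully unpacked the details the paper leaves implicit: the translation-invariance for the middle equality, the identification $\varphi(J_\mu)=L_\mu$ so that $\varphi(\Zo_\tau)=Z_\tau\cap S^\mu_-$ is the required affine open $T^\vee$-invariant neighborhood of $L_\mu$, and the weight count on $\mathfrak n\cap\mathbb T_\mu$ giving $p(\mu)$.
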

\begin{proof}
	The first equality is Corollary \ref{co:barDEM} and the third comes from Proposition \ref{prop:mdegeqvtmult}.
\end{proof}

\subsubsection{A Pl\"ucker embedding}
\label{ss:ss12}
We will now explain how the knowledge of $ \Zo_\tau $ allows us to compute the sections of line bundles over $ Z_\tau $.

Recall that in the lattice model for \(\Gr\) the orbit \(\Gr^\lambda\) can be described as
\[
\Gr^\lambda = \bigl\{L\in\Gr \bigm| L\subset L_0= \C\xt^m \textrm{ and } t\big|_{L_0/L}\textrm{ has Jordan type }\lambda\bigr\}
\]
Let \(p\ge\lambda_1\).  Thus \(L\in\overline{\Gr^\lambda}\) contains \(t^p L_0\) and the quotient $ L / t^p L_0 $ has dimension $ mp - N $.

\begin{proposition}
	The map
	\[
	\overline{\Gr^\lambda}\to \Grf(mp-N,L_0 / t^p L_0) \quad L\mapsto L/t^pL_0
	\]
	is a closed embedding.  Moreover, the standard determinant line bundle on $ \Grf(mp-N,L_0 / t^p L_0) $ restricts to the line bundle $ \mathscr L $ on $\overline{\Gr^\lambda} $.
\end{proposition}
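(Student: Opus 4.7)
My plan has three steps: verify well-definedness and functoriality of the map, show it is a closed immersion, and identify the line bundle. For the first step: for any $L \in \overline{\Gr^\lambda}$, the endomorphism $t$ of $L_0/L$ has nilpotency order at most $\lambda_1 \leq p$, so $t^p L_0 \subseteq L \subseteq L_0$, placing $L/t^p L_0 \in \Grf(mp-N, L_0/t^p L_0)$ with the asserted dimension. Functoriality follows from the universal property of Grassmannians: the tautological sub-$\C[[t]]$-module $\mathcal L$ over $\overline{\Gr^\lambda}$ contains the constant subfamily $t^p L_0 \times \overline{\Gr^\lambda}$, and the quotient is a locally free rank $mp-N$ subsheaf of $(L_0/t^p L_0) \otimes \mathcal O_{\overline{\Gr^\lambda}}$.

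For the closed immersion claim, projectivity of $\overline{\Gr^\lambda}$ makes the image automatically closed, and injectivity on $\C$-points is clear since $L$ is recovered as the preimage of $L/t^p L_0$ in $L_0$. It therefore remains to show injectivity on Zariski tangent spaces. At a point $L$, the tangent space $T_L \overline{\Gr^\lambda}$ sits inside the tangent space at $L$ of the closed subvariety $\{L' : t^p L_0 \subseteq L' \subseteq L_0\} \subset \Gr$, which is $\Hom_{\C[t]/t^p}(L/t^p L_0, L_0/L)$ (first-order deformations preserving both inclusions, using that $t^p \cdot (L_0/L) = 0$). The tangent space of the target Grassmannian at $L/t^p L_0$ is $\Hom_\C(L/t^p L_0, L_0/L)$, and the differential of our map is the forgetful inclusion of $\C[t]/t^p$-linear maps into $\C$-linear maps, which is manifestly injective.

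For the line bundle identification, the Picard group of the connected component of $\Gr$ containing $\overline{\Gr^\lambda}$ is generated by $\mathscr L$, so the pullback of the Plücker hyperplane bundle along our embedding must be $\mathscr L^{\otimes a}$ for some $a \geq 1$. To pin down $a = 1$, I would invoke the standard realization of the basic representation $V(\Lambda_0)$ of $\widehat{\mathfrak{pgl}_m}$ as a submodule of the semi-infinite wedge on $\C^m \otimes \C[t,t^{-1}]$: for a lattice $L$ with $t^p L_0 \subseteq L \subseteq L_0$, the image $\Upsilon(L)$ is represented by $w_L \wedge \tau$, where $w_L \in \wedge^{mp-N}(L_0/t^p L_0)$ is the finite Plücker coordinate of $L/t^p L_0$ and $\tau$ is a fixed semi-infinite tail independent of $L$. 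This factorization of $\Upsilon$ through the finite-dimensional Plücker embedding identifies $\mathscr L|_{\overline{\Gr^\lambda}}$ with the pullback of the Plücker hyperplane bundle. As a cross-check, I would compute the degree of each bundle on a Schubert $\mathbb P^1 \subset \overline{\Gr^\lambda}$ joining two adjacent $T^\vee$-fixed points: Proposition~\ref{pr:KacFormula} yields $\deg \mathscr L = 1$ there, and the Plücker bundle likewise has degree $1$ because the Plücker coordinates at the two fixed points differ by a single elementary weight.

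The line bundle identification is the main obstacle, as it requires matching the abstract Kac--Moody construction (with its central extension) against the concrete finite-dimensional Plücker embedding. By contrast, the tangent space computation is essentially mechanical, and well-definedness is immediate.
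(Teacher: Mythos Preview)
The paper states this proposition without proof, treating it as standard background in the appendix. So there is no ``paper's proof'' to compare against; I can only assess your argument on its own.

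Your first two steps are correct. Well-definedness is as you say. For the closed-immersion step, the cleanest formulation is that the map $L\mapsto L/t^pL_0$ identifies the whole locus $\{L':t^pL_0\subseteq L'\subseteq L_0\}\subset\Gr$ with the closed subscheme of $t$-stable subspaces in $\Grf(mp-N,L_0/t^pL_0)$; since $\overline{\Gr^\lambda}$ is already a closed subvariety of that locus, its image is closed and the map is an isomorphism onto its image. Your tangent-space computation recovers exactly this: the differential lands in $\Hom_{\C[t]/t^p}(L/t^pL_0,L_0/L)\subset\Hom_\C(L/t^pL_0,L_0/L)$, so it is injective. Together with properness and injectivity on $\C$-points, this suffices (proper $+$ radicial $+$ unramified $\Rightarrow$ closed immersion).

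For the line bundle, your strategy is the right one, but as written it is a sketch (``I would invoke\dots'', ``I would compute\dots'') rather than a proof. The substantive content is the fermionic realization of $V(\Lambda_0)$ in type $A$ as sitting inside the semi-infinite wedge $\Lambda^{\infty/2}(\C^m\otimes\C((t)))$, under which $\Upsilon(L)=[w_L\wedge\tau]$ factors exactly as you describe. Once you cite this (e.g.\ Kac's book, or the Kac--Peterson construction), the identification $\mathscr L|_{\overline{\Gr^\lambda}}\cong\text{(Pl\"ucker }\mathcal O(1))$ is immediate and your degree cross-check on a $T^\vee$-invariant $\mathbb P^1$ becomes redundant. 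The only real gap, then, is that you have indicated the reference to be made rather than made it.
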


Now assume that \(N = m, \mu = (1, \dots, 1) \) and \(p = 2\), so that $ mp - N = m $.  We consider the Pl\"ucker embedding $ \Grf(m, L_0/t^2 L_0)  \rightarrow \mathbb P(\bigwedge^m L_0/t^2 L_0) $.  Our aim is to study the chain of maps connecting $ \overline{\mathbb O_\lambda} \cap \mathfrak n $ with this projective space.

Let $ A \in  \overline{\mathbb O_\lambda} \cap \mathfrak n $.  In this case, the Mirkovi\'c--Vybornov isomorphism is simply given by $ \varphi(A) = [t{\mathrm{Id}} - A^{tr}]$ and the corresponding lattice is \[
L_A = (t\mathrm{Id} - A^{tr}) L_0 = \Span_{\C\xt}\{e_it-A^{tr}e_i \mid 1\le i\le m \}\,.
\]

Fix the basis
$$(v_1, \ldots,  v_m,v_{\bar 1}, \ldots, v_{\bar m}) \equiv ([e_1 t], \ldots, [e_{m}t],[-e_1], \ldots, [-e_{m}]) \text{ for } L_0 / t^2 L_0
$$
We write $ S = (1, \dots, m, \bar 1, \dots, \bar m) $ for the index set of this basis.  We have a resulting basis for $ \bigwedge^m L_0/t^2 L_0 $ indexed by subsets $ C \subset S $ of size $ m$.

In this basis,
\( L_A / t^2 L_0 \) is the row space of the $m\times 2m$ matrix
\(
\widetilde{A} = \begin{bmatrix} I & A \end{bmatrix}_\beta
\)
For any subset $ C \subset S $ of size $m $, we can consider the minor $ \Delta_C(\widetilde A) $ of this matrix using the columns $ C $.
Thus, we have established the following result.

\begin{proposition}
	Under the chain of maps
	\begin{equation}
	\overline{\mathbb O_\lambda} \cap \mathfrak n \rightarrow \overline{\Gr^\lambda} \cap S_-^0 \rightarrow \Grf(m, L_0 / t^2 L_0) \rightarrow \mathbb P(\bigwedge\nolimits^m L_0/t^2 L_0)
	\end{equation}
	$A $ is sent to $ [ \Delta_{C}(\widetilde A) ]_{C \in \binom{S}{m}} $.
\end{proposition}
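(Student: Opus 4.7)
The plan is to unwind the definitions at each stage of the composition and identify the Plücker coordinates explicitly.

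First I would observe that for $A \in \overline{\mathbb{O}_\lambda} \cap \mathfrak{n}$, with $\mu = (1,\dots,1)$, the Mirković--Vybornov formula collapses to $\varphi(A) = [t\mathrm{Id} - A^{tr}]$, whose associated lattice is
\[
L_A = (t\mathrm{Id} - A^{tr}) L_0 = \Span_{\C\xt}\bigl\{e_i t - A^{tr} e_i \bigm| 1 \leq i \leq m\bigr\}.
\]
Since $p = 2 \geq \lambda_1$, we have $t^2 L_0 \subseteq L_A$, so the quotient $L_A/t^2 L_0$ is spanned by the images of the $m$ generators $e_i t - A^{tr} e_i$ inside $L_0/t^2 L_0$.

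Next I would expand each generator in the chosen basis $(v_1,\dots,v_m,v_{\bar 1},\dots,v_{\bar m})$. Using $v_i = [e_i t]$ and $v_{\bar j} = [-e_j]$, and the identity $A^{tr} e_i = \sum_j A_{ij}\, e_j$, we obtain
\[
[e_i t - A^{tr} e_i] \;=\; v_i + \sum_{j=1}^{m} A_{ij}\, v_{\bar j}.
\]
Thus the row space of $L_A/t^2 L_0$ with respect to this basis is precisely the row space of the block matrix $\widetilde{A} = [I \;\; A]$, which proves simultaneously that the generators are linearly independent (so $\dim(L_A/t^2 L_0) = m$ as needed) and identifies the subspace.

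Finally I would invoke the standard description of the Plücker embedding: for a subspace of $L_0/t^2 L_0$ realized as the row space of an $m \times 2m$ matrix $\widetilde{A}$, the image in $\mathbb{P}(\bigwedge^m L_0/t^2 L_0)$ has homogeneous coordinates $[\Delta_C(\widetilde{A})]_{C \in \binom{S}{m}}$, where $\Delta_C$ denotes the maximal minor on the columns indexed by $C$. This yields the claimed formula. The entire argument is essentially bookkeeping; the only subtlety is keeping track of the sign conventions built into the basis choice $v_{\bar j} = [-e_j]$, which is exactly what arranges the $+A_{ij}$ entries in $\widetilde{A}$ rather than $-A_{ij}$.
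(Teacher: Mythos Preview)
Your proposal is correct and follows essentially the same approach as the paper: the paper establishes the result in the discussion immediately preceding the proposition by noting that $\varphi(A)=[t\,\mathrm{Id}-A^{tr}]$, that $L_A/t^2L_0$ is the row space of $\widetilde A=[I\ \ A]$ in the chosen basis, and that the Pl\"ucker coordinates are therefore the maximal minors $\Delta_C(\widetilde A)$. Your write-up adds a helpful explicit check of the basis expansion and the sign convention, but the argument is the same.
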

We also note these maps are $T^\vee $-equivariant (in fact $ PGL_m$-equivariant) where $ T^\vee $ acts on $ \mathbb P(\bigwedge^m L_0/t^2 L_0) $ using the natural $GL_m $ action on $  L_0/t^2 L_0 $.  In particular, the basis vectors $ v_i $ and $ v_{\bar i} $ both have weight $ \varep_i $.

From this Proposition, it is immediate that $ \overline{\mathbb O_\lambda} \cap \mathfrak n $ is mapped into the affine space $ \mathbb A^{\binom{2m}{m} -1} \subset \mathbb P(\bigwedge^m L_0/t^2 L_0) $ defined by the condition that $ \Delta_{1,\dots, m} \ne 0 $ and moreover that for any $ \tau $, $ \Zo_\tau $ is the intersection of $ Z_\tau $ with this open affine space.  Thus we deduce the following corollary.

\begin{corollary} \label{co:IdealMV}
	The ideal of $ Z_\tau \subset \mathbb P(\bigwedge^m L_0/t^2 L_0) $ equals the homogenization of the kernel of the map $ \C[\{\Delta_C \mid C \in  \binom{S}{m}\}] \rightarrow \C[\Zo_\tau] $.
\end{corollary}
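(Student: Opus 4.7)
The plan is to recognize Corollary~\ref{co:IdealMV} as an instance of the classical dehomogenization--homogenization correspondence between affine and projective ideals, applied to the distinguished affine chart $U := \{\Delta_{1,\ldots,m} \neq 0\}$ inside $\mathbb P(\bigwedge^m L_0/t^2 L_0)$.

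First I would note that, via Theorem~\ref{thm:iso} and the chain of closed embeddings $\overline{\Gr^\lambda} \hookrightarrow \Grf(m,L_0/t^2L_0) \hookrightarrow \mathbb P(\bigwedge^m L_0/t^2 L_0)$ recalled in \S\ref{ss:ss12}, $Z_\tau$ is a closed irreducible subvariety of projective space. By the discussion immediately preceding the corollary, we have $Z_\tau\cap U = \Zo_\tau$, and since $Z_\tau$ is by definition the closure of $\varphi(\Zo_\tau)$ with $\varphi(\Zo_\tau)\subset U$, the open subvariety $\Zo_\tau$ is dense in $Z_\tau$. Equivalently, no irreducible component of $Z_\tau$ is contained in the hyperplane $\{\Delta_{1,\ldots,m}=0\}$.

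The conclusion then follows from the standard dehomogenization--homogenization principle: for a closed subvariety $Y\subset\mathbb P^n$ having no irreducible component in the hyperplane $\{x_0=0\}$, the homogeneous ideal $I(Y)\subset\C[x_0,\ldots,x_n]$ equals the homogenization with respect to $x_0$ of the affine ideal $I(Y\cap\{x_0\neq 0\})\subset\C[x_1/x_0,\ldots,x_n/x_0]$. Applying this with $Y=Z_\tau$, $x_0=\Delta_{1,\ldots,m}$, and identifying the coordinate ring of $U$ with the polynomial ring in the ratios $\Delta_C/\Delta_{1,\ldots,m}$, the ideal of $\Zo_\tau$ inside $\C[U]$ (i.e.\ the kernel of the restriction map once $\Delta_{1,\ldots,m}$ has been normalized to $1$) homogenizes to the ideal of $Z_\tau$, as claimed.

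There is no substantive obstacle; the corollary is a formal consequence of the density of $\Zo_\tau$ in $Z_\tau$ already established at the end of \S\ref{ss:ss12}. The only small point to verify is that the map $\C[\{\Delta_C\}]\to\C[\Zo_\tau]$ appearing in the statement matches the restriction of Pl\"ucker coordinates under the affine trivialization $\Delta_{1,\ldots,m}=1$, so that ``homogenization of the kernel'' has its usual meaning in the graded polynomial ring $\C[\{\Delta_C\}]$.
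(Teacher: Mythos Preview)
Your proof is correct and follows essentially the same approach as the paper: the paper establishes just before the corollary that $\Zo_\tau$ is the intersection of $Z_\tau$ with the affine chart $\{\Delta_{1,\dots,m}\neq 0\}$, and then states the corollary as an immediate consequence, which is precisely the dehomogenization--homogenization principle you invoke. You have simply made explicit the standard commutative algebra fact that the paper leaves implicit.
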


\subsubsection{Preprojective algebra modules}
\label{ss:ss13}
Fix $ \nu = Q_+ $.  We have a bijection $ \Irr \Lambda(\nu) \rightarrow B(\infty)_{-\nu} $.  We will now recall the composition $ \cup_\nu \Irr \Lambda(\nu) \rightarrow B(\infty) \rightarrow \N^{\Delta_+} $ which was studied in \cite{BaumannKamnitzerTingley}.

The convex order (\ref{eq:order}) on $ \Delta_+$ determines a sequence of indecomposable bricks $(B_\beta)_{\beta \in \Delta_+} $ labelled by the positive roots.  For this convex order, 
$$B_{\varep_i - \varep_j} = i \leftarrow i+1 \leftarrow \cdots \leftarrow j-1 $$
Let \(M \) be a $ \Lambda$-module. The \textbf{Harder--Narasimhan filtration} of \(M\) is the unique decreasing filtration $ (M_\beta)_{\beta \in \Delta_+} $ such that $ M_\beta / M_{\beta'} \cong B_\beta^{\oplus n(M)_\beta} $ where $ \beta < \beta' $ is a consecutive pair in the convex order on $ \Delta_+ $.  We define the Lusztig datum $n(M)_\bullet $ of $ M $ to be these multiplicities.

We define $ n(Y)_\bullet $ for any component $ Y \subset \Lambda(\nu) $ by setting $ n(Y)_\bullet := n(M)_\bullet $, where $ M $ is a general point of $ Y $.  The following result is contained in \cite{BaumannKamnitzerTingley} (see Remark 5.25 (ii)).

\begin{theorem} \label{th:fromBKT}
	The map $ \cup_\nu \Irr \Lambda(\nu) \rightarrow \N^{\Delta_+}  $ defined by $ Y \mapsto n(Y)_\bullet $ is a bijection.  Moreover, this bijection agrees with the composite $ \cup_\nu \Irr \Lambda(\nu) \rightarrow B(\infty) \rightarrow \N^{\Delta_+} $ (where the second map is Lusztig's bijection).
\end{theorem}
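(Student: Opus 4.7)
My plan is to deduce this from the results of \cite{BaumannKamnitzerTingley}. The first step is to package the convex order~\eqref{eq:order} into a stability condition: choose a group homomorphism $\theta : Q \to \R$ such that the induced slope function $\beta \mapsto \theta(\beta)/\mathrm{ht}(\beta)$ is strictly decreasing along the order~\eqref{eq:order}. This gives a notion of $\theta$-stability and, for every finite-dimensional $\Lambda$-module $M$, a unique Harder--Narasimhan filtration whose successive quotients are $\theta$-semistable with strictly decreasing slopes. The structural input from \cite{BaumannKamnitzerTingley} for this particular convex order (which is adapted to a linear orientation of the type $A$ Dynkin diagram) is that the $\theta$-semistable $\Lambda$-modules of slope corresponding to $\beta \in \Delta_+$ are exactly direct sums of copies of the brick $B_\beta$. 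Consequently, the HN filtration of any $M$ has the shape described in the paragraph preceding the theorem, and the multiplicities $n(M)_\beta$ are determined by $M$.

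Next I would check that $Y \mapsto n(Y)_\bullet$ is well-defined. The HN filtration varies constructibly on the family $\Lambda(\nu)$: the locus where the Jordan--H\"older multiplicities of the HN subquotients are prescribed is locally closed, and there is a unique stratum of maximal dimension in each component $Y$. Taking a general $M \in Y$ then produces a fixed tuple $n(Y)_\bullet \in \N^{\Delta_+}$ with $\sum n(Y)_\beta\, \beta = \nu$. Bijectivity of the resulting map is then a counting argument: both sides have generating series equal to $\prod_{\beta \in \Delta_+}(1-e^{-\beta})^{-1}$ (for the right side this is Lusztig's parametrization; for the left it follows from the existence for each tuple $(n_\beta)_\beta$ of a ``stratified'' module $\oplus_\beta B_\beta^{n_\beta}$ whose closure in $\Lambda(\nu)$ is an irreducible component of the correct dimension, as shown in~\cite{BaumannKamnitzerTingley}).

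The harder part is to verify agreement with the composite $Y \mapsto b \in B(\infty) \mapsto \psi(b) \in \N^{\Delta_+}$ coming from the crystal isomorphism and Lusztig's bijection. The plan here is to argue by induction on $\nu$ using Saito reflections. Both parametrizations of $B(\infty)_{-\nu}$ are equivariant for the Kashiwara operators $\tilde e_i^*$ in a compatible way: on the one hand, Lusztig's bijection intertwines $\tilde e_{i_1}^*$ with subtraction of $1$ from the first coordinate (for the chosen adapted reduced word); on the other hand, the first entry $n(Y)_{\alpha_{i_1}}$ equals the multiplicity of $S_{i_1}$ as a direct summand at the top of the HN filtration of a general $M \in Y$, which from the bicrystal structure on the dual semicanonical basis (Theorem~\ref{th:LusztigPerfect}) and \cite[\S5]{KashiwaraSaito} coincides with $\varepsilon_{i_1}^*(c_Y)$. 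Applying $(\tilde e_{i_1}^*)^{n(Y)_{\alpha_{i_1}}}$ strips off the top piece of both the Lusztig datum and the HN filtration simultaneously, reducing to a strictly smaller weight, which closes the induction. The main obstacle is ensuring that this ``top-piece'' compatibility holds on the nose, i.e.\ identifying the first entry of both Lusztig data with the same multiplicity; this is precisely the content extracted from Remark 5.25(ii) of \cite{BaumannKamnitzerTingley}.
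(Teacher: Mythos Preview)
The paper offers no proof of its own here; it simply cites \cite{BaumannKamnitzerTingley}, Remark~5.25(ii). Your proposal is effectively an outline of how that reference establishes the result---via a stability condition adapted to the convex order, HN filtrations with brick subquotients, and compatibility with the bicrystal structure---so you are taking the same route as the paper, just with more detail spelled out.

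One caution on conventions. You identify the first Lusztig coordinate with $\varepsilon_{i_1}^*(b)$, but in the paper's setup (\S\ref{ss:MVpolytopes}) the operator that drops the first coordinate is $\hat\sigma_{i_1}=\sigma_{i_1}\circ\tilde e_{i_1}^{\max}$, built from the \emph{unstarred} $\tilde e_{i_1}$; this indicates that the first coordinate is $\varepsilon_{i_1}(b)$. On the module side, the top HN subquotient $S_{i_1}^{\,n(M)_{\alpha_{i_1}}}$ is a \emph{quotient} of $M$, and you must match it with whichever of $\varepsilon_{i_1}$, $\varepsilon_{i_1}^*$ measures the $S_{i_1}$-head in the Kashiwara--Saito bicrystal under the paper's dual conventions (note the remark in \S\ref{ssec:dualsemi} about increasing versus decreasing composition series). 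Your inductive strategy is sound once the starred/unstarred choice is made consistently on both sides; as written, the bookkeeping is off by this involution.
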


From Theorem \ref{thm:s1t2} and \ref{th:fromBKT}, we deduce the following.
\begin{corollary} \label{co:Compatible}
	Let $ \tau \in YT(\lambda)_\mu$ be a Young tableau and let $ \nu = \lambda - \mu $.  Let $ Z_\tau $ be the MV cycle constructed in \S \ref{se:Ztau}.  Let $Y_\tau \subset \Lambda(\nu) $ be component such that  $ n(Y)_\bullet = n(\tau)_\bullet $.
	
	Then the stable MV cycle $ t^{-\lambda} Z_\tau $ and the component $ Y_\tau $ correspond in the sense of \S \ref{se:measec}.
\end{corollary}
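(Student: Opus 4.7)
The plan is to show this is essentially a direct unpacking of the two main combinatorial theorems just invoked, namely Theorem \ref{thm:s1t2}(ii) on the MV side and Theorem \ref{th:fromBKT} on the preprojective side. Both sides yield elements of $B(\infty)$ via a comparison with Lusztig's bijection $B(\infty) \to \mathbb N^{\Delta_+}$ with respect to the convex order fixed in \eqref{eq:order}, so the strategy is simply to check that $t^{-\lambda} Z_\tau$ and $Y_\tau$ are sent to the same Lusztig datum.

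First, I would handle the MV side. By Proposition \ref{pr:MVCohFam}, the basis vector $b_{t^{-\lambda} Z_\tau} \in \C[N]$ equals $\Psi_\lambda([Z_\tau])$, and under the bijections
\[
\mathcal Z(\infty) \longleftarrow \bigl\{Z \in \mathcal Z(\infty) \bigm| t^\lambda Z \subseteq \overline{\Gr^\lambda}\bigr\} \longrightarrow \mathcal Z(\lambda)
\]
the element $t^{-\lambda} Z_\tau$ corresponds to $Z_\tau \in \mathcal Z(\lambda)_\mu$. The crystal structure on $\mathcal Z(\infty)$ restricts (up to a shift in weight) to the crystal structure on $\mathcal Z(\lambda)$ used in \S\ref{ss:MVCycles}, so $t^{-\lambda} Z_\tau$ and $Z_\tau$ represent the same element of $B(\infty)$. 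By Theorem \ref{thm:s1t2}(ii), this element is exactly the one whose Lusztig datum (in Lusztig's bijection $B(\infty) \to \N^{\Delta_+}$) equals $n(\tau)_\bullet$.

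Next, I would handle the preprojective side, which is even more immediate: $Y_\tau$ is defined so that $n(Y_\tau)_\bullet = n(\tau)_\bullet$, and Theorem \ref{th:fromBKT} asserts precisely that the map $Y \mapsto n(Y)_\bullet$ agrees with the composition $\Irr \Lambda(\nu) \to B(\infty) \to \N^{\Delta_+}$ (where the first map comes from the dual semicanonical basis). Hence the element of $B(\infty)$ attached to $Y_\tau$ also has Lusztig datum $n(\tau)_\bullet$.

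Combining the two paragraphs, both $t^{-\lambda} Z_\tau$ and $Y_\tau$ map to the unique element of $B(\infty)$ with Lusztig datum $n(\tau)_\bullet$; equivalently $b_{t^{-\lambda} Z_\tau}$ and $c_{Y_\tau}$ represent the same element of the bicrystal, which is the definition of correspondence from \S \ref{se:measec}. There is essentially no obstacle here; the only nontrivial inputs are Theorems \ref{thm:s1t2}(ii) and \ref{th:fromBKT}, both already in hand. The one point worth pausing on is to confirm that the convex order \eqref{eq:order} used to define $n(\tau)_\bullet$ via restricted shapes, the convex order implicit in Lusztig's bijection for $\mathcal Z(\lambda)$, and the convex order used in the Harder--Narasimhan filtration from \cite{BaumannKamnitzerTingley} all match --- but these conventions have already been aligned in the statements of the two cited theorems, so the proof reduces to a one-line citation.
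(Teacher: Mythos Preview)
Your proof is correct and follows exactly the approach the paper takes: the corollary is stated immediately after Theorems \ref{thm:s1t2} and \ref{th:fromBKT} with the one-line justification ``From Theorem \ref{thm:s1t2} and \ref{th:fromBKT}, we deduce the following,'' and you have simply unpacked this citation by tracing both $t^{-\lambda}Z_\tau$ and $Y_\tau$ to the same Lusztig datum $n(\tau)_\bullet$ under Lusztig's bijection $B(\infty)\to\N^{\Delta_+}$. Your added care about the convex-order conventions matching is a reasonable sanity check but, as you note, is already absorbed into the statements of the cited theorems.
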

\subsubsection{Evidence for extra-compatibility in an $A_4$ example}
\label{ss:A4}
We take $ m = N = 5, \lambda = (2,2,1), \mu = (1,1,1,1,1)$ and consider
\[\tau = \young(12,34,5)
\]

Let $ Z = Z_\tau $ be the MV cycle defined from $ \tau $ in \S \ref{se:Ztau}.  Let $ Y = Y_\tau \subset \Lambda(\nu) $ and let $ M $ be a general point of $ Y_\tau $, i.e. a general module with $ n(M) = n(\tau)$.  This gives the simplest indecomposable $ M $ not covered by the analysis in \S \ref{se:Schubert}.

The pair $ t^{-\lambda}Z, Y $ are compatible, by Corollary \ref{co:Compatible}.  Moreover, $ b_{t^{-\lambda}Z} = c_Y$, as can be seen by a variant of the analysis from \S \ref{ss:NonUniq}.  In fact, we expect that $ t^{-\lambda}Z, Y $ are extra-compatible.  We will now prove the following result which gives evidence in this direction.  

\begin{theorem} \label{th:A4}
	\begin{enumerate}
		\item For all $ n \in \N $, we have $\dim \Gamma(t^{-\lambda} Z, \mathscr L^{\otimes n}) = \dim H^\bullet(F_n(M))$.
		\item For $ n = 1, 2 $ and all $ \mu $, we have $ \dim \Gamma(t^{-\lambda} Z, \mathscr L^{\otimes n})_\mu = \dim H^\bullet(F_{n,\mu}(M))
		$.
		\item We have $ \barD(b_{t^{-\lambda}Z}) = \barD(\xi_M) $.
	\end{enumerate}
\end{theorem}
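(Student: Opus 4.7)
The proof is a computational verification in this specific $SL_5$ example, organized in increasing difficulty from (iii) to (ii) to (i). For Part (iii), the text has already established $b_{t^{-\lambda}Z_\tau} = c_{Y_\tau}$ by a variant of the $D_4$ analysis in \S \ref{ss:NonUniq}, so (iii) follows immediately from $\barD(c_Y) = \barD(\xi_M)$ for $M$ a general point of $Y_\tau$; as an independent check I compute each side directly. Applying Proposition \ref{pr:Dmdeg} (noting that $\mu = (1,1,1,1,1)$ makes $\mathbb T_\mu \cap \mathfrak n$ the strictly upper triangular $5\times 5$ matrices), $\barD(b_{t^{-\lambda}Z_\tau}) = \mdeg[\mathfrak n]\Zo_\tau/p(\mu)$. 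I write out the defining rank conditions for $\Zo_\tau$ from (\ref{eq:zo}) and compute the multidegree in Macaulay2. On the module side, Proposition \ref{pr:ExpanCh} and the Geiss--Leclerc--Schr\"oer formula give $\barD(\xi_M) = \sum_{\vi \in \Seq(\nu)}\chi(F_\vi(M))\,\barD_\vi$; I identify the general module $M$ with the prescribed Lusztig datum, of dimension vector $\alpha_1 + 2\alpha_2 + 2\alpha_3 + \alpha_4$ over the $A_4$ quiver, and compute each Euler characteristic by stratifying $F_\vi(M)$ according to the isomorphism types of intermediate quotients.

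For Part (ii), the key is Corollary \ref{co:IdealMV}, which presents the homogeneous coordinate ring of $Z_\tau \subset \mathbb P(\bigwedge^5 L_0/t^2 L_0)$ as the homogenization of the image of $\C[\{\Delta_C\}_{C \in \binom{S}{5}}] \twoheadrightarrow \C[\Zo_\tau]$, with the map sending each Pl\"ucker coordinate to its value on $\widetilde A = [I \mid A]$. Since $\mathscr L$ is ample and higher cohomology vanishes for $n \geq 1$, the $n$-th graded piece is $\Gamma(Z_\tau, \mathscr L^{\otimes n})$, and the $T^\vee$-weight decomposition is inherited from the weights of the Pl\"ucker variables; Proposition \ref{pr:shiftWeights} translates to $t^{-\lambda}Z_\tau$ by a weight shift. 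Computer algebra then produces the weight-refined Hilbert series in degrees $n = 1, 2$. On the $\Lambda$-module side, $F_{1,\mu}(M)$ is the variety of submodules of $M$ of dimension vector $\mu$, and $F_{2,\mu}(M)$ parameterizes nested pairs $0 \subseteq N_1 \subseteq N_2 \subseteq M$ of total dimension $\mu$; I compute $\chi(F_{n,\mu}(M))$ either by direct enumeration of submodule types or, equivalently, via a generic torus action on $\mathbb G_\mu(M[t]/t^n)$ whose fixed-point locus is the corresponding flag variety. Matching weight-by-weight proves (ii).

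For Part (i), both sides are polynomials in $n$ of degree $\dim Z_\tau = \dim M = 6$: the left-hand side by Hilbert polynomial theory (using the above presentation and cohomology vanishing), the right-hand side by the binomial expansion $\chi(F_n(M)) = \sum_{\ell \leq 6} c_\ell \binom{n}{\ell}$ extracted from the proof of Theorem \ref{th:limLambda}. Agreement at seven consecutive values of $n$ therefore forces equality of the polynomials, and I extend the explicit computations of (ii) to $n = 0, \ldots, 6$ (unrefined by weight) to verify these seven data points.

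The main obstacle is the scale of the $n = 2$ weight-refined computation in Part (ii). The Pl\"ucker embedding has $\binom{10}{5} = 252$ coordinates, so while the ideal of $Z_\tau$ can be built from Pl\"ucker relations together with the pullback of the defining equations of $\Zo_\tau$, extracting the weight-graded Hilbert series in degree $2$ requires careful Gr\"obner basis work; in parallel, one must understand enough of the submodule lattice of the specific indecomposable module $M$ of dimension vector $(1,2,2,1)$ to enumerate $F_{2,\mu}(M)$ stratum by stratum. Both calculations are feasible at this small scale but depend essentially on computer algebra.
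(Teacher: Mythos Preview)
Your proposal is broadly correct in spirit---this is indeed a computational verification---but differs from the paper's argument in two places worth noting.

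For part (i), the paper does not interpolate from finitely many values. Instead it computes a closed-form expression on each side. On the MV side, Macaulay2 returns the full Hilbert series of the homogeneous coordinate ring of $Z$, yielding
\[
\dim\Gamma(Z,\mathscr L^{\otimes n}) \;=\; \frac{(n+1)^2(n+2)^2(n+3)(5n+12)}{144}.
\]
On the module side, the paper exploits a special feature of this particular $M$: all submodules of $M$ of a given dimension vector are isomorphic. This gives a recursion
\[
F_n(M)^{\nu} \;\cong\; F_{n-1}(N)\times F_{1,\nu}(M)
\]
(where $N$ is the unique submodule of dimension $\nu$), from which one computes $\dim H^\bullet(F_n(M)^{\nu})$ as an explicit polynomial in $n$ for each of the fifteen relevant $\nu$, and summing reproduces the same closed form. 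Your seven-point interpolation would work only after you justify that $\dim\Gamma(Z,\mathscr L^{\otimes n})$ is already a polynomial at the small values $n=0,\dots,6$; this amounts to a projective-normality or cohomology-vanishing statement that you assert but do not verify. Once you have the Macaulay2 Hilbert series anyway, there is no economy in checking only seven points, and on the module side the recursion is less work than enumerating $F_n(M)$ for seven values of $n$.

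For the Pl\"ucker computation underlying (i) and (ii), you anticipate working with all $\binom{10}{5}=252$ coordinates. The paper observes that since $\mathring Z$ lies in the linear subspace $a_5=a_{10}=0$, all but $17$ minors of $[I\mid A]$ vanish identically on $Z$, so $Z$ sits in a $\mathbb P^{16}$. The homogeneous ideal is then quite small (three Pl\"ucker-type relations coming from $I$, plus eight linear substitutions), and the weight-refined degree-$2$ computation is straightforward rather than the ``main obstacle'' you describe.

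Your treatment of (iii)---computing $\mdeg[\mathfrak n]\mathring Z_\tau$ via Proposition~\ref{pr:Dmdeg} and comparing with the flag function $\sum_\vi \chi(F_\vi(M))\,\barD_\vi$---is exactly what the paper does.
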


\subsubsection{Generalized orbital variety and multidegree}
Let $ A \in \Zoo_\tau $ and write
\[A =\begin{bmatrix}
0 & a_1 & a_2 & a_3 & a_4 \\
0 & 0   & a_5 & a_6 & a_7 \\
0 & 0   & 0   & a_8 & a_9 \\
0 & 0   &   0 & 0   & a_{10} \\
0 & 0 & 0 & 0 & 0
\end{bmatrix}
\]

We apply (\ref{eq:zo})  to find the following conditions along with the equations they impose on the matrix entries of \(A\)
\[
\begin{aligned}
\begin{bmatrix}
0 & a_1\\
0 & 0
\end{bmatrix} &\in \mathbb O_{(2)}   &a_1 \ne 0 \\
\begin{bmatrix}
0 & a_1 & a_2\\
0 & 0 & a_5\\
0 & 0 & 0
\end{bmatrix}  &\in \mathbb O_{(2,1)}  &a_5 = 0\\
\begin{bmatrix}
0 & a_1 & a_2 & a_3\\
0 & 0 & 0 & a_6 \\
0 & 0 & 0 & a_8\\
0 & 0 & 0 & 0
\end{bmatrix} &\in \mathbb O_{(2,2)}  &\phantom{hello}a_1a_6 + a_2a_8 = 0, a_8 \ne 0 \\
 A  &\in\mathbb O_\lambda  &a_{10} = \det\begin{bmatrix}
a_6 & a_7 \\
a_8 & a_9
\end{bmatrix} &= a_1a_7 + a_2a_9 = 0
\end{aligned}
\]
Altogether we find that $ \mathring Z = \Zo_\tau$ is the vanishing locus of
\[
I = (a_5, a_{10}, a_1a_6 + a_2a_8, a_7a_8 - a_6a_9, a_1a_7 + a_2a_9)
\]
and we verified using a computer that this ideal is prime.

Applying the algorithm from \cite[\S 1.5]{KZJ} (or by computer), we obtain the following. 
\begin{equation}
\label{eq:mdeg}
\mdeg[\mathfrak n] \mathring Z = {\left(\alpha_{1} \alpha_{2} + \alpha_{2}^{2} + \alpha_{2} \alpha_{3}\right)} \alpha_{4}^{2} + {\left(\alpha_{1} \alpha_{2}^{2} + \alpha_{2}^{3} + \alpha_{2} \alpha_{3}^{2} + 2 \, {\left(\alpha_{1} \alpha_{2} + \alpha_{2}^{2}\right)} \alpha_{3}\right)} \alpha_{4}
\end{equation}
\subsubsection{The MV cycle and its sections}
In this example, the desciption of the MV cycle \(Z\) from Corollary \ref{co:IdealMV}
can be simplified, since \(\mathring Z\) is contained in the subspace defined by the vanishing of $ a_5 $ and $ a_{10} $. By ignoring minors which are forced to be zero among the set of ${10 \choose 5}$ total possibilities, we can exhibit \(Z\) as a subvariety of $\mathbb{P}^{16}$ using the following set of minors.
$$
\begin{aligned}
u &= \Delta_{12345} & b_1 &= \Delta_{1345\overline{1}} & b_2 &= \Delta_{1245\overline{1}} & b_3 &= \Delta_{1235\overline{1}} & b_4 &= \Delta_{1234\overline{1}} & b_5 &= \Delta_{1235\overline{2}} \\
b_6 &= \Delta_{1234\overline{2}} & b_7 &= \Delta_{1235\overline{3}} & b_8 &= \Delta_{1234\overline{3}} & b_9 &= \Delta_{123\overline{12}} & b_{10} &= \Delta_{124\overline{12}} & b_{11} &= \Delta_{124\overline{13}} \\
b_{12} &= \Delta_{125\overline{12}} & b_{13} &= \Delta_{125\overline{13}} & b_{14} &= \Delta_{123\overline{13}} & b_{15} &= \Delta_{134\overline{13}} & b_{16} &= \Delta_{135\overline{13}}
\end{aligned}$$
Let \(P = \mathbb{C}[b_1, \dots, b_{16}, u]\) and
$$
\begin{aligned}
J_1 &= (b_9 - b_3b_6 + b_4b_5, b_{10} - b_2b_6, b_{11} - b_2b_8, \\
&\qquad \qquad b_{12} - b_2b_5,b_{13} - b_2b_7, b_{14} - b_3b_8 + b_4b_7, b_{15} - b_1b_8, b_{16} - b_1b_7) \\
J_2 &= (b_1b_5 + b_2b_7, b_6b_7 - b_5b_8, b_1b_6 + b_2b_8)
\end{aligned}
$$
where $ J_1 $ just comes from $ \C[\mathfrak n] $ and $ J_2 $ represents the additional relations coming from $ I $.
Thus $Z = \text{Proj}(P/J^h)$ for $J = J_1 + J_2$, where $J^h$ is the homogenization of $J$ with respect to $u$.
Using Macauley2, we obtain the following expression for the space of sections.
\begin{equation}
\label{eq:Hilb}
\dim\Gamma(Z, \mathcal{O}(n)) = \frac{(n+1)^2(n+2)^2(n + 3)(5n + 12)}{144}
\end{equation}

\subsubsection{The preprojective algebra module}
A general module $M $ with Lusztig data $ n(\tau) = (1,0,0,0,1,1,0,0,1,0)$ is 
\[
\begin{tikzcd}[row sep = small,column sep = small]
1\ar[dr] & & 3\ar[dl]\ar[dr] \\
& 2\,2\ar[dr] & & 4\ar[dl] \\
& & 3
\end{tikzcd}
\]
with the maps chosen such that $\ker(M_2 \to M_3)$, $\text{im}(M_3 \to M_2)$ and $\text{im}(M_1\to M_2)$ are all distinct.

In fact, \(M\) has HN filtration with subquotients
\[
\begin{tabular}{c}
\(1\)\\
\hline
\(2\)\\
\hline
\(3\to 2\) \\
\hline
\(4\to 3\)
\end{tabular}
\]

\subsubsection{Flags of submodules}

We will now outline a recursive method for computing $\dim H^\bullet (F_{n}(M))$.
Given a dimension vector $\nu \in Q_+$, let
$F_n(M)^\nu$
be the component of $F_{n}(M)$ consisting of $(n+1)$-step flags for which the $n$th submodule in the chain has dimension vector $\nu$. Note that for each $ \nu $, all submodules of $M $ of dimension $ \nu $ are isomorphic (this is a special property of $M$).  This means that for any $\nu$, there exists a submodule $N$ such that
$$F_n(M)^\nu \cong F_{n-1}(N)\times F_{1,\nu}(M).$$
Using this recursive definition, we compute $\dim H^\bullet(F_n(M)^\nu)$ in Table \ref{tab:A4exgr1}.

\begin{table}
	\caption{Spaces of (chains of) submodules of $M$.}\label{tab:A4exgr1}
	\begin{tabular}{ |c|c|c|c| }
		\hline
		$\nu$ & $F_1(M)^\nu$ & $\dim H^\bullet(F_1(M)^\nu)$ & $\dim H^\bullet(F_n(M)^\nu)$\\
		\hline
		$(0, 0, 0, 0)$ & Point & 1 & $1$\\
		$(0, 1, 0, 0)$ & Point & 1 & $n$\\
		$(0, 0, 1, 0)$ & Point & 1 & $n$\\
		$(0, 0, 1, 1)$ & Point & 1 & $\frac{1}{2}n(n+1)$\\
		$(0, 1, 1, 0)$ & $\mathbb{P}^1$ & 2 & $\frac{1}{2}n(3n+1)$\\
		$(0, 1, 1, 1)$ & $\mathbb{P}^1$ & 2 & $\frac{1}{6}n(n+1)(5n+1)$\\
		$(0, 2, 1, 0)$ & Point & 1 & $\frac{1}{2}n^2(n+1)$\\
		$(1, 1, 1, 0)$ & Point & 1 & $\frac{1}{6}n(n+1)(n+2)$\\
		$(0, 1, 2, 1)$ & Point & 1 & $\frac{1}{12}n(n+1)^2(n+2)$\\
		$(0, 2, 1, 1)$ & Point & 1 & $\frac{1}{6}n^2(n+1)(2n+1)$\\
		$(1, 1, 1, 1)$ & Point & 1 & $\frac{1}{24}n(n+1)(n+2)(3n+1)$\\
		$(1, 2, 1, 0)$ & Point & 1 & $\frac{1}{6}n^2(n+1)(n+2)$\\
		$(0, 2, 2, 1)$ & Point & 1 & $\frac{1}{12}n^2(n+1)^2(n+2)$\\
		$(1, 2, 1, 1)$ & Point & 1 & $\frac{1}{24}n^2(n+1)(n+2)(3n+1)$\\
		$(1, 2, 2, 1)$ & Point & 1 & $\frac{1}{144}n^2(n+1)^2(n+2)(5n+7)$\\
		\hline
	\end{tabular}
\end{table}


Since $F_n(M)$ is the disjoint union of each of these varieties, we have $$\dim H^\bullet(F_n(M)) = \sum_{\nu}\dim H^\bullet(F_n(M)^\nu)$$ and so summing the above polynomials we get
\begin{equation}
\label{eq:HVn}
\dim H^\bullet(F_n(M)) = \frac{(n+1)^2(n+2)^2(n + 3)(5n + 12)}{144}.
\end{equation}

Together with (\ref{eq:Hilb}), this establishes Theorem \ref{th:A4}(i).

When $ n = 1, 2 $ we can take this computation further and prove Theorem \ref{th:A4}(ii).  (When $ n = 1$, this can be easily seen by comparing the weights of the variables $ u, b_1, \dots, b_{16} $ with the dimension vectors of submodules of $ M $ from Table \ref{tab:A4exgr1}, taking into account the shifting of weights given by Proposition \ref{pr:shiftWeights}.)

\subsubsection{Computation of the ``flag function''}

By Proposition \ref{pr:ExpanCh} and the definition of $ \xi_M $ from \S \ref{ssec:preproj}, we have that
\[
\barD(\xi_M) =  \sum_{\vi \in \Seq(\nu)} \chi(F_\vi(M))\, \barD_\vi
\]
We call $ \barD(\xi_M) $ the flag function of $ M$.  

For $\vi\in \Seq(\nu)$ among
\[
\begin{aligned}
(3, 4, 2, 3, 2, 1) & & (3, 2, 4, 3, 2, 1) & & (2, 3, 4, 2, 3, 1) & &
(2, 3, 2, 1, 4, 3) \\
(2, 3, 2, 4, 3, 1) & & (2, 3, 4, 2, 1, 3) & & (2, 3, 2, 4, 1, 3)  & & (3, 2, 1, 2, 4, 3) \\
(3, 4, 2, 1, 2, 3) & & (3, 2, 4, 1, 2, 3) & & (3, 2, 1, 4, 2, 3)
\end{aligned}
\]
the variety $F_{\vi}(M)$ is a point, so $\chi(F_{\vi}(M)) = 1$. For $\vi$ among
\[
\begin{aligned}
(3, 4, 2, 2, 3, 1) & & (3, 2, 4, 2, 3, 1) & & (3, 2, 2, 4, 3, 1) & & (3, 4, 2, 2, 1, 3) \\
(3, 2, 4, 2, 1, 3) & & (3, 2, 2, 4, 1, 3) & & (3, 2, 2, 1, 4, 3)
\end{aligned}
\]
we see that $F_{\vi}(M)\cong\mathbb P ^1$, so $\chi(F_{\vi}(M)) = 2$.
For all other values of $\vi$, $ F_\vi(M) = \emptyset$. 

The flag function is a rational function, but we can use $ p(\mu) $ to clear the denominator.  By direct computation, we obtain that the flag function of $ M $ is given by:

\begin{equation*}
\barD(\xi_M)
p(\mu)
= {\left(\alpha_{1} \alpha_{2} + \alpha_{2}^{2} + \alpha_{2} \alpha_{3}\right)} \alpha_{4}^{2} + {\left(\alpha_{1} \alpha_{2}^{2} + \alpha_{2}^{3} + \alpha_{2} \alpha_{3}^{2} + 2 \, {\left(\alpha_{1} \alpha_{2} + \alpha_{2}^{2}\right)} \alpha_{3}\right)} \alpha_{4}
\end{equation*}
Comparing with (\ref{eq:mdeg}) and applying Proposition \ref{pr:Dmdeg}, we obtain the proof of Theorem \ref{th:A4}(iii).
\subsection{Weak evidence for extra-compatibility in an \(A_5\) Example}
\label{ss:ss22}
Let \(\lambda = (2,2,1,1) \), let \(\mu = (1,1,1,1,1,1)\) and consider \[\tau = \young(13,25,4,6)\]

As before, let $ Z = Z_\tau $ be the MV cycle defined from $ \tau $ in \S \ref{se:Ztau}.  Let $ Y = Y_\tau \subset \Lambda(\nu) $.  A general point of $ Y $ is of the form $ M_a $, where $ a \in \C $ is a parameter.  This is the simplest example of a component whose general point is not rigid.

The pair $ t^{-\lambda}Z, Y $ are compatible and $ b_Z = c_Y$, as can be seen by a variant of the analysis from \S \ref{ss:NonUniq}.  We have the following weak evidence for extra-compatibility and for the equality of basis vectors. 
\begin{theorem} \label{th:A5}
	\begin{enumerate}
		\item For all $ \nu \in Q_+ $,  $ \dim \Gamma(t^{-\lambda} Z, \mathscr L)_{-\nu} = \dim H^\bullet(F_{1,\nu}(M_a))
		$.
		\item We have $ \barD(b_{t^{-\lambda}Z}) = \barD(\xi_{M_a}) $.
	\end{enumerate}
\end{theorem}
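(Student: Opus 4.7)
The plan is to mirror the strategy used for the $A_4$ example in \S \ref{ss:A4}, but with additional care because a general point of $Y$ now varies in a one-parameter family $\{M_a\}_{a \in \C}$ rather than being rigid. I would organize the argument around the three geometric objects that enter: the generalized orbital variety $\mathring Z := \mathring Z_\tau \subset \mathfrak n \cap \mathbb T_\mu$, the MV cycle $Z = Z_\tau \subset \overline{\Gr^\lambda}$, and the family $M_a$ of preprojective algebra modules.

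First I would write down an explicit general module $M_a$ (the parameter $a$ measuring the cross-ratio of three lines in the two-dimensional piece at an interior vertex, in analogy with the $A_4$ case) and enumerate, for each dimension vector $\nu' \le \nu = \dimvec M_a$, the space of submodules of $M_a$ of dimension $\nu'$. By constructibility of dimensions, this gives the numbers $\dim H^\bullet(F_{1,\nu}(M_a))$ for generic $a$. The list will be a larger analogue of Table~\ref{tab:A4exgr1}; in particular for the generic $\nu'$ I expect either a point or a $\mathbb P^1$, so the Euler characteristics are small integers easily assembled by hand or by computer.

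Next, on the MV side, I would follow \S \ref{ss:ss11}--\S \ref{ss:ss12} to describe $\mathring Z$ as an affine subvariety of $\mathfrak n \cap \mathbb T_\mu$ by imposing the rank conditions coming from (\ref{eq:zo}) applied to $\tau$. This yields a prime ideal $I$ in the coordinate ring of $\mathfrak n \cap \mathbb T_\mu$, which can be computed and verified prime using Macaulay2. Then, using the restriction of the Pl\"ucker coordinates (Corollary \ref{co:IdealMV}), homogenize to obtain the ideal of $Z \subset \mathbb P(\bigwedge^6 L_0/t^2 L_0)$. The graded piece $\Gamma(t^{-\lambda}Z, \mathscr L)_{-\nu'}$ is then, by Proposition~\ref{pr:shiftWeights}, the $T^\vee$-weight space of weight $-\nu' + \iota(\lambda)$ of the degree-one part of the homogeneous coordinate ring of $Z$, i.e.\ the span of the Pl\"ucker coordinates of the appropriate weight modulo relations of degree one. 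This gives part (i): compare the two lists of dimensions weight-by-weight.

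For part (ii), I would compute $\barD(b_{t^{-\lambda} Z})$ via Proposition \ref{pr:Dmdeg} as $\mdeg[\mathfrak n \cap \mathbb T_\mu](\mathring Z) / p(\mu)$, using the standard degeneration algorithm of Knutson--Zinn-Justin on the explicit ideal $I$ produced above. On the other hand, $\barD(\xi_{M_a})$ is computed from the formula
$$\barD(\xi_{M_a}) = \sum_{\vi \in \Seq(\nu)} \chi(F_\vi(M_a))\, \barD_\vi,$$
which requires enumerating, for each of the finitely many $\vi \in \Seq(\nu)$, the composition series in $M_a$ of type $\vi$. Because $\nu$ here has length $8$, there are many sequences to check, but for each one $F_\vi(M_a)$ will be empty, a point, or a small product of projective lines, so their Euler characteristics are computable directly from the structure of $M_a$. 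After clearing denominators by $p(\mu)$, verify the identity $\mdeg[\mathfrak n \cap \mathbb T_\mu](\mathring Z) = p(\mu) \barD(\xi_{M_a})$ as polynomials in $\alpha_1, \ldots, \alpha_5$.

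The main obstacle is the non-rigidity of $Y$. This has two concrete effects: on the module side, one must argue that although $M_a$ itself varies, the numbers $\chi(F_\vi(M_a))$ and $\dim H^\bullet(F_{1,\nu'}(M_a))$ are independent of $a$ for $a$ generic (this is automatic by constructibility, but identifying the generic stratum requires identifying which specializations collapse the parameter); on the MV side, the variety $\mathring Z$ is larger and has fewer rank conditions cutting it down, so the ideal $I$ is more complicated than in the $A_4$ example and proving its primality, not to mention computing a multidegree by hand, becomes genuinely harder. I expect to rely on computer algebra for both the multidegree computation and the Pl\"ucker-ideal section count, and the crux of the argument is matching the two answers symbolically.
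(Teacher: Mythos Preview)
Your proposal is correct and follows essentially the same route as the paper: compute the ideal of $\mathring Z_\tau$ from the rank conditions of (\ref{eq:zo}) (with computer assistance), homogenize via Corollary~\ref{co:IdealMV} to get $\Gamma(Z,\mathscr L)$ and the multidegree via Proposition~\ref{pr:Dmdeg}, write down the explicit family $M_a$, enumerate its submodules for part~(i) and its composition series for part~(ii), and compare. The paper carries this out (omitting details), finding 148 sequences $\vi$ with $F_\vi(M_a)\cong\mathbb P^1$ and 104 with $F_\vi(M_a)$ a point, so your expectation of ``a point or a small product of projective lines'' is in fact always at most a single $\mathbb P^1$.
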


\subsubsection{Generalized orbital variety and multidegree}
%
We apply (\ref{eq:zo}) with the aid of a computer to find that the generalized orbital variety \(\mathring Z_\tau\) is cut out by the prime ideal
\begin{align*}
I = ({a}_{15}&,{a}_{10},{a}_{1}, {a}_{3} {a}_{6}-{a}_{2} {a}_{7}, {a}_{2} {a}_{12} +{a}_{3} {a}_{14}, {a}_{6} {a}_{12} +{a}_{7} {a}_{14}, {a}_{2} {a}_{11} +{a}_{3} {a}_{13}, \\
 {a}_{6} {a}_{11}& +{a}_{7} {a}_{13}, {a}_{12} {a}_{13} -{a}_{11}{a}_{14}, 
{a}_{5} {a}_{6} {a}_{13}-{a}_{2}{a}_{9} {a}_{13}-{a}_{4} {a}_{6} {a}_{14}+{a}_{2} {a}_{8} {a}_{14}, \\
{a}_{5} {a}_{7} {a}_{13}&-{a}_{3} {a}_{9} {a}_{13}-{a}_{4} {a}_{7} {a}_{14}+{a}_{3} {a}_{8} {a}_{14}, 
{a}_{5}{a}_{7} {a}_{11} -{a}_{3} {a}_{9} {a}_{11}-{a}_{4} {a}_{7})
\end{align*}
where $a_1,\ldots, a_{15}$ are the matrix entries of a $6\times 6$ upper triangular matrix
\[
A = \begin{bmatrix}
0 & a_1 & a_2 & a_3 & a_4 & a_5\\
0 & 0 & a_6 & a_7 & a_8 & a_9\\
0 & 0 & 0 & a_{10} & a_{11} & a_{12}\\
0 & 0 & 0 & 0 & a_{13} & a_{14}\\
0 & 0 & 0 & 0 & 0 & a_{15}\\
0 & 0 & 0 & 0 & 0 & 0
\end{bmatrix}
\]
in \(\mathfrak n\).

From here it is easy to compute $\mdeg[\mathfrak n]\mathring Z$ using a computer.

As in the previous section, we can use the ideal of the orbital variety $ \Zo $ to find the homogeneous ideal of the MV cycle $Z$ and thus to determine $ \Gamma(Z, \mathscr L) $.  We omit the details.

\subsubsection{The preprojective algebra module and its flag function}
In this example \(n(\tau) =(0,1,0,0,0,0,0,1,0,1,0,0,0,1,0)\).  The general module \(M_a \in Y \) is of the form
\[
\begin{tikzpicture}
\begin{scope}[scale=1.3,xshift=6cm]
\node (10) at (1,0){$2$};
\node (30) at (3,0){$4$};
\node (01) at (0,1){$1$};
\node (21) at (2,1){$33$};
\node (41) at (4,1){$5$};
\node (12) at (1,2){$2$};
\node (32) at (3,2){$4$};
\draw[->] (12) to node[pos=.3,left]{$\scriptstyle1$} (01);
\draw[->] (12) to node[pos=.6,left]{$\left(\bsm1\\0\esm\right)$} (21);
\draw[->] (32) to node[pos=.6,right]{$\left(\bsm0\\1\esm\right)$} (21);
\draw[->] (32) to node[pos=.3,right]{$\scriptstyle1$} (41);
\draw[->] (01) to node[pos=.6,left]{$\scriptstyle-a$} (10);
\draw[->] (21) to node[pos=.3,left]{$\left(\bsm a&1\esm\right)$} (10);
\draw[->] (21) to node[pos=.3,right]{$\left(\bsm1&1\esm\right)$} (30);
\draw[->] (41) to node[pos=.6,right]{$\scriptstyle-1$} (30);
\end{scope}
\end{tikzpicture}
\]
%
%

It is easy to determine all submodules of $ M_a $ and thus the space $ F_1(M_a)$.  Comparing with the computation of $ \Gamma(Z, \mathscr L) $ yields the proof of Theorem \ref{th:A5}(i).  We omit the details.

We can compute $\barD(\xi_{M_a})$ by enumerating composition series in the same manner as in the $A_4$ example; there are 148 sequences $ \vi $ with $ F_\vi(M_a) = \mathbb P^1 $ and 104 sequences $ \vi $ with $ F_\vi(M_a) = pt $.  Computing in this way, we get that $\barD(\xi_{M_a}){p(\mu)}$ is equal to the multidegree given in the previous section, yielding the proof of Theorem \ref{th:A5}(ii).

\subsection{Non-equality of basis vectors}
\label{ss:ss23}
Let \(\lambda = (4,4,2,2)\), let \(\mu = (2,2,2,2,2,2)\) and consider
\[\tau = \young(1133,2255,44,66)\]
so that \(\nu = 2\alpha_1 + 4\alpha_2 + 4\alpha_3 + 4\alpha_4 + 2\alpha_5\) and \(n(\tau) =  (0,2,0,0,0,0,0,2,0,2,0,0,0,2,0)\). 

As before, we let $ Z = Z_\tau $ and $ Y = Y_\tau $ giving a corresponding pair $ t^{-\lambda} Z $ and $ Y $.  However, we will now prove that $ b_Z \ne c_Y $.  Using Proposition \ref{prop:4statements}, it suffices to prove that $ \barD(b_Z) \ne \barD(c_Y) $.

A general point of $ Y $ is $ M_a \oplus M_{a'} $ with $ a\ne a' $.  Let $ I(\omega_2 + \omega_4) := I(\omega_2) \oplus I(\omega_4) $ be the injective $ \Lambda $ module (using the notation of \S \ref{ssec:spherical}).  We prove the following.

\begin{theorem} \label{th:AppendixMain}
	We have
	$$ \barD(b_Z) = \barD(\xi_{M_a \oplus M_{a'}}) - 2 \barD(\xi_{I(\omega_2 + \omega_4)}) $$
	and in particular, $ \barD(b_Z) \ne \barD(\xi_{M_a \oplus M_{a'}}) $ and thus $ b_Z \ne c_Y $.
\end{theorem}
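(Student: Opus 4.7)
The plan is to verify the identity directly by computing all three rational functions on the two sides as explicit elements of $\C[\ftreg]$ and checking their equality.

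First, I would handle $\barD(b_Z)$ using Proposition \ref{pr:Dmdeg}, which reduces the computation to finding $\mdeg[\mathfrak n \cap \mathbb T_\mu](\Zo_\tau)$ and dividing by $p(\mu)$. The generalized orbital variety $\Zo_\tau$ is defined by (\ref{eq:zo}) as the locus in $\mathfrak n \cap \mathbb T_\mu$ where, for each $i = 1, \dots, 5$, the restriction $A|_{\C^{2i}}$ has Jordan type $\sh(\tau^{(i)})$. These Jordan type conditions translate into rank equalities on explicit submatrices, giving a concrete set of polynomial generators for the (prime) defining ideal of $\Zo_\tau$. Given the size of the ambient affine space, the multidegree computation must be carried out in a computer algebra system, paralleling the calculations of $\S\ref{ss:A4}$ and $\S\ref{ss:ss22}$.

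Next, for the right-hand side I would apply Lemma \ref{le:multXi} together with the algebra-morphism property of $\barD$ to write
\begin{equation*}
\barD(\xi_{M_a \oplus M_{a'}}) = \barD(\xi_{M_a})\,\barD(\xi_{M_{a'}}), \qquad \barD(\xi_{I(\omega_2 + \omega_4)}) = \barD(\xi_{I(\omega_2)})\,\barD(\xi_{I(\omega_4)}).
\end{equation*}
The factors $\barD(\xi_{I(\omega_i)})$ are easy to handle: $I(\omega_i)$ is the injective hull of $S_i$, and by $\S\ref{ssec:spherical}$ its submodules are parametrized by the Bruhat order on $W\omega_i$, so the sum $\sum_\vi \chi(F_\vi(I(\omega_i)))\,\barD_\vi$ can be computed by enumerating saturated chains. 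For $M_a$, which has dimension $8$ and sits in an explicit family of $\Lambda$-modules, I would exhibit $M_a$ as a concrete representation of the $A_5$ Dynkin quiver satisfying the preprojective relations, and then enumerate composition series $\vi \in \Seq(\omega_2 + \omega_4)$ together with $\chi(F_\vi(M_a))$; since $\xi_{M_a}$ depends only on the generic isomorphism class, the answer is independent of the parameter $a$, and so $\barD(\xi_{M_a \oplus M_{a'}}) = \barD(\xi_{M_a})^2$.

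Once all three rational functions are in hand, the identity is verified by direct symbolic comparison in $\C[\ftreg]$. The conclusion $b_Z \ne c_Y$ follows immediately from $\barD(c_Y) - \barD(b_Z) = 2\,\barD(\xi_{I(\omega_2+\omega_4)}) \ne 0$, since $\barD$ is linear and the right-hand side is a nonzero rational function (it is, up to $p(\omega_2+\omega_4)^2$, the square of a nontrivial multidegree).

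The principal obstacle will be the sheer scale of the calculations. The defining ideal of $\Zo_\tau$ lives in a polynomial ring of comparatively high dimension, and even after exploiting the rank-condition structure the multidegree is demanding. Likewise, enumerating composition series of $M_a$ involves organizing sequences in $\Seq(\omega_2+\omega_4)$ of length $8$ and identifying the corresponding submodule filtrations inside an $8$-dimensional preprojective module. Managing the computation efficiently, presumably by exploiting Young tableau combinatorics specific to type $A$, the Mirkovi\'c--Vybornov description of $\Zo_\tau$, and a recursive decomposition of $F_\bullet(M_a)$ along the lines of Table \ref{tab:A4exgr1}, will be essential.
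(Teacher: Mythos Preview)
Your proposal is correct and follows essentially the same route as the paper: compute $\barD(b_Z)$ via Proposition~\ref{pr:Dmdeg} by determining the ideal of $\Zo_\tau$ from the rank conditions in~(\ref{eq:zo}) and taking its multidegree with a computer algebra system, and compute the right-hand side by factoring with Lemma~\ref{le:multXi} into $\barD(\xi_{M_a})^2$ (already obtained in \S\ref{ss:ss22}) and the easy factors $\barD(\xi_{I(\omega_i)})$. One small slip: $\barD(\xi_{I(\omega_2+\omega_4)})=\barD(\xi_{I(\omega_2)})\barD(\xi_{I(\omega_4)})$ is a product of two distinct nonzero equivariant multiplicities rather than a square, but this does not affect the nonvanishing argument.
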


\begin{proof}
	By Lemma \ref{le:multXi}, the computation of the right hand side is reduced to the previous section (and the easy computation of $ \barD(\xi_{I(\omega_i)}) $).
	
	On the other hand, for the left hand side, we use (\ref{eq:zo}) to give a description of the generalized orbital variety $ \Zo_\tau $.  Using the aid of a computer, we find that it is cut out by a prime ideal $ I $ inside a polynomial ring with 24 generators.
	
		From there, it is easy to compute the multidegree of $ \Zo_\tau $ and thus $ \barD(b_Z) $.

\end{proof}

\end{appendix}

\end{document}